\newcommand*{\dt}[1]{%
  \accentset{\mbox{\large\bfseries .}}{#1}}
\numberwithin{equation}{section}
\newtheorem{thm}{Theorem}[section]
\newaliascnt{prp}{thm}
\newtheorem{prp}[prp]{Proposition}
\newaliascnt{cor}{thm}
\newtheorem{cor}[cor]{Corollary}
\theoremstyle{definition}
\newaliascnt{dfn}{thm}
\newtheorem{dfn}[dfn]{Definition}
\newaliascnt{xpl}{thm}
\newtheorem{xpl}[xpl]{Example}
\newaliascnt{rmk}{thm}
\newtheorem{rmk}[rmk]{Remark}
\author{Tristan Bice}
\thanks{The author is supported by Cardinal Stefan Wyszy\'nski University in Warsaw, Poland.}
\keywords{Wallman duality, uniformity, nearness, poset, Cauchy filter, compact, locally compact, regular space, completely metrisable}
\subjclass[2010]{06A06, 06A07, 54D10, 54D30, 54D35, 54D45, 54D70, 54D80, 54E15, 54E17, 54E35, 54E45, 54E50, 54H10}
\title{Nearness Posets}
\begin{document}

\begin{abstract}
We extend nearness frames to posets representing bases and even subbases of $T_1$ spaces.  This allows us to put a classic duality due to Wallman, between compact $T_1$ spaces and abstract simplicial complexes, into a general nearness framework.  Within this framework we extend Wallman's duality to locally compact $T_1$ spaces and, with further modifications, to completely metrisable spaces.  Moreover, we provide an elementary sublocale-free version of an admissibility condition due to Picado and Pultr and show how it strengthens Wallman's admissibility condition.
\end{abstract}

\maketitle

\section*{Introduction}

The story really begins 80 years ago with a paper of Wallman, namely \cite{Wallman1938}.  Most attention was focused on the earlier lattice theoretic part of \cite{Wallman1938} which, together with Stone's famous papers around the same time, gave birth to the field of point-free topology.  However, in the latter part of \cite{Wallman1938}, Wallman showed that more general posets arising from abstract simplicial complexes also provide a point-free description of compact $T_1$ spaces.  Here the vertices of the complex correspond to a subbasis of the space, while the simplices in the complex determine the covers of the space.  Thus the complex could be considered as a point-free `generalised uniform space'.  This duality was well ahead of its time and it would be many years before point-free uniform or nearness structures were considered again.

On the pointed side of things, however, entourage uniformities were introduced by Weil around the same time, while Tukey introduced covering uniformities in \cite{Tukey1940}.  These were significantly generalised by Morita in a series of papers \cite{Morita1951} which were somewhat overlooked at the time, despite their importance.  Indeed, to quote Collins in \cite{CollinsMorita1998}, Morita's papers formed an `extended rewriting of the basis of general topology' via generalised uniformities.  Our goal in the present paper is to explore similar ideas but in a point-free context.

The word `basis' here really is the key.  Indeed, Morita made two key observations about covering uniformities:
\begin{enumerate}
\item It is often best to weaken the star-refinement axiom.
\item It usually suffices to work with a basis rather than all open sets.
\end{enumerate}
The first observation was made independently several times thereafter, although in equivalent but superficially different contexts, like Herrlich's nearness spaces and Kat\v{e}tov's merotopic spaces (see \cite{BentleyHerrlichHusek1998} for more on the history and interrelatedness of these structures).  Once this equivalence became apparent, `nearness spaces' gradually came to be synonymous with Morita's `generalised uniform spaces'.

More recently, uniform and nearness structures have been considered in the point free context, namely on frames (see \cite{PicadoPultr2012}).  However, what appears to have escaped attention is the fact that Morita's second observation is still valid in the point free context.  More precisely, instead of considering covers on a frame, representing the entire collection of open sets of some space, we can work with a poset, representing a mere basis of some space, leaving the covers to carry most of the topological information, just like Wallman did in \cite{Wallman1938}.

The significance of this observation is easy to overlook, but its practical and theoretical implications should not be underestimated.  On the practical side of things, by working posets/bases we are dealing with much smaller, often countable objects.  These will generally be easier to construct, often in more finite combinatorial ways, like from inverse systems of graphs in a manner similar to \cite{DebskiTymchatyn2018}.  On the theoretical side of things, this illustrates that covers really do contain a lot of information about the space.

Indeed, on their own, posets arising from arbitrary bases of topological spaces do not retain much information about the space at all, excepting e.g. isolated points.  For example, all second countable regular spaces without isolated points have bases which are isomorphic as posets (to the unique countable atomless Boolean algebra).  So there is no way to distinguish, say, the unit interval from the Cantor space given just the order structure of some arbitrary basis.

To rectify this, we considered slightly stronger structures in locally compact Hausdorff spaces, namely bases closed under finite unions (see \cite{BiceStarling2018}) or bases together with the compact containment relation (see \cite{BiceStarling2018b}).  However, by obtaining the requisite extra structure from covers instead we are able to recover a much larger class of spaces than the locally compact Hausdorff ones.  In fact, we can recover all $T_1$ spaces, including the non-sober ones.  Moreover, as a bonus we get some additional uniform structure from which to define things like uniform continuity (although this will not be the focus of the present paper).

\section*{Outline}

First, we set out some basic notation and terminology in \autoref{Preliminaries}.  In \autoref{CauchyvsRound}, we then examine some elementary properties of Cauchy and round up-sets.  The more standard approach would be to consider regular filters, which would be fine for bases in $T_3$ spaces.  However, we will be working with subbases in $T_1$ spaces, which is why we must consider more general round up-sets.  Under suitable extra conditions, they turn out to be the same thing, as becomes clear later on.  Following that, we introduce the spectrum of round Cauchy up-sets in \autoref{TheSpectrum}.  In particular, in \autoref{T1}, we note that the spectrum allows us to recover any $T_1$ space from an arbitrary subbasis.

We come to our first duality in \autoref{WallmanDuality}.  The results in this sections are essentially all from \cite{Wallman1938}, but reformulated in a way that makes it clearer how they relate to nearness structures and also easier to generalise.  The main result is summarised in \autoref{WallmanDualitySummary}, which we repeat here:
\[\textbf{Wallman posets are dual to subbases of compact $T_1$ spaces}.\]
Here, Wallman posets are structures of the form $(\mathbb{P},\leq,\Theta)$ where $\Theta$ consists of finite subsets of $\mathbb{P}$ and $\Theta$ determines the ordering $\leq$ in a natural way (see \autoref{WallmanAdmissibleEquivalent}).  For details, see the comments following \autoref{WallmanDualitySummary}.

Before moving on, we digress to examine near subsets in \autoref{NearSubsets}.  This is necessary in order to define appropriate `restrictions' $\Theta|p$ to elements of $\mathbb{P}$.  Using these restrictions, we extend Wallman's duality to locally compact $T_1$ spaces.  The main result is summarised in \autoref{GeneralisedWallmanDualitySummary}, we which again repeat here:
\begin{center}
\textbf{\upshape Generalised Wallman posets are dual to\\ relatively compact subbases of locally compact $T_1$ spaces}.
\end{center}
Basically, $(\mathbb{P},\leq,\Theta)$ is a generalised Wallman poset if the restrictions $\Theta|p$ are finite and can be patched together to obtain $\Theta$, which is again required to determine the ordering $\leq$.  For the technical details, see the comments following \autoref{GeneralisedWallmanDualitySummary}.

From \autoref{BasicWallmanDuality} onwards, we move from general subbases to focus on bases.  This is done in order to handle more general families of covers, like in the main result summarised in \autoref{BasicWallmanDualitySummary}, specifically:
\begin{center}
\textbf{\upshape Abstract nearly finite proper Wallman admissible filters $\Theta\subseteq\mathcal{P}(\mathbb{P})$ are dual to concrete $*$-coinitial filters $\Theta\subseteq\mathcal{C}_X(\mathbb{P})$ where $\mathbb{P}\subseteq\mathcal{O}(X)$ is a basis of relatively compact sets on a locally compact $T_1$ space $X\neq\emptyset$}.
\end{center}
The key point is that $*$-coinitial families of covers are general enough to include examples like the uniform covers of a locally compact metric space.

We temporarily digress again in \autoref{OrderCovers} to examine a notion of cover that applies to general posets.  This is done in order to extend an admissibility condition due to Picado and Pultr from frames to posets, which we examine in \autoref{Picado-Pultr Admissibility}.  Even in the case of frames, our condition is somewhat simpler than the original in that it avoids any mention of sublocales and can be stated concisely as
\[\tag{Picado-Pultr Admissible}p\leq q\qquad\Leftrightarrow\qquad\forall C\in\Theta\ (\bigvee_{p\neq c\in C}c\vee q=1)\]
(note that $\Rightarrow$ is immediate if $\Theta$ consists of covers, it is the $\Leftarrow$ part that is the key).  In \autoref{PPEquiv}, we show that this is indeed equivalent to the sublocale version.

In \autoref{UniformlyBelow}, we examine the `uniformly below' relation $\vartriangleleft$, bringing us closer to the theory in \cite{PicadoPultr2012} (although we are still working with more general posets rather than frames).  Finally, we examine $\vartriangleleft$-regularity and the slightly weaker notion of star-regularity.  In \autoref{AdmissibleDirectedRegularisation}, we then show how any admissible $\Theta$ can be turned into a regular admissible family $\Theta^\mathsf{R}$.  This allows us to prove our final duality for completely metrisable spaces, summarised in \autoref{CompletelyMetrisableDualitySummary} and repeated here:
\begin{center}
\textbf{\upshape Abstract countable admissible filters $\Theta\subseteq\mathcal{P}(\mathbb{P})$ are dual to concrete countable complete locally uniform compatible filters $\Theta\subseteq\mathcal{C}_X(\mathbb{P})$ where $\mathbb{P}\subseteq\mathcal{O}(X)$ is a basis of a completely metrisable space $X$}.
\end{center}

\section{Preliminaries}\label{Preliminaries}

For a relation $\prec$ on a set $\mathbb{P}$ and $S\subseteq\mathbb{P}$, define
\[S^\prec=\{p\in\mathbb{P}:\exists s\in S\ (s\prec p)\}.\]
Further define the following conditions on $S$.
\begin{align}
\tag{$\prec$-Closed}S^\prec\subseteq S.\\
\tag{$\prec$-Regular}S^\prec\supseteq S.\\
\tag{$\prec$-Coinitial}\mathbb{P}\subseteq S^\prec.\\
\tag{$\prec$-Cofinal}\mathbb{P}\subseteq S^\succ.
\end{align}
We call any $\prec$-closed $S\subseteq\mathbb{P}$ an \emph{up-set} when the relation is clear.

As in \cite[VIII.2.1]{PicadoPultr2012}, we extend $\prec$ to $\mathcal{P}(\mathbb{P})=\{S:S\subseteq\mathbb{P}\}$ by defining
\[\tag{$\prec$-Refinement}R\prec S\quad\Leftrightarrow\quad R\subseteq S^\succ\quad\Leftrightarrow\quad\forall r\in R\ \exists s\in S\ (r\prec s).\]
When $R\prec S$, we say that $R$ \emph{refines} $S$.  Note that above we are using the standard convention of writing $\succ$ for the opposite relation of $\prec$, i.e. $s\succ r$ means $r\prec s$.  For subsets, however, $S\succ R$ is not the same as $R\prec S$, indeed
\begin{align*}
R\prec S\qquad&\Leftrightarrow\qquad R\subseteq S^\succ\\
\text{while}\qquad S\succ R\qquad&\Leftrightarrow\qquad S\subseteq R^\prec.
\end{align*}
Also note that we abuse notation by identifying elements with singleton subsets whenever there is no risk of confusion, e.g.
\[R\prec q\quad\text{really means}\quad R\prec\{q\}.\]

Let us denote the finite subsets of $\mathbb{P}$ (including the empty set $\emptyset$) by
\[\mathcal{F}(\mathbb{P})=\{F\subseteq\mathbb{P}:F\text{ is finite}\}.\]
Consider the following conditions on $S\subseteq\mathbb{P}$.
\begin{align}
\tag{$\prec$-Filter}F\in\mathcal{F}(S)\qquad&\Leftrightarrow\qquad\exists s\in S\ (F\succ s).\\
\tag{$\prec$-Directed}F\in\mathcal{F}(S)\qquad&\Rightarrow\qquad\exists s\in S\ (F\succ s).
\end{align}
When the relation is clear, we just refer to \emph{filters} and \emph{directed} subsets.  Note filters are just directed up-sets and directed subsets are always non-empty, as $\emptyset\in\mathcal{F}(S)$.

When $\prec$ is transitive and reflexive, we call $\prec$ a \emph{preorder} and $(\mathbb{P},\prec)$ a \emph{proset}.  The extension of $\prec$ to $\mathcal{P}(\mathbb{P})$ is then also a preorder.  When $\prec$ is also antisymmetric, we call $\prec$ a \emph{partial order} and $(\mathbb{P},\prec)$ a \emph{poset}.  However, the extension of $\prec$ to $\mathcal{P}(\mathbb{P})$ is generally not antisymmetric (in fact this only happens when $\prec$ is $=$).

A \emph{topology} on a set $X$ is a collection of open subsets $\mathcal{O}(X)\subseteq\mathbb{P}(X)$ that is closed under taking arbitrary unions and finite intersections.  In particular, $\emptyset=\bigcup\emptyset$ and $X=\bigcap\emptyset$ are always open.  We call $\mathbb{P}\subseteq\mathcal{O}(X)$ a \emph{basis} if every $p\in\mathcal{O}(X)$ is a union of elements of $\mathbb{P}$.  We call $\mathbb{P}\subseteq\mathcal{O}(X)$ a \emph{subbasis} if the finite intersections of elements of $\mathbb{P}$ form a basis.  Again note that we allow empty unions and intersections, so $\mathbb{P}=\emptyset$ is both a basis for the empty space $X=\emptyset$ and a subbasis for the one-point space $X=\{x\}$ (where \emph{space} here means \emph{topological space}, i.e. a set $X$ together with a topology $\mathcal{O}(X)$, which is unique for the empty and one-point spaces).

\section{Cauchy vs Round}\label{CauchyvsRound}

Our primary goal is to investigate posets together with some additional structure coming from a distinguished family of subsets.  Accordingly, throughout we assume
\[\textbf{$(\mathbb{P},\leq,\Theta)$ is structure where $(\mathbb{P},\leq)$ is a poset and $\Theta\subseteq\mathcal{P}(\mathbb{P})$}.\]
So $\Theta$ is a very general kind of `nearness'.  We leave `nearness' itself as a vague term for $\Theta$ satisfying some `nice' properties (although `near subset' will take on a precise meaning in \autoref{NearSubsets}).  Exactly what these properties might be will depend on the kinds of spaces and coverings one is dealing with.  The goal of the present paper is to explore these various properties, how they relate to each other and which ones are the weakest necessary to prove the results we want.

Our $\mathbb{P}$ is meant to represent a basis of some space $X$, $\leq$ is meant to represent inclusion $\subseteq$ and $\Theta$ is meant to represent a family of basic open covers of $X$.  We wish to recover the points of this hypothetical $X$ via their basic open neighbourhood filters.  Arbitrary filters in $\mathbb{P}$ will not do, e.g. these could correspond to the neighbourhoods of a subset rather than a single point.  However, $\Theta$ allows us to identify precisely which filters should really correspond to points.

Firstly, as every $C\in\Theta$ represents a cover of the space, every neighbourhood filter of $x\in X$ should contain some element of $C$.  These are the `Cauchy' filters, but again arbitrary Cauchy filters will not do, e.g. in a metric space different Cauchy filters can converge to the same point $x$.  The neighbourhood filter of $x$ is the smallest of these, which suggests we should consider minimal Cauchy filters.

In fact, at first we will even consider more general subbases.  Then the above remarks still apply, the only difference is that the subbasic neighbourhoods of a point may only form an up-set, not a filter.  Accordingly, we start by examining minimal or `round' Cauchy up-sets.

\begin{dfn}
Consider the following conditions on $S\subseteq\mathbb{P}$.
\begin{align}
\label{ThetaCauchy}\tag{$\Theta$-Cauchy}&\forall C\in\Theta\ (C\cap S\neq\emptyset).\\
\label{ThetaRound}\tag{$\Theta$-Round}\forall s\in S\ &\exists C\in\Theta\ (C\cap S\leq s).
\end{align}
\end{dfn}

\begin{rmk}
Here we are following the terminology of \cite[Definition 2.4 (b)]{CollinsHendrie2000}, although the definitions there are restricted to filters.  Also, the round subsets there are further required to satisfy $C\cap S\neq\emptyset$, presumably so that non-Cauchy subsets are not automatically round.  We prefer the above definition so that roundness is preserved by subsets, just as Cauchyness is preserved by supersets.  Cauchy filters are defined likewise in \cite[Definition 2.1]{Morita1989} but round filters are instead called `weak star-filters' in \cite[Definition 2.10]{Morita1989}.  This differs from Morita's earlier terminology in \cite{Morita1951} where a stronger notion of Cauchy filter is given and round filters do not appear at all \textendash\, round filters are originally from \cite{Harris1971}, although even there the focus was on a slightly stronger notion for Hausdorff spaces.
\end{rmk}

If $\Theta$ is an up-set, every $\Theta$-round $\Theta$-Cauchy subset is also an up-set.

\begin{prp}\label{Upset=>Upset}
If $R\subseteq\mathbb{P}$ is $\Theta$-round and $\Theta$-Cauchy,
\[\Theta\text{ is an up-set}\qquad\Rightarrow\qquad R\text{ is an up-set}.\]
\end{prp}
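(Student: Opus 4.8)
The plan is: given $r\in R$ and $r\leq p$, produce a member of $\Theta$ whose intersection with $R$ can only be $\{p\}$, and then conclude $p\in R$ from the $\Theta$-Cauchy property of $R$.

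First I would apply $\Theta$-roundness to the point $r\in R$ to obtain $C\in\Theta$ with $C\cap R\leq r$, i.e. $c\leq r$ for every $c\in C\cap R$. The key construction is then the set
\[ D=(C\setminus R)\cup\{p\}. \]
I claim $C$ refines $D$. Indeed, take $c\in C$: if $c\in R$ then $c\in C\cap R$, so $c\leq r\leq p$ by roundness and transitivity, and $p\in D$; if $c\notin R$ then $c\in C\setminus R\subseteq D$ and $c\leq c$. Either way $c$ lies $\leq$ some element of $D$, so $C\leq D$ in the refinement order on $\mathcal{P}(\mathbb{P})$. Since $\Theta$ is an up-set and $C\in\Theta$, this gives $D\in\Theta$.

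Finally, $R$ being $\Theta$-Cauchy forces $D\cap R\neq\emptyset$. But $C\setminus R$ is disjoint from $R$ by construction, so $D\cap R=\bigl((C\setminus R)\cup\{p\}\bigr)\cap R=\{p\}\cap R$. Hence $\{p\}\cap R\neq\emptyset$, i.e. $p\in R$, which is exactly what it means for $R$ to be an up-set. I do not expect a genuine obstacle: the only point requiring a moment's thought is the design of the refining cover $D$, which must retain $C\setminus R$ verbatim (so that refinement holds on that part for free via reflexivity of $\leq$) while collapsing the whole of $C\cap R$ to the single element $p$ — legitimate precisely because roundness places all of $C\cap R$ below $r\leq p$.
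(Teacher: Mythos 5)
Your proof is correct and is essentially identical to the paper's: both apply roundness at $r$ to get $C\cap R\leq r$, form $D=(C\setminus R)\cup\{p\}$, use the up-set hypothesis to get $D\in\Theta$, and conclude via the Cauchy condition that $D\cap R\subseteq\{p\}$ is nonempty. You simply spell out the refinement check $C\leq D$ in more detail than the paper does.
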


\begin{proof}
Say $s\geq r\in R$.  As $R$ is $\Theta$-round, we have some $C\in\Theta$ with $C\cap R\leq r\leq s$ and hence $C\leq D$ where $D=(C\setminus R)\cup\{s\}$.  As $\Theta$ is an up-set, $D\in\Theta$.  As $R$ is $\Theta$-Cauchy, $\emptyset\neq R\cap D\subseteq\{s\}$ and hence $s\in R$, showing that $R$ is an up-set.
\end{proof}

Likewise, if $\Theta$ is directed, so are all $\Theta$-round $\Theta$-Cauchy up-sets.

\begin{prp}\label{Directed=>Directed}
If $R\subseteq\mathbb{P}$ is a $\Theta$-round $\Theta$-Cauchy up-set then
\[\Theta\text{ is directed}\qquad\Rightarrow\qquad R\text{ is directed}.\]
\end{prp}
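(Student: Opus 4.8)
The plan is to check the $\prec$-Directed condition for $R$ head-on: given finitely many elements of $R$, I will produce a single element of $R$ lying below all of them, using a common refinement supplied by directedness of $\Theta$ to simultaneously `round down' all of them at once.

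First I would dispose of the degenerate case $F=\emptyset$. Since directed subsets are non-empty, $\Theta\neq\emptyset$; fixing any $C\in\Theta$, $\Theta$-Cauchyness gives $C\cap R\neq\emptyset$, so $R\neq\emptyset$, as required. Now take an arbitrary non-empty $F=\{f_1,\dots,f_n\}\in\mathcal{F}(R)$. For each $i$, $\Theta$-roundness of $R$ yields some $C_i\in\Theta$ with $C_i\cap R\leq f_i$ (i.e. every element of $C_i\cap R$ is $\leq f_i$). As $\{C_1,\dots,C_n\}$ is a finite subset of $\Theta$ and $\Theta$ is directed, there is a common refinement $C\in\Theta$, so $C\prec C_i$ for every $i$. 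By $\Theta$-Cauchyness we may pick $s\in C\cap R$. For each $i$, $C\prec C_i$ provides $c_i\in C_i$ with $s\leq c_i$; since $R$ is an up-set and $s\in R$, we get $c_i\in C_i\cap R$, hence $c_i\leq f_i$, and therefore $s\leq f_i$ by transitivity. Thus $s\in R$ is a lower bound of $F$, i.e. $F\succ s$, which is exactly the $\prec$-Directed condition for $R$.

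I do not expect a genuine obstacle here; the proof is a short unwinding of definitions. The only points demanding a little care are: handling $F=\emptyset$, which forces us to use that directedness of $\Theta$ includes non-emptiness (so that $R$ is non-empty via $\Theta$-Cauchyness); and correctly reading the refinement relation $C\prec C_i$ on $\mathcal{P}(\mathbb{P})$ as ``every member of $C$ lies below some member of $C_i$'', since it is precisely this, together with the up-set hypothesis on $R$, that pulls $c_i$ back into $C_i\cap R$ so that $C_i\cap R\leq f_i$ can be invoked. Note the up-set hypothesis genuinely cannot be dropped: \autoref{Upset=>Upset} only recovers it when $\Theta$ is an up-set, whereas here $\Theta$ is merely assumed directed.
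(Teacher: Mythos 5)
Your proof is correct and follows essentially the same route as the paper: use roundness to get a cover $C_f$ for each $f\in F$, directedness of $\Theta$ to get a common refinement $C$, Cauchyness to pick $c\in C\cap R$, and the up-set property to push $c$ into each $C_f\cap R\leq f$. Your explicit treatment of $F=\emptyset$ is a harmless elaboration of what the paper's argument already covers implicitly.
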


\begin{proof}
Take any finite $F\subseteq R$.  As $R$ is $\Theta$-round, for all $f\in F$, we have $C_f\in\Theta$ with $C_f\cap R\leq f$.  As $\Theta$ is directed, we have $C\in\Theta$ with $C\leq C_f$, for all $f\in F$.  As $R$ is $\Theta$-Cauchy, we have $c\in C\cap R$.  For all $f\in F$, $C\leq C_f$ so we have $c_f\in C_f$ with $c_f\geq c$.  As $R$ is an up-set, $c\leq c_f\in C_f\cap R\leq f$, for all $f\in F$, showing that $R$ is directed.
\end{proof}

The same result for filters is immediate from \autoref{Upset=>Upset} and \autoref{Directed=>Directed}.

\begin{cor}\label{Filter=>Filter}
If $R\subseteq\mathbb{P}$ is $\Theta$-round and $\Theta$-Cauchy then
\[\Theta\text{ is a filter}\qquad\Rightarrow\qquad R\text{ is a filter}.\]
\end{cor}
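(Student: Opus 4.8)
The plan is to derive this immediately from the two preceding results, since a filter is by definition a directed up-set. Suppose $R\subseteq\mathbb{P}$ is $\Theta$-round and $\Theta$-Cauchy, and suppose $\Theta$ is a filter. Then in particular $\Theta$ is an up-set, so \autoref{Upset=>Upset} applies and tells us $R$ is an up-set. Consequently $R$ is a $\Theta$-round $\Theta$-Cauchy up-set, so we are now in the hypothesis of \autoref{Directed=>Directed}. Since $\Theta$ is also directed (again because it is a filter), \autoref{Directed=>Directed} yields that $R$ is directed. Being a directed up-set, $R$ is a filter, which is what we wanted.

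The only thing to be careful about is the order of application: \autoref{Directed=>Directed} requires $R$ to already be known to be an up-set, so we genuinely need \autoref{Upset=>Upset} first and cannot invoke the two results in parallel. There is no real obstacle here — the statement is explicitly flagged in the text as ``immediate from \autoref{Upset=>Upset} and \autoref{Directed=>Directed}'' — so the proof is a one-line chaining of the two implications, and the main (trivial) point is simply unwinding the definition of \emph{filter} as \emph{directed up-set} at both ends of the argument.
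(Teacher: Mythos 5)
Your proof is correct and matches the paper's intended argument exactly: the paper states the corollary as immediate from \autoref{Upset=>Upset} and \autoref{Directed=>Directed}, chaining them in precisely the order you describe (first obtaining that $R$ is an up-set, then applying the directedness result). Your remark about the necessary order of application is a worthwhile clarification of why the chaining is not quite symmetric, but the substance is identical to the paper's proof.
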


As long as $\Theta$ avoids trivial subsets, so do $\Theta$-round up-sets.

\begin{prp}\label{Avoiding0}
If $\mathbb{P}$ has a minimum $0$ then
\[\emptyset\in\Theta\text{ or }\{0\}\in\Theta\qquad\Leftrightarrow\qquad\mathbb{P}\text{ is $\Theta$-round}\qquad\Leftrightarrow\qquad\exists\text{ $\Theta$-round up-set }R\ni0.\]
\end{prp}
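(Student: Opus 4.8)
The plan is to prove both biconditionals by a short cycle of implications, the whole argument being essentially an unwinding of the definition of $\Theta$-round together with minimality of $0$. The one observation doing real work is that \emph{any up-set containing the minimum is all of $\mathbb{P}$}: if $R$ is $\leq$-closed and $0\in R$, then since $0\leq p$ for every $p\in\mathbb{P}$ we get $p\in R$, so $R=\mathbb{P}$. Since $\mathbb{P}$ is itself always an up-set, this collapses the right-hand condition ``$\exists$ $\Theta$-round up-set $R\ni 0$'' to the middle condition ``$\mathbb{P}$ is $\Theta$-round''; so it really suffices to prove the outer equivalence with ``$\mathbb{P}$ is $\Theta$-round'' inserted as an intermediate step.

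First I would treat ``$\emptyset\in\Theta$ or $\{0\}\in\Theta$'' $\Rightarrow$ ``$\mathbb{P}$ is $\Theta$-round''. Given $s\in\mathbb{P}$ I must produce $C\in\Theta$ with $C\cap\mathbb{P}=C\leq s$. If $\emptyset\in\Theta$, take $C=\emptyset$ and note $\emptyset\leq s$ holds vacuously; if $\{0\}\in\Theta$, take $C=\{0\}$ and note $\{0\}\leq s$ because $0$ is the minimum. Either way $\mathbb{P}$ is $\Theta$-round, and then $R=\mathbb{P}$ witnesses the existence of a $\Theta$-round up-set containing $0$.

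Conversely, given a $\Theta$-round up-set $R$ with $0\in R$, the observation above gives $R=\mathbb{P}$, so $\mathbb{P}$ is $\Theta$-round. Applying $\Theta$-roundness at the element $0$ yields some $C\in\Theta$ with $C=C\cap\mathbb{P}\leq 0$; but $c\leq 0$ forces $c=0$ by minimality, so $C\subseteq\{0\}$, i.e. $C=\emptyset$ or $C=\{0\}$, giving $\emptyset\in\Theta$ or $\{0\}\in\Theta$ and closing the cycle. There is no genuine obstacle here; the only point requiring care is \emph{not} to appeal to $\Theta$-Cauchyness of $R$ (which is not hypothesised), so the argument must pass through $R=\mathbb{P}$ rather than through nonemptiness of $C\cap R$.
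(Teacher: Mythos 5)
Your proof is correct and follows essentially the same route as the paper's: the first equivalence is established by taking $C=\emptyset$ or $C=\{0\}$ in one direction and applying roundness at $s=0$ in the other, while the second equivalence is handled by noting that an up-set containing the minimum must be all of $\mathbb{P}$ (which the paper dismisses as ``immediate''). Your closing remark about not needing $\Theta$-Cauchyness is a sensible sanity check but introduces no new content.
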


\begin{proof}
If $\emptyset\in\Theta$ or $\{0\}\in\Theta$ then $\mathbb{P}$ is $\Theta$-round.  Conversely, if $\mathbb{P}$ is $\Theta$-round then, as $0\in\mathbb{P}$, we have some $C\in\Theta$ with $C\leq0$, the only possibilities for which are $C=\emptyset$ or $C=\{0\}$.  This proves the first $\Leftrightarrow$, while the second is immediate.
\end{proof}

Equivalent definitions of $\Theta$-round and $\Theta$-Cauchy could have been given in terms of the complement $\mathbb{P}\setminus R$.  For up-sets, these can be stated as follows.  Note here $\Theta^\leq$ denotes the $\leq$-closure of $\Theta$, i.e. the family of all subsets refined by subsets in $\Theta$.

\begin{prp}\label{CauchyRoundComplements}
If $R\subseteq\mathbb{P}$ is an up-set then
\begin{align}
\label{ThetaCauchyComplement}R\text{ is $\Theta$-Cauchy}\qquad&\Leftrightarrow\qquad\mathbb{P}\setminus R\notin\Theta^\leq.\\
\label{ThetaRoundComplement}R\text{ is $\Theta$-round}\qquad&\Leftrightarrow\qquad\forall r\in R\ ((\mathbb{P}\setminus R)\cup\{r\}\in\Theta^\leq).
\end{align}
\end{prp}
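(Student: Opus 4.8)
The plan is to reduce both equivalences to a single bookkeeping lemma about refinements. I would first prove: \emph{for any up-set $R\subseteq\mathbb{P}$, any $T\subseteq R$ and any $C\subseteq\mathbb{P}$,}
\[C\leq(\mathbb{P}\setminus R)\cup T\qquad\Leftrightarrow\qquad C\cap R\leq T.\]
The direction ``$\Leftarrow$'' needs nothing special: each $c\in C$ either lies in $C\cap R$, whence $c\leq t$ for some $t\in T\subseteq(\mathbb{P}\setminus R)\cup T$, or lies in $\mathbb{P}\setminus R$, whence $c\leq c\in(\mathbb{P}\setminus R)\cup T$. The direction ``$\Rightarrow$'' is the one place the hypothesis on $R$ is used: if $c\in C\cap R$ and $c\leq d$ with $d\in(\mathbb{P}\setminus R)\cup T$, then $d\in R$ because $R$ is an up-set, and since $d\notin\mathbb{P}\setminus R$ we must have $d\in T$, giving $c\leq d\in T$.

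Granting this, \eqref{ThetaCauchyComplement} comes from taking $T=\emptyset$: then $C\leq\mathbb{P}\setminus R$ is equivalent to $C\cap R\leq\emptyset$, which holds iff $C\cap R=\emptyset$ (a subset refines $\emptyset$ only when it is empty). Thus $\mathbb{P}\setminus R\in\Theta^\leq$ means exactly that some $C\in\Theta$ has $C\cap R=\emptyset$, i.e. that $R$ is \emph{not} $\Theta$-Cauchy; negating both sides yields \eqref{ThetaCauchyComplement}. Likewise \eqref{ThetaRoundComplement} comes from taking $T=\{r\}$ for a fixed $r\in R$: then $C\leq(\mathbb{P}\setminus R)\cup\{r\}$ is equivalent to $C\cap R\leq r$, so $(\mathbb{P}\setminus R)\cup\{r\}\in\Theta^\leq$ says precisely that some $C\in\Theta$ satisfies $C\cap R\leq r$. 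Universally quantifying over $r\in R$ then reproduces the $\Theta$-roundness condition verbatim.

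I do not expect a genuine obstacle here, since after the lemma everything is unwinding the definitions of the refinement preorder on $\mathcal{P}(\mathbb{P})$ and of $\Theta^\leq$. The only points worth double-checking are the edge case ``$C$ refines $\emptyset$ iff $C=\emptyset$'', which is what converts the Cauchy condition into a non-membership statement, and the fact that $R$ being an up-set is genuinely needed (and available, by hypothesis) in the ``$\Rightarrow$'' half of the lemma — without it an element of $C\cap R$ could be refined into $\mathbb{P}\setminus R$ and the argument collapses.
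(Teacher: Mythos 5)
Your proof is correct and rests on the same key observation as the paper's: since $R$ is an up-set, an element of $C\cap R$ can only refine to elements of $R$, so refinement of $C$ into $(\mathbb{P}\setminus R)\cup T$ is controlled entirely by where $C\cap R$ goes (the paper phrases this as $\mathbb{P}\setminus R$ being a down-set, so that $C\leq(\mathbb{P}\setminus R)\cup\{r\}$ iff $C\subseteq(\mathbb{P}\setminus R)\cup r^{\geq}$). Your single lemma parameterised by $T\subseteq R$ is a tidy unification of the two cases $T=\emptyset$ and $T=\{r\}$, but it is the same argument, just spelled out more explicitly than the paper's terse manipulation of $\Theta^{\subseteq}$ and $\Theta^{\leq}$.
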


\begin{proof}\
\begin{itemize}
\item[\eqref{ThetaCauchyComplement}] By definition, $R$ is $\Theta$-Cauchy iff $\mathbb{P}\setminus R\notin\Theta^\subseteq$.  As $R$ is an up-set, $R$ is $\Theta$-Cauchy iff $R$ is $\Theta^\leq$-Cauchy iff $\mathbb{P}\setminus R\notin\Theta^{\leq\subseteq}=\Theta^\leq$.

\item[\eqref{ThetaRoundComplement}] By definition, $R$ is $\Theta$-round iff $(\mathbb{P}\setminus R)\cup r^\geq\in\Theta^\subseteq$,  for all $r\in R$.  As $R$ is an up-set, $\mathbb{P}\setminus R$ is a down-set so $(\mathbb{P}\setminus R)\cup r^\geq\in\Theta^\subseteq$ iff $(\mathbb{P}\setminus R)\cup\{r\}\in\Theta^\leq$.\qedhere
\end{itemize}
\end{proof}

Next, we have the following analog of \cite[Lemma 1 (b)]{Harris1971}.

\begin{prp}\label{CauchyUpsetRound}
If $Q\subseteq\mathbb{P}$ is a $\Theta$-Cauchy up-set, and $R\subseteq\mathbb{P}$ is $\Theta$-round then
\[Q\subseteq R\qquad\Rightarrow\qquad Q=R.\]
\end{prp}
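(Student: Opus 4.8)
The plan is to show $R \subseteq Q$, which together with the hypothesis $Q \subseteq R$ gives equality. So I take an arbitrary $r \in R$ and try to deduce $r \in Q$. The natural move is to exploit the roundness of $R$ at the element $r$: there is some $C \in \Theta$ with $C \cap R \leq r$. Since $Q$ is $\Theta$-Cauchy, $C$ meets $Q$, so pick $q \in C \cap Q$. Because $Q \subseteq R$, this $q$ also lies in $C \cap R$, and hence $q \leq r$ by the choice of $C$. Finally, $Q$ is an up-set and $q \in Q$ with $q \leq r$, so $r \in Q$.

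That is essentially the whole argument — it is a short chain of the three hypotheses (roundness of $R$, Cauchyness of $Q$, up-set-ness of $Q$), each used exactly once. The only place where one must be a little careful is the step $q \in C \cap R$: this requires $Q \subseteq R$, so the inclusion hypothesis is genuinely used here and not merely for the trivial half of the equality. I expect no real obstacle; if anything, the subtlety is purely bookkeeping, namely making sure that the $C$ produced by roundness is the same $C$ against which one applies the Cauchy property of $Q$ (it is, since $\Theta$-Cauchy means $Q$ meets every member of $\Theta$). This mirrors the structure of the proof of \autoref{Upset=>Upset}, where roundness is likewise used to manufacture a cover element below a given point and Cauchyness then forces membership.

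One remark worth including: the statement is the abstract analogue of the fact that a round Cauchy filter is a minimal Cauchy filter, and more precisely that no round up-set can properly contain, or be properly contained in, another Cauchy up-set — here we get the "contained in" direction. No directedness or filter hypothesis is needed, consistent with the paper's policy of isolating the weakest assumptions; in particular $\Theta$ need not be an up-set or directed for this proposition.
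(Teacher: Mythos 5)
Your argument is exactly the paper's proof: roundness of $R$ at $r$ gives $C\in\Theta$ with $C\cap R\leq r$, Cauchyness of $Q$ gives $q\in C\cap Q\subseteq C\cap R$, so $q\leq r$ and the up-set property of $Q$ yields $r\in Q$. Correct and the same approach, with the same careful use of $Q\subseteq R$ at the same step.
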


\begin{proof}
For any $r\in R$, we have $C\in\Theta$ with $\emptyset\neq C\cap Q\subseteq C\cap R\leq r$, as $Q$ is $\Theta$-Cauchy, $Q\subseteq R$ and $R$ is $\Theta$-round.  Thus $r\in Q$, as $Q$ is an up-set.
\end{proof}

It follows immediately that any $\Theta$-round $\Theta$-Cauchy up-set $Q\subseteq\mathbb{P}$ is both a maximal $\Theta$-round subset and a minimal $\Theta$-Cauchy up-set.  In fact, among $\Theta$-Cauchy up-sets, `$\Theta$-round' really just means `minimal'.

\begin{prp}\label{Round<=>Minimal}
If $R\subseteq\mathbb{P}$ is a $\Theta$-Cauchy up-set then
\[R\text{ is $\Theta$-round}\qquad\Leftrightarrow\qquad R\text{ is minimal}.\]
\end{prp}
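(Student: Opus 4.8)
The plan is to prove the two implications separately. The forward implication is essentially a restatement of \autoref{CauchyUpsetRound}, so the substance lies in the reverse implication, which I would handle by contraposition.

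For $(\Rightarrow)$, suppose $R$ is a $\Theta$-round $\Theta$-Cauchy up-set and let $Q\subseteq\mathbb{P}$ be any $\Theta$-Cauchy up-set with $Q\subseteq R$. Applying \autoref{CauchyUpsetRound} to this $Q$ and this $R$ gives $Q=R$ at once, so $R$ is a minimal $\Theta$-Cauchy up-set. This is exactly the observation already recorded immediately after \autoref{CauchyUpsetRound}.

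For $(\Leftarrow)$, I would argue by contraposition: assume $R$ is a $\Theta$-Cauchy up-set that is \emph{not} $\Theta$-round, and produce a strictly smaller $\Theta$-Cauchy up-set. Failure of \eqref{ThetaRound} yields some $s\in R$ with $C\cap R\not\leq s$ for every $C\in\Theta$; that is, each $C\in\Theta$ has an element lying in $C\cap R$ but not in $s^\geq$. Put $Q=R\setminus s^\geq=\{p\in R:p\not\leq s\}$. Then $Q$ is again an up-set: if $p\in Q$ and $p\leq q$ then $q\in R$ since $R$ is an up-set, and $q\leq s$ would force $p\leq s$, contradicting $p\in Q$; hence $q\in Q$. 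Also $s\in R\setminus Q$, so $Q\subsetneq R$. Finally $Q$ is $\Theta$-Cauchy, since for each $C\in\Theta$ the element of $C\cap R$ witnessing $C\cap R\not\leq s$ actually lies in $C\cap Q$. Thus $R$ is not minimal among $\Theta$-Cauchy up-sets, which is the desired contrapositive.

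I do not expect any real obstacle here; the one point requiring a little care is subtracting $s^\geq$ rather than $\{s\}$, which is precisely what keeps $Q$ an up-set while still discarding $s$. An alternative route would phrase the reverse direction through the complement characterisations of \autoref{CauchyRoundComplements}, using that $(\mathbb{P}\setminus R)\cup s^\geq\in\Theta^\leq$ iff $(\mathbb{P}\setminus R)\cup\{s\}\in\Theta^\leq$, but the direct construction above is shorter.
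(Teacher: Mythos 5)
Your proof is correct and follows essentially the same route as the paper: the forward direction via \autoref{CauchyUpsetRound}, and the reverse direction by exhibiting $R\setminus s^\geq$ as a strictly smaller $\Theta$-Cauchy up-set when roundness fails. You merely spell out the verification that $R\setminus s^\geq$ is a $\Theta$-Cauchy up-set, which the paper leaves implicit.
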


\begin{proof}
The $\Rightarrow$ part follows immediately from \autoref{CauchyUpsetRound}.  Conversely, say $R$ is not $\Theta$-round, which means we have some $r\in R$ such that $C\cap R\not\leq r$, for all $C\in\Theta$.  But this means $R\setminus r^\geq\subsetneqq R$ is a $\Theta$-Cauchy up-set so $R$ is not minimal.
\end{proof}

If we expand $\Theta$ to $\Theta^\leq$ then `round' even means minimal among arbitrary $\Theta^\leq$-Cauchy subsets.

\begin{thm}\label{RCU<=>MleqC}
For any $R\subseteq\mathbb{P}$,
\[R\text{ is a $\Theta$-round $\Theta$-Cauchy up-set}\qquad\Leftrightarrow\qquad R\text{ is minimal $\Theta^\leq$-Cauchy}.\]
\end{thm}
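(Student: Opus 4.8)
The plan is to prove the two implications directly, leaning on the ``cover-patching'' trick already used in \autoref{Upset=>Upset} and \autoref{Round<=>Minimal}: if $C\in\Theta^\leq$ and we replace some of the elements of $C$ by larger elements, the result still lies in $\Theta^\leq$, since refinement is transitive and $\Theta^\leq$ is closed under it. I would also record at the outset the elementary fact (implicit in the proof of \autoref{CauchyRoundComplements}\eqref{ThetaCauchyComplement}) that an up-set is $\Theta$-Cauchy iff it is $\Theta^\leq$-Cauchy.

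For the forward direction, let $R$ be a $\Theta$-round $\Theta$-Cauchy up-set. Then $R$ is $\Theta^\leq$-Cauchy by the fact just noted, so only minimality remains. Given a $\Theta^\leq$-Cauchy $Q\subseteq R$ and any $r\in R$, roundness supplies $C\in\Theta$ with $C\cap R\leq r$; then $C$ refines $D:=(C\setminus R)\cup\{r\}$, so $D\in\Theta^\leq$. Since $Q\subseteq R$ is disjoint from $C\setminus R$, the fact that $Q$ meets $D$ forces $r\in Q$, and as $r$ was arbitrary, $Q=R$.

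For the reverse direction, let $R$ be minimal $\Theta^\leq$-Cauchy. The one genuinely delicate point is that $R$ must be an up-set: a priori a minimal $\Theta^\leq$-Cauchy set need not visibly be one, and indeed this can fail for minimal $\Theta$-Cauchy sets, which is exactly why \autoref{Round<=>Minimal} was stated only for up-sets. To see it, take $r\in R$ and $s\geq r$; minimality gives $C\in\Theta^\leq$ with $C\cap R=\{r\}$, and then $C$ refines $C':=(C\setminus\{r\})\cup\{s\}\in\Theta^\leq$, so the fact that $R$ meets $C'$ forces $s\in R$. Once $R$ is an up-set it is $\Theta$-Cauchy (as $\Theta\subseteq\Theta^\leq$) and minimal among $\Theta$-Cauchy up-sets, since any $\Theta$-Cauchy up-set $Q\subseteq R$ would be $\Theta^\leq$-Cauchy and hence equal to $R$; \autoref{Round<=>Minimal} then yields that $R$ is $\Theta$-round.

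The substantive step is thus the ``$\Rightarrow$ up-set'' argument in the reverse direction; the rest is bookkeeping with \autoref{Round<=>Minimal} and the Cauchy/round definitions. I expect no surprises in the routine parts, though one should be slightly careful with the degenerate case $R=\emptyset$, which forces $\Theta=\emptyset$ and makes every clause vacuous.
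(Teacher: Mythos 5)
Your proof is correct, and most of it coincides with the paper's argument: the forward direction is the same construction (pass to $\Theta^\leq$-Cauchyness using that $R$ is an up-set, then use roundness to form $(C\setminus R)\cup\{r\}\in\Theta^\leq$, which misses every $\Theta^\leq$-Cauchy $Q\subseteq R$ except at $r$), and your direct up-set argument in the reverse direction is exactly the cover-patching trick of \autoref{Upset=>Upset}, which the paper invokes by first noting that the witnesses $C\cap R=\{r\}$ make $R$ $\Theta^\leq$-round. The one genuine divergence is the final step of the reverse direction, where $\Theta$-roundness must be extracted. The paper stays with the cover $C\in\Theta^\leq$ satisfying $C\cap R=\{r\}$, picks $D\in\Theta$ refining $C$, and checks directly that $D\cap R\leq r$ (using that $R$ is an up-set); this shows concretely how $\Theta^\leq$-roundness upgrades to $\Theta$-roundness. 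You instead observe that $R$ is minimal among $\Theta$-Cauchy up-sets and cite \autoref{Round<=>Minimal}, whose proof runs by contraposition, exhibiting $R\setminus r^\geq$ as a strictly smaller $\Theta$-Cauchy up-set when roundness fails. Both mechanisms are sound: your route reuses an earlier proposition and is shorter on the page, while the paper's refinement argument is self-contained and isolates the passage from $\Theta^\leq$ back to $\Theta$. Your side remarks are also accurate \textendash\, the up-set property of a minimal $\Theta^\leq$-Cauchy set is indeed the non-obvious point, and $R=\emptyset$ forces $\Theta=\emptyset$, in which case every clause is vacuous.
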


\begin{proof}
Say $R$ is $\Theta$-Cauchy and $\leq$-closed.  For any $D\in\Theta^\leq$, we have $C\in\Theta$ with $C\leq D$ and hence, as $R$ is $\leq$-closed, $\emptyset\neq R\cap C\leq R\cap D$.  As $R$ is $\leq$-closed, this means $R\cap D\neq\emptyset$, i.e. $R$ is $\Theta^\leq$-Cauchy.  If $R$ is also round then, for any $r\in R$, we have $C\in\Theta$ with $C\cap R\leq r$.  Setting $D=(C\setminus R)\cup\{r\}$, this means $C\leq D$ and hence $D\in\Theta^\leq$.  By definition, $D\cap(R\setminus\{r\})=\emptyset$ which means $R\setminus\{r\}$ is not $\Theta^\leq$-Cauchy.  As $r\in R$ was arbitrary, this means $R$ is minimal $\Theta^\leq$-Cauchy.

Say $R$ is a minimal $\Theta^\leq$-Cauchy subset, i.e.
\[\tag{Minimality}\forall r\in R\ \exists C\in\Theta^\leq\ (C\cap R=\{r\}).\]
Thus $R$ is certainly $\Theta^\leq$-round and hence an up-set, by \autoref{Upset=>Upset}.  To see that $R$ is also $\Theta$-round, take any $r\in R$ and $C\in\Theta^\leq$ with $C\cap R=\{r\}$.  Further take $D\in\Theta$ with $D\leq C$.  If we had $d\in D\cap R$ with $d\nleq r$ then we would have $c\in C$ with $d\leq c$ and hence $c\in R$, as $R$ is an up-set.  But then $c\nleq r$ too so $c\neq r$, contradicting $C\cap R=\{r\}$.  Thus we must have had $D\cap R\leq r$, i.e. $R$ is $\Theta$-round.
\end{proof}

Note that being $\Theta$-Cauchy depends only on $\Theta$, not on the order structure.  So what the above result shows is that, once we know $\Theta^\leq$, we can forget about the order and yet still determine the $\Theta$-round $\Theta$-Cauchy up-sets.  Sometimes $\Theta$ even determines the order structure itself \textendash\, see \autoref{SubbasisOrder} below.

\section{The Spectrum}\label{TheSpectrum}

For any $S\subseteq\mathbb{P}$, let
\[\Theta_S=\{Q\in\Theta:S\subseteq Q\}.\]
Here we again identify elements with singleton subsets, i.e. $\Theta_p=\Theta_{\{p\}}$.

\begin{dfn}
The \emph{$\Theta$-spectrum} is the space with subbasis $(\widehat{\Theta}_p)_{p\in\mathbb{P}}$ and points
\[\widehat{\Theta}=\{R\subseteq\mathbb{P}:R\text{ is a $\Theta$-round $\Theta$-Cauchy up-set}\}.\]
\end{dfn}

The first natural question to ask is what kind of spaces arise in this way.

\begin{dfn}
We call $\mathbb{P}\subseteq\mathcal{P}(X)$ a \emph{$T_1$ family} if, for all $x,y\in X$,
\[\tag{$T_1$}x\neq y\qquad\Leftrightarrow\qquad\exists O,N\in\mathbb{P}\ (x\in O\setminus N\text{ and }y\in N\setminus O).\]
\end{dfn}

Recall that $X$ is said to be a \emph{$T_1$ space} iff $\mathcal{O}(X)$ is a $T_1$ family.  In this case, one immediately sees that every subbasis $\mathbb{P}\subseteq\mathcal{O}(X)$ is also a $T_1$ family.

\begin{prp}\label{T1}
The $\Theta$-spectrum is always a $T_1$ space.
\end{prp}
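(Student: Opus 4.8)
The plan is to reduce the claim to the separation property of the subbasic opens $\widehat{\Theta}_p$, and then simply invoke \autoref{CauchyUpsetRound}, which already contains the real content.

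First I would observe that it suffices to show that the subbasis $(\widehat{\Theta}_p)_{p\in\mathbb{P}}$ is itself a $T_1$ family of subsets of $\widehat{\Theta}$. Indeed, each $\widehat{\Theta}_p$ is open in the $\Theta$-spectrum, so any pair of witnesses $O,N$ drawn from the subbasis serves equally well as a pair of witnesses drawn from $\mathcal{O}(\widehat{\Theta})$; hence $\mathcal{O}(\widehat{\Theta})$ is a $T_1$ family and $\widehat{\Theta}$ is a $T_1$ space. Moreover, the $\Leftarrow$ direction of the $T_1$ condition is trivial (if $R\in O\setminus N$ then $R\in O$ but $R'\notin O$, so $R\neq R'$), so only the $\Rightarrow$ direction requires argument, i.e. I only need to separate two distinct points by subbasic opens.

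So take distinct $R,R'\in\widehat{\Theta}$, that is, two distinct $\Theta$-round $\Theta$-Cauchy up-sets. Applying \autoref{CauchyUpsetRound} with $Q=R$ (a $\Theta$-Cauchy up-set) and $R'$ in the role of the round set, $R\subseteq R'$ would force $R=R'$; hence $R\not\subseteq R'$, and I may pick $p\in R\setminus R'$. By the symmetric application (now $Q=R'$, round set $R$) I may pick $q\in R'\setminus R$. Then $R\in\widehat{\Theta}_p$ while $R'\notin\widehat{\Theta}_p$, and $R'\in\widehat{\Theta}_q$ while $R\notin\widehat{\Theta}_q$; that is, $R\in\widehat{\Theta}_p\setminus\widehat{\Theta}_q$ and $R'\in\widehat{\Theta}_q\setminus\widehat{\Theta}_p$, exactly the witnesses required by the $T_1$ condition.

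I do not anticipate any genuine obstacle here: once \autoref{CauchyUpsetRound} is in hand, the mutual non-containment of two distinct points is immediate. The only points to be careful about are phrasing the reduction to the subbasis correctly and remembering that one needs \emph{both} $p\in R\setminus R'$ and a symmetric $q\in R'\setminus R$, rather than just one of them. Any remaining subtlety lies entirely within \autoref{CauchyUpsetRound} itself, whose proof already exploits roundness of the larger set and Cauchyness of the smaller up-set.
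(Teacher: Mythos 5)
Your proof is correct and follows essentially the same route as the paper: both arguments boil down to using \autoref{CauchyUpsetRound} to see that two distinct points of $\widehat{\Theta}$ are $\subseteq$-incomparable (the paper phrases this via minimality/maximality of elements of $\widehat{\Theta}$, you apply the lemma directly), and then the resulting $p\in R\setminus R'$ and $q\in R'\setminus R$ give the subbasic $T_1$ witnesses.
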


\begin{proof}
For any distinct $R,S\subseteq\mathbb{P}$ that are maximal or minimal within any $\Theta\subseteq\mathcal{P}(\mathbb{P})$, we can find $r\in R\setminus S$ and $s\in S\setminus R$.  But this means $R\in\Theta_r\not\ni S$ and $S\in\Theta_s\not\ni R$, which is precisely the $T_1$ condition.  This applies to $\widehat{\Theta}$ because, by \autoref{CauchyUpsetRound}, \autoref{Round<=>Minimal} and \autoref{RCU<=>MleqC},
\begin{align*}
\widehat{\Theta}&=\{R\subseteq\mathbb{P}:R\text{ is a minimal $\Theta^\leq$-Cauchy subset}\}\\
&=\{R\subseteq\mathbb{P}:R\text{ is a minimal $\Theta$-Cauchy up-set}\}\\
&\subseteq\{R\subseteq\mathbb{P}:R\text{ is a maximal $\Theta$-Round up-set}\}.\qedhere
\end{align*}
\end{proof}

Conversely, all $T_1$ spaces arise in this way.  First let
\[\mathcal{C}_X(\mathbb{P})=\{C\subseteq\mathbb{P}:X\subseteq\bigcup C\},\]
i.e. $\mathcal{C}_X(\mathbb{P})$ denotes the family of all $\mathbb{P}$-covers of $X$.

\begin{prp}\label{CauchyCovers}
If $\mathbb{P}\subseteq\mathcal{P}(X)$ then $\Theta\subseteq\mathcal{C}_X(\mathbb{P})$ if and only if, for all $x\in X$,
\[\mathbb{P}_x=\{N\in\mathbb{P}:x\in N\}\text{ is $\Theta$-Cauchy}.\]
\end{prp}

\begin{proof}
If $\Theta\subseteq\mathcal{C}_X(\mathbb{P})$ then each $C\in\Theta$ satisfies $\bigcup C=X$ so, for each $x\in X$, we have $c\in C$ with $x\in c$ and hence $c\in\mathbb{P}_x$, i.e. $\mathbb{P}_x$ is $\Theta$-Cauchy.  If $\Theta\nsubseteq\mathcal{C}_X(\mathbb{P})$ then we have some $C\in\Theta$ and $x\in X$ with $x\notin\bigcup C$ and hence $C\cap\mathbb{P}_x=\emptyset$, i.e. $\mathbb{P}_x$ is not $\Theta$-Cauchy.
\end{proof}

\begin{prp}\label{T1Recovery}
If $X$ is a $T_1$ space, $\leq\ =\ \subseteq$ on a subbasis $\mathbb{P}\subseteq\mathcal{O}(X)$ and $\Theta$ is a coinitial subset of $\mathcal{C}_X(\mathbb{P})$ then $X$ is homeomorphic to $\widehat{\Theta}$ via the map
\[x\mapsto\mathbb{P}_x=\{N\in\mathbb{P}:x\in N\}.\]
\end{prp}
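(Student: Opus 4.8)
The plan is to show that the map $\varphi\colon x\mapsto\mathbb{P}_x$ is a well-defined bijection onto $\widehat{\Theta}$ which, together with its inverse, carries the subbasis $\mathbb{P}$ to the subbasis $(\widehat{\Theta}_p)_{p\in\mathbb{P}}$, so that it is a homeomorphism. First I would check that $\varphi$ lands in $\widehat{\Theta}$. Since $\leq$ is $\subseteq$, each $\mathbb{P}_x$ is automatically a $\subseteq$-up-set in $\mathbb{P}$. It is $\Theta$-Cauchy by \autoref{CauchyCovers}, using $\Theta\subseteq\mathcal{C}_X(\mathbb{P})$. For roundness, take $N\in\mathbb{P}_x$; since $\mathbb{P}$ is a subbasis there is a basic open set $B=N_1\cap\dots\cap N_k$ (finite intersection of $\mathbb{P}$-elements) with $x\in B\subseteq N$, and then $C=\{p\in\mathbb{P}: p\subseteq N_i\text{ for some }i\text{ with }x\notin p\}\cup\{N\}$ ... more simply, I would exhibit a cover $C\in\Theta$ with $C\cap\mathbb{P}_x\subseteq\{N\}$ in the $\subseteq$-sense. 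The natural candidate is built from the complement: there is an open cover of $X$ whose only member meeting $\{x\}$ is contained in $N$ — concretely, using the $T_1$ hypothesis and the subbasis, one covers $X\setminus\{x\}$ by subbasic sets omitting $x$ and throws in $N$; then refine this to an element of $\Theta$ using coinitiality. This refined cover $C\in\Theta$ satisfies $C\cap\mathbb{P}_x\leq N$, giving roundness.

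Next I would show $\varphi$ is injective: if $x\neq y$, the $T_1$ property of the subbasis $\mathbb{P}$ (noted before \autoref{T1}) gives $O\in\mathbb{P}$ with $x\in O$, $y\notin O$, so $O\in\mathbb{P}_x\setminus\mathbb{P}_y$. For surjectivity, let $R\in\widehat{\Theta}$ be a $\Theta$-round $\Theta$-Cauchy up-set; I must produce $x$ with $R=\mathbb{P}_x$. The idea is that $R$, being a $\Theta$-Cauchy up-set, hits every cover in $\Theta$, hence (by coinitiality) every cover in $\mathcal{C}_X(\mathbb{P})$, which forces $\bigcap R\neq\emptyset$: otherwise $\{\,p\in\mathbb{P}: x\notin p\text{ for some chosen witness}\,\}$ ... more carefully, if $\bigcap R=\emptyset$ then $\{X\setminus\overline{\{x\}}\}$-type reasoning fails, so instead note that $\{\mathbb{P}\setminus r^\geq : r\in R\}$ would, if $\bigcap R=\emptyset$, ... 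Let me restate: if $\bigcap R=\emptyset$, then for each $x\in X$ there is $r_x\in R$ with $x\notin r_x$, and $\{\mathbb{P}$-basic sets inside the various $X\setminus\text{(points)}\}$ — the cleanest route is: $\bigcup\{O\in\mathbb{P}: O\cap(\bigcap R)=\emptyset\}$, together with enough sets, forms a $\mathbb{P}$-cover $C$ of $X$ with $C\cap R=\emptyset$, contradicting $\Theta^\leq$-Cauchyness via coinitiality. So fix $x\in\bigcap R$, giving $R\subseteq\mathbb{P}_x$. Since $\mathbb{P}_x$ is a $\Theta$-Cauchy up-set and $R$ is $\Theta$-round, \autoref{CauchyUpsetRound} forces $R=\mathbb{P}_x$. (If $X=\emptyset$ the statement is vacuous as $\widehat{\Theta}=\emptyset$ too, since $\emptyset\in\mathcal{C}_X(\mathbb{P})$ forces $\emptyset\in\Theta$ by coinitiality, making no subset $\Theta$-Cauchy.)

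Finally, for the homeomorphism I would compare subbases directly: $\varphi(N^X)=\{\mathbb{P}_x: x\in N\}$, where $N^X$ denotes the open set $N\subseteq X$, and I claim this equals $\widehat{\Theta}_N=\{R\in\widehat{\Theta}: N\in R\}$. Indeed $x\in N\iff N\in\mathbb{P}_x\iff \varphi(x)\in\widehat{\Theta}_N$. Since the $N^X$ ($N\in\mathbb{P}$) form a subbasis of $X$ and the $\widehat{\Theta}_N$ form a subbasis of $\widehat{\Theta}$ by definition, and $\varphi$ is a bijection matching these subbases element-for-element, $\varphi$ is a homeomorphism.

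The main obstacle is the surjectivity step, specifically showing $\bigcap R\neq\emptyset$ for a round Cauchy up-set $R$. This is where coinitiality of $\Theta$ in $\mathcal{C}_X(\mathbb{P})$ (equivalently $\mathcal{C}_X(\mathbb{P})\subseteq\Theta^\leq$, so that $R$ is $\mathcal{C}_X(\mathbb{P})$-Cauchy) is essential: one must carefully build, from the assumption $\bigcap R=\emptyset$, a genuine $\mathbb{P}$-cover of $X$ avoiding $R$. The subbasis (rather than basis) setting means $R$ is merely an up-set, not a filter, so I cannot take finite intersections within $R$; instead I must argue purely with the covering structure of $X$, covering each point $x$ by a subbasic $O\ni x$ with $O\notin R$ (possible since $x\notin\bigcap R$ means some $r\in R$ omits $x$, and then any subbasic $O$ with $x\in O\not\supseteq r$ — guaranteed by $T_1$ — lies outside the up-set $R$). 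Assembling these $O$'s over all $x\in X$ yields the desired cover.
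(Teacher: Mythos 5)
Your verification that $x\mapsto\mathbb{P}_x$ lands in $\widehat{\Theta}$, your injectivity argument, and the subbasis-matching step at the end are all fine and essentially match the paper. The genuine gap is in surjectivity, where you try to prove $\bigcap R\neq\emptyset$ for $R\in\widehat{\Theta}$ by assuming $\bigcap R=\emptyset$ and assembling a $\mathbb{P}$-cover of $X$ disjoint from $R$. The key step fails: from ``some $r\in R$ omits $x$'' you choose a subbasic $O$ with $x\in O\not\supseteq r$ and assert $O\notin R$. But $R$ being an up-set only says that $O\in R$ whenever $O$ contains \emph{some} element of $R$ (e.g.\ itself); failing to contain the one witness $r$ says nothing about whether $O$ lies in $R$. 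Structurally, to cover each $x$ by a member of $\mathbb{P}\setminus R$ you need $\mathbb{P}_x\nsubseteq R$ for every $x$, whereas your hypothesis $\bigcap R=\emptyset$ only gives $R\nsubseteq\mathbb{P}_x$ for every $x$; these are different statements and the first does not follow from the second. (Indeed, a priori $R$ could satisfy $\mathbb{P}_x\subseteq R$ while containing extra sets omitting $x$, which is exactly the case your cover construction cannot handle.)

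The paper dichotomizes on the correct condition: either $\mathbb{P}_x\subseteq R$ for some $x$, in which case \autoref{CauchyUpsetRound} applied to the $\Theta$-Cauchy up-set $\mathbb{P}_x$ sitting inside the $\Theta$-round set $R$ gives $R=\mathbb{P}_x$ at once; or $\mathbb{P}_x\setminus R\neq\emptyset$ for every $x$, in which case choosing $N_x\in\mathbb{P}_x\setminus R$ produces a cover contained in $\mathbb{P}\setminus R$, hence an element of $\mathcal{C}_X(\mathbb{P})=\Theta^\leq$ refined by some $C\in\Theta$, contradicting the fact that $R$ is a $\Theta$-Cauchy up-set. Note also that even where your argument concludes, the inclusion points the wrong way for \autoref{CauchyUpsetRound}: you derive $R\subseteq\mathbb{P}_x$ and cite ``$\mathbb{P}_x$ is a Cauchy up-set and $R$ is round,'' but the lemma requires the Cauchy up-set to be the \emph{smaller} set; your version is only rescuable because $\mathbb{P}_x$ happens to be round as well, whereas the paper's inclusion $\mathbb{P}_x\subseteq R$ matches the lemma directly. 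Once repaired along these lines, your proof coincides with the paper's.
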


\begin{proof}
For every $x\in\mathbb{P}$, $\mathbb{P}_x$ is $\Theta^\leq$-Cauchy, by \autoref{CauchyCovers}, as $\Theta^\leq\subseteq\mathcal{C}_X(\mathbb{P})$.  Also observe that, for all distinct $x,y\in X$, we have $O\in\mathbb{P}$ with $x\notin O\ni y$, as $X$ is $T_1$ and $\mathbb{P}$ is a subbasis.  In particular, $\mathbb{P}_x\neq\mathbb{P}_y$ and hence $x\mapsto\mathbb{P}_x$ is injective.  This observation also means that $\mathbb{P}\setminus\mathbb{P}_x$ always covers $X\setminus\{x\}$ so, for each $N\in\mathbb{P}_x$,
\[\{N\}\cup(\mathbb{P}\setminus\mathbb{P}_x)\in\mathcal{C}_X(\mathbb{P})=\Theta^\leq.\]
This shows that $\mathbb{P}_x$ is minimal $\Theta^\leq$-Cauchy and hence a $\Theta$-round $\Theta$-Cauchy up-set, by \autoref{RCU<=>MleqC}.  Thus $x\mapsto\mathbb{P}_x$ maps $X$ to $\widehat{\Theta}$.  Moreover, the image is homeomorphic to $X$ because $\mathbb{P}$ and $(\widehat\Theta_p)_{p\in\mathbb{P}}$ are subbases of $X$ and $\widehat\Theta$ and
\[x\in p\qquad\Leftrightarrow\qquad p\in\mathbb{P}_x\qquad\Leftrightarrow\qquad\mathbb{P}_x\in\widehat\Theta_p.\]

On the other hand, any $\Theta$-Cauchy up-set $Q\subseteq\mathbb{P}$ must contain $\mathbb{P}_x$, for some $x\in X$.  Otherwise, $\mathbb{P}\setminus Q$ would have non-empty intersection with each $\mathbb{P}_x$ and hence cover $X$, i.e. $\mathbb{P}\setminus Q\in\mathcal{C}_X(\mathbb{P})=\Theta^\leq$.  So we would have $C\in\Theta$ with $C\leq\mathbb{P}\setminus Q$ which, as $Q$ is $\Theta$-Cauchy, would yield $c\in C\cap Q$ and $p\in\mathbb{P}\setminus Q$ with $c\leq q$, contradicting the fact that $Q$ is an up-set.  As we already know $\mathbb{P}_x\in\widehat{\Theta}$, for each $x\in X$, $(\mathbb{P}_x)_{x\in X}$ must exhaust all minimal/$\Theta$-round $\Theta$-Cauchy up-sets, i.e. $\widehat{\Theta}=\{\mathbb{P}_x:x\in X\}$.
\end{proof}

So \autoref{T1} tells us that $\Theta$ can always be represented concretely on some $T_1$ space.  Conversely, \autoref{T1Recovery} tells us that if $\Theta$ already is a coinitial family of covers of a $T_1$ space then we can recover that space via the spectrum.  This could already be seen as a weak kind of duality between nearness posets and $T_1$ spaces.  But there are a couple of things that stop it from really being a duality.

Firstly, given some abstract $\Theta$, there is no guarantee that resulting covers on $\widehat\Theta$ will be coinitial.  Secondly, there is also no guarantee that $\Theta$ will be faithful or, more precisely, that $\leq$ on $\mathbb{P}$ will be faithfully represented as $\subseteq$ on $(\widehat\Theta_p)_{p\in\mathbb{P}}$, i.e.
\[\label{Faithful}\tag{Faithful}p\leq q\qquad\Leftrightarrow\qquad\widehat\Theta_p\subseteq\widehat\Theta_q.\]
To get a true duality we need some extra `admissibility' condition on $\Theta$.  Several such conditions will be discussed in the following sections.

For the moment we just note that the $\Rightarrow$ part of \eqref{Faithful} always holds, even when $\leq$ is replaced by a weaker preorder defined from $\Theta$.  First let
\[\Theta^S=\{R\subseteq\mathbb{P}:R\cup S\in\Theta\},\]
for any $S\subseteq\mathbb{P}$.  Then define $\leq_\Theta$ on $\mathbb{P}$ by
\[\tag{$\Theta$-Preorder}p\leq_\Theta q\qquad\Leftrightarrow\qquad\Theta^p\subseteq\Theta^q.\]

\begin{prp}\label{SubbasisOrder}
If $\mathbb{P}\subseteq\mathcal{P}(X)$ is a $T_1$ family and $\Theta=\mathcal{C}_X(\mathbb{P})$ then
\[\tag{$\subseteq\ =\ \leq_\Theta$}p\subseteq q\qquad\Leftrightarrow\qquad p\leq_\Theta q.\]
\end{prp}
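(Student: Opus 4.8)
The plan is to prove the two implications of $\subseteq\ =\ \leq_\Theta$ separately. For the forward direction $p\subseteq q\Rightarrow p\leq_\Theta q$, suppose $p\subseteq q$ and take any $R\in\Theta^p$, i.e.\ $R\cup\{p\}\in\Theta=\mathcal{C}_X(\mathbb{P})$. Since $p\subseteq q$, replacing $p$ by $q$ can only enlarge the union, so $\bigcup(R\cup\{q\})\supseteq\bigcup(R\cup\{p\})=X$, giving $R\cup\{q\}\in\mathcal{C}_X(\mathbb{P})=\Theta$, i.e.\ $R\in\Theta^q$. Hence $\Theta^p\subseteq\Theta^q$, which is exactly $p\leq_\Theta q$. (This direction is the general $\Rightarrow$ of \eqref{Faithful} specialised to $\Theta=\mathcal{C}_X(\mathbb{P})$ and uses only that $\mathbb{P}$ consists of subsets of $X$ ordered by $\subseteq$; the $T_1$ hypothesis is not needed here.)

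For the converse $p\leq_\Theta q\Rightarrow p\subseteq q$, I would argue by contraposition: assume $p\not\subseteq q$, so there is a point $x\in p\setminus q$, and produce an $R\in\Theta^p\setminus\Theta^q$. The natural candidate is $R=\mathbb{P}\setminus\mathbb{P}_x=\{N\in\mathbb{P}:x\notin N\}$. Since $x\in p$ we have $p\notin R$, and $R\cup\{p\}$ covers $X$: any point $y\neq x$ lies in some member of $R$ — this is where $T_1$ enters, exactly as in the proof of \autoref{T1Recovery}, where it is observed that $\mathbb{P}\setminus\mathbb{P}_x$ covers $X\setminus\{x\}$ — and $x\in p$ takes care of $x$ itself. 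Thus $R\cup\{p\}\in\mathcal{C}_X(\mathbb{P})=\Theta$, so $R\in\Theta^p$. On the other hand, $x\notin q$ (since $x\in p\setminus q$) and $x\notin N$ for every $N\in R$ by definition of $R$, so $x\notin\bigcup(R\cup\{q\})$, whence $R\cup\{q\}\notin\mathcal{C}_X(\mathbb{P})=\Theta$, i.e.\ $R\notin\Theta^q$. Therefore $\Theta^p\not\subseteq\Theta^q$, i.e.\ $p\not\leq_\Theta q$, completing the contrapositive.

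The only mild subtlety — and the step I would flag as the main point rather than a real obstacle — is the verification that $\mathbb{P}\setminus\mathbb{P}_x$ covers $X\setminus\{x\}$, which is precisely the consequence of the $T_1$ condition on the family $\mathbb{P}$ used in \autoref{T1Recovery}: for each $y\neq x$ there is $N\in\mathbb{P}$ with $y\in N$ and $x\notin N$. Everything else is a direct unwinding of the definitions of $\Theta^S$, $\mathcal{C}_X(\mathbb{P})$ and $\leq_\Theta$, together with the trivial monotonicity of $\bigcup$ under $\subseteq$. So the proof is short, with the forward implication essentially formal and the reverse implication resting entirely on the $T_1$ separation already exploited earlier.
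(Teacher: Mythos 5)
Your proposal is correct and matches the paper's argument: the forward direction is the same formal monotonicity of unions, and for the converse the paper likewise picks $x\in p\setminus q$ and uses the $T_1$ condition to produce a subfamily of $\mathbb{P}\setminus\mathbb{P}_x$ covering $X\setminus\{x\}$ (choosing one $r_y$ per $y\neq x$ rather than taking all of $\mathbb{P}\setminus\mathbb{P}_x$, an immaterial difference). No gaps.
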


\begin{proof}
Certainly if $p\subseteq q$ then, for any $S\subseteq\mathbb{P}$, $p\cup\bigcup S=X$ implies $q\cup\bigcup S=X$, i.e. $p\leq_\Theta q$.  Conversely, if $p\nsubseteq q$ then we have some $x\in p\setminus q$.  As $\mathbb{P}$ is $T_1$, for every $y\in X\setminus\{x\}$, we have $r_y\in\mathbb{P}$ with $x\notin r_y\ni y$.  So $p\cup\bigcup_{y\in X\setminus\{x\}}r_y=X$ but $x\notin q\cup\bigcup_{y\in X\setminus\{x\}}r_y$ and hence $p\nleq_\Theta q$, as required.
\end{proof}

Alternatively, one could note that $\Theta$ above is faithful, by \autoref{T1Recovery}, and then the following abstract result yields \autoref{SubbasisOrder}.

\begin{prp}\label{FaithfulRightarrow}
For any $p,q\in\mathbb{P}$,
\[p\leq q\qquad\Rightarrow\qquad p\leq_{\Theta^\leq}q\qquad\Rightarrow\qquad\widehat\Theta_p\subseteq\widehat\Theta_q.\]
\end{prp}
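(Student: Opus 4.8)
I would prove the two implications separately, the first by a direct refinement argument and the second by passing to complements via \autoref{CauchyRoundComplements}.

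For the first implication, suppose $p\leq q$ and take any $R\in\mathbb{P}$ with $R\cup\{p\}\in\Theta^\leq$ (i.e.\ $R\in(\Theta^\leq)^p$); we must show $R\cup\{q\}\in\Theta^\leq$. Since $p\leq q$, every element of $R\cup\{p\}$ is $\leq$ some element of $R\cup\{q\}$ (elements of $R$ to themselves, and $p$ to $q$), so $R\cup\{p\}\leq R\cup\{q\}$ in the refinement preorder. As $R\cup\{p\}\in\Theta^\leq$ means some $C\in\Theta$ refines $R\cup\{p\}$, transitivity of refinement gives $C\leq R\cup\{q\}$, hence $R\cup\{q\}\in\Theta^\leq$. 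This shows $(\Theta^\leq)^p\subseteq(\Theta^\leq)^q$, i.e.\ $p\leq_{\Theta^\leq}q$.

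For the second implication, suppose $p\leq_{\Theta^\leq}q$, i.e.\ $(\Theta^\leq)^p\subseteq(\Theta^\leq)^q$, and let $R\in\widehat\Theta_p$, so $R$ is a $\Theta$-round $\Theta$-Cauchy up-set with $p\in R$; we must show $q\in R$. Since $R$ is an up-set and $\Theta$-round, \autoref{CauchyRoundComplements} \eqref{ThetaRoundComplement} applied at $r=p$ gives $(\mathbb{P}\setminus R)\cup\{p\}\in\Theta^\leq$, i.e.\ $\mathbb{P}\setminus R\in(\Theta^\leq)^p\subseteq(\Theta^\leq)^q$, hence $(\mathbb{P}\setminus R)\cup\{q\}\in\Theta^\leq$. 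If $q\notin R$ then $q\in\mathbb{P}\setminus R$, so $(\mathbb{P}\setminus R)\cup\{q\}=\mathbb{P}\setminus R$ and thus $\mathbb{P}\setminus R\in\Theta^\leq$; but by \autoref{CauchyRoundComplements} \eqref{ThetaCauchyComplement} this contradicts $R$ being $\Theta$-Cauchy. Therefore $q\in R$, and since $R\in\widehat\Theta_p$ was arbitrary, $\widehat\Theta_p\subseteq\widehat\Theta_q$.

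I do not expect a serious obstacle here; the argument is essentially bookkeeping. The one point requiring care is keeping the three operations $\Theta^S$, $\Theta^\leq$, and passing to complements $\mathbb{P}\setminus R$ straight, and in particular invoking the correct equivalences from \autoref{CauchyRoundComplements} (which require $R$ to be an up-set, something guaranteed for members of $\widehat\Theta$). If one prefers to avoid citing \autoref{CauchyRoundComplements}, the same proof works by unwinding roundness directly: from $p\in R$ round pick $C\in\Theta$ with $C\cap R\leq p$, note $C$ refines $(C\setminus R)\cup\{p\}\subseteq(\mathbb{P}\setminus R)\cup\{p\}$ so the latter lies in $\Theta^\leq$, and then argue as above using that $\mathbb{P}\setminus R\in\Theta^\leq$ would contradict $\Theta$-Cauchyness of $R$.
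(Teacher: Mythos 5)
Your proof is correct and follows essentially the same route as the paper: roundness at $p$ produces a set $S$ disjoint from $R$ with $S\cup\{p\}\in\Theta^\leq$, the hypothesis $p\leq_{\Theta^\leq}q$ upgrades this to $S\cup\{q\}\in\Theta^\leq$, and Cauchyness then forces $q\in R$ (the paper takes $S=C\setminus p^{\geq}$ directly rather than $S=\mathbb{P}\setminus R$ via \autoref{CauchyRoundComplements}, but this is the same idea, as your closing remark acknowledges). The only nit is the typo ``$R\in\mathbb{P}$'' for ``$R\subseteq\mathbb{P}$'' in your first paragraph.
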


\begin{proof}
For the first implication, note that if $p\leq q$ and $S\in\Theta^{\leq p}$, i.e. $\{p\}\cup S\in\Theta^\leq$ then $\{q\}\cup S\in\Theta^\leq$, i.e. $S\in\Theta^{\leq p}$, showing that $p\leq_{\Theta^\leq}q$.  For the next implication, say $p\leq_{\Theta^\leq}q$ and take $R\in\widehat\Theta_p$.  As $R$ is $\Theta$-round, we have some $C\in\Theta$ with $R\cap C\leq p$.  Let $D=(C\setminus p^\geq)\cup\{q\}$ so $C\leq D$ and hence $D\in\Theta^\leq$.  Thus $R\cap D=\{q\}$, as $R$ is $\Theta^\leq$-Cauchy and $R\cap C\leq p$, so $R\in\widehat\Theta_q$, as required.
\end{proof}

For $\Theta$ to be faithful, it follows that we must at least have $\leq\ =\ \leq_{\Theta^\leq}$.  There are a couple of easily identifiable situations in which this fails very badly.  For example, if $\Theta=\emptyset$ then $\Theta^\leq=\Theta^{\leq p}=\emptyset$, for all $p\in\mathbb{P}$, while if $\emptyset\in\Theta$ then $\Theta^\leq=\Theta^{\leq p}=\mathcal{P}(\mathbb{P})$, for all $p\in\mathbb{P}$.  In either case, $\leq_{\Theta^\leq}$ is the weakest relation possible, i.e.
\[\leq_\emptyset\ =\ \leq_{\mathcal{P}(\mathbb{P})}\ =\ \mathbb{P}\times\mathbb{P}.\]
It then follows from \autoref{FaithfulRightarrow} that the representation in \autoref{T1} will be completely degenerate.  This can also be shown directly as follows.

\begin{prp}
We always have the following general implications.
\begin{align}
\label{Empty=Theta}\emptyset=\Theta\qquad&\Leftrightarrow\qquad\emptyset\in\widehat\Theta\qquad\Leftrightarrow\qquad\{\emptyset\}=\widehat\Theta.\\
\label{EmptyinTheta}\emptyset\in\Theta\qquad&\Rightarrow\qquad\emptyset=\widehat\Theta.\\
\intertext{If $\mathbb{P}$ has a minimum $0$ then}
\label{0inTheta}\{0\}\in\Theta\text{ and }\emptyset\notin\Theta\qquad&\Leftrightarrow\qquad\mathbb{P}\in\widehat\Theta\qquad\Leftrightarrow\qquad\{\mathbb{P}\}=\widehat\Theta.\\
\label{0notinTheta}\{0\}\notin\Theta\qquad&\Rightarrow\qquad\widehat\Theta\subseteq\mathcal{P}(\mathbb{P}\setminus\{0\}).
\end{align}
\end{prp}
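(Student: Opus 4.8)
The plan is to dispatch each of the four implications by unwinding the definitions \eqref{ThetaCauchy} and \eqref{ThetaRound}, calling on \autoref{Avoiding0} and \autoref{CauchyUpsetRound} to shorten two of them. For \eqref{Empty=Theta} I would argue in a cycle $\emptyset=\Theta\Rightarrow\{\emptyset\}=\widehat\Theta\Rightarrow\emptyset\in\widehat\Theta\Rightarrow\emptyset=\Theta$. If $\Theta=\emptyset$ then every subset of $\mathbb{P}$ is vacuously $\Theta$-Cauchy, while the only $\Theta$-round subset is $\emptyset$ itself, since a nonempty $S$ would need some $C$ in the empty family $\Theta$ with $C\cap S\leq s$; as $\emptyset$ is trivially an up-set, this gives $\widehat\Theta=\{\emptyset\}$. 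The middle implication is immediate, and $\emptyset\in\widehat\Theta$ forces $\emptyset$ to be $\Theta$-Cauchy, i.e. $C\cap\emptyset\neq\emptyset$ for all $C\in\Theta$, which is only possible when $\Theta=\emptyset$, closing the loop.

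For \eqref{EmptyinTheta}, if $\emptyset\in\Theta$ then no $S\subseteq\mathbb{P}$ is $\Theta$-Cauchy, since that would demand $\emptyset\cap S\neq\emptyset$; hence $\widehat\Theta=\emptyset$, a fact I would reuse below. For \eqref{0inTheta}, assume $\mathbb{P}$ has a minimum $0$. The set $\mathbb{P}$ is automatically an up-set; it is $\Theta$-Cauchy precisely when $\emptyset\notin\Theta$; and by \autoref{Avoiding0} it is $\Theta$-round precisely when $\emptyset\in\Theta$ or $\{0\}\in\Theta$. Combining these three facts gives $\mathbb{P}\in\widehat\Theta\Leftrightarrow(\{0\}\in\Theta\text{ and }\emptyset\notin\Theta)$. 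To upgrade $\mathbb{P}\in\widehat\Theta$ to $\widehat\Theta=\{\mathbb{P}\}$, observe that any $R\in\widehat\Theta$ is in particular a $\Theta$-Cauchy up-set with $R\subseteq\mathbb{P}$, so \autoref{CauchyUpsetRound} applied with the $\Theta$-round set $\mathbb{P}$ forces $R=\mathbb{P}$; the converse $\{\mathbb{P}\}=\widehat\Theta\Rightarrow\mathbb{P}\in\widehat\Theta$ is trivial.

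Finally, for \eqref{0notinTheta}, if some $R\in\widehat\Theta$ contained $0$ then $R$ would be a $\Theta$-round up-set containing $0$, so \autoref{Avoiding0} gives $\emptyset\in\Theta$ or $\{0\}\in\Theta$; ruling out the second by hypothesis leaves $\emptyset\in\Theta$, whence $\widehat\Theta=\emptyset$ by \eqref{EmptyinTheta}, contradicting $R\in\widehat\Theta$. Thus $0\notin R$ for every $R\in\widehat\Theta$. None of these steps is genuinely hard; the one point requiring care is to keep the $\Theta$-Cauchy and $\Theta$-round conditions disentangled, since $\emptyset\in\Theta$ destroys Cauchyness everywhere while $\{0\}\in\Theta$ makes $\mathbb{P}$ round, and it is precisely the interplay of these two extremes that drives \eqref{0inTheta} and \eqref{0notinTheta}.
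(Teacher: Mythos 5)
Your proof is correct and follows essentially the same route as the paper: vacuous Cauchyness/roundness for \eqref{Empty=Theta} and \eqref{EmptyinTheta}, and \autoref{Avoiding0} for \eqref{0inTheta} and \eqref{0notinTheta}. The only cosmetic difference is that for the uniqueness in \eqref{0inTheta} you invoke \autoref{CauchyUpsetRound} with the round set $\mathbb{P}$, where the paper instead notes directly that $\{0\}\in\Theta$ forces every $\Theta$-Cauchy up-set to contain $0$ and hence equal $\mathbb{P}$; both are equally valid one-liners.
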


\begin{proof}\
\begin{itemize}
\item[\eqref{Empty=Theta}] If $\emptyset=\Theta$ then any $R\subseteq\mathbb{P}$ is vacuously $\Theta$-Cauchy.  However $\emptyset$ is the only (again vacuously) $\Theta$-round subset, i.e. $\{\emptyset\}=\Theta$.  Conversely, if $\emptyset\in\widehat\Theta$ then $\emptyset$ is $\Theta$-Cauchy, which is only possible if $\Theta=\emptyset$.
\item[\eqref{EmptyinTheta}] If $\emptyset\in\Theta$ then $\mathbb{P}$ has no $\Theta$-Cauchy subsets.
\item[\eqref{0inTheta}] If $\{0\}\in\Theta$ then $\mathbb{P}$ is a $\Theta$-round up-set and also the only possible $\Theta$-Cauchy up-set.  If $\emptyset\notin\Theta$ then $\mathbb{P}$ is indeed $\Theta$-Cauchy and hence $\widehat\Theta=\{\mathbb{P}\}$.  Conversely, if $\mathbb{P}\in\widehat\Theta$ then $\mathbb{P}$ is $\Theta$-round so $\emptyset\in\Theta$ or $\{0\}\in\Theta$, by \autoref{Avoiding0}.  However, $\mathbb{P}$ is also $\Theta$-Cauchy so $\emptyset\notin\Theta$ and hence $\{0\}\in\Theta$.
\item[\eqref{0notinTheta}] If $\emptyset\in\Theta$ then $\widehat\Theta=\emptyset\subseteq\mathcal{P}(\mathbb{P}\setminus\{0\})$.  If $\emptyset\notin\Theta$ and $\{0\}\notin\Theta$ then $\widehat\Theta\subseteq\mathcal{P}(\mathbb{P}\setminus\{0\})$, by \autoref{Avoiding0}.\qedhere
\end{itemize}
\end{proof}

In general, the implication in \eqref{EmptyinTheta} can not be reversed.  Moreover, even when $\mathbb{P}$ contains minimal $\Theta$-Cauchy filters, there may still be no minimal $\Theta$-Cauchy up-sets, i.e. $\widehat\Theta=\emptyset$, as in the following example.

\begin{xpl}\label{FirstExample}
Define $\leq$ on $\mathbb{P}=\mathbb{N}\times\mathbb{N}$ by
\[(m,n)\leq(m',n')\quad\Leftrightarrow\quad m\geq m'\text{ and }(m+n=m'+n'\text{ or }(n=1\text{ and }m\geq m'+n')).\]
The Hasse diagram of $\mathbb{P}$ is as follows.
\begin{center}
\begin{tikzpicture}
    \node (0-1) at (-1,0) [align=center]{$R_1$};
    \node (1-1) at (-1,-1) [align=center]{$R_2$};
    \node (2-1) at (-1,-2) [align=center]{$R_3$};
		
    \node (00) at (0,0) [align=center]{$(1,1)$};
		\node (10) at (0,-1) [align=center]{$(2,1)$};
		\node (20) at (0,-2) [align=center]{$\quad\ (3,1)\ldots$};
		\node (01) at (1,0) [align=center]{$(1,2)$};
		\node (11) at (1,-1) [align=center]{$\quad\ (2,2)\ldots$};
		\node (02) at (2,0) [align=center]{$\quad\ (1,3)\ldots$};
		\draw (00)--(10)--(20);
		\draw (01)--(10);
		\draw (02)--(11)--(20);
		\draw[dotted] (20)--(0,-3);
\end{tikzpicture}
\end{center}
This makes $\mathbb{P}$ a (reverse) graded poset where the rank is determined by the first coordinate.  So we can take $\Theta=\{R_m:m\in\mathbb{N}\}$ to be the decreasing rank levels
\[R_m=\{(m,n):n\in\mathbb{N}\}\]

We claim that $\mathbb{P}$ itself is the unique $\Theta$-Cauchy filter.  To see this, first note that $\mathbb{P}$ is a $\wedge$-semilattice where
\[(m,n)\wedge(m',n')=\begin{cases}(m\vee m',n\wedge n')&\text{if }m+n=m'+n'.\\ (m+n-1,1)&\text{if }m+n\geq m'+n'.\end{cases}\]
Given a $\Theta$-Cauchy filter $Q\subseteq\mathbb{P}$, take any $(m,n)\in Q$.  As $Q$ is $\Theta$-Cauchy, for any $l\geq m+n$, we have $(k,l)\in Q$, for some $k\in\mathbb{N}$.  As $Q$ is a filter,
\[(k+l+1,1)=(m,n)\wedge(k,l)\in Q.\]
Thus, for arbitrarily large $j\in\mathbb{N}$, we have $(j,1)\in Q$ and hence $(j,1)^\leq\subseteq Q$, again because $Q$ is a filter.  Thus $\mathbb{P}=\bigcup_{j\in\mathbb{N}}(j,1)^\leq=R$, proving the claim.

However, $\mathbb{P}$ is not $\Theta$-round.  Indeed, for any $(m,n)\in\mathbb{P}$, $(m',n+1)\nleq(m,n)$ and hence $R_{m'}\nleq(m,n)$, for all $m'\in\mathbb{N}$.  Thus $\mathbb{P}$ is not minimal among $\Theta$-Cauchy up-sets, by \autoref{Round<=>Minimal}.  Indeed, $\mathbb{P}$ has no minimal/$\Theta$-round $\Theta$-Cauchy up-sets, as these would be filters, by \autoref{Directed=>Directed}, i.e. $\widehat{\Theta}$ is empty.
\end{xpl}

It was crucial in this example that $\Theta$ consisted of infinite subsets.  Indeed, if $\emptyset\notin\Theta\subseteq\mathcal{F}(\mathbb{P})$ then taking $C=\emptyset$ in \eqref{CoinitialCovers} below yields $\widehat{\Theta}\neq\emptyset$.

\section{Wallman Duality}\label{WallmanDuality}

We can now reformulate Wallman's classic duality from \cite{Wallman1938}.

\begin{thm}\label{ThetaCompact}
If $\Theta\subseteq\mathcal{F}(\mathbb{P})$ then $\widehat{\Theta}$ is compact.  Moreover, for $p,q\in\mathbb{P}$ and $C\subseteq\mathbb{P}$,
\begin{align}
\label{ThetaPreorder=subseteq}\widehat\Theta_p\subseteq\widehat\Theta_q\qquad&\Leftrightarrow\qquad p\leq_{\Theta^\leq}q.\\
\label{CoinitialCovers}\widehat\Theta\subseteq\bigcup_{c\in C}\widehat{\Theta}_c\qquad&\Leftrightarrow\qquad C\in\Theta^\leq.
\end{align}
\end{thm}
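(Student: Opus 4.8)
The plan is to prove \eqref{CoinitialCovers} first, and then read off both the compactness of $\widehat\Theta$ and \eqref{ThetaPreorder=subseteq} from it. It will be convenient to recall that $C^\geq=\{x\in\mathbb{P}:\exists c\in C\ (x\leq c)\}$ is a down-set, that $C_0\leq C$ is by definition the same as $C_0\subseteq C^\geq$, and that for an \emph{up-set} $R$ the conditions $R\cap C=\emptyset$ and $R\subseteq\mathbb{P}\setminus C^\geq$ are equivalent. For the easy half of \eqref{CoinitialCovers}: if $C\in\Theta^\leq$ then every $R\in\widehat\Theta$ is $\Theta^\leq$-Cauchy by \autoref{RCU<=>MleqC}, so $R$ meets $C$ and hence $R\in\widehat\Theta_c$ for some $c\in C$; thus $\widehat\Theta\subseteq\bigcup_{c\in C}\widehat\Theta_c$.

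For the converse I argue the contrapositive. Suppose $C\notin\Theta^\leq$ and put $R_0=\mathbb{P}\setminus C^\geq$, an up-set. It is $\Theta$-Cauchy: if some $C_0\in\Theta$ were disjoint from $R_0$, i.e.\ $C_0\subseteq C^\geq$, then $C_0\leq C$, whence $C\in\Theta^\leq$, contrary to hypothesis. Now I pass to a minimal $\Theta$-Cauchy up-set $R\subseteq R_0$ via Zorn's lemma, and this is the one place where $\Theta\subseteq\mathcal{F}(\mathbb{P})$ is essential: given a descending chain $(R_\alpha)$ of $\Theta$-Cauchy up-sets and any $C_0\in\Theta$, the sets $C_0\cap R_\alpha$ form a chain of non-empty subsets of the \emph{finite} set $C_0$, so $C_0\cap\bigcap_\alpha R_\alpha=\bigcap_\alpha(C_0\cap R_\alpha)\neq\emptyset$; hence $\bigcap_\alpha R_\alpha$ is again a $\Theta$-Cauchy up-set and Zorn applies. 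Since anything properly below $R$ still lies inside $R_0$, $R$ is in fact minimal among all $\Theta$-Cauchy up-sets, hence $\Theta$-round by \autoref{Round<=>Minimal}, so $R\in\widehat\Theta$; and $R\subseteq R_0$ gives $R\cap C=\emptyset$, i.e.\ $\widehat\Theta\not\subseteq\bigcup_{c\in C}\widehat\Theta_c$. (This is exactly the mechanism that breaks down in \autoref{FirstExample}, where $\Theta$ consists of infinite sets.)

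Granting \eqref{CoinitialCovers}, the rest is short. For \eqref{ThetaPreorder=subseteq}, the implication $p\leq_{\Theta^\leq}q\Rightarrow\widehat\Theta_p\subseteq\widehat\Theta_q$ is \autoref{FaithfulRightarrow}. Conversely, if $p\not\leq_{\Theta^\leq}q$ then $\Theta^{\leq p}\not\subseteq\Theta^{\leq q}$, so we may fix $S\subseteq\mathbb{P}$ with $\{p\}\cup S\in\Theta^\leq$ but $\{q\}\cup S\notin\Theta^\leq$. Applying \eqref{CoinitialCovers} to $\{p\}\cup S$ gives $\widehat\Theta\subseteq\widehat\Theta_p\cup\bigcup_{s\in S}\widehat\Theta_s$, while the contrapositive of \eqref{CoinitialCovers} applied to $\{q\}\cup S$ supplies some $R\in\widehat\Theta$ with $R\notin\widehat\Theta_q$ and $R\notin\widehat\Theta_s$ for all $s\in S$; the first inclusion then forces $R\in\widehat\Theta_p$, so $R$ witnesses $\widehat\Theta_p\not\subseteq\widehat\Theta_q$. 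For compactness I invoke the Alexander subbasis lemma, recalling that $(\widehat\Theta_p)_{p\in\mathbb{P}}$ is the defining subbasis of $\widehat\Theta$: a cover by subbasic sets has the form $\widehat\Theta=\bigcup_{c\in C}\widehat\Theta_c$, so $C\in\Theta^\leq$ by \eqref{CoinitialCovers}; choosing $C_0\in\Theta$ with $C_0\leq C$ and, for each $c_0\in C_0$, some $\gamma(c_0)\in C$ with $c_0\leq\gamma(c_0)$, the finite set $C'=\{\gamma(c_0):c_0\in C_0\}\subseteq C$ satisfies $C_0\leq C'$, hence $C'\in\Theta^\leq$, and \eqref{CoinitialCovers} yields $\widehat\Theta=\bigcup_{c\in C'}\widehat\Theta_c$, a finite subcover.

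The main obstacle is the $\Rightarrow$ direction of \eqref{CoinitialCovers}: one must manufacture an honest point of the spectrum — a round Cauchy up-set — disjoint from the prescribed $C$, and the only natural route, shrinking $\mathbb{P}\setminus C^\geq$ down to a minimal Cauchy up-set, is precisely the step where finiteness of the members of $\Theta$ is unavoidable. Everything else reduces to bookkeeping once \eqref{CoinitialCovers} is established.
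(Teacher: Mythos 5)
Your proposal is correct and follows essentially the same route as the paper: prove \eqref{CoinitialCovers} first (with the backward direction by refinement and the forward direction by shrinking the Cauchy up-set $\mathbb{P}\setminus C^\geq$ to a minimal one via Zorn, using finiteness of the members of $\Theta$ to keep intersections along chains Cauchy), then deduce \eqref{ThetaPreorder=subseteq} and compactness via the Alexander subbasis theorem. The only differences are presentational — you spell out the finite-subcover extraction and the passage from minimality within $\mathbb{P}\setminus C^\geq$ to global minimality, which the paper leaves implicit.
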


\begin{proof}
First note that, for compactness, it suffices to prove \eqref{CoinitialCovers}.  Indeed, \eqref{CoinitialCovers} implies that the covers of $\widehat{\Theta}$ of the form $(\widehat{\Theta}_c)_{c\in C}$, for $C\in\Theta^\leq\cap\mathcal{F}(\mathbb{P})$, are $\subseteq$-coinitial in the collection of all covers consisting of open sets from $(\widehat{\Theta}_p)_{p\in\mathbb{P}}$.   In other words, every subbasic open cover has a finite subcover and hence $\widehat{\Theta}$ is compact, by the Alexander subbasis theorem.

\begin{itemize}
\item[\eqref{CoinitialCovers}]  For $\Leftarrow$, take $C\in\Theta^\leq$, i.e. $D\leq C$, for some $D\in\Theta$.  As $R\cap D\neq\emptyset$, for any $\Theta$-Cauchy $R$, it follows $R\cap C\neq\emptyset$, for any $\Theta$-Cauchy up-set $R$.

Conversely, say $C\notin\Theta^\leq$.  This means $D\setminus C^\geq\neq\emptyset$, for all $D\in\Theta$, and hence $\mathbb{P}\setminus C^\geq$ is a $\Theta$-Cauchy up-set.  We claim that we can then apply the Kuratowski-Zorn lemma to obtain a minimal $\Theta$-Cauchy up-set $R\subseteq\mathbb{P}\setminus C^\geq$.  To see this, say we have a chain, or even a $\subseteq$-directed collection $\Phi$ of $\Theta$-Cauchy up-sets.  Certainly $\bigcap\Phi$ will also be an up-set.  Also, as $\Phi$ is $\subseteq$-directed and each $C\in\Theta$ is finite, $C\cap\bigcap\Phi=\emptyset$ would imply $C\cap Q=\emptyset$, for some $Q\in\Phi$, contradicting the fact $Q$ is $\Theta$-Cauchy.  This proves the claim, which yields $R\in\widehat{\Theta}$ with $R\cap C=\emptyset$ and hence $R\notin\bigcup_{c\in C}\widehat{\Theta}_c$.  So $R$ witnesses $\bigcup_{c\in C}\widehat{\Theta}_c\neq\widehat{\Theta}$, as required.

\item[\eqref{ThetaPreorder=subseteq}]  The $\Leftarrow$ part appears in \autoref{FaithfulRightarrow}.  Conversely, say $p\nleq_{\Theta^\leq}q$, so we have some $S\subseteq\mathbb{P}$ with $S\cup\{p\}\in\Theta^\leq$ but $S\cup\{q\}\notin\Theta^\leq$.  Then \eqref{CoinitialCovers} yields $R\in\widehat\Theta$ disjoint from $D\cup\{q\}$.  As $R$ is a $\Theta^\leq$-Cauchy and $D\cup\{p\}\in\Theta^\leq$, it follows that $p\in R$, i.e. $R\in\widehat\Theta_p\setminus\widehat\Theta_q$. \qedhere
\end{itemize}
\end{proof}

\begin{rmk}
The careful reader might note that we used the Alexander subbasis theorem from \cite{Alexander1939} in the proof of \autoref{ThetaCompact}, even though this would not have been available to Wallman in \cite{Wallman1938}.  However, examination of \cite{Wallman1938} reveals that Wallman essentially proved the same theorem in order to derive his duality.  Alexander even cited \cite{Wallman1938} in \cite{Alexander1939}, so `Alexander-Wallman subbasis theorem' might be more historically accurate.
\end{rmk}

So any $\Theta\subseteq\mathcal{F}(\mathbb{P})$ can be concretely represented as a coinitial collection of subbasic covers of the compact $T_1$ space $\widehat\Theta$ (see \autoref{T1}).  Moreover, $\leq$ on $\mathbb{P}$ is faithfully represented as $\subseteq$ on $(\widehat\Theta_p)_{p\in\mathbb{P}}$ if (and only if) $\leq\ =\ \leq_{\Theta^\leq}$.  Conversely, if $\Theta$ is a coinitial collection of finite subbasic covers of a compact $T_1$ space then taking $\leq\ =\ \subseteq$ on the subbasis $\mathbb{P}$, we have $\leq\ =\ \leq_{\Theta^\leq}$, by \autoref{SubbasisOrder}.  Moreover, we then recover the original space as $\widehat\Theta$, by \autoref{T1Recovery}.

This shows that we do indeed have a duality between appropriate nearness posets and compact $T_1$ spaces or, more precisely, certain covers of these spaces.  We can express this duality in a slightly more convenient form by considering all finite covers of the space rather than their coinitial subfamilies.  In this case, $\leq\ =\ \leq_{\Theta^\leq}$ corresponds to the following `admissibility' condition on $\Theta$.
\[\label{WallmanAdmissible}\tag{Wallman Admissible}p\leq q\qquad\Leftrightarrow\qquad\forall C\in\Theta\ ((C\setminus\{p\})\cup\{q\}\in\Theta).\]

Let us denote the `finite extension' relation by $\subseteq_\mathcal{F}$, i.e.
\[R\subseteq_\mathcal{F}S\qquad\Leftrightarrow\qquad R\subseteq S\text{ and }S\setminus R\text{ is finite}.\]

\begin{prp}\label{WallmanAdmissibleEquivalent}
$\Theta$ is Wallman admissible iff $\Theta$ is $\subseteq_\mathcal{F}$-closed and $\leq\ =\ \leq_\Theta$.
\end{prp}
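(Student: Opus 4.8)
The plan is to unwind both conditions into statements about the sets $\Theta^p=\{R\subseteq\mathbb{P}:R\cup\{p\}\in\Theta\}$, and to exploit the observation that, once $\Theta$ is closed under finite extensions, the operation $C\mapsto(C\setminus\{p\})\cup\{q\}$ appearing in \eqref{WallmanAdmissible} is, up to $\subseteq_\mathcal{F}$-equivalence, just the map $R\mapsto R\cup\{q\}$ on $\Theta^p$. The first step I would take is to extract $\subseteq_\mathcal{F}$-closedness from Wallman admissibility. Since $\leq$ is reflexive, we may put $p=q$ in \eqref{WallmanAdmissible}; as $p\leq p$, the right-hand side must hold, giving $C\cup\{p\}=(C\setminus\{p\})\cup\{p\}\in\Theta$ for every $C\in\Theta$ and every $p\in\mathbb{P}$. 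Iterating over the (finitely many) elements of $S\setminus R$ shows $S\in\Theta$ whenever $R\in\Theta$ and $R\subseteq_\mathcal{F}S$, i.e. $\Theta$ is $\subseteq_\mathcal{F}$-closed.

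The second step is an auxiliary lemma: \emph{if $\Theta$ is $\subseteq_\mathcal{F}$-closed, then for all $p,q\in\mathbb{P}$,}
\[\Big(\forall C\in\Theta\ ((C\setminus\{p\})\cup\{q\}\in\Theta)\Big)\quad\Longleftrightarrow\quad p\leq_\Theta q.\]
For ``$\Rightarrow$'': given $R\in\Theta^p$, set $C=R\cup\{p\}\in\Theta$; then $(C\setminus\{p\})\cup\{q\}=(R\setminus\{p\})\cup\{q\}\in\Theta$, and since $(R\setminus\{p\})\cup\{q\}\subseteq_\mathcal{F}R\cup\{q\}$, closedness gives $R\cup\{q\}\in\Theta$, i.e. $R\in\Theta^q$; as $R$ was arbitrary, $\Theta^p\subseteq\Theta^q$. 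For ``$\Leftarrow$'': given $C\in\Theta$, closedness yields $C\cup\{p\}\in\Theta$, so $C\setminus\{p\}\in\Theta^p\subseteq\Theta^q$, which is exactly $(C\setminus\{p\})\cup\{q\}\in\Theta$. (The only bookkeeping point is that $C\setminus\{p\}=(C\cup\{p\})\setminus\{p\}$, so passing between $C$ and $C\cup\{p\}$, i.e. the ``$p\in C$ vs. $p\notin C$'' case split, is absorbed entirely by $\subseteq_\mathcal{F}$-closedness.)

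Finally I would assemble the two directions. If $\Theta$ is Wallman admissible, then it is $\subseteq_\mathcal{F}$-closed by the first step, and then the auxiliary lemma rewrites the right-hand side of \eqref{WallmanAdmissible} as $p\leq_\Theta q$, so \eqref{WallmanAdmissible} reads ``$p\leq q\Leftrightarrow p\leq_\Theta q$'', i.e. $\leq\ =\ \leq_\Theta$. Conversely, if $\Theta$ is $\subseteq_\mathcal{F}$-closed and $\leq\ =\ \leq_\Theta$, then the auxiliary lemma immediately turns ``$p\leq q\Leftrightarrow p\leq_\Theta q$'' into \eqref{WallmanAdmissible}. I do not anticipate a genuine obstacle: the proof is elementary set manipulation, and the one subtlety worth being careful about is precisely the reduction carried out in the auxiliary lemma, namely that $\subseteq_\mathcal{F}$-closedness is exactly what is needed to identify the ``finite extension'' variant $R\mapsto R\cup\{q\}$ with the ``swap'' $C\mapsto(C\setminus\{p\})\cup\{q\}$.
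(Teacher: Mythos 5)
Your proof is correct and follows essentially the same route as the paper's: extract $\subseteq_\mathcal{F}$-closedness by setting $p=q$ in \eqref{WallmanAdmissible}, then show that under $\subseteq_\mathcal{F}$-closedness the right-hand side of \eqref{WallmanAdmissible} is equivalent to $p\leq_\Theta q$. The only (cosmetic) difference is that where the paper splits into the cases $p\in S$ and $p\notin S$, you absorb both at once via the observation $(R\setminus\{p\})\cup\{q\}\subseteq_\mathcal{F}R\cup\{q\}$.
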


\begin{proof}
If $\Theta$ is Wallman admissible then, for any $C\in\Theta$, the definition with $p=q$ yields $C\cup\{p\}=(C\setminus\{p\})\cup\{p\}\in\Theta$.  From this it follows that $\Theta$ must be $\subseteq_\mathcal{F}$-closed.  Thus it suffices to show that, when $\Theta$ is $\subseteq_\mathcal{F}$-closed,
\[p\leq_\Theta q\qquad\Leftrightarrow\qquad\forall C\in\Theta\ ((C\setminus\{p\})\cup\{q\}\in\Theta).\]
To see this, say $p$ and $q$ satisfy the right hand side and take $S\subseteq\mathbb{P}$ with $S\cup\{p\}\in\Theta$.  If $p\notin S$ then it follows that $S\cup\{q\}=((S\cup\{p\})\setminus\{p\})\cup\{q\}\in\Theta$.  While if $p\in S$ then $S\cup\{q\}\supseteq S=S\cup\{p\}\in\Theta$ so again $S\cup\{q\}\in\Theta$, as $\Theta$ is $\subseteq_\mathcal{F}$-closed, i.e. $p\leq_\Theta q$.  Conversely, if $p\leq_\Theta q$ then, for any $C\in\Theta$, $C\subseteq C\cup\{p\}=(C\setminus\{p\})\cup\{p\}\in\Theta$, as $\Theta$ is $\subseteq_\mathcal{F}$-closed, and hence $(C\setminus\{p\})\cup\{q\}\in\Theta$.
\end{proof}

\begin{prp}\label{SubsetClosedTheta}
If $\Theta$ is $\subseteq$-closed in $\mathcal{F}(\mathbb{P})$ then $\Theta$ is $\leq_\Theta$-closed in $\mathcal{F}(\mathbb{P})$.
\end{prp}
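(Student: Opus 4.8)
The plan is to check the defining implication of ``$\leq_\Theta$-closed in $\mathcal{F}(\mathbb{P})$'' head on. So I would take $C\in\Theta\cap\mathcal{F}(\mathbb{P})$ and $D\in\mathcal{F}(\mathbb{P})$ with $C\leq_\Theta D$ --- meaning every $c\in C$ has some $d\in D$ with $c\leq_\Theta d$ --- and aim to produce $D\in\Theta$. The strategy is to get from $C$ to $D$ in two moves: first \emph{enlarge} $C$ to $C\cup D$, then \emph{delete} the elements of $C\setminus D$ one at a time. The enlarging move is exactly where $\subseteq$-closedness gets used, and the deleting move is licensed by the very definition $p\leq_\Theta q\Leftrightarrow\Theta^p\subseteq\Theta^q$, which lets one strip an element $c$ out of a member of $\Theta$ as soon as some $\leq_\Theta$-larger element is already present in that member.

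In detail: $C\cup D$ is finite and contains $C\in\Theta$, so $C\cup D\in\Theta$ because $\Theta$ is $\subseteq$-closed in $\mathcal{F}(\mathbb{P})$. Then I would enumerate $C\setminus D=\{c_1,\dots,c_k\}$ (finite, with each $c_i\notin D$) and pick $d_i\in D$ with $c_i\leq_\Theta d_i$. Set $E_0=C\cup D=D\cup\{c_1,\dots,c_k\}$, and suppose inductively $E_{i-1}=D\cup\{c_i,\dots,c_k\}\in\Theta$. Since $c_i\in E_{i-1}$ we have $E_{i-1}\setminus\{c_i\}\in\Theta^{c_i}\subseteq\Theta^{d_i}$, so $(E_{i-1}\setminus\{c_i\})\cup\{d_i\}\in\Theta$; but $d_i\in D\subseteq E_{i-1}\setminus\{c_i\}$, so this set equals $E_{i-1}\setminus\{c_i\}=D\cup\{c_{i+1},\dots,c_k\}=E_i$, whence $E_i\in\Theta$. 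After $k$ steps, $E_k=D\in\Theta$, as desired.

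I do not expect a genuine obstacle here; the only thing to keep straight is the interplay of the two operations. The content is that $\subseteq$-closedness supplies, in one shot, all the ``new'' members (namely all of $D$) that are needed as $\leq_\Theta$-witnesses, after which each old element $c_i\notin D$ can be discarded via $\Theta^{c_i}\subseteq\Theta^{d_i}$; and since every intermediate $E_i$ lies between $D$ and the finite set $C\cup D$, everything stays inside $\mathcal{F}(\mathbb{P})$ throughout, so both closure notions apply where they are invoked. (If one prefers, the deletion step can be packaged as a small lemma: $E\in\Theta$ with $c\in E$ and $c\leq_\Theta d$ imply $(E\setminus\{c\})\cup\{d\}\in\Theta$, which is immediate from the definition of $\leq_\Theta$ taking $R=E\setminus\{c\}$.)
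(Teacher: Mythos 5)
Your proof is correct and uses essentially the same finite exchange argument as the paper: one application of $\subseteq$-closedness plus a chain of single-element swaps licensed by $\Theta^{c}\subseteq\Theta^{d}$. The only difference is the order of operations \textendash\, you invoke $\subseteq$-closedness at the start (enlarging $C$ to $C\cup D$ so that each subsequent step is a pure deletion), whereas the paper performs the replacements first, landing on some $G'\subseteq G$ in $\Theta$, and only then enlarges $G'$ to $G$ by $\subseteq$-closedness.
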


\begin{proof}
Take $F,G\in\mathcal{F}(\mathbb{P})$ with $F\leq_\Theta G$.  Thus, for any $f\in F$, we have $g\in G$ with $f\leq_\Theta g$.  As $(F\setminus\{f\})\cup\{f\}=F\in\Theta$, this means $(F\setminus\{f\})\cup\{g\}\in\Theta$.  Taking $f'\in F\setminus\{f\}$, we have $g'\in G$ with $f'\leq_\Theta g'$ which again yields $(F\setminus\{f,f'\})\cup\{g,g'\}\in\Theta$.  As $F$ is finite, we can continue in this way to get $G'\in\Theta$, for some $G'\subseteq G$.  As $\Theta$ is $\subseteq$-closed in $\mathcal{F}(\mathbb{P})$, this means $G\in\Theta$, as required.
\end{proof}

From \autoref{WallmanAdmissibleEquivalent} and \autoref{SubsetClosedTheta}, we immediately have the following.

\begin{cor}\label{Wallman}
The following are equivalent when $\Theta\subseteq\mathcal{F}(\mathbb{P})$.
\begin{enumerate}
\item $\Theta$ is Wallman admissible.
\item $\Theta$ is $\subseteq$-closed in $\mathcal{F}(\mathbb{P})$ and $\leq\ =\ \leq_\Theta$.
\item $\Theta$ is $\leq$-closed in $\mathcal{F}(\mathbb{P})$ and $\leq\ =\ \leq_\Theta$.
\end{enumerate}
\end{cor}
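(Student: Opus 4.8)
The plan is to obtain all three equivalences by chaining \autoref{WallmanAdmissibleEquivalent} with \autoref{SubsetClosedTheta}, after two elementary observations about how the closure conditions interact once we are inside $\mathcal{F}(\mathbb{P})$.

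First I would read off $(1)\Leftrightarrow(2)$ from \autoref{WallmanAdmissibleEquivalent}, which already says that $\Theta$ is Wallman admissible iff $\Theta$ is $\subseteq_\mathcal{F}$-closed and $\leq\ =\ \leq_\Theta$. So the only gap is to check that, for $\Theta\subseteq\mathcal{F}(\mathbb{P})$, being $\subseteq_\mathcal{F}$-closed is literally the same as being $\subseteq$-closed in $\mathcal{F}(\mathbb{P})$. This is because enlarging a finite set by finitely many elements keeps it finite: if $F\in\Theta$ and $F\subseteq_\mathcal{F}G$ then $G$ is automatically in $\mathcal{F}(\mathbb{P})$, and conversely any $G\in\mathcal{F}(\mathbb{P})$ with $F\subseteq G$ has $F\subseteq_\mathcal{F}G$, so the two closure conditions quantify over exactly the same pairs $F\subseteq G$.

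Next I would handle $(2)\Leftrightarrow(3)$. The implication $(3)\Rightarrow(2)$ is immediate from the refinement conventions: $R\subseteq S$ gives $R\subseteq S^\geq$, i.e. $R\leq S$, so any $\leq$-closed subfamily of $\mathcal{F}(\mathbb{P})$ is a fortiori $\subseteq$-closed in $\mathcal{F}(\mathbb{P})$; combined with the common hypothesis $\leq\ =\ \leq_\Theta$ this yields $(2)$. For $(2)\Rightarrow(3)$, I invoke \autoref{SubsetClosedTheta}: a $\subseteq$-closed $\Theta\subseteq\mathcal{F}(\mathbb{P})$ is $\leq_\Theta$-closed in $\mathcal{F}(\mathbb{P})$, and since $(2)$ assumes $\leq\ =\ \leq_\Theta$ this is precisely $\leq$-closedness, giving $(3)$.

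There is no substantive obstacle here — all the work has been pushed into \autoref{WallmanAdmissibleEquivalent} and \autoref{SubsetClosedTheta}, and what remains is bookkeeping. The one place to stay alert is the direction of refinement versus inclusion, namely that $R\subseteq S$ yields $R\leq S$ and not $S\leq R$, so that one correctly concludes ``$\leq$-closed $\Rightarrow$ $\subseteq$-closed'' rather than its converse.
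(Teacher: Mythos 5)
Your proof is correct and follows essentially the same route as the paper, which simply cites \autoref{WallmanAdmissibleEquivalent} and \autoref{SubsetClosedTheta} and calls the rest immediate; you have just spelled out the two bookkeeping steps (that $\subseteq_\mathcal{F}$-closedness coincides with $\subseteq$-closedness inside $\mathcal{F}(\mathbb{P})$, and that $R\subseteq S$ implies $R\leq S$ so $\leq$-closedness implies $\subseteq$-closedness) that the paper leaves implicit.
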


To reformulate Wallman duality, let us call $(\mathbb{P},\leq,\Theta)$ a \emph{Wallman poset} if $\Theta\subseteq\mathcal{F}(\mathbb{P})$ is Wallman admissible.  Roughly speaking, we can state the result as follows.
\begin{thm}\label{WallmanDualitySummary}
\[\textbf{\upshape Wallman posets are dual to subbases of compact $T_1$ spaces}.\]
\end{thm}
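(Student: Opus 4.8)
The plan is to make the informal statement precise and then assemble the results already proved. Spelled out, the claim is that the following two constructions are mutually inverse, up to homeomorphism on one side and poset isomorphism on the other. Given a Wallman poset $(\mathbb{P},\leq,\Theta)$, form the compact $T_1$ space $\widehat\Theta$ (compact by \autoref{ThetaCompact}, the step where the Alexander subbasis theorem is used, and $T_1$ by \autoref{T1}) together with its subbasis $(\widehat\Theta_p)_{p\in\mathbb{P}}$. Conversely, given a compact $T_1$ space $X$ with a subbasis $\mathbb{P}\subseteq\mathcal{O}(X)$, order $\mathbb{P}$ by $\subseteq$ and take $\Theta=\mathcal{C}_X(\mathbb{P})\cap\mathcal{F}(\mathbb{P})$, the family of \emph{all} finite $\mathbb{P}$-covers of $X$; using all finite covers rather than a merely coinitial subfamily is what keeps the bookkeeping clean.

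First I would check that both constructions land in the right class. For the first, it only remains to observe that, once $\mathbb{P}$ is re-ordered by inclusion and equipped with all finite covers of $\widehat\Theta$, \eqref{CoinitialCovers} together with the identity $\Theta=\Theta^\leq\cap\mathcal{F}(\mathbb{P})$ — valid since a Wallman admissible $\Theta$ is $\leq$-closed in $\mathcal{F}(\mathbb{P})$, by \autoref{Wallman} — identifies those finite covers with $\{(\widehat\Theta_c)_{c\in C}:C\in\Theta\}$. For the second, $\mathcal{C}_X(\mathbb{P})\cap\mathcal{F}(\mathbb{P})$ is plainly $\subseteq$-closed in $\mathcal{F}(\mathbb{P})$, and repeating the argument of \autoref{SubbasisOrder} — but now extracting a finite subcover of the compact closed set $X\setminus p$ — gives $\subseteq\ =\ \leq_\Theta$, so $(\mathbb{P},\subseteq,\mathcal{C}_X(\mathbb{P})\cap\mathcal{F}(\mathbb{P}))$ is Wallman admissible, again by \autoref{Wallman}.

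Next I would verify the two round trips. Starting from $(X,\mathbb{P})$: since $X$ is compact the finite covers are coinitial in $\mathcal{C}_X(\mathbb{P})$, so \autoref{T1Recovery} applies and $x\mapsto\mathbb{P}_x$ is a homeomorphism $X\to\widehat\Theta$; by the last biconditional in that proof it carries $p$ to $\widehat\Theta_p$, hence is also an isomorphism of subbases, and finite covers of $X$ match finite covers of $\widehat\Theta$, recovering $\Theta$. Starting from a Wallman poset $(\mathbb{P},\leq,\Theta)$: faithfulness is the crux. By \autoref{FaithfulRightarrow} and \eqref{ThetaPreorder=subseteq} we have $\widehat\Theta_p\subseteq\widehat\Theta_q\Leftrightarrow p\leq_{\Theta^\leq}q$, and since $\leq\ \subseteq\ \leq_{\Theta^\leq}$ always, it suffices to see $p\leq_{\Theta^\leq}q\Rightarrow p\leq q$: given finite $S$ with $S\cup\{p\}\in\Theta\subseteq\Theta^\leq$, one gets $S\cup\{q\}\in\Theta^\leq\cap\mathcal{F}(\mathbb{P})=\Theta$, so $p\leq_\Theta q$, which equals $p\leq q$ by \autoref{WallmanAdmissibleEquivalent}. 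Thus $p\mapsto\widehat\Theta_p$ is a poset isomorphism $(\mathbb{P},\leq)\to((\widehat\Theta_p)_{p\in\mathbb{P}},\subseteq)$, and under it $\Theta$ matches the finite covers of $\widehat\Theta$ by \eqref{CoinitialCovers} and $\Theta=\Theta^\leq\cap\mathcal{F}(\mathbb{P})$ once more. This closes both round trips.

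I expect the main obstacle to be the bookkeeping around the two preorders $\leq_\Theta$ and $\leq_{\Theta^\leq}$: \autoref{ThetaCompact} naturally produces $\leq_{\Theta^\leq}$, whereas Wallman admissibility is phrased via $\leq_\Theta$, and one must be sure the concrete $\Theta$ is recovered exactly and not merely up to $\leq$-closure. The fact reconciling this is the easy identity $\Theta=\Theta^\leq\cap\mathcal{F}(\mathbb{P})$ for any $\Theta\subseteq\mathcal{F}(\mathbb{P})$ that is $\leq$-closed in $\mathcal{F}(\mathbb{P})$, which both the abstract and the concrete $\Theta$ enjoy. A minor point worth stating explicitly is the degenerate case $\mathbb{P}=\emptyset$, where $\Theta=\emptyset$ and $\Theta=\{\emptyset\}$ correspond respectively to the one-point and the empty space, so the concrete side must remember $X$ and not just $\mathbb{P}$.
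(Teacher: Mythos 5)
Your proposal is correct and follows essentially the same route as the paper: compactness and $T_1$ via \autoref{ThetaCompact} and \autoref{T1}, faithfulness via \eqref{ThetaPreorder=subseteq} reconciled with Wallman admissibility through \autoref{WallmanAdmissibleEquivalent}/\autoref{Wallman}, the concrete direction via \autoref{SubbasisOrder} (with the finite-subcover refinement) and \autoref{T1Recovery}. Your explicit treatment of $\Theta=\Theta^\leq\cap\mathcal{F}(\mathbb{P})$, of the passage between $\leq_\Theta$ and $\leq_{\Theta^\leq}$, and of the degenerate case $\mathbb{P}=\emptyset$ merely spells out details the paper leaves implicit.
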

More precisely, any Wallman poset $(\mathbb{P},\leq,\Theta)$ can be concretely represented as the subbasis $(\widehat\Theta_p)_{p\in\mathbb{P}}$ of the compact $T_1$ space $\widehat\Theta$ where $\leq$ becomes $\subseteq$ and $\Theta$ becomes the family of all finite covers $(\{\widehat\Theta_c:c\in C\})_{C\in\Theta}$ of $\widehat\Theta$, by \autoref{T1}, \autoref{ThetaCompact} and \autoref{WallmanAdmissibleEquivalent}.  On the other hand, if $\mathbb{P}$ is a subbasis of a $T_1$ space $X$ and $\Theta=\mathcal{C}_X(\mathbb{P})\cap\mathcal{F}(\mathbb{P})$ then $(\mathbb{P},\subseteq,\Theta)$ is a Wallman poset, by \autoref{SubbasisOrder}.  Moreover, in this case $\widehat\Theta$ is homeomorphic to the original space $X$, by \autoref{T1Recovery}.

Lastly, let us note that the order structure in Wallman duality can reduce to mere equality.  So it really is the covers that are playing the pivotal role.

\begin{xpl}\label{Order=Equality}
Let $X=[0,1]$ and let $\mathbb{P}\subseteq\mathcal{O}(X)$ consist of all sets of the form $(a,b)\cup(c,d)$ where $a\leq b<c\leq d$ and $(b-a)+(d-c)=\frac{1}{2}$.  This last condition ensures that distinct elements of $\mathbb{P}$ are incomparable, i.e. $p\subseteq q$ or $q\subseteq p$ implies $p=q$.  But it is not hard to see that any open interval $(e,f)$ with $f-e\leq\frac{1}{2}$ is of the form $p\cap q$, for some $p,q\in\mathbb{P}$.  Thus $\mathbb{P}$ is a subbasis for $X$.
\end{xpl}

\section{Near Subsets}\label{NearSubsets}

To go any further, we must first discuss near subsets.

\begin{dfn}\label{NearDefinition}
We call $S\subseteq\mathbb{P}$ \emph{$\Theta$-near} if
\[\tag{$\Theta$-Near}\exists D\notin\Theta\ \forall s\in S\ (D\cup\{s\}\in\Theta).\]
\end{dfn}

If $S$ is $\Theta$-near then we immediately see that any $Q\subseteq S$ is again $\Theta$-near.  Likewise,
\begin{equation}\label{BoundedNear}
Q\geq S\quad\text{and}\quad S\text{ is $\Theta^\leq$-near}\qquad\Rightarrow\qquad Q\text{ is $\Theta^\leq$-near}.
\end{equation}
In particular, this applies to points of the spectrum.

\begin{prp}\label{xNear}
Every $R\in\widehat\Theta$ is maximal $\Theta^\leq$-near.
\end{prp}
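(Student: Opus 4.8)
The plan is to unwind the definitions and exploit the characterisation of $\widehat\Theta$ as the minimal $\Theta^\leq$-Cauchy subsets (via \autoref{RCU<=>MleqC}). Take $R\in\widehat\Theta$. First I would observe that $R$ is itself $\Theta^\leq$-near: since $R\notin\Theta$ would need checking — actually the right witness is obtained from roundness. Here is the cleaner route: fix $R\in\widehat\Theta$ and suppose $S\supseteq R$ is $\Theta^\leq$-near but $S\neq R$; pick $s\in S\setminus R$. Because $R$ is minimal $\Theta^\leq$-Cauchy, $R\cup\{s\}$ strictly contains $R$, but that is not yet a contradiction; instead I would use that $R$ is $\Theta^\leq$-Cauchy together with the near-witness $D\notin\Theta^\leq$ for $S$ to derive that some translate of $D$ lies in $\Theta^\leq$ yet meets $R$ trivially.

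More concretely, the key step is: since $S$ is $\Theta^\leq$-near we have $D\notin\Theta^\leq$ with $D\cup\{s\}\in\Theta^\leq$ for every $s\in S$, in particular for every $s\in R$ (as $R\subseteq S$). Now I claim $D$ cannot be $\Theta^\leq$-Cauchy-incompatible with $R$ in the wrong way: consider $D\cup R$. For each $r\in R$, $D\cup\{r\}\in\Theta^\leq$, so choose $C_r\in\Theta$ with $C_r\leq D\cup\{r\}$. The goal is to show $D$ itself must then satisfy $D\in\Theta^\leq$, contradicting $D\notin\Theta^\leq$. This is where roundness of $R$ enters: pick $r\in R$ with a round-witness $C\in\Theta$, $C\cap R\leq r$; combining $C$ with a $C_r\leq D\cup\{r\}$ and using that $R$ is $\Theta^\leq$-Cauchy (so $R$ meets $C_r$, forcing the hit to be below $r$, hence — via the up-set property and $C\cap R\le r$ — one can excise $r$) should produce an element of $\Theta^\leq$ refining $D$ alone.

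The main obstacle I anticipate is exactly this excision step: showing that the "$+\{r\}$" in each witness $D\cup\{r\}\in\Theta^\leq$ can be uniformly removed. The danger is that different $r\in R$ require different witnesses and one cannot patch them simultaneously with only the given hypotheses. I expect the fix is to not patch over all of $R$ but to use a single cleverly chosen $r$: take any $r\in R$, get $C\in\Theta$ with $C\cap R\le r$ by roundness, and get $C_r\in\Theta$ with $C_r\le D\cup\{r\}$ by nearness of $S\supseteq R$; then form $E=(C_r\setminus r^\ge)\cup(C\setminus R)$ or a similar combination, argue $E\in\Theta^\le$ by refining an appropriate member of $\Theta$, and argue $E\le D$ using $C\cap R\le r$ and the fact that $R$ is an up-set (so anything in $C_r$ above $r$ is in $R$, hence handled by the $C$-part). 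This yields $D\in\Theta^\le$, the desired contradiction, so no proper superset of $R$ is $\Theta^\le$-near, i.e. $R$ is maximal $\Theta^\le$-near. I would also record at the outset that $R$ is $\Theta^\le$-near at all — which follows since, for any fixed $r_0\in R$, the set $D_0=(\mathbb{P}\setminus R)$ is not in $\Theta^\le$ (as $R$ is $\Theta^\le$-Cauchy, by \eqref{ThetaCauchyComplement}) while $D_0\cup\{r\}\in\Theta^\le$ for each $r\in R$ (by \eqref{ThetaRoundComplement}), so the witness is simply $D_0=\mathbb{P}\setminus R$. That last observation in fact makes the whole argument transparent: $S\supseteq R$ is $\Theta^\le$-near means $S$ admits some $D\notin\Theta^\le$ with $D\cup\{s\}\in\Theta^\le$ for all $s\in S$; but if $s\in S\setminus R$ then $s\notin\mathbb{P}\setminus R$ need not hold, so one compares $D$ with $\mathbb{P}\setminus R$ directly via \autoref{CauchyUpsetRound}-type minimality to force $S\subseteq R$.
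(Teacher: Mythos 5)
Your opening observation---that $R$ itself is $\Theta^\leq$-near, witnessed by $D_0=\mathbb{P}\setminus R$ via \autoref{CauchyRoundComplements}---is exactly right, and it is also the first step of the paper's proof. The maximality half, however, has a genuine gap. Your main strategy is to derive $D\in\Theta^\leq$ from the hypothesis that $D\cup\{s\}\in\Theta^\leq$ for all $s\in R\cup\{p\}$, by excising the ``$+\{r\}$'' using roundness witnesses. That target is unreachable by any argument using only the conditions ``$D\cup\{r\}\in\Theta^\leq$ for all $r\in R$'': these very conditions are satisfied by $D=\mathbb{P}\setminus R$, which is \emph{not} in $\Theta^\leq$. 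So the extra element $p\notin R$ must enter essentially, and your proposed combination $E=(C_r\setminus r^\geq)\cup(C\setminus R)$ never uses it; moreover there is no reason why $C\setminus R$ should refine $D$. You flag this step yourself as the anticipated obstacle, and it is a real one, not a presentational issue.

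The argument that works is much shorter and needs no roundness witnesses or directedness at this stage. If $D\notin\Theta^\leq$ witnesses the nearness of $S=R\cup\{p\}$ with $p\notin R$, then $D\cap S=\emptyset$ (otherwise $D=D\cup\{s\}\in\Theta^\leq$ for some $s\in D\cap S$), so $D\cup\{p\}\subseteq\mathbb{P}\setminus R$. Since $\mathbb{P}\setminus R\notin\Theta^\leq$ and $\Theta^\leq$ is closed under supersets (equivalently, any subset of a set outside $\Theta^\leq$ is outside $\Theta^\leq$), we get $D\cup\{p\}\notin\Theta^\leq$, contradicting nearness of $S$. Your closing sentence gestures at ``comparing $D$ with $\mathbb{P}\setminus R$,'' but the comparison needed is the containment $D\cup\{p\}\subseteq\mathbb{P}\setminus R$, not an appeal to \autoref{CauchyUpsetRound}; and the remark that ``$s\notin\mathbb{P}\setminus R$ need not hold'' for $s\in S\setminus R$ is backwards---such an $s$ \emph{always} lies in $\mathbb{P}\setminus R$, and that is precisely what closes the argument.
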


\begin{proof}
By \autoref{CauchyRoundComplements}, $\mathbb{P}\setminus R\notin\Theta^\leq$ and $(\mathbb{P}\setminus R)\cup\{r\}\in\Theta^\leq$, for all $r\in R$, so $R$ is $\Theta^\leq$-near.  If $R\cup\{p\}$ were $\Theta^\leq$-near, for some $p\in\mathbb{P}\setminus R$, then we would have $D\notin\Theta^\leq$ with $D\cup\{r\}\in\Theta^\leq$, for all $r\in R\cup\{p\}$.  Thus $D\cap(R\cup\{p\})=\emptyset$ and hence $D\cup\{p\}\subseteq\mathbb{P}\setminus R\notin\Theta^\leq$, a contradiction, proving maximality.
\end{proof}

When dealing with arbitrary covers, `near' just means `non-empty intersection'.

\begin{prp}\label{NearForAllCovers}
If $\mathbb{P}\subseteq\mathcal{P}(X)$ is a $T_1$ family and $\Theta=\mathcal{C}_X(\mathbb{P})$ then, for all $S\subseteq\mathbb{P}$,
\[\bigcap S\neq\emptyset\qquad\Leftrightarrow\qquad S\text{ is $\Theta$-near}\]
\end{prp}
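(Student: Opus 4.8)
The plan is to reduce everything to a statement about unions of subfamilies of $\mathbb{P}$ together with a single witnessing point. Since $\Theta=\mathcal{C}_X(\mathbb{P})$, membership $C\in\Theta$ is literally the equation $\bigcup C=X$, so I would first rewrite the definition of $\Theta$-near (\autoref{NearDefinition}) in this concrete setting: $S$ is $\Theta$-near iff there is $D\subseteq\mathbb{P}$ with $\bigcup D\neq X$ while $s\cup\bigcup D=X$ for every $s\in S$. Choosing any $x\in X\setminus\bigcup D$, the second half of this says exactly that $x\in s$ for all $s\in S$, i.e. $x\in\bigcap S$. Both implications then essentially drop out of this reformulation.

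For $\Leftarrow$ I would take a witnessing $D\notin\Theta$, fix a point $x\in X\setminus\bigcup D$, and observe that $D\cup\{s\}\in\Theta$, i.e. $s\cup\bigcup D=X$, together with $x\notin\bigcup D$ forces $x\in s$; hence $x\in\bigcap S$, so $\bigcap S\neq\emptyset$.

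For $\Rightarrow$ I would, given $x\in\bigcap S$, use the witness $D=\mathbb{P}\setminus\mathbb{P}_x=\{N\in\mathbb{P}:x\notin N\}$, exactly the family already used in the proof of \autoref{T1Recovery}. No member of $D$ contains $x$, so $x\notin\bigcup D$ and hence $D\notin\Theta$; and since $\mathbb{P}$ is a $T_1$ family, every $y\in X\setminus\{x\}$ lies in some $N\in\mathbb{P}$ with $x\notin N$, giving $\bigcup D\supseteq X\setminus\{x\}$. As each $s\in S$ contains $x$, we get $s\cup\bigcup D=X$, i.e. $D\cup\{s\}\in\Theta$, so $S$ is $\Theta$-near.

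I do not expect a real obstacle here: once the $\Theta$-near condition is unwound, the argument is pure bookkeeping. The one place that genuinely uses the hypotheses is the appeal to $T_1$-ness in the $\Rightarrow$ direction, needed so that $\mathbb{P}\setminus\mathbb{P}_x$ covers all of $X\setminus\{x\}$ rather than merely some proper subset of it; this is precisely the separation property already exploited for \autoref{T1Recovery}. Degenerate cases such as $X=\emptyset$ or $S=\emptyset$ are harmless and I would dispatch them by noting that a witness $D\notin\Theta$ can exist only when $X\neq\emptyset$, which is exactly the situation in which $\bigcap\emptyset=X\neq\emptyset$.
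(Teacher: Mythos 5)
Your proof is correct. The $\Leftarrow$ direction is word-for-word the paper's argument: take a witness $D$ with $\bigcup D\neq X$, pick $x\in X\setminus\bigcup D$, and observe $x\in\bigcap S$. The only divergence is in $\Rightarrow$: the paper picks $x\in\bigcap S$, notes $S\subseteq\mathbb{P}_x$, invokes \autoref{T1Recovery} to get $\mathbb{P}_x\in\widehat\Theta$, then \autoref{xNear} to conclude $\mathbb{P}_x$ is near, and finally uses heredity of nearness under subsets. You instead exhibit the witness $D=\mathbb{P}\setminus\mathbb{P}_x$ directly and verify the two clauses of \autoref{NearDefinition} by hand, using the $T_1$ property to see that $\bigcup D\supseteq X\setminus\{x\}$. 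This is really the same witness the paper's chain of lemmas bottoms out at (it is exactly the set appearing in \autoref{CauchyRoundComplements} applied to $R=\mathbb{P}_x$), so your version is an inlined, self-contained form of the paper's argument; what it buys is independence from the spectrum machinery (no need to know $\mathbb{P}_x\in\widehat\Theta$, hence no implicit reliance on $\Theta^\leq=\Theta$ when passing from ``$\Theta^\leq$-near'' in \autoref{xNear} to ``$\Theta$-near'' in the statement), at the cost of not reusing results already established. Your handling of the degenerate cases $X=\emptyset$ and $S=\emptyset$ is also sound.
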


\begin{proof}
For $\Rightarrow$, note that $\bigcap S\neq\emptyset$ means $S\subseteq\mathbb{P}_x$, for some $x\in S$.  As $\mathbb{P}$ is $T_1$, $\mathbb{P}_x\in\widehat\Theta$ so $\mathbb{P}_x$ is $\Theta$-near, by \autoref{xNear}, and hence $S\subseteq\mathbb{P}_x$ is also $\Theta$-near.  Conversely, if $S$ is $\Theta$-near then we have $D\notin\mathcal{C}_X(\mathbb{P})$ with $D\cup\{s\}\in\mathcal{C}_X(\mathbb{P})$, for all $s\in S$.  Thus $\bigcup D\neq X$ while $s\cup\bigcup D=X$, for all $s\in S$, so $\emptyset\neq X\setminus\bigcup D\subseteq\bigcap S$.
\end{proof}

In particular, in the above situation every singleton $S=\{p\}$ is $\Theta$-near, except when $p=\emptyset$.  In general, we consider this as a weak kind of admissibility condition.

\begin{dfn}
We call $\Theta$ \emph{weakly admissible} if $\{p\}$ is $\Theta$-near whenever $p^\leq\neq\mathbb{P}$.
\end{dfn}

\begin{prp}\label{Wallman=>Weakly}
If $\Theta$ is Wallman admissible then $\Theta$ is weakly admissible.
\end{prp}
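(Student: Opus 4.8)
The plan is to unwind both definitions and produce the witnessing set directly. By \autoref{NearDefinition}, showing that the singleton $\{p\}$ is $\Theta$-near amounts to exhibiting some $D\notin\Theta$ with $D\cup\{p\}\in\Theta$. So fix $p\in\mathbb{P}$ with $p^\leq\neq\mathbb{P}$, which by definition of $p^\leq$ means there is some $q\in\mathbb{P}$ with $p\nleq q$. Since $\Theta$ is \eqref{WallmanAdmissible}, the failure $p\nleq q$ gives some $C\in\Theta$ with $(C\setminus\{p\})\cup\{q\}\notin\Theta$.

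The natural candidate is then $D:=(C\setminus\{p\})\cup\{q\}$, which lies outside $\Theta$ by the very choice of $C$. It remains to check $D\cup\{p\}\in\Theta$. Here I would compute the set-theoretic identity $D\cup\{p\}=(C\setminus\{p\})\cup\{p\}\cup\{q\}=C\cup\{p,q\}$, and then invoke \autoref{WallmanAdmissibleEquivalent}, which tells us that a Wallman admissible $\Theta$ is $\subseteq_\mathcal{F}$-closed; since $C\in\Theta$ and $C\subseteq C\cup\{p,q\}$ with finite difference, we conclude $C\cup\{p,q\}\in\Theta$, i.e. $D\cup\{p\}\in\Theta$. (Alternatively one can avoid citing \autoref{WallmanAdmissibleEquivalent} and just apply the $p=q$ instance of \eqref{WallmanAdmissible} twice, first to adjoin $p$ and then to adjoin $q$ to $C$.) This makes $D$ a witness to $\Theta$-nearness of $\{p\}$, completing the argument.

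There is really no serious obstacle here: the whole proof is a definitional manipulation, and the only thing to be careful about is the bookkeeping in the identity $D\cup\{p\}=C\cup\{p,q\}$ (the cases $p\in C$ and $p\notin C$ both collapse to this) and the fact that Wallman admissibility is what supplies $\subseteq_\mathcal{F}$-closure, so that enlarging $C$ by finitely many elements keeps it in $\Theta$.
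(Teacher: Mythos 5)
Your proof is correct and follows essentially the same route as the paper: both take $q\ngeq p$, extract $C\in\Theta$ with $(C\setminus\{p\})\cup\{q\}\notin\Theta$ from \eqref{WallmanAdmissible}, and close the argument using the $\subseteq_\mathcal{F}$-closure supplied by \autoref{WallmanAdmissibleEquivalent}. The only (immaterial) difference is the choice of witness: you take $D=(C\setminus\{p\})\cup\{q\}$, for which $D\notin\Theta$ is immediate and $D\cup\{p\}=C\cup\{p,q\}\in\Theta$ needs $\subseteq_\mathcal{F}$-closure, while the paper takes $D=C\setminus\{p\}$, for which $D\cup\{p\}\in\Theta$ is immediate and $D\notin\Theta$ needs the same closure property.
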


\begin{proof}
Say $\Theta$ is Wallman admissible and take $p^\leq\neq\mathbb{P}$.  So we have some $q\ngeq p$ and Wallman admissibility yields $C\in\Theta$ such that $(C\setminus\{p\})\cup\{q\}\notin\Theta$.  As $C\setminus\{p\}\subseteq_\mathcal{F}(C\setminus\{p\})\cup\{q\}$, \autoref{WallmanAdmissibleEquivalent} shows that $C\setminus\{p\}\notin\Theta$ and hence $\{p\}$ is $\Theta$-near.
\end{proof}

When $\mathbb{P}$ has a minimum $0$, weak admissibility specifically does not require that $\{0\}$ is $\Theta^\leq$-near.  Indeed, $\{0\}$ is only $\Theta^\leq$-near in the somewhat trivial case of \eqref{0inTheta}.

\begin{prp}\label{0near}
If $\mathbb{P}$ has a minimum $0$ then
\[\{0\}\text{ is $\Theta^\leq$-near}\qquad\Leftrightarrow\qquad\{0\}\in\Theta\text{ and }\emptyset\notin\Theta.\]
\end{prp}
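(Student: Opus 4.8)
The plan is to unwind the definition of $\Theta^\leq$-near applied to the singleton $\{0\}$: it says there is some $D\notin\Theta^\leq$ with $D\cup\{0\}\in\Theta^\leq$, equivalently there is $C\in\Theta$ with $C\leq D\cup\{0\}$. Three elementary facts about refinement and $\Theta^\leq$ will do all the work. First, if $\emptyset\in\Theta$ then $\Theta^\leq=\mathcal{P}(\mathbb{P})$, since $\emptyset$ vacuously refines every subset. Second, if $\emptyset\notin\Theta$ then $\emptyset\notin\Theta^\leq$, because the only subset refining $\emptyset$ is $\emptyset$ itself. Third, whenever $D\neq\emptyset$ we have $D\cup\{0\}\leq D$: each $d\in D$ refines itself, and $0$ refines any $d\in D$ since $0$ is the minimum and $D$ is non-empty; this is the one spot where the hypothesis on $0$ is used.

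For $\Leftarrow$, I would assume $\{0\}\in\Theta$ and $\emptyset\notin\Theta$ and take $D=\emptyset$. Then $D\cup\{0\}=\{0\}\in\Theta\subseteq\Theta^\leq$ while $D=\emptyset\notin\Theta^\leq$ by the second fact, so $\{0\}$ is $\Theta^\leq$-near. Alternatively one could invoke \autoref{0inTheta} to get $\mathbb{P}\in\widehat\Theta$, then \autoref{xNear} that $\mathbb{P}$ is $\Theta^\leq$-near, and finally the remark after \autoref{NearDefinition} that subsets of near sets are near.

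For $\Rightarrow$, suppose $\{0\}$ is $\Theta^\leq$-near, witnessed by $D\notin\Theta^\leq$ and $C\in\Theta$ with $C\leq D\cup\{0\}$. By the first fact, $\emptyset\notin\Theta$. Then I claim $D=\emptyset$: if $D$ were non-empty, then $C\leq D\cup\{0\}\leq D$ by the third fact and transitivity of $\leq$ on $\mathcal{P}(\mathbb{P})$, forcing $D\in\Theta^\leq$, a contradiction. Hence $C\leq\{0\}$, so every element of $C$ is below $0$ and therefore equals $0$; thus $C\subseteq\{0\}$, and since $\emptyset\notin\Theta$ we conclude $C=\{0\}\in\Theta$.

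Everything here is routine; the only subtlety is that in the $\Rightarrow$ direction one must first dispose of the degenerate case $D=\emptyset$ (where $0$ being the minimum is what matters) before reading off $\{0\}\in\Theta$. I foresee no real obstacle.
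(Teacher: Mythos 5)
Your proposal is correct and follows essentially the same route as the paper: both reduce to the key observation that any witness $D$ must be empty (since $D\cup\{0\}\leq D$ for non-empty $D$ would force $D\in\Theta^\leq$), and then read off $\emptyset\notin\Theta$ and $\{0\}\in\Theta$ from $C\leq\{0\}$. No gaps.
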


\begin{proof}
By definition, $\{0\}$ is $\Theta^\leq$-near iff there is some $D\notin\Theta^\leq$ with $D\cup\{0\}\in\Theta^\leq$.  This implies $D=\emptyset$ \textendash\, otherwise we would have $D\cup\{0\}\leq D$ and hence $D\cup\{0\}\in\Theta^\leq$ would imply $D\in\Theta^\leq$, a contradiction.  Thus $\{0\}$ is $\Theta^\leq$-near iff $\emptyset\notin\Theta^\leq$ and $\{0\}\in\Theta^\leq$.  As $D\leq\emptyset$ iff $D=\emptyset$, $\emptyset\notin\Theta^\leq$ iff $\emptyset\notin\Theta$.  Likewise, $D\leq\{0\}$ iff $D=\emptyset$ or $D=\{0\}$, so if $\emptyset\notin\Theta$ then $\{0\}\notin\Theta^\leq$ iff $\{0\}\notin\Theta$.
\end{proof}

Here is another somewhat trivial situation.

\begin{prp}
If $\Theta^\leq$ is weakly admissible then
\[\emptyset\in\Theta\text{ or }\emptyset=\Theta\qquad\Rightarrow\qquad|\mathbb{P}|\leq1.\]
\end{prp}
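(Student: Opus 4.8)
The plan is to observe that each of the two hypotheses forces $\Theta^\leq$ to be one of the two extreme families $\emptyset$ and $\mathcal{P}(\mathbb{P})$, and that for such a trivial $\Theta^\leq$ no nonempty subset of $\mathbb{P}$ can be $\Theta^\leq$-near, so weak admissibility of $\Theta^\leq$ can only hold because it is vacuous.

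First I would recall that $\emptyset\leq R$ holds (vacuously) for every $R\subseteq\mathbb{P}$, as was noted earlier in \autoref{TheSpectrum}: hence $\emptyset=\Theta$ gives $\Theta^\leq=\emptyset$, while $\emptyset\in\Theta$ gives $\Theta^\leq=\mathcal{P}(\mathbb{P})$. So under either hypothesis $\Theta^\leq\in\{\emptyset,\mathcal{P}(\mathbb{P})\}$. Next I would check that in both of these cases no singleton $\{p\}$ is $\Theta^\leq$-near: by \autoref{NearDefinition}, $\{p\}$ being $\Theta^\leq$-near means there is some $D\notin\Theta^\leq$ with $D\cup\{p\}\in\Theta^\leq$, but $D\cup\{p\}\in\Theta^\leq$ is impossible when $\Theta^\leq=\emptyset$ and $D\notin\Theta^\leq$ is impossible when $\Theta^\leq=\mathcal{P}(\mathbb{P})$.

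Finally, weak admissibility of $\Theta^\leq$ asserts that $\{p\}$ is $\Theta^\leq$-near whenever $p^\leq\neq\mathbb{P}$; since we have just seen that no singleton is $\Theta^\leq$-near, it must be that $p^\leq=\mathbb{P}$ for every $p\in\mathbb{P}$, i.e.\ $p\leq q$ for all $p,q\in\mathbb{P}$. As $\leq$ is antisymmetric, this yields $p=q$ for all $p,q\in\mathbb{P}$, i.e.\ $|\mathbb{P}|\leq1$. There is essentially no obstacle in this argument; the only step worth pausing on is the dichotomy $\Theta^\leq\in\{\emptyset,\mathcal{P}(\mathbb{P})\}$, which has already been recorded in the text.
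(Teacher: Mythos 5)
Your argument is correct and is essentially the paper's own proof: both compute that $\Theta^\leq$ is $\emptyset$ or $\mathcal{P}(\mathbb{P})$ in the two cases, observe that no singleton can then be $\Theta^\leq$-near, and conclude from weak admissibility that $p^\leq=\mathbb{P}$ for every $p$, forcing $|\mathbb{P}|\leq1$. The only cosmetic difference is that you spell out the final antisymmetry step, which the paper leaves implicit.
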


\begin{proof}
If $\emptyset\in\Theta$ then $\Theta^\leq=\mathcal{P}(\mathbb{P})$ so there are no $\Theta^\leq$-near subsets whatsoever.  If $\emptyset=\Theta$ then $\Theta^\leq=\emptyset$, in which case $\emptyset$ is the only $\Theta^\leq$-near subset.  In either case, $\{p\}$ is not $\Theta^\leq$-near for any $p\in\mathbb{P}$ so weak admissibility then implies that there are no non-zero elements, i.e. $\mathbb{P}$ is either empty or the one-element poset $\{0\}$.
\end{proof}

Under weak admissibility, any $S\subseteq\mathbb{P}$ with a non-zero lower bound is $\Theta^\leq$-near, by \eqref{BoundedNear}.  For finite $S$, we have the following converse, at least when $\Theta$ is directed.

\begin{prp}\label{Near=>Bounded}
If $\Theta$ is directed and $F\subseteq\mathbb{P}$ is finite then
\[F\text{ is $\Theta^\leq$-near}\ \quad\ \Rightarrow\ \quad\ \exists p\in\mathbb{P}\ (F\geq p).\]
Moreover, if $\mathbb{P}$ has a minimum $0$ and $\{0\}$ is not $\Theta^\leq$-near, we can take $p\neq0$ above.
\end{prp}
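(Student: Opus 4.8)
The plan is to unwind the definition of $\Theta^\leq$-nearness and then use directedness to collapse the witnessing covers into a single one. First I would fix $D\notin\Theta^\leq$ witnessing that $F$ is $\Theta^\leq$-near, so that $D\cup\{f\}\in\Theta^\leq$ for every $f\in F$. For each $f\in F$ this yields some $C_f\in\Theta$ with $C_f\leq D\cup\{f\}$; since $F$ is finite and $\Theta$ is directed, I can then pick a single $C\in\Theta$ with $C\leq C_f$ for all $f\in F$, and hence $C\leq D\cup\{f\}$ for every $f\in F$ by transitivity of refinement. (Here $F\neq\emptyset$ guarantees $\Theta\neq\emptyset$, so such a $C$ exists; the empty $F$ is a degenerate case that can be set aside.)

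The crucial observation is that $C\nleq D$: otherwise $C\in\Theta$ with $C\leq D$ would force $D\in\Theta^\leq$. So there is some $c\in C$ with $c\nleq d$ for all $d\in D$. Now for each $f\in F$, the refinement $C\leq D\cup\{f\}$ supplies some $e\in D\cup\{f\}$ with $c\leq e$; since $c\nleq d$ for all $d\in D$, we must have $e=f$, i.e. $c\leq f$. Thus $c$ is a lower bound of $F$, which is exactly the first assertion with $p=c$.

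For the ``moreover'' clause I would show this $c$ can be chosen $\neq 0$. If $c=0$, then since $0\leq d$ for every $d\in\mathbb{P}$, the condition $c\nleq d$ for all $d\in D$ forces $D=\emptyset$; hence $C\leq\{f\}$ for every $f\in F$, which means that \emph{every} element of $C$ is a lower bound of $F$. It then remains to produce an element of $C$ distinct from $0$. Since $F$ is $\Theta^\leq$-near we have $\emptyset\notin\Theta$ (otherwise $\emptyset\leq D$ would give $D\in\Theta^\leq$), and then the hypothesis that $\{0\}$ is not $\Theta^\leq$-near forces $\{0\}\notin\Theta$ by \autoref{0near}. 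As $C\in\Theta$, this gives $C\neq\emptyset$ and $C\neq\{0\}$, so $C$ contains some $p\neq 0$, which is the required lower bound of $F$.

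I expect the ``directed refinement plus pigeonhole'' argument of the first two paragraphs to be routine; the genuine subtlety is the ``moreover'' clause, and specifically noticing that the bad case $c=0$ collapses $D$ to $\emptyset$, which in turn upgrades $C\leq D\cup\{f\}$ to $C\leq\{f\}$ and makes every member of $C$ a lower bound of $F$ — after which \autoref{0near} finishes the job by ruling out $C\subseteq\{0\}$.
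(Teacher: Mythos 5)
Your proposal is correct and follows essentially the same route as the paper's proof: use directedness to collapse the witnessing covers into a single $C\leq D\cup\{f\}$ for all $f\in F$, extract $c\in C$ with $c\nleq D$ from $C\nleq D$, and conclude $c\leq f$ for every $f$, with the ``moreover'' clause handled by observing that the bad case forces $D=\emptyset$ and then ruling out $C\subseteq\{0\}$ via \autoref{0near}. The only differences are presentational (the paper adjoins a minimum to $\mathbb{P}$ at the outset and disposes of $\emptyset\in\Theta$ as a vacuous case, whereas you derive $\emptyset\notin\Theta$ from the nearness of $F$), and neither affects correctness.
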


\begin{proof}
We may assume that $\mathbb{P}$ has a minimum $0$ (by adjoining $0$ to $\mathbb{P}$ if necessary).  If $\emptyset\in\Theta$ then $\Theta^\leq=\mathcal{P}(\mathbb{P})$ so there are no $\Theta^\leq$-near subsets whatsoever, making the result vacuously true.  If $\emptyset\notin\Theta$ then, as $\{0\}$ is not $\Theta^\leq$-near, \autoref{0near} implies that $\{0\}\notin\Theta$.  Now say $\Theta$ is directed and $F$ is $\Theta^\leq$-near, so we have $D\notin\Theta^\leq$ with $D\cup\{f\}\in\Theta^\leq$, for all $f\in F$.  Thus we have $C\in\Theta^\leq$ with $C\leq D\cup\{f\}$, for all $f\in F$.  Note $C\nleq D$, as $D\notin\Theta^\leq$, so we have some $c\in C$ with $c\nleq D$.  As $\{0\}\notin\Theta$, $C\neq\{0\}$ so even if $D=\emptyset$, we can still replace $c$ if necessary to ensure that $c\neq0$.  As $C\leq D\cup\{s\}$, for all $f\in F$, we must have $F\geq c$, as required.
\end{proof}

To state this in a slightly nicer form, let us denote the near singletons by
\[\dt{\mathbb{P}}=\{p\in\mathbb{P}:\{p\}\text{ is $\Theta^\leq$-near}\}.\]
So if we interpret $\mathbb{P}\setminus\{0\}$ just as $\mathbb{P}$ when $\mathbb{P}$ has no minimum then
\[\Theta\text{ is weakly admissible}\qquad\Leftrightarrow\qquad\mathbb{P}\setminus\{0\}\subseteq\dt{\mathbb{P}}.\]

\begin{cor}\label{FiniteNearBounded}
If $\Theta$ is weakly admissible and directed then
\[F\text{ is $\Theta^\leq$-near}\qquad\Leftrightarrow\qquad\exists p\in\dt{\mathbb{P}}\ (F\geq p).\]
\end{cor}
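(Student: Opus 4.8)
The plan is to deduce this directly from \autoref{Near=>Bounded} and \eqref{BoundedNear}, the only real work being to bookkeep the cases around a possible minimum of $\mathbb{P}$. The backward implication needs neither hypothesis: if $p\in\dt{\mathbb{P}}$ with $F\geq p$, then $\{p\}$ is $\Theta^\leq$-near by definition of $\dt{\mathbb{P}}$, and since $F\geq\{p\}$, \eqref{BoundedNear} immediately gives that $F$ is $\Theta^\leq$-near.

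For the forward implication, I would assume $F$ is $\Theta^\leq$-near and split on whether $\mathbb{P}$ has a minimum. If it does not, then the reformulation of weak admissibility recorded just before the corollary reads $\mathbb{P}\subseteq\dt{\mathbb{P}}$, so any $p\in\mathbb{P}$ with $F\geq p$ produced by \autoref{Near=>Bounded} (here is where directedness is used) already lies in $\dt{\mathbb{P}}$. If $\mathbb{P}$ has a minimum $0$, I would further split on whether $\{0\}$ is $\Theta^\leq$-near. When it is, $0\in\dt{\mathbb{P}}$ by definition of $\dt{\mathbb{P}}$, so together with weak admissibility $\dt{\mathbb{P}}=\mathbb{P}$ and again any $p$ from \autoref{Near=>Bounded} works. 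When it is not, the ``moreover'' clause of \autoref{Near=>Bounded} yields $p\neq0$ with $F\geq p$, and then $p\in\mathbb{P}\setminus\{0\}\subseteq\dt{\mathbb{P}}$ by weak admissibility.

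There is essentially no obstacle here: the corollary is just a convenient repackaging of \autoref{Near=>Bounded}. The only point demanding a little care is matching the convention that ``$\mathbb{P}\setminus\{0\}$'' means ``$\mathbb{P}$'' when $\mathbb{P}$ has no minimum, so that the element furnished by \autoref{Near=>Bounded} is seen to land in $\dt{\mathbb{P}}$; this is precisely why the minimum-$0$ case is treated separately according to whether $\{0\}$ is itself $\Theta^\leq$-near.
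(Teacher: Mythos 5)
Your proposal is correct and follows essentially the same route as the paper: the backward direction via \eqref{BoundedNear}, and the forward direction by splitting on whether $\mathbb{P}$ has a minimum $0$ and whether $0\in\dt{\mathbb{P}}$, invoking \autoref{Near=>Bounded} (with its ``moreover'' clause in the one nontrivial case). The only cosmetic difference is that in the case $0\in\dt{\mathbb{P}}$ the paper simply takes $p=0$ rather than calling on \autoref{Near=>Bounded}, but this changes nothing.
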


\begin{proof}
The $\Leftarrow$ part is immediate, as is the $\Rightarrow$ part if $\mathbb{P}$ has a minimum $0\in\dt{\mathbb{P}}$.  If $0\notin\dt{\mathbb{P}}$ or if $\mathbb{P}$ has no minimum then the $\Rightarrow$ part follows from \autoref{Near=>Bounded}.
\end{proof}

However, this does not extend to infinite families, as the following example shows.  It also shows that there is no general converse to \autoref{xNear} and that \autoref{NearForAllCovers} does not extend to more general subfamilies of covers.

\begin{xpl}\label{PositiveReals}
Consider the strictly positive real line $X=(0,\infty)$ with its usual metric structure.  Further consider the bases $\mathbb{P}=\{(x,y):0\leq x<y\leq\infty\}$ and $\mathbb{P}_\epsilon=\{(x,y)\in\mathbb{P}:y-x\leq\epsilon\}$ and let $\Theta=\{\mathbb{P}_\epsilon:\epsilon>0\}$ so $\Theta^\leq$ is the family of uniform $\mathbb{P}$-covers of $X$.  Then we see that $D=\{(n-1,n+\frac{1}{n}):n\in\mathbb{N}\}\notin\Theta^\leq$ while $D\cup\{s\}\in\Theta^\leq$, for all $s\in S=\{(x,\infty):0\leq x<\infty\}$.  Thus $S$ is $\Theta^\leq$-near even though $\bigcap S=\emptyset$.  In fact, $S$ is maximal $\Theta^\leq$-near, by \autoref{Near=>Bounded} (because if $(x,y)\in\mathbb{P}\setminus S$ then $y\neq\infty$ so $(y,\infty)\in S$ and $(x,y)\cap(y,\infty)=\emptyset$) even though $S$ is not $\Theta$-Cauchy and hence $S\notin\widehat\Theta$.
\end{xpl}

\section{Generalised Wallman Duality}

Our next goal is to extend Wallman duality to locally compact spaces.

\begin{dfn}
If $X$ has a basis of relatively compact sets, $X$ is \emph{locally compact}.
\end{dfn}

\begin{rmk}
By a relatively compact set $O$, we of course mean that $O$ has compact closure $\mathrm{cl}(O)$.  As a closed subset of a compact set is again compact, it suffices to have a subbasis or even just an open cover of relatively compact sets.  In particular, any compact space is locally compact by this definition.

As is well known, when $X$ is Hausdorff, local compactness is the same as saying each $x\in X$ has a neighbourhood base of compact sets, which is another common definition.  However, even for slightly more general locally Hausdorff spaces, these two definitions are incomparable.  For example, consider an `infinite-point compactification of $\mathbb{N}$', i.e. $X=Y\cup\mathbb{N}$ and
\[\mathcal{O}(X)=\{O\subseteq X:O\subseteq\mathbb{N}\text{ or }O\setminus Y\subseteq_\mathcal{F}\mathbb{N}\}.\]
Then $\{y\}\cup O$, for $O\subseteq_\mathcal{F}\mathbb{N}$, forms a compact neighbourhood base of each $y\in Y$.  However $X$ is not locally compact by the above definition as the closure of any such neighbourhood contains the entirety of $Y$ and is thus not compact.  On the other hand, a compact locally Hausdorff space containing points without neighbourhood bases of compact sets is given in \cite{Scott2013}.
\end{rmk}

For any $S\subseteq\mathbb{P}$, define
\begin{align*}
\mathcal{F}_\Theta^S&=\{F\in\mathcal{F}(\mathbb{P}):F\cup S\text{ is not $\Theta^\leq$-near}\}.\\
\Theta|S&=\{C\subseteq\mathbb{P}:\forall\text{ $\mathcal{F}_\Theta^S$-Cauchy }D\subseteq\mathbb{P}\ (C\cup D\in\Theta^\leq)\}.
\end{align*}
This `restriction' of $\Theta$ to $S$ represents a family of covers of the closure of $\widehat\Theta_S$.

\begin{thm}\label{pClosure}
For any $S\subseteq\mathbb{P}$,
\[\mathrm{cl}(\widehat{\Theta}_S)\subseteq\widehat{\Theta|S}\subseteq\widehat\Theta.\]
\end{thm}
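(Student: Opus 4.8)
The plan is to establish the two inclusions separately. For the right-hand inclusion $\widehat{\Theta|S}\subseteq\widehat\Theta$, the natural route is to show that every $\Theta|S$-Cauchy set is $\Theta^\leq$-Cauchy, and likewise that every $\Theta|S$-round set is $\Theta^\leq$-round; then any $\Theta|S$-round $\Theta|S$-Cauchy up-set is a $\Theta^\leq$-round $\Theta^\leq$-Cauchy up-set, which by \autoref{RCU<=>MleqC} lies in $\widehat\Theta$ (note $\widehat{\Theta^\leq}=\widehat\Theta$ since $\Theta^{\leq\leq}=\Theta^\leq$). The key elementary observation is that $\emptyset$ is always $\mathcal{F}_\Theta^S$-Cauchy — indeed $\emptyset\notin\mathcal{F}_\Theta^S$ would mean $\emptyset\cup S=S$ is not $\Theta^\leq$-near, but that is not what we need; rather, unpacking the definition, $D$ being $\mathcal{F}_\Theta^S$-Cauchy means $D$ meets every $F\in\mathcal{F}(\mathbb{P})$ with $F\cup S$ not $\Theta^\leq$-near, so taking $D=\emptyset$ forces: there is no such $F$, i.e. $S$ itself is $\Theta^\leq$-near and every finite subset too. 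Assuming we are in the nondegenerate situation where this holds, plugging $D=\emptyset$ into the definition of $\Theta|S$ gives $C\in\Theta|S\Rightarrow C\in\Theta^\leq$, so $\Theta|S\subseteq\Theta^\leq$, from which $\widehat{\Theta|S}\subseteq\widehat\Theta$ follows because $\Theta|S$-Cauchy up-sets are then $\Theta^\leq$-Cauchy and $\Theta|S$-roundness implies $\Theta^\leq$-roundness. I would handle the degenerate case ($S$ not $\Theta^\leq$-near) by checking both sides are empty.

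For the left-hand inclusion $\mathrm{cl}(\widehat\Theta_S)\subseteq\widehat{\Theta|S}$, I would take a point $R$ in the closure of $\widehat\Theta_S$ and show $R\in\widehat{\Theta|S}$, i.e. $R$ is a $\Theta|S$-round $\Theta|S$-Cauchy up-set. Since $R\in\widehat\Theta$ already (by the right inclusion applied with $S=\emptyset$, or simply because $\widehat\Theta$ is the whole spectrum and closures are taken inside it), $R$ is an up-set; the work is Cauchyness and roundness for $\Theta|S$. For Cauchyness: given $C\in\Theta|S$, I must find $c\in C\cap R$. Being in $\mathrm{cl}(\widehat\Theta_S)$ means every subbasic neighbourhood $\widehat\Theta_p$ of $R$ (so $p\in R$) meets $\widehat\Theta_S$, i.e. there is $Q\in\widehat\Theta$ with $S\cup\{p\}\subseteq Q$; more usefully, $R$ is a limit of the net of such $Q$'s. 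The plan is to extract from this that $\mathbb{P}\setminus R$ is $\mathcal{F}_\Theta^S$-Cauchy, or rather that some $D\subseteq\mathbb{P}\setminus(C^\geq)$-style set built from $R$ is $\mathcal{F}_\Theta^S$-Cauchy, and then invoke the defining property of $C\in\Theta|S$ to get $C\cup D\in\Theta^\leq$, contradicting $\Theta$-Cauchyness of $R$ unless $C\cap R\neq\emptyset$. Concretely: suppose $C\cap R=\emptyset$; set $D=\mathbb{P}\setminus R$ (a down-set containing $C$). I claim $D$ is $\mathcal{F}_\Theta^S$-Cauchy, i.e. $D$ meets every finite $F$ with $F\cup S$ not $\Theta^\leq$-near. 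If some such $F\subseteq R$, then $F\cup S\subseteq Q$ for the $Q\in\widehat\Theta_S$ near $R$ (using that $R$ is in the closure and $F$ is finite, so all of $F$ lies in a single such neighbourhood witness), and $Q$ is $\Theta^\leq$-near by \autoref{xNear}, hence $F\cup S$ is $\Theta^\leq$-near by \eqref{BoundedNear} — contradiction. So $D$ is $\mathcal{F}_\Theta^S$-Cauchy, whence $C\cup D\in\Theta^\leq$; but $C\cup D=\mathbb{P}\setminus(R\setminus C^\geq)$-ish and $R$ being $\Theta^\leq$-Cauchy meets $C\cup D$... I need to be careful that $R\cap(C\cup D)=R\cap C=\emptyset$ since $D=\mathbb{P}\setminus R$, contradicting $\Theta^\leq$-Cauchyness of $R$. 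Hence $C\cap R\neq\emptyset$.

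For $\Theta|S$-roundness of $R$: given $r\in R$, since $R\in\widehat\Theta$ is $\Theta$-round we have $C\in\Theta$ with $C\cap R\leq r$; I would show this same $C$ (or $(C\setminus R)\cup\{r\}$) lies in $\Theta|S$ by checking $C\cup D\in\Theta^\leq$ for every $\mathcal{F}_\Theta^S$-Cauchy $D$ — but in fact $C\in\Theta\subseteq\Theta^\leq$ outright, so a fortiori $C\cup D\in\Theta^\leq$, giving $C\in\Theta|S$ trivially; then $C\cap R\leq r$ witnesses $\Theta|S$-roundness at $r$. This part is routine. The main obstacle is the Cauchyness argument for the left inclusion: making precise the step "$R$ in the closure of $\widehat\Theta_S$ $\Rightarrow$ every finite subset of $R$ is $\Theta^\leq$-near together with $S$", which requires that a finite tuple of neighbourhoods of $R$ can be realised simultaneously by a single point of $\widehat\Theta_S$ — this is exactly the statement that $R$ lies in the closure, unwound through the subbasic description $\widehat\Theta_p$ and the fact that finite intersections of subbasic sets form a basis. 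I expect that to be the delicate bookkeeping; everything else reduces to the definitions of $\mathcal{F}_\Theta^S$, $\Theta|S$, and the earlier lemmas \autoref{xNear}, \eqref{BoundedNear}, \autoref{CauchyRoundComplements}, and \autoref{RCU<=>MleqC}.
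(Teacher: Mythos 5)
Your argument for the first inclusion $\mathrm{cl}(\widehat\Theta_S)\subseteq\widehat{\Theta|S}$ is essentially the paper's: from $R\in\mathrm{cl}(\widehat\Theta_S)$ you get that every finite $F\subseteq R$ has $F\cup S$ contained in some $Q\in\widehat\Theta_S$, hence $\Theta^\leq$-near by \autoref{xNear} and \eqref{BoundedNear}, hence $\mathbb{P}\setminus R$ is $\mathcal{F}^S_\Theta$-Cauchy; the $\Theta|S$-Cauchyness of $R$ then follows by feeding $D=\mathbb{P}\setminus R$ into the definition of $\Theta|S$, and $\Theta|S$-roundness is immediate from $\Theta\subseteq\Theta|S$. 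That half is correct, including the ``delicate bookkeeping'' you flag.

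The second inclusion $\widehat{\Theta|S}\subseteq\widehat\Theta$ is where your proposal has a genuine gap. Your plan hinges on $\emptyset$ being $\mathcal{F}^S_\Theta$-Cauchy, i.e.\ on $\mathcal{F}^S_\Theta=\emptyset$, which would give $\Theta|S\subseteq\Theta^\leq$. But $\mathcal{F}^S_\Theta=\emptyset$ says $F\cup S$ is $\Theta^\leq$-near for \emph{every} finite $F\subseteq\mathbb{P}$, which essentially never happens; and in the motivating concrete situation (\autoref{Theta|pForAllCovers}) $\Theta|S=\mathcal{C}_{\mathrm{cl}(\bigcap S)}(\mathbb{P})$ is genuinely \emph{larger} than $\Theta^\leq$ \textendash\, the containment always runs the other way, $\Theta^\leq\subseteq\Theta|S$, since $C\in\Theta^\leq$ gives $C\cup D\in\Theta^\leq$ for every $D$. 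Your fallback dichotomy (``$S$ not $\Theta^\leq$-near, both sides empty'') does not cover the typical case where $S$ is near but $\mathcal{F}^S_\Theta\neq\emptyset$. Moreover, even granting $\Theta|S\subseteq\Theta^\leq$, your inference runs backwards for Cauchyness: being Cauchy with respect to a smaller family is a \emph{weaker} condition, so that containment would not yield ``$\Theta|S$-Cauchy $\Rightarrow$ $\Theta^\leq$-Cauchy''. In fact Cauchyness is the easy half precisely because $\Theta\subseteq\Theta|S$; the hard half is roundness, since $\Theta|S$-roundness is a priori weaker than $\Theta$-roundness. The paper closes this by first showing that $\mathbb{P}\setminus R$ is $\mathcal{F}^S_\Theta$-Cauchy for any $R\in\widehat{\Theta|S}$: if some finite $F\subseteq R$ had $F\cup S$ not $\Theta^\leq$-near, then every $\mathcal{F}^S_\Theta$-Cauchy $D$ would meet $F$, so $(\mathbb{P}\setminus R)\cup\{f\}\in\Theta|S$ (from $\Theta|S$-roundness) would force $(\mathbb{P}\setminus R)\cup D\in\Theta^\leq$ for all such $D$, i.e.\ $\mathbb{P}\setminus R\in\Theta|S$, contradicting $\Theta|S$-Cauchyness. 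Feeding $D=\mathbb{P}\setminus R$ into $(\mathbb{P}\setminus R)\cup\{r\}\in\Theta|S$ then gives $(\mathbb{P}\setminus R)\cup\{r\}\in\Theta^\leq$ for all $r\in R$, which is exactly $\Theta$-roundness of $R$ by \autoref{CauchyRoundComplements}.
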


\begin{proof}
For the first $\subseteq$, take $R\in\mathrm{cl}(\widehat{\Theta}_S)$ so, for every finite $F\subseteq R$,
\[\widehat\Theta_F\cap\widehat\Theta_S\neq\emptyset.\]
Thus we have $Q\in\widehat\Theta$ with $F\cup S\subseteq Q$ and hence $F\cup S$ is $\Theta^\leq$-near, by \autoref{xNear}.  In other words, for any finite $F\subseteq\mathbb{P}$ such that $F\cup S$ is not $\Theta^\leq$-near, $F\setminus R\neq\emptyset$, i.e. $\mathbb{P}\setminus R$ is $\mathcal{F}^S_\Theta$-Cauchy.  Thus, for any $C\in\Theta|S$, $C\cup(\mathbb{P}\setminus R)\in\Theta^\leq$ and hence, as $R$ is $\Theta^\leq$-Cauchy, $C\cap R\neq\emptyset$, i.e. $R$ is $\Theta|S$-Cauchy.  Also, as $R$ is $\Theta$-round and $\Theta\subseteq\Theta|S$, it follows immediately that $R$ is $\Theta|S$-round and hence $R\in\widehat{\Theta|S}$.

For the next $\subseteq$, take $R\in\widehat{\Theta|S}$.  We claim that $\mathbb{P}\setminus R$ is $\mathcal{F}^S_\Theta$-Cauchy, which is saying that $F\cup S$ is $\Theta^\leq$-near, for all finite $F\subseteq R$.  To see this note that, as $R$ is $\Theta|S$-round, $(\mathbb{P}\setminus R)\cup\{f\}\in\Theta|S$, for all $f\in F\subseteq R$.  If $F\cup S$ were not $\Theta^\leq$-near then every $\mathcal{F}^S_\Theta$-Cauchy $D$ would contain some $f\in F$ and hence we would have $(\mathbb{P}\setminus R)\cup D=(\mathbb{P}\setminus R)\cup\{f\}\cup D\in\Theta^\leq$, as $(\mathbb{P}\setminus R)\cup\{f\}\in\Theta|S$.  But this would mean $(\mathbb{P}\setminus R)\in\Theta|S$, contradicting the fact that $R$ is $\Theta|S$-Cauchy.  This proves the claim.

As $R$ is $\Theta|S$-round, $\mathbb{P}\setminus R\cup\{r\}\in\Theta|S$, for every $r\in R$, and hence, by the claim,
\[\mathbb{P}\setminus R\cup\{r\}=\mathbb{P}\setminus R\cup\{r\}\cup\mathbb{P}\setminus R\in\Theta^\leq.\]
Thus $R$ is $\Theta$-round.  Also, as $R$ is $\Theta|S$-Cauchy and $\Theta\subseteq\Theta|S$, it follows immediately that $R$ is $\Theta$-Cauchy and hence $R\in\widehat\Theta$.
\end{proof}

In particular, \autoref{pClosure} yields $\widehat\Theta=\widehat{\Theta|\emptyset}$, i.e. we can always enlarge $\Theta$ to $\Theta|\emptyset$ without affecting the spectrum.  The advantage of doing this is that we can piece together covers in each $\Theta|p$ to obtain covers in $\Theta|\emptyset$, as the following result shows.

\begin{prp}\label{CoverPatching}
As long as $\Theta\neq\emptyset$,
\[\Theta|\emptyset=\bigcap_{p\in\mathbb{P}}\Theta|p.\]
\end{prp}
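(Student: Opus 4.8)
The plan is to prove the two inclusions separately, using the definitions of $\Theta|S$ and $\mathcal{F}^S_\Theta$ unwound in terms of near subsets, together with the observation that $\mathcal{F}^\emptyset_\Theta$-Cauchy sets decompose naturally in terms of the $\mathcal{F}^p_\Theta$-Cauchy sets.

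The inclusion $\Theta|\emptyset\subseteq\bigcap_{p\in\mathbb{P}}\Theta|p$ should be the easier direction. Take $C\in\Theta|\emptyset$ and fix $p\in\mathbb{P}$; I must show $C\in\Theta|p$, i.e. that $C\cup D\in\Theta^\leq$ for every $\mathcal{F}^p_\Theta$-Cauchy $D$. The point is that any $\mathcal{F}^p_\Theta$-Cauchy set is $\mathcal{F}^\emptyset_\Theta$-Cauchy: if $F\cup\{p\}$ is not $\Theta^\leq$-near then, by \eqref{BoundedNear} applied with the bound obtained from $p$ — more directly, since $F\subseteq F\cup\{p\}$ and $\Theta^\leq$-nearness passes to subsets, $F$ itself being not $\Theta^\leq$-near would suffice, but that is not automatic; rather, the containment $\mathcal{F}^p_\Theta\subseteq\mathcal{F}^\emptyset_\Theta$ fails in general and one must be careful. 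The correct observation is the reverse: a set that is $\mathcal{F}^\emptyset_\Theta$-Cauchy meets every $F$ with $F$ not $\Theta^\leq$-near, hence (since $F$ not $\Theta^\leq$-near implies $F\cup\{p\}$ not $\Theta^\leq$-near by $\Theta^\leq$-nearness passing to subsets) it meets every member of $\mathcal{F}^p_\Theta$, so $\mathcal{F}^\emptyset_\Theta\text{-Cauchy}\Rightarrow\mathcal{F}^p_\Theta\text{-Cauchy}$. Thus $\Theta|\emptyset\subseteq\Theta|p$ directly, and intersecting over $p$ gives one inclusion. (Here is exactly where $\Theta\neq\emptyset$ is not yet needed.)

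For the reverse inclusion $\bigcap_{p\in\mathbb{P}}\Theta|p\subseteq\Theta|\emptyset$, take $C$ lying in every $\Theta|p$ and an arbitrary $\mathcal{F}^\emptyset_\Theta$-Cauchy set $D$; I must produce $C\cup D\in\Theta^\leq$. The idea is that $D$ being $\mathcal{F}^\emptyset_\Theta$-Cauchy means $D$ meets every finite non-$\Theta^\leq$-near set. If $D$ happened to be $\mathcal{F}^p_\Theta$-Cauchy for some single $p$, we would be done, since $C\in\Theta|p$. So the crux is: show that some $p\in\mathbb{P}$ makes $D$ into an $\mathcal{F}^p_\Theta$-Cauchy set, or else patch finitely/arbitrarily many such covers. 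The natural move is to argue by contradiction: if $D$ is not $\mathcal{F}^p_\Theta$-Cauchy for any $p$, then for each $p$ there is a finite $F_p\subseteq\mathbb{P}$ with $F_p\cup\{p\}$ not $\Theta^\leq$-near yet $F_p\cap D=\emptyset$; since $F_p$ itself could still be $\Theta^\leq$-near, the element $p$ genuinely matters. One then wants to assemble these witnesses into a single finite non-$\Theta^\leq$-near set disjoint from $D$, contradicting $\mathcal{F}^\emptyset_\Theta$-Cauchyness of $D$. This is where $\Theta\neq\emptyset$ enters: pick any $C_0\in\Theta$; since $C_0\notin\mathcal{F}^\emptyset_\Theta$-relevant data... more precisely, use that $D$ is $\mathcal{F}^\emptyset_\Theta$-Cauchy to force $D$ to meet $C_0$ (as $C_0$, being in $\Theta$, is trivially not $\Theta^\leq$-near by definition of $\Theta$-near requiring $D'\notin\Theta$), giving a concrete element of $D$ to work with.

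The main obstacle I anticipate is precisely the combinatorial patching in this second inclusion: relating "$D$ meets every finite non-$\Theta^\leq$-near set" (Cauchy w.r.t. $\mathcal{F}^\emptyset_\Theta$) to "$D$ meets every finite set $F$ with $F\cup\{p\}$ not $\Theta^\leq$-near, for some well-chosen $p$" (Cauchy w.r.t. some $\mathcal{F}^p_\Theta$). The asymmetry is that enlarging $F$ to $F\cup\{p\}$ can destroy $\Theta^\leq$-nearness, so $\mathcal{F}^\emptyset_\Theta\subseteq\mathcal{F}^p_\Theta$, making $\mathcal{F}^p_\Theta$-Cauchy the \emph{stronger} condition — so one cannot simply restrict. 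I expect the resolution to use that a $\mathcal{F}^\emptyset_\Theta$-Cauchy $D$ must contain some $p$ with the property that every finite $F\cup\{p\}$ that is non-$\Theta^\leq$-near already has $F$ non-$\Theta^\leq$-near or meets $D$ through $p$ itself — i.e. the element $p\in D$ "absorbs" the extra nearness — and verifying this will be the technical heart of the argument, likely via a maximality/Zorn argument on finite non-near sets or a direct contradiction using $\Theta\neq\emptyset$ to seed the construction.
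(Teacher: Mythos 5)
There is a genuine gap: you never actually prove the reverse inclusion $\bigcap_{p\in\mathbb{P}}\Theta|p\subseteq\Theta|\emptyset$, you only describe it as ``the technical heart'', and the directions you propose for it would not succeed. The resolution is a one-line trick going the opposite way from your guess: given an $\mathcal{F}^\emptyset_\Theta$-Cauchy $D$ with $D\neq\mathbb{P}$, pick $p\in\mathbb{P}\setminus D$ (not $p\in D$ ``absorbing'' nearness). For any $F\in\mathcal{F}^p_\Theta$, the finite set $F\cup\{p\}$ is not $\Theta^\leq$-near, hence lies in $\mathcal{F}^\emptyset_\Theta$, hence meets $D$; as $p\notin D$, already $F$ meets $D$. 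So $D$ is $\mathcal{F}^p_\Theta$-Cauchy and $C\cup D\in\Theta^\leq$ follows from $C\in\Theta|p$ for that single $p$ \textendash\, no Zorn-type patching of witnesses $F_p$ is needed. The hypothesis $\Theta\neq\emptyset$ enters only in the leftover case $D=\mathbb{P}$, where $C\cup D=\mathbb{P}\in\Theta^\leq$ because any member of $\Theta$ refines $\mathbb{P}$. Your proposed use of $\Theta\neq\emptyset$ \textendash\, that $D$ must meet some $C_0\in\Theta$ because $C_0$ is ``trivially not $\Theta^\leq$-near'' \textendash\, is unjustified: $C_0$ need not be finite, and nothing in the definitions makes a member of $\Theta$ non-near, so $\mathcal{F}^\emptyset_\Theta$-Cauchyness says nothing about $C_0$.

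Your first paragraph also ends on a wrong implication, though the inclusion it targets is salvageable. You open with the correct claim (every $\mathcal{F}^p_\Theta$-Cauchy set is $\mathcal{F}^\emptyset_\Theta$-Cauchy) but conclude with its converse, ``$\mathcal{F}^\emptyset_\Theta$-Cauchy $\Rightarrow$ $\mathcal{F}^p_\Theta$-Cauchy'', which does not follow from $\mathcal{F}^\emptyset_\Theta\subseteq\mathcal{F}^p_\Theta$, is not true in general (precisely when $p\in D$ it can fail), contradicts your own later remark that $\mathcal{F}^p_\Theta$-Cauchy is the stronger condition, and would anyway yield $\Theta|p\subseteq\Theta|\emptyset$ rather than $\Theta|\emptyset\subseteq\Theta|p$. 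The correct chain, which is exactly the paper's opening observation $Q\subseteq S\Rightarrow\mathcal{F}^Q_\Theta\subseteq\mathcal{F}^S_\Theta\Rightarrow\Theta|Q\subseteq\Theta|S$, is: non-nearness passes to supersets, so $\mathcal{F}^\emptyset_\Theta\subseteq\mathcal{F}^p_\Theta$, so every $\mathcal{F}^p_\Theta$-Cauchy $D$ is $\mathcal{F}^\emptyset_\Theta$-Cauchy, so any $C\in\Theta|\emptyset$ satisfies $C\cup D\in\Theta^\leq$ for all such $D$, i.e. $C\in\Theta|p$.
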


\begin{proof}
First note that, for any $Q,S\subseteq\mathbb{P}$,
\[Q\subseteq S\qquad\Rightarrow\qquad\mathcal{F}^Q_\Theta\subseteq\mathcal{F}^S_\Theta\qquad\Rightarrow\qquad\Theta|Q\subseteq\Theta|S.\]
In particular, $\Theta|\emptyset\subseteq\bigcap_{p\in\mathbb{P}}\Theta|p$.  For the reverse inclusion, take $C\in\bigcap_{p\in\mathbb{P}}\Theta|p$ and $\mathcal{F}^\emptyset_\Theta$-Cauchy $D$.  If $D=\mathbb{P}$ then $C\cup D\in\Theta^\leq$, as $\Theta\neq\emptyset$.  If $D\neq\mathbb{P}$ then we have $p\in\mathbb{P}\setminus D$ and $D$ is immediately seen to be $\mathcal{F}^p_\Theta$-Cauchy which again yields $C\cup D\in\Theta^\leq$, as $C\in\Theta|p$.  As $D$ was arbitrary, this shows that $C\in\Theta|\emptyset$.
\end{proof}

The $\Theta\neq\emptyset$ condition here really is necessary.  Indeed, if $\Theta=\emptyset$ then $\emptyset$ is the only $\Theta^\leq$-near subset so $\mathcal{F}^\emptyset_\Theta=\mathcal{P}(\mathbb{P})\setminus\{\emptyset\}$ while $\mathcal{F}^p_\Theta=\mathcal{P}(\mathbb{P})$, for all $p\in\mathbb{P}$.  This means that $\mathbb{P}$ is $\mathcal{F}^\emptyset_\Theta$-Cauchy even though $\mathbb{P}\notin\Theta^\leq$ and hence $\Theta|\emptyset=\emptyset$.  On the other hand, for all $p\in\mathbb{P}$, there are no $\mathcal{F}^p_\Theta$-Cauchy subsets because $\emptyset\in\mathcal{F}^p_\Theta$, which means the defining property of $\Theta|p$ holds vacuously for arbitrary subsets, i.e. $\Theta|p=\mathcal{P}(\mathbb{P})$.

For general $S$, \autoref{pClosure} only yields the inclusion $\widehat{\Theta|S}\subseteq\mathrm{cl}(\widehat\Theta_S)$.  For finite $S$, we can strengthen this to an equality under an appropriate additional assumption.

\begin{dfn}
We call $\Theta$ \emph{non-degenerate} if, for all finite $F\subseteq\mathbb{P}$,
\[\tag{Non-Degenerate}F\text{ is $\Theta^\leq$-near}\qquad\Rightarrow\qquad\widehat\Theta_F\neq\emptyset.\]
\end{dfn}

\begin{prp}\label{Non-DegenerateResults}
If $\Theta$ is non-degenerate, for all $F\in\mathcal{F}(\mathbb{P})$ and $\mathcal{F}^F_\Theta$-Cauchy $D$,
\[\mathrm{cl}(\widehat\Theta_F)=\widehat{\Theta|F}\qquad\text{and}\qquad\widehat\Theta\setminus\mathrm{cl}(\widehat\Theta_F)\subseteq\bigcup_{d\in D}\widehat\Theta_d.\]
\end{prp}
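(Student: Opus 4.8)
The plan is to obtain the identity $\mathrm{cl}(\widehat\Theta_F)=\widehat{\Theta|F}$ by upgrading \autoref{pClosure} with non-degeneracy, and then to deduce the complement inclusion almost immediately.

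For the identity, \autoref{pClosure} already gives $\mathrm{cl}(\widehat\Theta_F)\subseteq\widehat{\Theta|F}$, so only the reverse inclusion is at issue. I would take $R\in\widehat{\Theta|F}$ and check that every basic open neighbourhood of $R$ in $\widehat\Theta$ meets $\widehat\Theta_F$. Such a neighbourhood has the form $\widehat\Theta_G=\bigcap_{g\in G}\widehat\Theta_g$ for some finite $G\subseteq R$, and $\widehat\Theta_G\cap\widehat\Theta_F=\widehat\Theta_{G\cup F}$, so it suffices to show $\widehat\Theta_{G\cup F}\neq\emptyset$ for every finite $G\subseteq R$. Since $G\cup F$ is finite, non-degeneracy reduces this to the claim that $G\cup F$ is $\Theta^\leq$-near. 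But this is exactly what the proof of \autoref{pClosure} establishes (taking $S=F$ there): for any $R\in\widehat{\Theta|F}$, the set $\mathbb{P}\setminus R$ is $\mathcal{F}^F_\Theta$-Cauchy, which is precisely the statement that $G\cup F$ is $\Theta^\leq$-near for every finite $G\subseteq R$. Hence $R\in\mathrm{cl}(\widehat\Theta_F)$ and the two sets agree.

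For the complement inclusion, I would use the identity just proved to rewrite $\widehat\Theta\setminus\mathrm{cl}(\widehat\Theta_F)$ as $\widehat\Theta\setminus\widehat{\Theta|F}$, fix $R$ in this set and an $\mathcal{F}^F_\Theta$-Cauchy subset $D$. Since $\Theta\subseteq\Theta|F$, the $\Theta$-roundness of $R\in\widehat\Theta$ already makes $R$ a $\Theta|F$-round up-set, so the only way $R$ can avoid $\widehat{\Theta|F}$ is by failing to be $\Theta|F$-Cauchy; fix $C\in\Theta|F$ with $C\cap R=\emptyset$. The defining property of $\Theta|F$ gives $C\cup D\in\Theta^\leq$, and since $R\in\widehat\Theta$ is $\Theta^\leq$-Cauchy (by \autoref{RCU<=>MleqC}) we get $R\cap(C\cup D)\neq\emptyset$; as $R\cap C=\emptyset$, this forces $R\cap D\neq\emptyset$, i.e.\ $R\in\bigcup_{d\in D}\widehat\Theta_d$, as required.

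I do not expect a serious obstacle here; the content is really just assembling \autoref{pClosure}, the near-set machinery and non-degeneracy. The points needing care are purely bookkeeping: that the relevant step in the proof of \autoref{pClosure} is genuinely stated for arbitrary $S$ (so it applies to the finite $S=F$ used here), and that $\Theta$-roundness transfers to $\Theta|F$-roundness since $\Theta\subseteq\Theta|F$. It is also cleanest to dispose first of the degenerate case in which $F$ is itself not $\Theta^\leq$-near: then $\widehat\Theta_F=\emptyset$ by \autoref{xNear}, while $\emptyset\in\mathcal{F}^F_\Theta$ forces $\Theta|F=\mathcal{P}(\mathbb{P})$ and hence $\widehat{\Theta|F}=\emptyset$ by \eqref{EmptyinTheta}, so both assertions hold trivially.
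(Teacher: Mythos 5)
Your argument is correct and is essentially the paper's: the first equality is obtained by combining \autoref{pClosure} with the fact, already established in its proof, that $\mathbb{P}\setminus R$ is $\mathcal{F}^F_\Theta$-Cauchy for every $R\in\widehat{\Theta|F}$ (equivalently, $G\cup F$ is $\Theta^\leq$-near for every finite $G\subseteq R$), which non-degeneracy converts into $\widehat\Theta_{G\cup F}\neq\emptyset$ for every basic neighbourhood $\widehat\Theta_G$ of $R$ \textendash\, the paper merely runs this same argument by contradiction. The only (harmless) divergence is in the second inclusion, where the paper applies non-degeneracy a second time to a finite $G\subseteq R$ with $\widehat\Theta_{F\cup G}=\emptyset$ to get $G\in\mathcal{F}^F_\Theta$ and hence $G\cap D\neq\emptyset$ directly, whereas you route through the equality just proved, the failure of $\Theta|F$-Cauchyness and the defining property of $\Theta|F$; both variants are valid and of comparable length.
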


\begin{proof}
Say we had $R\in\widehat{\Theta|F}\setminus\mathrm{cl}(\widehat\Theta_F)$.  In particular, $\widehat\Theta\setminus\mathrm{cl}(\widehat\Theta_F)$ is a neighbourhood of $R$ so we have finite $G\subseteq R$ with $\widehat\Theta_F\cap\widehat\Theta_G=\emptyset$.  As $\Theta$ is non-degenerate, this means $F\cup G$ is not $\Theta^\leq$-near, i.e. $G\in\mathcal{F}^F_\Theta$.  Thus $D$ must contain some $g\in G$ and hence $(\mathbb{P}\setminus R)\cup D=(\mathbb{P}\setminus R)\cup\{g\}\cup D\in\Theta|F$, as $R$ is $\Theta|F$-round and $g\in G\subseteq R$.  As $D$ was arbitrary, this means $\mathbb{P}\setminus R\in\Theta|F$, contradicting the fact that $R$ is also $\Theta|F$-Cauchy.  Combined with \autoref{pClosure}, this proves the first equality.

Likewise, for the second inclusion, take any $R\in\widehat\Theta\setminus\mathrm{cl}(\widehat\Theta_F)$, so we have finite $G\subseteq R$ with $\widehat\Theta_F\cap\widehat\Theta_G=\emptyset$.  As $\Theta$ is non-degenerate, this implies that $F\cup G$ is not $\Theta^\leq$-near, i.e. $G\in\mathcal{F}^F_\Theta$ and hence $G\cap D\neq\emptyset$ so $R\in\bigcup_{d\in D}\widehat\Theta_d$.
\end{proof}

If $\Theta$ consists of arbitrary covers then so do its restrictions.

\begin{prp}\label{Theta|pForAllCovers}
If $X$ is a $T_1$ space, $\leq\ =\ \subseteq$ on a subbasis $\mathbb{P}\subseteq\mathcal{O}(X)$ and $\Theta$ is a coinitial subset of $\mathcal{C}_X(\mathbb{P})$ then, for all $S\subseteq\mathbb{P}$,
\[\Theta|S=\mathcal{C}_{\mathrm{cl}(\bigcap S)}(\mathbb{P}).\]
\end{prp}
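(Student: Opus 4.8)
The plan is to begin by reducing the problem to a statement about ordinary covers. Since $\Theta$ is coinitial in $\mathcal{C}_X(\mathbb{P})$ and any refinement of a $\mathbb{P}$-cover of $X$ is again a $\mathbb{P}$-cover, we have $\Theta^\leq=\mathcal{C}_X(\mathbb{P})$. As $\mathbb{P}\subseteq\mathcal{O}(X)$ is a subbasis of the $T_1$ space $X$, $\mathbb{P}$ is a $T_1$ family, so \autoref{NearForAllCovers}, applied with the cover family $\mathcal{C}_X(\mathbb{P})$, says a subset $G\subseteq\mathbb{P}$ is $\Theta^\leq$-near precisely when $\bigcap G\neq\emptyset$. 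Hence, for finite $F\subseteq\mathbb{P}$, we have $F\in\mathcal{F}_\Theta^S$ iff $\bigcap F\cap\bigcap S=\emptyset$, and $\Theta|S$ consists of those $C\subseteq\mathbb{P}$ with $\bigcup(C\cup D)=X$ for every $D\subseteq\mathbb{P}$ meeting all finite $F$ with $\bigcap F\cap\bigcap S=\emptyset$. The task is then to match this with $\mathcal{C}_{\mathrm{cl}(\bigcap S)}(\mathbb{P})=\{C\subseteq\mathbb{P}:\mathrm{cl}(\bigcap S)\subseteq\bigcup C\}$.

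For the inclusion $\mathcal{C}_{\mathrm{cl}(\bigcap S)}(\mathbb{P})\subseteq\Theta|S$, I would take $C$ with $\mathrm{cl}(\bigcap S)\subseteq\bigcup C$ together with an $\mathcal{F}_\Theta^S$-Cauchy $D$ (if no such $D$ exists, $C\in\Theta|S$ holds vacuously). The existence of such a $D$ forces $\emptyset\notin\mathcal{F}_\Theta^S$ and hence $\bigcap S\neq\emptyset$. Now for any $x\in X\setminus\bigcup C$ we get $x\notin\mathrm{cl}(\bigcap S)$, so there is an open $U\ni x$ disjoint from $\bigcap S$; since $\mathbb{P}$ is a subbasis, there is a finite $F\subseteq\mathbb{P}$ with $x\in\bigcap F\subseteq U$, and then $\bigcap F\cap\bigcap S=\emptyset$, i.e. $F\in\mathcal{F}_\Theta^S$. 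As $D$ meets $F$, picking $f\in D\cap F$ gives $x\in\bigcap F\subseteq f\subseteq\bigcup D$. Thus $\bigcup(C\cup D)=X$, so $C\in\Theta|S$.

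For the reverse inclusion, I would take $C\in\Theta|S$ and $x\in\mathrm{cl}(\bigcap S)$ and aim to show $x\in\bigcup C$, testing $\Theta|S$ against $D=\mathbb{P}\setminus\mathbb{P}_x=\{N\in\mathbb{P}:x\notin N\}$. This $D$ is $\mathcal{F}_\Theta^S$-Cauchy: if a finite $F\in\mathcal{F}_\Theta^S$ avoided $D$, then $x\in N$ for all $N\in F$, so $x\in\bigcap F$, which is open; as $x\in\mathrm{cl}(\bigcap S)$, this neighbourhood of $x$ meets $\bigcap S$, contradicting $\bigcap F\cap\bigcap S=\emptyset$. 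Hence $C\cup D\in\Theta^\leq=\mathcal{C}_X(\mathbb{P})$, so $\bigcup(C\cup D)=X$; and since $x\notin N$ for every $N\in D$, we must have $x\in\bigcup C$, as required.

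Once $\Theta^\leq$-nearness is reinterpreted via \autoref{NearForAllCovers}, everything else is routine bookkeeping with the subbasis property, so I do not anticipate a real obstacle. The only points needing a little care are the degenerate case $\bigcap S=\emptyset$ (where $\emptyset\in\mathcal{F}_\Theta^S$ leaves no $\mathcal{F}_\Theta^S$-Cauchy subsets, so $\Theta|S=\mathcal{P}(\mathbb{P})$, while $\mathrm{cl}(\bigcap S)=\emptyset$ forces $\mathcal{C}_{\mathrm{cl}(\bigcap S)}(\mathbb{P})=\mathcal{P}(\mathbb{P})$ too) and the correct use of the subbasis property to extract the finite sets $F$.
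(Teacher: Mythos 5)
Your proof is correct, and it takes a more elementary, self-contained route than the paper's. Both arguments hinge on the same reduction via \autoref{NearForAllCovers}: since $\Theta$ is coinitial in $\mathcal{C}_X(\mathbb{P})$ we get $\Theta^\leq=\mathcal{C}_X(\mathbb{P})$, so $\Theta^\leq$-nearness is just non-empty intersection and $F\in\mathcal{F}^S_\Theta$ becomes $\bigcap F\cap\bigcap S=\emptyset$. From there the paper identifies $X$ with $\widehat\Theta$ via \autoref{T1Recovery} and cites \autoref{pClosure} for $\Theta|S\subseteq\mathcal{C}_{\mathrm{cl}(\bigcap S)}(\mathbb{P})$ and \autoref{Non-DegenerateResults} for the reverse inclusion, with the slightly informal remark that the finiteness restriction in the latter can be dropped by ``arguing as in the proof''. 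You instead verify both inclusions by direct computation on $X$: the test set $D=\mathbb{P}\setminus\mathbb{P}_x$ for $x\in\mathrm{cl}(\bigcap S)$, and the finite $F\subseteq\mathbb{P}$ with $x\in\bigcap F$ disjoint from $\bigcap S$ for $x\notin\bigcup C$, are precisely the concrete content of those two lemmas, so nothing essentially new is happening, but your version avoids the spectrum machinery, treats infinite $S$ uniformly without any hand-waving, and isolates the degenerate case $\bigcap S=\emptyset$ cleanly. One small wording slip: the reason $\Theta^\leq\subseteq\mathcal{C}_X(\mathbb{P})$ is that anything \emph{refined by} a cover is again a cover, not that refinements of covers are covers; the identity $\Theta^\leq=\mathcal{C}_X(\mathbb{P})$ that you actually use is nonetheless correct.
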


\begin{proof}
By \autoref{T1Recovery} we can identify $X$ and $\widehat\Theta$.  It then follows from \autoref{pClosure} that $\Theta|S\subseteq\mathcal{C}_{\mathrm{cl}(\bigcap S)}(\mathbb{P})$.  By \autoref{NearForAllCovers}, $\Theta$ is non-degenerate so it follows from \autoref{Non-DegenerateResults} that $\mathrm{cl}(\bigcap S)\cup\bigcup D=X$, for any $\mathcal{F}^S_\Theta$-Cauchy $D\subseteq\mathbb{P}$, at least if $S$ is finite.  But as \autoref{NearForAllCovers} applies to infinite sets, arguing as in the proof of \autoref{Non-DegenerateResults} yields the same result for infinite $S$.  So $\bigcup C\cup\bigcup D=X$, for any $C\in\mathcal{C}_{\mathrm{cl}(\bigcap S)}(\mathbb{P})$ and $\mathcal{F}^S_\Theta$-Cauchy $D\subseteq\mathbb{P}$, and hence $C\in\Theta|S$.  As $C$ was arbitrary, this shows that $\mathcal{C}_{\mathrm{cl}(\bigcap S)}(\mathbb{P})\subseteq\Theta|S$.
\end{proof}

\begin{thm}\label{ThetaLocallyCompactSubbases}
If $\Theta|p\cap\mathcal{F}(\mathbb{P})$ is coinitial in $\Theta|p$, for all $p\in\mathbb{P}$, then $\widehat\Theta$ is locally compact.  As long as $\Theta\neq\emptyset$ too then, for all $p,q\in\mathbb{P}$ and $C\subseteq\mathbb{P}$,
\begin{align}
\label{ThetaPreorder=subseteqGeneral}\widehat\Theta_p\subseteq\widehat\Theta_q\qquad&\Leftrightarrow\qquad p\leq_{\Theta|\emptyset}q.\\
\label{CoinitialCoversGeneral}\widehat\Theta\subseteq\bigcup_{c\in C}\widehat{\Theta}_c\qquad&\Leftrightarrow\qquad C\in\Theta|\emptyset.
\end{align}
\end{thm}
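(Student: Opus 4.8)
The strategy is to transfer \autoref{ThetaCompact} through the identifications $\widehat\Theta=\widehat{\Theta|\emptyset}$ (from \autoref{pClosure}) and, when $\Theta\neq\emptyset$, $\Theta|\emptyset=\bigcap_{p\in\mathbb{P}}\Theta|p$ (from \autoref{CoverPatching}). I would first record two routine preliminaries. (i) Each $\Theta|S$ is $\leq$-closed: if $C\leq C'$ with $C\in\Theta|S$ then, for every $\mathcal{F}^S_\Theta$-Cauchy $D$, we have $C\cup D\leq C'\cup D$ and $C\cup D\in\Theta^\leq$, so $C'\cup D\in\Theta^\leq$; in particular $(\Theta|\emptyset)^\leq=\Theta|\emptyset$. (ii) Write $\Xi_p=\Theta|p\cap\mathcal{F}(\mathbb{P})$. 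Since membership of a spectrum involves only up-sets, one checks that $\widehat\Xi=\widehat{\Xi'}$ whenever $\Xi\subseteq\Xi'$ with $\Xi$ coinitial in $\Xi'$ (Cauchyness passes upward using coinitiality and the up-set property, roundness passes downward using refinement and the up-set property), and that the two spectrum topologies coincide; applying this to $\Xi_p\subseteq\Theta|p$ and using (i) to get $\Xi_p^\leq=\Theta|p$ gives $\widehat{\Xi_p}=\widehat{\Theta|p}$. Moreover, the spectrum topology on $\widehat{\Theta|p}$ is exactly the subspace topology it inherits as a subset of $\widehat\Theta$ (via \autoref{pClosure}), since $\widehat\Theta_c\cap\widehat{\Theta|p}=\widehat{(\Theta|p)}_c$.

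For local compactness, $\Xi_p\subseteq\mathcal{F}(\mathbb{P})$, so $\widehat{\Xi_p}=\widehat{\Theta|p}$ is compact by \autoref{ThetaCompact}, hence a compact subspace of $\widehat\Theta$. By \autoref{pClosure}, $\mathrm{cl}(\widehat\Theta_p)\subseteq\widehat{\Theta|p}$, and being closed in $\widehat\Theta$ it is a closed subspace of the compact space $\widehat{\Theta|p}$; thus $\mathrm{cl}(\widehat\Theta_p)$ is compact, i.e.\ $\widehat\Theta_p$ is relatively compact for every $p$. As $(\widehat\Theta_p)_{p\in\mathbb{P}}$ is a subbasis of $\widehat\Theta$, it follows that $\widehat\Theta$ has a subbasis of relatively compact sets and is therefore locally compact. (When $\Theta=\emptyset$ this is trivial, since then $\widehat\Theta=\widehat{\Theta|\emptyset}$ has at most one point by \eqref{Empty=Theta}.)

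Now assume $\Theta\neq\emptyset$. For \eqref{CoinitialCoversGeneral}: if $C\in\Theta|\emptyset$ then each $R\in\widehat\Theta=\widehat{\Theta|\emptyset}$ is $\Theta|\emptyset$-Cauchy, so $C\cap R\neq\emptyset$ and hence $\widehat\Theta\subseteq\bigcup_{c\in C}\widehat\Theta_c$; conversely, if $C\notin\Theta|\emptyset$ then \autoref{CoverPatching} gives $p$ with $C\notin\Theta|p=\Xi_p^\leq$, so \autoref{ThetaCompact} supplies $R\in\widehat{\Xi_p}=\widehat{\Theta|p}\subseteq\widehat\Theta$ with $R\cap C=\emptyset$, witnessing $\widehat\Theta\not\subseteq\bigcup_{c\in C}\widehat\Theta_c$. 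For \eqref{ThetaPreorder=subseteqGeneral}, the forward implication is the contrapositive: if $p\not\leq_{\Theta|\emptyset}q$, choose $S$ with $S\cup\{p\}\in\Theta|\emptyset$ but $S\cup\{q\}\notin\Theta|\emptyset$; by \eqref{CoinitialCoversGeneral} some $R\in\widehat\Theta$ misses $S\cup\{q\}$, and $\Theta|\emptyset$-Cauchyness of $R$ against $S\cup\{p\}$ forces $p\in R$, so $R\in\widehat\Theta_p\setminus\widehat\Theta_q$. For the reverse implication, given $p\leq_{\Theta|\emptyset}q$ and $R\in\widehat\Theta_p$, apply $\Theta|\emptyset$-roundness of $R$ at $p\in R$ to get $C\in\Theta|\emptyset$ with $C\cap R\leq p$; then $C\leq(C\setminus R)\cup\{p\}$, so $(C\setminus R)\cup\{p\}\in(\Theta|\emptyset)^\leq=\Theta|\emptyset$ by (i), whence $(C\setminus R)\cup\{q\}\in\Theta|\emptyset$ since $p\leq_{\Theta|\emptyset}q$; as $R$ is $\Theta|\emptyset$-Cauchy and disjoint from $C\setminus R$, we conclude $q\in R$, i.e.\ $R\in\widehat\Theta_q$.

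The main obstacle is preliminary (ii): checking that passing to the finite part $\Xi_p$ leaves the spectrum and its topology unchanged, and that this spectrum really is the subspace $\widehat{\Theta|p}$ of $\widehat\Theta$ — this is what allows the finite-cover results of \autoref{ThetaCompact} to be applied. Everything after that is a direct translation of the proof of \autoref{ThetaCompact}, carried through the identifications $\widehat\Theta=\widehat{\Theta|\emptyset}$ and $\Theta|\emptyset=\bigcap_{p\in\mathbb{P}}\Theta|p$.
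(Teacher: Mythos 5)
Your proposal is correct and follows essentially the same route as the paper: reduce to \autoref{ThetaCompact} via $\widehat{\Theta|p}=\widehat{\Theta|p\cap\mathcal{F}(\mathbb{P})}$ for compactness of $\mathrm{cl}(\widehat\Theta_p)\subseteq\widehat{\Theta|p}$, and derive \eqref{CoinitialCoversGeneral} and then \eqref{ThetaPreorder=subseteqGeneral} through $\widehat\Theta=\widehat{\Theta|\emptyset}$ and $\Theta|\emptyset=\bigcap_{p\in\mathbb{P}}\Theta|p$. The only difference is that you make explicit the steps the paper leaves implicit (the $\leq$-closedness of $\Theta|S$, the coinitial-subfamily spectrum identification, and the $\Leftarrow$ half of \eqref{ThetaPreorder=subseteqGeneral}), which is a faithful elaboration rather than a different argument.
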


\begin{proof}
For all $p\in\mathbb{P}$, $\widehat{\Theta|p}$ is compact, by \autoref{ThetaCompact}, and hence $\mathrm{cl}(\widehat\Theta_p)$ is compact, by \autoref{pClosure}.  Thus $\widehat\Theta$ is locally compact.
\begin{itemize}
\item[\eqref{CoinitialCoversGeneral}] Note that $\widehat\Theta\subseteq\bigcup_{c\in C}\widehat{\Theta}_c$ implies $\widehat{\Theta|p}\subseteq\bigcup_{c\in C}\widehat{\Theta}_c$, for all $p\in\mathbb{P}$.  By \eqref{CoinitialCovers}, that means $C\in\bigcap_{p\in\mathbb{P}}\Theta|p=\Theta|\emptyset$, by \autoref{CoverPatching}.

\item[\eqref{ThetaPreorder=subseteqGeneral}] This follows immediately from \eqref{CoinitialCoversGeneral}, as in the proof of \eqref{ThetaPreorder=subseteq}.\qedhere
\end{itemize}
\end{proof}

We can now state a natural generalisation of Wallman duality.  First let us call $(\mathbb{P},\leq,\Theta)$ a \emph{generalised Wallman poset} if, for all $p\in\mathbb{P}$,
\begin{enumerate}
\item\label{RestrictionFinite} $\Theta|p\cap\mathcal{F}(\mathbb{P})$ is coinitial in $\Theta|p$,
\item\label{StronglyWallmanAdmissible} $\Theta=\Theta|\emptyset$ and $\leq\ =\ \leq_\Theta$.
\end{enumerate}
Note here that \eqref{RestrictionFinite} is a kind of local finiteness condition, while \eqref{StronglyWallmanAdmissible} is a strengthening of Wallman admissibility, by \autoref{WallmanAdmissibleEquivalent}.  In summary, we have the following.

\begin{thm}\label{GeneralisedWallmanDualitySummary}\
\begin{center}
\textbf{\upshape Generalised Wallman posets are dual to\\ relatively compact subbases of locally compact $T_1$ spaces}.
\end{center}
\end{thm}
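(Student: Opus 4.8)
The plan is to read \autoref{GeneralisedWallmanDualitySummary} exactly as \autoref{WallmanDualitySummary} was read: it asserts that the assignment sending a generalised Wallman poset $(\mathbb{P},\leq,\Theta)$ to the subbasis $(\widehat\Theta_p)_{p\in\mathbb{P}}$ of $\widehat\Theta$, with $\leq$ turned into $\subseteq$ and $\Theta$ turned into the indexed family of covers $(\{\widehat\Theta_c:c\in C\})_{C\in\Theta}$, is inverse (up to homeomorphism and order-and-cover isomorphism) to the assignment sending a relatively compact subbasis $\mathbb{P}$ of a locally compact $T_1$ space $X$ to $(\mathbb{P},\subseteq,\mathcal{C}_X(\mathbb{P}))$. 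So nothing genuinely new has to be proved; the task is to invoke the lemmas of this and the two preceding sections in the right order.

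For the direction from a generalised Wallman poset $(\mathbb{P},\leq,\Theta)$ to a concrete space, I would argue as follows. The spectrum $\widehat\Theta$ is $T_1$ by \autoref{T1}, and locally compact by \autoref{ThetaLocallyCompactSubbases}, whose hypothesis is precisely \eqref{RestrictionFinite}; moreover each $\mathrm{cl}(\widehat\Theta_p)$ is a closed subset of the compact space $\widehat{\Theta|p}$ (compactness of $\widehat{\Theta|p}$ following from \eqref{RestrictionFinite} and \autoref{ThetaCompact} as in the proof of \autoref{ThetaLocallyCompactSubbases}) by \autoref{pClosure}, hence compact, so $(\widehat\Theta_p)_{p\in\mathbb{P}}$ is a relatively compact subbasis. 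Using $\Theta=\Theta|\emptyset$ and $\leq\ =\ \leq_\Theta$ from \eqref{StronglyWallmanAdmissible}, identity \eqref{ThetaPreorder=subseteqGeneral} of \autoref{ThetaLocallyCompactSubbases} gives $\widehat\Theta_p\subseteq\widehat\Theta_q\Leftrightarrow p\leq q$, so $p\mapsto\widehat\Theta_p$ is an order embedding, while \eqref{CoinitialCoversGeneral} gives $\widehat\Theta=\bigcup_{c\in C}\widehat\Theta_c\Leftrightarrow C\in\Theta$, so $\Theta$ is identified with the family of all covers of $\widehat\Theta$ by subbasic open sets. The standing hypothesis $\Theta\neq\emptyset$ needed for these identities is harmless, since $\Theta=\emptyset$ forces $\leq_\Theta=\mathbb{P}\times\mathbb{P}$, hence $|\mathbb{P}|\leq1$, which is a trivial degenerate case.

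For the converse direction, I would start from a locally compact $T_1$ space $X$ with a relatively compact subbasis $\mathbb{P}\subseteq\mathcal{O}(X)$, put $\leq\ =\ \subseteq$ and $\Theta=\mathcal{C}_X(\mathbb{P})$, and verify the two defining conditions of a generalised Wallman poset. Since $\Theta$ is trivially a coinitial subset of $\mathcal{C}_X(\mathbb{P})$, \autoref{Theta|pForAllCovers} applies and yields $\Theta|S=\mathcal{C}_{\mathrm{cl}(\bigcap S)}(\mathbb{P})$ for every $S$. Taking $S=\emptyset$ (so $\bigcap S=X$) gives $\Theta|\emptyset=\mathcal{C}_X(\mathbb{P})=\Theta$; taking $S=\{p\}$, the compactness of $\mathrm{cl}(p)$ — the one place relative compactness of the subbasis is used — means every $C\in\Theta|p=\mathcal{C}_{\mathrm{cl}(p)}(\mathbb{P})$ has a finite subcover, so $\Theta|p\cap\mathcal{F}(\mathbb{P})$ is coinitial in $\Theta|p$, which is \eqref{RestrictionFinite}. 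Together with $\subseteq\ =\ \leq_\Theta$ from \autoref{SubbasisOrder}, this yields \eqref{StronglyWallmanAdmissible}, so $(\mathbb{P},\subseteq,\Theta)$ is a generalised Wallman poset. Finally \autoref{T1Recovery} (again applicable because $\Theta$ is coinitial in $\mathcal{C}_X(\mathbb{P})$) identifies $X$ with $\widehat\Theta$ via $x\mapsto\mathbb{P}_x$, and under this homeomorphism the covers in $\mathcal{C}_X(\mathbb{P})$ become exactly the subbasic covers of $\widehat\Theta$, which closes the loop.

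I do not anticipate a real obstacle, since \autoref{pClosure}, \autoref{CoverPatching}, \autoref{Non-DegenerateResults}, \autoref{Theta|pForAllCovers} and \autoref{ThetaLocallyCompactSubbases} already do all the heavy lifting; the content of \autoref{GeneralisedWallmanDualitySummary} is essentially the conjunction of these. The only points needing care are bookkeeping ones: fixing the precise meaning of ``dual'', checking that relative compactness of the subbasis is exactly what feeds \eqref{RestrictionFinite} through \autoref{Theta|pForAllCovers}, and not letting the degenerate cases ($\Theta=\emptyset$, and the one-point and empty spaces) fall through the cracks.
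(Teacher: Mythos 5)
Your proposal is correct and follows essentially the same route as the paper: the forward direction via \autoref{T1} and \autoref{ThetaLocallyCompactSubbases} (with \autoref{pClosure} supplying relative compactness), and the converse via \autoref{SubbasisOrder}, \autoref{Theta|pForAllCovers} and \autoref{T1Recovery}. Your extra care over the degenerate case $\Theta=\emptyset$ and the explicit identification of where relative compactness feeds into \eqref{RestrictionFinite} are sound refinements of the same argument.
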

More precisely, any generalised Wallman poset $(\mathbb{P},\leq,\Theta)$ can be concretely represented as the subbasis $(\widehat\Theta_p)_{p\in\mathbb{P}}$ of the locally compact $T_1$ space $\widehat\Theta$ where $\leq$ becomes $\subseteq$ and $\Theta$ becomes the family of all covers $(\{\widehat\Theta_c:c\in C\})_{C\in\Theta}$ of $\widehat\Theta$, by \autoref{T1} and \autoref{ThetaLocallyCompactSubbases}.  On the other hand, if $\mathbb{P}$ is a subbasis of relatively compact subsets of a $T_1$ space $X$, $\leq\ =\ \subseteq$ on $\mathbb{P}$ and $\Theta=\mathcal{C}_X(\mathbb{P})$ then $(\mathbb{P},\leq,\Theta)$ is a generalised Wallman poset, by \autoref{SubbasisOrder} and \autoref{Theta|pForAllCovers}.  Moreover, in this case $\widehat\Theta$ is homeomorphic to the original space $X$, by \autoref{T1Recovery}.

The drawback of this duality is that $\Theta=\Theta|\emptyset$ rules out covering families commonly considered on locally compact spaces, such as the uniform open covers on a metric space like $\mathbb{R}$.  To include such examples, it would be better if we could weaken $\Theta=\Theta|\emptyset$ to something like $\Theta=\Theta^\leq$.  The problem is that it is no longer clear then that $\Theta$ will be faithful or non-degenerate.  However, as long as we are willing to work with bases rather than subbases, we can rectify this by placing another assumption on $\Theta$, one that still applies to the uniform covers example.  Specifically, we can assume that $\Theta$ is also directed, as we show in the next section.

\begin{rmk}
Even when $\Theta$ is not directed, there is a natural directed replacement.  This allows us to translate results about directed $\Theta$ into more general results, at least in theory.  Here is an outline of how this would be done.

First we replace $\mathbb{P}$ with $\mathcal{F}(\mathbb{P})$ ordered by
\[F\leq^\mathcal{F}G\qquad\Leftrightarrow\qquad\bigcap_{f\in F}\Theta^{\leq f}\subseteq\bigcap_{g\in G}\Theta^{\leq g}.\]
Each $F\in\mathcal{F}(\mathbb{P})$ is meant to represent the intersection of those $f\in F$.  Indeed, in this ordering, each $F\in\mathcal{F}(\mathbb{P})$ is a meet of its singleton subsets, i.e. $\mathcal{F}(\mathbb{P})$ would be a meet-semilattice except that reflexivity may not hold so, strictly speaking, $\mathcal{F}(\mathbb{P})$ is only a proset, not a poset.  However all the same definitions apply and, if desired, one can consider the quotient poset of equivalent classes.  Next, we replace $\Theta$ with
\[\Theta^\mathcal{F}=\{\Phi\subseteq\mathcal{F}(\mathbb{P}):\forall\text{ $\Phi$-Cauchy }C\subseteq\mathbb{P}\ (C\in\Theta)\}.\]
Reflexivity aside, $\Theta^\mathcal{F}$ is also a meet-semilattice and hence $\leq^\mathcal{F}$-directed.  Specifically,
\[\Phi\wedge\Psi=\{F\cup G:F\in\Phi\text{ and }G\in\Psi\}.\]
Then one can check that the spectrum remains unchanged, more precisely
\[\widehat{\Theta^\mathcal{F}}=\{\mathcal{F}(R):R\in\widehat\Theta\}.\]
The main thing to be aware of though is that, just like $\Theta|\emptyset$,
\[\Theta^{\mathcal{F}\leq^\mathcal{F}}\cap\mathcal{P}(\mathbb{P})\]
(identifying each $S\subseteq\mathbb{P}$ with $\{\{s\}:s\in S\}$) could be strictly bigger than $\Theta^\leq$.
\end{rmk}

\section{Basic Wallman Duality}\label{BasicWallmanDuality}

First we note that bases are closely related to directed $\Theta$.

\begin{prp}\label{Directed=>Basis}
If $\Theta$ is directed then $(\widehat\Theta_p)_{p\in\mathbb{P}}$ is a basis for $\widehat\Theta$.
\end{prp}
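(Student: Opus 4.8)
The plan is to show that the subbasis $(\widehat\Theta_p)_{p\in\mathbb{P}}$ is in fact closed (up to refinement) under finite intersections, which is exactly the condition for a subbasis to be a basis. Recall that by definition $(\widehat\Theta_p)_{p\in\mathbb{P}}$ is always a subbasis of $\widehat\Theta$, so a typical basic open set is a finite intersection $\widehat\Theta_{p_1}\cap\cdots\cap\widehat\Theta_{p_n}=\widehat\Theta_F$ where $F=\{p_1,\dots,p_n\}$ is finite. What I must show is that $\widehat\Theta_F$ is a union of sets of the form $\widehat\Theta_p$, $p\in\mathbb{P}$; concretely, that for every $R\in\widehat\Theta_F$ there is some $p\in\mathbb{P}$ with $R\in\widehat\Theta_p\subseteq\widehat\Theta_F$.

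First I would fix $R\in\widehat\Theta_F$, so $F\subseteq R$. Since $R$ is a $\Theta$-round $\Theta$-Cauchy up-set, hence directed by \autoref{Directed=>Directed} (this is where directedness of $\Theta$ enters), and $F$ is a finite subset of the directed up-set $R$, there is some $p\in R$ with $p\leq f$ for every $f\in F$, i.e. $p$ is a lower bound for $F$ lying in $R$. Then $R\in\widehat\Theta_p$ since $p\in R$. It remains to check $\widehat\Theta_p\subseteq\widehat\Theta_F$: if $Q\in\widehat\Theta_p$ then $p\in Q$, and since $Q$ is an up-set and $p\leq f$ for each $f\in F$, we get $f\in Q$ for each $f\in F$, i.e. $F\subseteq Q$, so $Q\in\widehat\Theta_F$. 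Thus $R\in\widehat\Theta_p\subseteq\widehat\Theta_F$, and since $R\in\widehat\Theta_F$ was arbitrary, $\widehat\Theta_F=\bigcup\{\widehat\Theta_p:p\in\mathbb{P},\ \widehat\Theta_p\subseteq\widehat\Theta_F\}$ is a union of subbasic sets.

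Since every finite intersection of subbasic sets is thus a union of subbasic sets, and every open set of $\widehat\Theta$ is a union of such finite intersections, every open set is a union of sets from $(\widehat\Theta_p)_{p\in\mathbb{P}}$; that is, $(\widehat\Theta_p)_{p\in\mathbb{P}}$ is a basis. The only real content is the lower-bound step, and that is handled entirely by \autoref{Directed=>Directed}; I expect no genuine obstacle here, the main point being simply to organise the subbasis-to-basis reduction correctly and to invoke directedness of points at the right moment. One should take care with the degenerate finite intersection $F=\emptyset$, where $\widehat\Theta_\emptyset=\widehat\Theta$ itself, which is covered since $\widehat\Theta=\bigcup_{p\in\mathbb{P}}\widehat\Theta_p$ whenever $\widehat\Theta$ is nonempty (each $R\in\widehat\Theta$ being nonempty, as it is $\Theta$-Cauchy and $\emptyset\notin\Theta$ when $\widehat\Theta\neq\emptyset$), and trivially when $\widehat\Theta=\emptyset$.
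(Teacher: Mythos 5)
Your proposal is correct and follows essentially the same route as the paper: both invoke \autoref{Directed=>Directed} to see that each $R\in\widehat\Theta$ is directed, extract a lower bound of the finitely many subbasic sets containing $R$, and use the up-set property to conclude $\widehat\Theta_p\subseteq\widehat\Theta_F$. The paper only treats pairwise intersections (which suffices), while you handle general finite $F$ and the degenerate case $F=\emptyset$ explicitly; these are cosmetic differences.
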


\begin{proof}
As $\Theta$ is directed, every $R\in\widehat\Theta$ will be a filter, by \autoref{Directed=>Directed}.  So if $R\in\widehat\Theta_p\cap\widehat\Theta_q$, i.e. $p,q\in R$, then we have $r\in R$ with $p,q\geq r$ and hence $R\in\widehat\Theta_r\subseteq\widehat\Theta_p\cap\widehat\Theta_q$, i.e. $(\widehat\Theta_p)_{p\in\mathbb{P}}$ is a basis.
\end{proof}

\begin{prp}
If $\mathbb{P}\subseteq\mathcal{O}(X)$ is a basis and $\leq\ =\ \subseteq$ on $\mathbb{P}$,  $\mathcal{C}_X(\mathbb{P})$ is directed.
\end{prp}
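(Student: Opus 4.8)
The plan is to unwind what it means for $\mathcal{C}_X(\mathbb{P})$ to be directed as a subset of $\mathcal{P}(\mathbb{P})$ under the $\subseteq$-refinement relation: by definition we must produce, for each finite subfamily $\{C_1,\dots,C_n\}\subseteq\mathcal{C}_X(\mathbb{P})$, a single $D\in\mathcal{C}_X(\mathbb{P})$ refining every $C_i$, where (since $\leq\ =\ \subseteq$) ``$D$ refines $C_i$'' means every member of $D$ is contained in some member of $C_i$. The case $n=0$ just demands $\mathcal{C}_X(\mathbb{P})\neq\emptyset$, which holds because $X\in\mathcal{O}(X)$ is a union of members of the basis $\mathbb{P}$, so $\mathbb{P}$ itself is a $\mathbb{P}$-cover of $X$ (and in the degenerate case $X=\emptyset$ one has $\mathbb{P}=\emptyset$ and $\mathcal{C}_X(\mathbb{P})=\{\emptyset\}$, trivially directed).

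For $n\geq 1$ I would exhibit the common refinement directly from the basis property. Set
\[D=\{p\in\mathbb{P}:\text{for each }i\le n\text{ there is }c\in C_i\text{ with }p\subseteq c\}.\]
By construction $D$ refines each $C_i$, so it only remains to check $D\in\mathcal{C}_X(\mathbb{P})$, i.e. $X\subseteq\bigcup D$. Given $x\in X$, for each $i$ pick $c_i\in C_i$ with $x\in c_i$; then $x\in\bigcap_{i\le n}c_i$, which is open as a finite intersection of open sets, so since $\mathbb{P}$ is a basis there is $p\in\mathbb{P}$ with $x\in p\subseteq\bigcap_{i\le n}c_i$. This $p$ lies in $D$ and contains $x$, proving $\bigcup D=X$.

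That completes the verification: every finite subfamily of $\mathcal{C}_X(\mathbb{P})$ --- including the empty one --- admits a common refinement inside $\mathcal{C}_X(\mathbb{P})$, so $\mathcal{C}_X(\mathbb{P})$ is directed. I expect no real obstacle here; the only points worth flagging are the empty-subfamily/nonemptiness case (forced by the convention $\emptyset\in\mathcal{F}(\mathcal{C}_X(\mathbb{P}))$, which is why directed subsets are required to be nonempty) and the role of the hypothesis $\leq\ =\ \subseteq$, which is exactly what turns the set-theoretic inclusions $p\subseteq c_i$ into instances of the order relation and hence makes $D$ a genuine refinement of each $C_i$ in the sense of the extended order on $\mathcal{P}(\mathbb{P})$.
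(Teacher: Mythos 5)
Your argument is correct and is essentially the paper's proof: given a point $x$ and covers $C_i$, pick $c_i\in C_i$ containing $x$ and use the basis property to find a basic $p$ with $x\in p\subseteq\bigcap_i c_i$, yielding a common refinement that still covers $X$. The only differences are cosmetic \textendash\, the paper treats a pair of covers rather than an arbitrary finite subfamily and omits the (trivial) nonemptiness check \textendash\, so there is nothing further to flag.
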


\begin{proof}
If $C,D\in\mathcal{C}_X(\mathbb{P})$ then, for each $x\in X$, we have $c\in C$ and $d\in D$ with $x\in c\cap d$.  If $\mathbb{P}$ is a basis then we have $p_x\in\mathbb{P}$ with $x\in p_x\subseteq c\cap d$.  Thus $(p_x)_{x\in X}$ is a cover of $X$ refining both $C$ and $D$, i.e. $\mathcal{C}_X(\mathbb{P})$ is directed.
\end{proof}

Near subsets again play an important role, near pairs in particular.

\begin{dfn}
For any $C\subseteq\mathbb{P}$ and $p\in\mathbb{P}$, the \emph{$C$-star of $p$} is given by
\[\tag{$C$-star}Cp=\{c\in C:\{c,p\}\text{ is $\Theta^\leq$-near}\}.\]
\end{dfn}

This notation comes from \cite[VIII.2.1]{PicadoPultr2012}, although there the $C$-star of $p$ is instead defined to be $\bigvee Cp$.  This is fine for frames, but our posets may not have any lattice structure so we must instead work with subsets rather than joins.

For any $p\in\mathbb{P}$, let us define
\[\Theta p=\{Cp:C\in\Theta\}.\]
This provides an alternative `restriction' of $\Theta$ to $p$.

\begin{prp}\label{Restrictions}
For any $p\in\mathbb{P}$, $\Theta p\subseteq\Theta|p$.
\end{prp}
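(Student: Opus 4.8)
The plan is to unwind both definitions and show that any $C\in\Theta$ has the property that $Cp\in\Theta|p$, i.e.\ that $Cp\cup D\in\Theta^\leq$ for every $\mathcal{F}^p_\Theta$-Cauchy $D\subseteq\mathbb{P}$. Recall $\mathcal{F}^p_\Theta$ consists of the finite $F\subseteq\mathbb{P}$ for which $F\cup\{p\}$ is not $\Theta^\leq$-near; a $D$ is $\mathcal{F}^p_\Theta$-Cauchy if it meets every such $F$. So fix $C\in\Theta$ and an $\mathcal{F}^p_\Theta$-Cauchy $D$; I want $Cp\cup D\in\Theta^\leq$. Since $C\in\Theta\subseteq\Theta^\leq$, it suffices to exhibit a refinement, i.e.\ to show $C\leq Cp\cup D$, which unwinds to: for every $c\in C$, either $c\in (Cp)^\geq$ (equivalently $\{c,p\}$ is $\Theta^\leq$-near, since then $c\in Cp$ itself), or $c\in D^\geq$ (equivalently $c\in D$, as $D$ will be an up-set — see below).

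The key step is the dichotomy on an element $c\in C$ with $\{c,p\}$ \emph{not} $\Theta^\leq$-near. In that case $\{c\}\in\mathcal{F}^p_\Theta$, so since $D$ is $\mathcal{F}^p_\Theta$-Cauchy we get $D\cap\{c\}\neq\emptyset$, i.e.\ $c\in D$. Hence every $c\in C$ lies in $Cp$ or in $D$, so $C\subseteq Cp\cup D$, which certainly gives $C\leq Cp\cup D$ and therefore $Cp\cup D\in\Theta^\leq$. Since $D$ was an arbitrary $\mathcal{F}^p_\Theta$-Cauchy set, $Cp\in\Theta|p$; and since $C\in\Theta$ was arbitrary, $\Theta p=\{Cp:C\in\Theta\}\subseteq\Theta|p$.

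I don't anticipate a serious obstacle here — the argument is essentially a direct unfolding — but the one point to be careful about is the precise reading of ``$\mathcal{F}^p_\Theta$-Cauchy $D$'': this uses the $\prec$-Cauchy notion for the \emph{set of finite sets} $\mathcal{F}^p_\Theta$, so $D$ being $\mathcal{F}^p_\Theta$-Cauchy literally means $D\cap F\neq\emptyset$ for all $F\in\mathcal{F}^p_\Theta$, and in particular $D\cap\{c\}\neq\emptyset$ whenever $\{c\}\in\mathcal{F}^p_\Theta$. No up-set or directedness hypothesis on $D$ is needed; the containment $C\subseteq Cp\cup D$ is enough, and it is cleaner to argue via $\subseteq$ than via $\leq$. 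One should also note the edge cases ($C=\emptyset$, or $\{c\}$ being $\Theta^\leq$-near for all $c$) are handled automatically by this argument.
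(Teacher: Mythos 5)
Your proof is correct and follows essentially the same route as the paper: observe that for each $c\in C$ with $\{c,p\}$ not $\Theta^\leq$-near the singleton $\{c\}$ lies in $\mathcal{F}^p_\Theta$, so any $\mathcal{F}^p_\Theta$-Cauchy $D$ must contain $C\setminus Cp$, whence $C\subseteq Cp\cup D$ and $Cp\cup D\in\Theta^\leq$. Your closing remark is also right that no up-set hypothesis on $D$ is needed and the plain containment suffices.
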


\begin{proof}
For any $C\in\Theta$, by definition $C\setminus Cp$ consists of all those $c\in C$ such that $\{c,p\}$ is not $\Theta^\leq$-near.  Thus $\{\{c\}:c\in C\setminus Cp\}\subseteq\mathcal{F}^p_\Theta$ and hence any $\mathcal{F}^p_\Theta$-Cauchy $D\subseteq\mathbb{P}$ must contain $C\setminus Cp$.  Thus $Cp\cup D\in\Theta^\leq$, for all such $D$, i.e. $Cp\in\Theta|p$.
\end{proof}

It follows that $(\Theta p)^\leq\subseteq\Theta|p$, but this inclusion could be strict.  For example, if $\Theta^\leq$ is weakly admissible and $\mathbb{P}$ has a maximum $1$ but no minimum then $\mathbb{P}1=\mathbb{P}$ and hence $\Theta1=\Theta$.  However $\Theta|1=\Theta|\emptyset$ will often be bigger than $\Theta^\leq$, e.g. if $\mathbb{P}$ consists of all the non-empty open subsets of $\mathbb{R}$ and $\Theta$ consists of all the uniform covers of $\mathbb{R}$ then $\Theta|1=\Theta|\emptyset$ will include all arbitrary covers of $\mathbb{R}$.

Again taking notation from \cite[VIII.1.1]{PicadoPultr2012}, for any $C\subseteq\mathcal{P}(X)$ and $p\subseteq X$ let
\begin{align*}
C*p&=\{c\in C:c\cap p\neq\emptyset\}.\\
\Theta*p&=\{C*p:C\in\Theta\}.
\end{align*}
Let us call $\Theta\subseteq\mathcal{C}_X(\mathbb{P})$ \emph{$*$-coinitial} if $\mathcal{C}_X(\mathbb{P})\subseteq(\Theta*p)^\leq$, for all $p\in\mathbb{P}$.

\begin{thm}\label{LocalT1Recovery}
If $X\neq\emptyset$ is a $T_1$ space, $\leq\ =\ \subseteq$ on a subbasis $\mathbb{P}\subseteq\mathcal{O}(X)$ and $\Theta\subseteq\mathcal{C}_X(\mathbb{P})$ is directed and $*$-coinitial then $X$ is homeomorphic to $\widehat\Theta$ via
\[x\mapsto\mathbb{P}_x=\{N\in\mathbb{P}:x\in N\}.\]
It follows that $\mathbb{P}$ is a basis and, for all $p\in\mathbb{P}$,
\[(\Theta*p)^\leq=(\Theta p)^\leq=\mathcal{C}_{\mathrm{cl}(p)}(\mathbb{P}).\]
\end{thm}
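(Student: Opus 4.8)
The plan is to follow the proof of \autoref{T1Recovery} closely, substituting $*$-coinitiality for coinitiality at the two spots where the latter was used. First I would check that $\mathbb{P}_x\in\widehat\Theta$ for every $x\in X$. It is obviously a $\subseteq$-up-set and, since $\Theta\subseteq\mathcal{C}_X(\mathbb{P})$, it is $\Theta$-Cauchy by \autoref{CauchyCovers}. For $\Theta$-roundness, fix $N\in\mathbb{P}_x$; as $X$ is $T_1$ and $\mathbb{P}$ a subbasis, for each $y\neq x$ there is $p_y\in\mathbb{P}$ with $y\in p_y\not\ni x$, so $D:=\{N\}\cup\{p_y:y\in X\setminus\{x\}\}\in\mathcal{C}_X(\mathbb{P})$. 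Applying $*$-coinitiality to $N$ yields $C\in\Theta$ with $C*N\leq D$; any $c\in C\cap\mathbb{P}_x$ then has $x\in c\cap N$, so $c\in C*N$, so $c$ refines some member of $D$, which cannot be any $p_y$ (as $x\in c$), hence $c\subseteq N$. Thus $C\cap\mathbb{P}_x\leq N$, so $\mathbb{P}_x$ is $\Theta$-round.

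That $x\mapsto\mathbb{P}_x$ is injective and carries the subbasis $(p)_{p\in\mathbb{P}}$ of $X$ onto $(\widehat\Theta_p)_{p\in\mathbb{P}}$ — hence is a homeomorphism onto its image — is verified exactly as in \autoref{T1Recovery}. For surjectivity I would take $R\in\widehat\Theta$ and show $\mathbb{P}\setminus R\notin\mathcal{C}_X(\mathbb{P})$. Suppose not; since $\emptyset\notin\Theta$ (because $X\neq\emptyset$), $R$ is nonempty, so pick $r\in R$; $*$-coinitiality applied to $r$ gives $C\in\Theta$ with $C*r\leq\mathbb{P}\setminus R$, and $\Theta$-Cauchyness gives some $c\in C\cap R$. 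If $c\cap r\neq\emptyset$ then $c\in C*r$ refines some $q\in\mathbb{P}\setminus R$, which forces $q\in R$ (as $R$ is an up-set), a contradiction; if $c\cap r=\emptyset$ then, since $R$ is a filter by \autoref{Directed=>Directed}, some $s\in R$ has $s\subseteq c\cap r=\emptyset$, so $R=\mathbb{P}$ and $\mathbb{P}\setminus R=\emptyset\in\mathcal{C}_X(\mathbb{P})$ would give $X=\emptyset$, again a contradiction. Hence there is $x\notin\bigcup(\mathbb{P}\setminus R)$, so $\mathbb{P}_x\subseteq R$, and \autoref{CauchyUpsetRound} gives $\mathbb{P}_x=R$. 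This establishes the homeomorphism, and then $\mathbb{P}$ is a basis by \autoref{Directed=>Basis}.

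For the displayed identities, fix $p\in\mathbb{P}$. To prove $(\Theta*p)^\leq=\mathcal{C}_{\mathrm{cl}(p)}(\mathbb{P})$: the inclusion $\subseteq$ holds because for $C\in\Theta$ and $y\in\mathrm{cl}(p)$ some $c\in C$ contains $y$ and then the open set $c$ meets $p$, so $C*p$ covers $\mathrm{cl}(p)$, while $\mathcal{C}_{\mathrm{cl}(p)}(\mathbb{P})$ is refinement-closed; for $\supseteq$, given $C$ covering $\mathrm{cl}(p)$, write $X\setminus\mathrm{cl}(p)=\bigcup\mathcal{B}$ with $\mathcal{B}\subseteq\mathbb{P}$ (using that $\mathbb{P}$ is a basis), so $C\cup\mathcal{B}\in\mathcal{C}_X(\mathbb{P})$, and $*$-coinitiality yields $D\in\Theta$ with $D*p\leq C\cup\mathcal{B}$; no member of $D*p$ (which meets $p$) can refine a member of $\mathcal{B}$ (which is disjoint from $p$), so $D*p\leq C$ and $C\in(\Theta*p)^\leq$. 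Finally, $(\Theta p)^\leq=(\Theta*p)^\leq$ will follow from $Cp=C*p$ for all $C\in\Theta$, that is, from the claim that $\{c,p\}$ is $\Theta^\leq$-near iff $c\cap p\neq\emptyset$: the reverse implication holds since then $c$ and $p$ both lie in some $\mathbb{P}_x\in\widehat\Theta$, which is $\Theta^\leq$-near by \autoref{xNear}; the forward implication follows from \autoref{Near=>Bounded} — using that $\Theta$ is directed and that $\{\emptyset\}$ is not $\Theta^\leq$-near when $\emptyset\in\mathbb{P}$ (by \autoref{0near}) — which produces a nonempty $q\in\mathbb{P}$ with $q\subseteq c\cap p$.

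The substantive step is the roundness computation in the first paragraph: this is precisely where $*$-coinitiality does the work that ordinary coinitiality did in \autoref{T1Recovery}, and everything after it is a careful rerun of that proof together with the star-calculus behind \autoref{Near=>Bounded}. I expect the only real nuisance to be the degenerate cases $\emptyset\in\mathbb{P}$ and $|X|\leq 1$, which should be dispatched using the facts that $\emptyset\notin\Theta$ and that $\{\emptyset\}$ is not $\Theta^\leq$-near here.
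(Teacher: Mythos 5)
Your proposal is correct and follows essentially the same route as the paper's proof: roundness of $\mathbb{P}_x$ via $*$-coinitiality applied to a cover separating $x$, surjectivity via showing $\mathbb{P}\setminus R$ cannot cover $X$, the basis claim from \autoref{Directed=>Basis}, and the star identities from the characterisation of near pairs as in \autoref{Near=>Bounded}. The only quibble is your justification that $R\neq\emptyset$: this follows from $R$ being $\Theta$-Cauchy together with $\Theta\neq\emptyset$ (which holds because $\Theta$ is directed), rather than from $\emptyset\notin\Theta$.
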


\begin{proof}
As in the proof of \autoref{T1Recovery}, we immediately see that $x\mapsto\mathbb{P}_x$ is an injective homeomorphism onto a subspace of $\mathcal{P}(\mathbb{P})$, each $\mathbb{P}_x$ is $\Theta$-Cauchy and, for any $x\in X$, $\mathbb{P}\setminus\mathbb{P}_x$ covers $X\setminus\{x\}$.  So, for each $p\in\mathbb{P}_x$, $\{p\}\cup(\mathbb{P}\setminus\mathbb{P}_x)\in\mathcal{C}_X(\mathbb{P})$.  Thus we have $C\in\Theta$ with $C*p\leq\{p\}\cup(\mathbb{P}\setminus\mathbb{P}_x)$ and hence
\[\mathbb{P}_x\cap C=\mathbb{P}_x\cap C*p\leq\mathbb{P}_x\cap(\{p\}\cup(\mathbb{P}\setminus\mathbb{P}_x))=\{p\}.\]
This shows that $\mathbb{P}_x$ is $\Theta$-round and hence $\mathbb{P}_x\in\widehat\Theta$.

On the other hand, any $R\in\widehat\Theta$ must contain $\mathbb{P}_x$, for some $x\in X$.  To see this, first note that, by \eqref{Empty=Theta}, $R\neq\emptyset$, as $\Theta\neq\emptyset$, so we can take $r\in R$.  If $\emptyset\in R$ then $R=\emptyset^\leq=\mathbb{P}\supseteq\mathbb{P}_x$, for any $x\in X$, and we are done.  So we may assume $\emptyset\notin R$ and hence $R\subseteq\mathbb{P}*r$, as $R$ is directed by \autoref{Directed=>Directed}.  If $R$ did not contain $\mathbb{P}_x$, for any $x\in X$, then we would have $\mathbb{P}\setminus R\in\mathcal{C}_X(\mathbb{P})$ and $*$-coinitiality would yield $C\in\Theta$ with $C*r\leq\mathbb{P}\setminus R$.  As $R\subseteq\mathbb{P}*r$ is $\Theta$-Cauchy, we $c\in C\cap R\subseteq C*r\leq\mathbb{P}\setminus R$, contradicting the fact that $R$ is an up-set.  This shows that $\mathbb{P}_x\subseteq R$, for some $x\in X$, and hence $R=\mathbb{P}_x$, as we already showed that $\mathbb{P}_x\in\widehat\Theta$, i.e.
\[\widehat\Theta=\{\mathbb{P}_x:x\in X\}.\]

To prove $\Theta*p=\Theta p$, it suffices to show that
\[\{p,q\}\text{ is $\Theta^\leq$-near}\qquad\Leftrightarrow\qquad p\cap q\neq\emptyset.\]
To see this, note that if $p\cap q\neq\emptyset$ then $p,q\in\mathbb{P}_x\in\widehat\Theta$ so $\{p,q\}$ is $\Theta^\leq$-near, by \autoref{xNear}.  For the converse, we argue as in the proof of \autoref{Near=>Bounded}.  Specifically, if $\{p,q\}$ is $\Theta^\leq$-near then we have $D\notin\Theta^\leq$ with $D\cup\{p\},D\cup\{q\}\in\Theta^\leq$.  As $\Theta$ is directed, we have $C\in\Theta$ with $C\leq D\cup\{p\}$ and $C\leq D\cup\{q\}$.  As $D\notin\Theta^\leq$, we know that $C\nleq D$, i.e. we have $c\in C$ with $c\nleq D$.  As $X\neq\emptyset$, $C$ must contain a non-empty set so we may assume that $c\neq\emptyset$.  As $C\leq D\cup\{p\}$ and $C\leq D\cup\{q\}$, we have $p,q\geq c$ and hence $p\cap q\supseteq c\neq\emptyset$, as required.

By \autoref{Directed=>Basis}, $\mathbb{P}$ is a basis (although this can also be checked directly).  By the definition of topological closure, if $p,q\in\mathbb{P}$ and $x\in\mathrm{cl}(p)\cap q$ then $p\cap q\neq\emptyset$, from which it follows that $(\Theta*p)^\leq\subseteq\mathcal{C}_{\mathrm{cl}(p)}(\mathbb{P})$.  Conversely, if $C\in\mathcal{C}_{\mathrm{cl}(p)}(\mathbb{P})$ then, as $\mathbb{P}$ is a basis, $C\cup(\mathbb{P}\setminus\mathbb{P}*p)\in\mathcal{C}_X(\mathbb{P})$.  By $*$-coinitiality, we have $D\in\Theta$ with $D*p\leq C\cup(\mathbb{P}\setminus\mathbb{P}*p)$ and hence $D*p\leq C$, i.e.
\[(\Theta*p)^\leq=\mathcal{C}_{\mathrm{cl}(p)}(\mathbb{P}).\qedhere\]
\end{proof}

Next we have an analog of \autoref{pClosure}

\begin{prp}\label{pClosure2}
For any $p\subseteq\mathbb{P}$,
\[\mathrm{cl}(\widehat{\Theta}_p)\subseteq\widehat{\Theta p}\subseteq\widehat\Theta.\]
If $\Theta$ is directed and non-degenerate then $\mathrm{cl}(\widehat{\Theta}_p)=\widehat{\Theta p}$.
\end{prp}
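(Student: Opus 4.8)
The plan is to mirror the two inclusions of \autoref{pClosure}, the new ingredient being that the star $Cp$ and the cover $C$ itself leave the same trace on any point $R$ we need to consider, so that the $\Theta p$-conditions and the $\Theta$-conditions on $R$ become interchangeable. First I would isolate the elementary observation that \emph{if $\{r,p\}$ is $\Theta^\leq$-near for every $r\in R$, then $C\cap R=Cp\cap R$ for every $C\in\Theta$}: the inclusion $\supseteq$ is just $Cp\subseteq C$, while for $\subseteq$ any $c\in C\cap R$ has $c\in R$, so $\{c,p\}$ is $\Theta^\leq$-near and hence $c\in Cp$. Granting this, $R$ is $\Theta$-Cauchy exactly when it is $\Theta p$-Cauchy, and $\Theta$-round exactly when it is $\Theta p$-round, since these conditions only refer to the sets $C\cap R$ and $Cp\cap R$.

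For $\mathrm{cl}(\widehat\Theta_p)\subseteq\widehat{\Theta p}$: a point $R\in\mathrm{cl}(\widehat\Theta_p)$ lies in $\widehat\Theta$, and each basic neighbourhood $\widehat\Theta_F$ of $R$ (with $F\subseteq R$ finite) meets $\widehat\Theta_p$, so some $Q\in\widehat\Theta$ contains $F\cup\{p\}$; by \autoref{xNear}, $F\cup\{p\}$ is $\Theta^\leq$-near, and the case $F=\{r\}$ shows every $r\in R$ is near $p$. The observation above then upgrades the $\Theta$-roundness and $\Theta$-Cauchyness of $R$ to the $\Theta p$-versions, so $R\in\widehat{\Theta p}$. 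For $\widehat{\Theta p}\subseteq\widehat\Theta$ the only point to check is, again, that every $r\in R$ is near $p$ — this is where it matters that we are using $\Theta p$ and not the larger $\Theta|p$. Given $r\in R$, $\Theta p$-roundness supplies $C\in\Theta$ with $Cp\cap R\leq r$, and $\Theta p$-Cauchyness supplies some $c'\in Cp\cap R$; then $c'\leq r$ and, since $c'\in Cp$, $\{c',p\}$ is $\Theta^\leq$-near, so $\{r,p\}\geq\{c',p\}$ and \eqref{BoundedNear} gives that $\{r,p\}$ is $\Theta^\leq$-near. With the trace identity in force, the $\Theta p$-conditions on $R$ become $\Theta$-conditions, whence $R\in\widehat\Theta$.

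For the equality under the extra hypotheses only $\widehat{\Theta p}\subseteq\mathrm{cl}(\widehat\Theta_p)$ remains. Take $R\in\widehat{\Theta p}$; by the inclusion just proved $R\in\widehat\Theta$, and since $\Theta$ is directed $R$ is a non-empty filter by \autoref{Directed=>Directed}. Given a finite $F\subseteq R$, pick a lower bound $q\in R$ of $F$; then $\{q,p\}$ is $\Theta^\leq$-near (as $q\in R$, by the argument of the previous paragraph) and $F\cup\{p\}\geq\{q,p\}$, so $F\cup\{p\}$ is $\Theta^\leq$-near by \eqref{BoundedNear}. Non-degeneracy now yields $\widehat\Theta_F\cap\widehat\Theta_p=\widehat\Theta_{F\cup\{p\}}\neq\emptyset$, and as $F$ ranged over all finite subsets of $R$ this is precisely $R\in\mathrm{cl}(\widehat\Theta_p)$.

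The one genuinely non-routine step, and where I expect the proof to be delicate, is showing that every element of a $\Theta p$-round $\Theta p$-Cauchy up-set $R$ is near $p$. Since $\Theta p$ need not be $\leq$-closed (unlike $\Theta|S$), the complement argument of \autoref{pClosure} does not transplant; one must instead use roundness to produce, below any given $r\in R$, a witness that lies in some $Cp$ and is therefore near $p$, and then propagate nearness back up along $\leq$ via \eqref{BoundedNear}. Everything else is routine bookkeeping around the trace identity $C\cap R=Cp\cap R$.
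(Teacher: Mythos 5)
Your proof is correct and follows essentially the same route as the paper's: establish that every element of $R$ is $\Theta^\leq$-near $p$ (via \autoref{xNear} in one direction, and via roundness plus Cauchyness together with \eqref{BoundedNear} in the other), transfer the Cauchy and round conditions between $\Theta$ and $\Theta p$, and use directedness and non-degeneracy for the reverse inclusion. Your explicit trace identity $C\cap R=Cp\cap R$ and your remark that the second inclusion needs Cauchyness as well as roundness merely make precise what the paper's proof leaves implicit, and your handling of general finite $F$ in the closure argument is interchangeable with the paper's reduction to singleton neighbourhoods via \autoref{Directed=>Basis}.
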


\begin{proof}
For the first $\subseteq$, take $R\in\mathrm{cl}(\widehat{\Theta}_p)$ so, for every $r\in R$,
\[\widehat\Theta_r\cap\widehat\Theta_p\neq\emptyset.\]
Thus we have $Q\in\widehat\Theta$ with $p,r\in Q$ and hence $\{p,r\}$ is $\Theta^\leq$-near, by \autoref{xNear}.  This shows that $R\subseteq\mathbb{P}p$ and hence $R$ is $\Theta p$-Cauchy, as $R$ is $\Theta$-Cauchy.  Also, as $R$ is $\Theta$-round, it follows immediately that $R$ is $\Theta p$-round and hence $R\in\widehat{\Theta p}$.

For the next $\subseteq$, take $R\in\widehat{\Theta p}$.  As $R$ is $\Theta p$-Cauchy, $R$ is certainly $\Theta$-Cauchy.  As $R$ is also $\Theta p$-round, it follows that $R\subseteq\mathbb{P}p$ and hence $R$ is also $\Theta$-round, i.e. $R\in\widehat\Theta$.  Also $R\subseteq\mathbb{P}p$ means that, for all $r\in R$, $\{r,p\}$ is $\Theta^\leq$-near and hence $\widehat\Theta_r\cap\widehat\Theta_p\neq\emptyset$, as $\Theta$ is non-degenerate.  If $\Theta\neq\emptyset$ is also directed then $(\widehat{\Theta}_p)_{p\in\mathbb{P}}$ is a basis, by \autoref{Directed=>Basis}, so this shows that $R\in\mathrm{cl}(\widehat{\Theta}_p)$, i.e. $\mathrm{cl}(\widehat{\Theta}_p)=\widehat{\Theta p}$.
\end{proof}

Note that for directed $\Theta$ to non-degenerate, it suffices that $\widehat\Theta_p\neq\emptyset$, for all $p\in\mathbb{P}$ (except $0$, if $\mathbb{P}$ has a minimum $0$ and $\{0\}$ is not $\Theta^\leq$-near), by \autoref{Near=>Bounded}.

Let us call $\Theta$ \emph{nearly finite} if $\Theta p\cap\mathcal{F}(\mathbb{P})$ is coinitial in $\Theta p$, for all $p\in\mathbb{P}$.

\begin{thm}\label{ThetaLocallyCompact}
If $\Theta$ is directed, weakly admissible and nearly finite then $\widehat\Theta$ is locally compact, $\Theta$ is non-degenerate and, for all $p,q\in\mathbb{P}$,
\begin{equation}
\label{ThetaPreorder=subseteq'}\widehat\Theta_p\subseteq\widehat\Theta_q\qquad\Leftrightarrow\qquad p\leq_{\Theta^\leq}q.
\end{equation}
Moreover, $(\{\widehat\Theta_c:c\in C\})_{C\in\Theta}$ is $*$-coinitial in $\mathcal{C}_{\widehat\Theta}(\{\widehat\Theta_p:p\in\mathbb{P}\})$.
\end{thm}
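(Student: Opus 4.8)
The plan is to derive the whole theorem from a single lemma: \emph{if $\{p\}$ is $\Theta^\leq$-near, i.e. $p\in\dt{\mathbb{P}}$, then $\widehat\Theta_p\neq\emptyset$.} Granting this, non-degeneracy is immediate from \autoref{FiniteNearBounded}, since any $\Theta^\leq$-near finite $F$ then lies above some $p\in\dt{\mathbb{P}}$ and every $R\in\widehat\Theta_p$ satisfies $F\subseteq p^\leq\subseteq R$. Local compactness follows by combining \autoref{Directed=>Basis} (so $(\widehat\Theta_p)_{p\in\mathbb{P}}$ is a basis) with \autoref{pClosure2} and \autoref{ThetaCompact}: since $\Theta$ is nearly finite we have $(\Theta p)^\leq=(\Theta p\cap\mathcal{F}(\mathbb{P}))^\leq$, so by \autoref{RCU<=>MleqC} the spectrum $\widehat{\Theta p}=\widehat{\Theta p\cap\mathcal{F}(\mathbb{P})}$ is a compact Wallman spectrum, and each $\mathrm{cl}(\widehat\Theta_p)\subseteq\widehat{\Theta p}$ is therefore compact. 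The same reduction, together with \eqref{CoinitialCovers}, will drive the two remaining assertions.

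To prove the lemma I would argue by contradiction, assuming $p\in\dt{\mathbb{P}}$ but $\widehat\Theta_p=\emptyset$ (minor care is needed when $\mathbb{P}$ has a minimum, handled exactly as in the proof of \autoref{Near=>Bounded}). Fix $D\notin\Theta^\leq$ with $D\cup\{p\}\in\Theta^\leq$, say $C_0\leq D\cup\{p\}$ with $C_0\in\Theta$. Any $R\in\widehat\Theta$ meets $C_0$ at some $c$ below a member of $D\cup\{p\}$; that member cannot be $p$, as $c\leq p$ would force $p\in R$, so $c\leq d$ for some $d\in D$ and hence $d\in R$. Thus $\widehat\Theta\subseteq\bigcup_{d\in D}\widehat\Theta_d$, so $\widehat{\Theta p}\subseteq\bigcup_{d\in D}\widehat{\Theta p}_d$, and applying \eqref{CoinitialCovers} to $\Theta p\cap\mathcal{F}(\mathbb{P})$ gives $D\in(\Theta p)^\leq$, say $C_1p\leq D$ with $C_1\in\Theta$. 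Choosing, by directedness, $C_2\in\Theta$ with $C_2\leq C_1$ and $C_2\leq C_0$, I would then show $C_2\leq D$, contradicting $D\notin\Theta^\leq$: each $c\in C_2$ lies below some member of $D\cup\{p\}$, and if it lies below $p$, pick $c'\in C_1$ with $c\leq c'$; since $\{c\}$ is $\Theta^\leq$-near (weak admissibility) and $\{c',p\}$ refines $\{c\}$, \eqref{BoundedNear} makes $\{c',p\}$ $\Theta^\leq$-near, so $c'\in C_1p\leq D$ and $c\leq c'\leq d$ for some $d\in D$ after all.

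For \eqref{ThetaPreorder=subseteq'} the implication $\Leftarrow$ is \autoref{FaithfulRightarrow}. For $\Rightarrow$ I would assume $\widehat\Theta_p\subseteq\widehat\Theta_q$ and $S\cup\{p\}\in\Theta^\leq$ and show $S\cup\{q\}\in\Theta^\leq$. As in the lemma, every $R\in\widehat\Theta$ either meets $S$ or contains an element $\leq p$, hence contains $p$, hence $q$; so $\widehat\Theta\subseteq\bigcup_{x\in S\cup\{q\}}\widehat\Theta_x$, and restricting to $\widehat{\Theta q}=\widehat{\Theta q\cap\mathcal{F}(\mathbb{P})}$ and applying \eqref{CoinitialCovers} yields $C_1\in\Theta$ with $C_1q\leq S\cup\{q\}$. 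Refining $C_1$ against a witness $C_0\leq S\cup\{p\}$ by directedness gives $C_2\in\Theta$ with $C_2\leq C_1$ and $C_2\leq C_0$; for $c\in C_2$ either $c\leq s$ for some $s\in S$, or $c\leq p$, in which case the lemma gives $\widehat\Theta_c\neq\emptyset$, any $R\ni c$ contains $p$ hence $q$, so $\{c,q\}$ is $\Theta^\leq$-near by \autoref{xNear}, whence $c\in C_2q$ and (since $C_2q\leq C_1q\leq S\cup\{q\}$) $c$ lies below a member of $S\cup\{q\}$ after all. Thus $C_2\leq S\cup\{q\}$ and $S\cup\{q\}\in\Theta^\leq$.

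Finally, for $*$-coinitiality, given a cover $\{\widehat\Theta_d:d\in D\}$ of $\widehat\Theta$ by basic open sets and any $p\in\mathbb{P}$, I would restrict it to $\widehat{\Theta p}=\widehat{\Theta p\cap\mathcal{F}(\mathbb{P})}$ and use \eqref{CoinitialCovers} to produce $C\in\Theta$ with $Cp\leq D$; by non-degeneracy and \autoref{xNear} we have $\widehat\Theta_c\cap\widehat\Theta_p\neq\emptyset\Leftrightarrow\{c,p\}$ is $\Theta^\leq$-near, so $\{\widehat\Theta_c:c\in C\}*\widehat\Theta_p=\{\widehat\Theta_c:c\in Cp\}$, which refines $\{\widehat\Theta_d:d\in D\}$ by \autoref{FaithfulRightarrow}. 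The hard part is the lemma, and the real obstacle is that $\Theta$ may contain infinite covers, so the Kuratowski--Zorn argument behind \autoref{ThetaCompact} is not directly available for $\Theta$ itself; near-finiteness is precisely what lets one descend to the finite restrictions $\Theta p\cap\mathcal{F}(\mathbb{P})\subseteq\mathcal{F}(\mathbb{P})$ and run the compact Wallman machinery there, while directedness and weak admissibility guarantee, via the lemma, that these restrictions still faithfully detect $\Theta^\leq$ and witness the points of $\widehat\Theta_p$.
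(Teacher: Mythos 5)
Your proof is correct, and it runs on the same technical engine as the paper's: near-finiteness lets you identify $\widehat{\Theta p}$ with the compact Wallman spectrum $\widehat{\Theta p\cap\mathcal{F}(\mathbb{P})}$ and apply \eqref{CoinitialCovers} there, while directedness plus weak admissibility (via \eqref{BoundedNear}: a non-zero $c\leq p$ forces any $c'\geq c$ into $C'p$) transfers information between $(\Theta p)^\leq$ and $\Theta^\leq$; the local compactness and $*$-coinitiality arguments are essentially identical to the paper's. Where you genuinely diverge is in the organisation of the other two claims. The paper proves, separately for faithfulness and for non-degeneracy, an inline non-membership transfer ($S\notin\Theta^\leq$ plus suitable hypotheses imply $S\notin(\Theta p)^\leq$) and then uses \eqref{CoinitialCovers} to \emph{exhibit} a point of $\widehat{\Theta p}$ avoiding $S$ \textendash\, in particular \eqref{ThetaPreorder=subseteq'} is proved contrapositively by constructing $R\in\widehat\Theta_p\setminus\widehat\Theta_q$. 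You instead isolate the singleton case of non-degeneracy ($p\in\dt{\mathbb{P}}\Rightarrow\widehat\Theta_p\neq\emptyset$) as the one lemma, recover full non-degeneracy from it via \autoref{FiniteNearBounded}, and prove faithfulness in the forward direction by a membership transfer ($C_2\leq C_0,C_1$ together with $C_1q\leq S\cup\{q\}$ gives $C_2\leq S\cup\{q\}$). The two arguments are near-contrapositives of one another: your version buys a cleaner dependency structure, with one lemma feeding every part, at the cost of invoking \autoref{xNear} and the lemma again inside the faithfulness step, while the paper's version keeps each bullet self-contained and directly produces the separating points. Your handling of the $c=0$ and $D=\emptyset$ degeneracies by deferring to the proof of \autoref{Near=>Bounded} is legitimate and no less careful than the paper's own treatment.
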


\begin{proof}
As $\Theta$ is nearly finite, for all $p\in\mathbb{P}$, the spectrum of $\Theta p$ is the same as the spectrum of $\Theta p\cap\mathcal{F}(\mathbb{P})$, which is compact, by \autoref{ThetaCompact}.  Thus $\mathrm{cl}(\widehat{\Theta}_p)\subseteq\widehat{\Theta p}$ is also compact, by \autoref{pClosure2}.  By \autoref{Directed=>Basis}, $(\widehat\Theta_p)_{p\in\mathbb{P}}$ is thus a basis of relatively compact sets so $\widehat\Theta$ is locally compact.

The $\Leftarrow$ part of \eqref{ThetaPreorder=subseteq'} appears in \autoref{FaithfulRightarrow}.  Conversely, say $p\nleq_{\Theta^\leq}q$, so we have some $S\subseteq\mathbb{P}$ with $S\cup\{p\}\in\Theta^\leq$ but $S\cup\{q\}\notin\Theta^\leq$.  We claim that $S\cup\{q\}\notin(\Theta p)^\leq$, otherwise we would have $S\cup\{q\}\cup(\mathbb{P}\setminus\mathbb{P}p)\in\Theta^\leq$.  As $\Theta$ is directed, we would then have $C\in\Theta$ with $C\leq S\cup\{p\}$ and $C\leq S\cup\{q\}\cup(\mathbb{P}\setminus\mathbb{P}p)$.  Note $C\nleq S\cup\{q\}$, as $S\cup\{q\}\notin\Theta^\leq$, so we have $c\in C$ with $c\nleq S\cup\{q\}$ and hence $c^\leq\neq\mathbb{P}$ and $c\leq p$, as $c\leq S\cup\{p\}$.  By weak admissibility, $\{c\}$ and hence $\{c,p\}$ is $\Theta^\leq$-near so $c\nleq\mathbb{P}\setminus\mathbb{P}p$, contradicting $C\leq S\cup\{q\}\cup(\mathbb{P}\setminus\mathbb{P}p)$, thus proving the claim.  By \eqref{CoinitialCovers}, we have $R\in\widehat{\Theta p}\subseteq\widehat\Theta$ disjoint from $S\cup\{q\}$ and hence $p\in R$, as $S\cup\{p\}\in\Theta$, i.e. $R\in\widehat\Theta_p\setminus\widehat\Theta_q$.  This proves the $\Rightarrow$ part of \eqref{ThetaPreorder=subseteq'}.

If $\Theta$ is weakly admissible but $\{p\}$ is not $\Theta^\leq$-near, for any $p\in\mathbb{P}$, then $|\mathbb{P}|\leq1$ and non-degeneracy amounts to showing $\widehat\Theta\neq\emptyset$ if $\emptyset\notin\Theta$, which again follows from \eqref{CoinitialCovers}.  Otherwise, to verify non-degeneracy it suffices to consider non-empty finite $\Theta^\leq$-near $F\subseteq\mathbb{P}$.  So we have $S\notin\Theta^\leq$ such that, for all $f\in F$, $S\cup\{f\}\in\Theta^\leq$.  Fixing $f\in F$, we again see that $S\notin(\Theta f)^\leq$ so \eqref{CoinitialCovers} yields $R\in\widehat{\Theta f}\subseteq\widehat\Theta$ disjoint from $S$ and hence with $F\subseteq R$, as required.

By non-degeneracy and \autoref{xNear},
\[\{p,q\}\text{ is $\Theta^\leq$-near}\qquad\Leftrightarrow\qquad\widehat\Theta_p\cap\widehat\Theta_q\neq\emptyset.\]
It follows that $(\{\widehat\Theta_c:c\in C\})_{C\in\Theta}$ is $*$-coinitial in $\mathcal{C}_{\widehat\Theta}(\{\widehat\Theta_p:p\in\mathbb{P}\})$, as \eqref{CoinitialCovers} shows that $(\{\widehat\Theta_c:c\in C\})_{C\in\Theta q}$ is coinitial in $\mathcal{C}_{\widehat{\Theta q}}(\{\widehat{\Theta q}_p:p\in\mathbb{P}\})$.
\end{proof}

To obtain a precise duality, we need the following extra observation.

\begin{prp}\label{ThetapNonempty}
If $\Theta$ is a weakly admissible filter and $p^\leq\neq\mathbb{P}$ then $\emptyset\notin\Theta p$.
\end{prp}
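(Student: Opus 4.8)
The plan is to argue by contradiction, using crucially that a filter satisfies $\Theta=\Theta^\leq$, so that $\Theta$-near and $\Theta^\leq$-near coincide throughout. Assume $\emptyset\in\Theta p$, i.e. there is some $C\in\Theta$ with $Cp=\emptyset$; unpacked, $\{c,p\}$ is not $\Theta^\leq$-near for any $c\in C$. Since $p^\leq\neq\mathbb{P}$, weak admissibility supplies $D\notin\Theta$ with $D\cup\{p\}\in\Theta$, and since $\Theta$ is directed and contains both $C$ and $D\cup\{p\}$ I would take a common refinement $C'\in\Theta$ with $C'\leq C$ and $C'\leq D\cup\{p\}$.

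The observation to isolate first is that any $x\in C'$ with $x\leq p$ is forced to be a minimum of $\mathbb{P}$: from $C'\leq C$ there is $c\in C$ with $x\leq c$, so $\{c,p\}\geq\{x\}$; as $\{c,p\}$ is not $\Theta^\leq$-near, \eqref{BoundedNear} rules out $\{x\}$ being $\Theta^\leq$-near, whence weak admissibility gives $x^\leq=\mathbb{P}$. Next, $C'\nleq D$ (otherwise $D\in\Theta^\leq=\Theta$), so some $c'\in C'$ refines into $\{p\}$ rather than into $D$, i.e. $c'\leq p$ while $c'\nleq d$ for every $d\in D$; by the observation $c'$ is the minimum $0$ of $\mathbb{P}$, but then $0\leq d$ for all $d\in D$, which is compatible with $c'\nleq d$ only if $D=\emptyset$. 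Hence $D=\emptyset\notin\Theta$.

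With $D=\emptyset$ one has $C'\leq\{p\}$, so \emph{every} element of $C'$ is $\leq p$ and hence equals $0$ by the observation; as $C'\neq\emptyset$ (since $\emptyset\notin\Theta$), this gives $C'=\{0\}\in\Theta$. Now \autoref{0near} applies: $\{0\}$ is $\Theta^\leq$-near. Picking any $c\in C$ (possible as $\emptyset\notin\Theta$), we get $0\leq c$, so $\{c,p\}\geq\{0\}$, and \eqref{BoundedNear} makes $\{c,p\}$ $\Theta^\leq$-near --- contradicting $Cp=\emptyset$.

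The main obstacle, and essentially the only subtle point, is the potential minimum $0$ of $\mathbb{P}$: weak admissibility says nothing about $\{0\}$, so an element of $C'$ refining to $p$ fails to yield a near pair exactly when it is $0$. The middle and final paragraphs are devoted to trapping that exceptional case --- forcing $D=\emptyset$, then $C'=\{0\}$, then appealing to \autoref{0near}. If $\mathbb{P}$ has no minimum the argument collapses to the first sentence of the observation, since then $x^\leq\neq\mathbb{P}$ for every $x$ and one contradicts $Cp=\emptyset$ at once.
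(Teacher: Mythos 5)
Your proof is correct and rests on the same ingredients as the paper's: the weak-admissibility witness $D\notin\Theta$ with $D\cup\{p\}\in\Theta$, a common refinement of $C$ and $D\cup\{p\}$ via directedness, an element of that refinement forced below $p$ but not below $D$, and a separate treatment of the degenerate case where that element is the minimum $0$ (which the paper handles by noting all subsets become near when $\{0\}\in\Theta$ and $\emptyset\notin\Theta$, exactly the content of your appeal to \autoref{0near}). The only difference is presentational: you run it as a contradiction while the paper argues directly that $Cp\neq\emptyset$ for each $C\in\Theta$.
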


\begin{proof}
Take any $C\in\Theta$ and non-zero $p\in\mathbb{P}$.  By weak admissibility, we have $D\notin\Theta$ with $D\cup\{p\}\in\Theta$.  As $\Theta$ is directed, we have $E\in\Theta$ with $E\leq C$ and $E\leq D\cup\{p\}$.  As $D\notin\Theta=\Theta^\leq$, we have $e\in E$ with $e\nleq D$ and hence $e\leq p$.  If we had $E=\{0\}$ and hence $D=\emptyset$ then all subsets would be $\Theta$-near so $e\leq C=Cp$ and, in particular, $Cp\neq\emptyset$.  If $E\neq\{0\}$ then we can take $e$ to be non-zero and hence $\{e\}$ is $\Theta$-near, by weak admissibility, so $e\leq Cp$ and again $Cp\neq\emptyset$.
\end{proof}

We can now state an extension of Wallman duality for bases as follows.

\begin{thm}\label{BasicWallmanDualitySummary}
\
\begin{center}
\textbf{\upshape Abstract nearly finite proper Wallman admissible filters $\Theta\subseteq\mathcal{P}(\mathbb{P})$ are dual to concrete $*$-coinitial filters $\Theta\subseteq\mathcal{C}_X(\mathbb{P})$ where $\mathbb{P}\subseteq\mathcal{O}(X)$ is a basis of relatively compact sets on a locally compact $T_1$ space $X\neq\emptyset$}.
\end{center}
\end{thm}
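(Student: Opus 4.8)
The statement is, like \autoref{WallmanDualitySummary} and \autoref{GeneralisedWallmanDualitySummary}, shorthand for a pair of mutually inverse constructions, and the plan is to spell these out by assembling results already in hand; throughout, ``$\Theta$ is a \textbf{proper} filter'' means $\emptyset\notin\Theta$, i.e.\ $\Theta\neq\mathcal{P}(\mathbb{P})$. For the \emph{abstract-to-concrete} direction I would start with a proper, nearly finite, Wallman admissible filter $\Theta\subseteq\mathcal{P}(\mathbb{P})$. Being a filter, $\Theta$ is directed and a $\leq$-up-set, hence $\subseteq$-closed (so $\subseteq_\mathcal{F}$-closed) and $\Theta=\Theta^\leq$; by \autoref{Wallman=>Weakly} it is also weakly admissible, so \autoref{ThetaLocallyCompact} applies: $\widehat\Theta$ is locally compact, $\Theta$ is non-degenerate, $\widehat\Theta_p\subseteq\widehat\Theta_q\Leftrightarrow p\leq_{\Theta^\leq}q$, and $(\{\widehat\Theta_c:c\in C\})_{C\in\Theta}$ is $*$-coinitial in $\mathcal{C}_{\widehat\Theta}(\{\widehat\Theta_p:p\in\mathbb{P}\})$. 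Since $\Theta=\Theta^\leq$, Wallman admissibility with \autoref{WallmanAdmissibleEquivalent} gives $\leq\ =\ \leq_\Theta\ =\ \leq_{\Theta^\leq}$, so $p\leq q\Leftrightarrow\widehat\Theta_p\subseteq\widehat\Theta_q$, i.e.\ $\Theta$ is faithful. By \autoref{T1} the spectrum is $T_1$, by \autoref{Directed=>Basis} $(\widehat\Theta_p)_{p\in\mathbb{P}}$ is a basis, and near finiteness lets me identify $\widehat{\Theta p}$ with the spectrum of $\Theta p\cap\mathcal{F}(\mathbb{P})$, which is compact by \autoref{ThetaCompact}, so $\mathrm{cl}(\widehat\Theta_p)\subseteq\widehat{\Theta p}$ is compact by \autoref{pClosure2}; thus the basis consists of relatively compact sets. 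Finally $\widehat\Theta\neq\emptyset$: if $|\mathbb{P}|\leq1$ this is \eqref{0inTheta}, and otherwise I would pick $p$ with $p^\leq\neq\mathbb{P}$, so $\emptyset\notin\Theta p$ by \autoref{ThetapNonempty}, whence $\Theta p\cap\mathcal{F}(\mathbb{P})$ is proper finite and has nonempty spectrum by \eqref{CoinitialCovers} with $C=\emptyset$, with $\widehat{\Theta p}\subseteq\widehat\Theta$ by \autoref{pClosure2}. This realises $(\mathbb{P},\leq,\Theta)$ as the relatively compact basis $(\widehat\Theta_p)_{p\in\mathbb{P}}$ of the locally compact $T_1$ space $X=\widehat\Theta\neq\emptyset$, with $\leq$ becoming $\subseteq$ and $\Theta$ becoming $(\{\widehat\Theta_c:c\in C\})_{C\in\Theta}$, which lies in $\mathcal{C}_X(\mathbb{P})$ (each $C\in\Theta$ meets every round Cauchy up-set), is $*$-coinitial, and is again a filter since faithfulness makes $C\mapsto\{\widehat\Theta_c:c\in C\}$ preserve and reflect refinement.

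For the \emph{concrete-to-abstract} direction I would take $X\neq\emptyset$ a locally compact $T_1$ space, $\mathbb{P}\subseteq\mathcal{O}(X)$ a basis of relatively compact sets with $\leq\ =\ \subseteq$, and $\Theta\subseteq\mathcal{C}_X(\mathbb{P})$ a $*$-coinitial filter. Then $\Theta$ is proper ($\emptyset\notin\mathcal{C}_X(\mathbb{P})$ as $X\neq\emptyset$), directed and $\subseteq_\mathcal{F}$-closed. For Wallman admissibility it remains, by \autoref{WallmanAdmissibleEquivalent}, to show $\subseteq\ =\ \leq_\Theta$; here $p\subseteq q\Rightarrow p\leq_\Theta q$ is the up-set property, and if $p\not\subseteq q$ I would fix $x\in p\setminus q$, use that $\mathbb{P}\setminus\mathbb{P}_x$ covers the $T_1$ space $X\setminus\{x\}$ so $\{p\}\cup(\mathbb{P}\setminus\mathbb{P}_x)\in\mathcal{C}_X(\mathbb{P})$, apply $*$-coinitiality to get $D\in\Theta$ with $D*p\leq\{p\}\cup(\mathbb{P}\setminus\mathbb{P}_x)$, and delete from $D\cup\{p\}\in\Theta$ every member containing $x$ — each such member being $\subseteq p$ — to obtain $R$ with $R\cup\{p\}\in\Theta$ (an up-set refined by $D\cup\{p\}$) yet $x\notin\bigcup(R\cup\{q\})$, so $R\cup\{q\}\notin\mathcal{C}_X(\mathbb{P})\supseteq\Theta$, i.e.\ $p\not\leq_\Theta q$. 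Then \autoref{LocalT1Recovery} applies: $X\cong\widehat\Theta$ via $x\mapsto\mathbb{P}_x$, $\mathbb{P}$ is a basis, and $(\Theta p)^\leq=\mathcal{C}_{\mathrm{cl}(p)}(\mathbb{P})$ with $\Theta p=\Theta*p$. For near finiteness, given $E\in\Theta p$ I would note $E$ covers the compact set $\mathrm{cl}(p)$, take a finite subcover $F\subseteq E$, so $F\cup(\mathbb{P}\setminus\mathbb{P}*p)\in\mathcal{C}_X(\mathbb{P})$; then $*$-coinitiality gives $D\in\Theta$ with $D*p\leq F$, and $D'=F\cup(D\setminus\mathbb{P}*p)\in\Theta$ (refined by $D$) has $D'p=D'*p=F*p\subseteq F\subseteq E$ finite, a finite member of $\Theta p$ refining $E$. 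Hence $(\mathbb{P},\subseteq,\Theta)$ is a nearly finite proper Wallman admissible filter whose spectrum recovers $X$.

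I expect the main obstacle to be near finiteness in the concrete-to-abstract direction (and, secondarily, the nontrivial half of Wallman admissibility there): both require passing carefully between $\Theta$ itself, its $C$-stars $\Theta p$, and genuine $\mathbb{P}$-covers of $X$, and then exploiting relative compactness of each $\mathrm{cl}(p)$ together with the filter (up-set) structure to trade an inconvenient cover for a better one inside $\Theta$. By contrast, the abstract-to-concrete direction is essentially bookkeeping on top of \autoref{ThetaLocallyCompact}, \autoref{pClosure2} and \autoref{ThetapNonempty}. The remaining care is in making ``dual'' precise — that the two constructions are mutually inverse up to the homeomorphism $\widehat\Theta\cong X$ and the order isomorphism between $\Theta$ and $(\{\widehat\Theta_c:c\in C\})_{C\in\Theta}$ — and in the observation, used above, that faithfulness makes this last correspondence preserve and reflect refinement, hence send filters to filters.
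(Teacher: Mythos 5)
Your proposal is correct and follows essentially the same route as the paper: the abstract-to-concrete direction is assembled from \autoref{T1}, \autoref{WallmanAdmissibleEquivalent}, \autoref{Wallman=>Weakly}, \autoref{ThetaLocallyCompact}, \autoref{ThetapNonempty} and \autoref{pClosure2} (including the same $\mathbb{P}=\{0\}$ case split for $\widehat\Theta\neq\emptyset$), and the concrete-to-abstract direction rests on \autoref{LocalT1Recovery}. The only difference is that you spell out explicitly the verifications of $\subseteq\ =\ \leq_\Theta$ and of near finiteness (via finite subcovers of the compact sets $\mathrm{cl}(p)$ and $*$-coinitiality), which the paper leaves implicit in its citation of \autoref{LocalT1Recovery}; these added details are correct.
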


More precisely, if $\Theta$ is a nearly finite Wallman admissible filter then $\Theta$ can be concretely represented as the basis $(\widehat\Theta_p)_{p\in\mathbb{P}}$ of relatively compact sets on the locally compact $T_1$ space $\widehat\Theta$ where $\leq$ becomes $\subseteq$ and $\Theta$ becomes the $*$-coinitial family of covers $(\{\widehat\Theta_c:c\in C\})_{C\in\Theta}$ of $\widehat\Theta$, by \autoref{T1}, \autoref{WallmanAdmissibleEquivalent}, \autoref{Wallman=>Weakly} and \autoref{ThetaLocallyCompact}.  Moreover, if $|\mathbb{P}|\geq2$ then we have non-zero $p\in\mathbb{P}$ and $\emptyset\notin\Theta p$, by \autoref{ThetapNonempty}, and hence $\emptyset\neq\widehat{\Theta p}\subseteq\widehat\Theta$, again by \eqref{ThetaCompact} and \autoref{pClosure2}.  As $\Theta$ is proper, i.e. $\Theta\neq\mathcal{P}(\mathbb{P})$, the only other possibility is $\mathbb{P}=\{0\}$ and $\Theta=\{\{0\}\}=\widehat\Theta$, again showing that $\widehat\Theta\neq\emptyset$.  On the other hand, if $\mathbb{P}$ is a basis of relatively compact sets on a $T_1$ space $X\neq\emptyset$ and $\Theta\subseteq\mathcal{C}_X(\mathbb{P})$ is a $*$-coinitial filter then $\Theta$ is nearly finite and Wallman admissible and $\widehat\Theta$ is homeomorphic to the original space $X$, by \autoref{LocalT1Recovery}.

\section{Order Covers}\label{OrderCovers}

We wish to compare Wallman admissibility with a slightly stronger notion due to Picado and Pultr.  To define a version of this that applies to general posets, not just frames, we first examine an order theoretic analog of a cover.

\begin{dfn}
We call $C\subseteq\mathbb{P}$ a \emph{$\leq$-cover} if, for all $p,q\in\mathbb{P}$,
\[\tag{$\leq$-Cover}\label{leqCover}p\leq q\qquad\Leftrightarrow\qquad p^\geq\cap C^\geq\subseteq q^\geq.\]
We denote the collection of all $\leq$-covers by
\[\mathcal{C}_\leq(\mathbb{P})=\{C\subseteq\mathbb{P}:C\text{ is a $\leq$-cover}.\}\]
\end{dfn}

Note that the $\Rightarrow$ part of the definition actually holds for arbitrary $C$.  So $C$ is a $\leq$-cover iff the $\Leftarrow$ part holds which, more explicitly, means that, for all $p,q\in\mathbb{P}$,
\[\tag{$\leq$-Cover$'$}\label{leqCover'}p\nleq q\qquad\Rightarrow\qquad\exists c\in C\ \exists r\in\mathbb{P}\ (p,c\geq r\nleq q).\]
When $\mathbb{P}$ is a frame, i.e. a complete lattice in which finite meets distribute over arbitrary joins (see \cite{PicadoPultr2012}), $\leq$-covers are the same as $\bigvee$-covers.

\begin{prp}\label{FrameCovers}
If $\mathbb{P}$ is a frame then $\mathcal{C}_\leq(\mathbb{P})=\{C\subseteq\mathbb{P}:\bigvee C=1\}$.
\end{prp}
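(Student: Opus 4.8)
The plan is to first reformulate the containment $p^\geq\cap C^\geq\subseteq q^\geq$ in terms of binary meets, and then treat the two inclusions separately. The key elementary observation is that, since $r\in p^\geq\cap C^\geq$ precisely when $r\leq p$ and $r\leq c$ for some $c\in C$, one has
\[p^\geq\cap C^\geq\subseteq q^\geq\qquad\Leftrightarrow\qquad\forall c\in C\ (p\wedge c\leq q).\]
Indeed, for $\Rightarrow$ take $r=p\wedge c$, and for $\Leftarrow$ note that $r\leq p\wedge c\leq q$ whenever $r\leq p$ and $r\leq c$. This step uses only the existence of binary meets, not the frame law, and it is really the crux of translating between the order-theoretic and lattice-theoretic pictures.

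For the inclusion $\{C\subseteq\mathbb{P}:\bigvee C=1\}\subseteq\mathcal{C}_\leq(\mathbb{P})$, suppose $\bigvee C=1$. Since the $\Rightarrow$ direction of \eqref{leqCover} holds for every $C$, it is enough to verify the $\Leftarrow$ direction: if $p^\geq\cap C^\geq\subseteq q^\geq$, then by the reformulation $p\wedge c\leq q$ for all $c\in C$, so $q$ is an upper bound of $\{p\wedge c:c\in C\}$, and hence the frame distributive law gives $p=p\wedge1=p\wedge\bigvee C=\bigvee_{c\in C}(p\wedge c)\leq q$. Thus $C$ is a $\leq$-cover.

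For the reverse inclusion, suppose $C\in\mathcal{C}_\leq(\mathbb{P})$ and set $s=\bigvee C$. Every $c\in C$ satisfies $c\leq s$, so $C^\geq\subseteq s^\geq$, and since $1$ is the top of the complete lattice, $1^\geq\cap C^\geq=C^\geq\subseteq s^\geq$. Applying the $\leq$-cover property \eqref{leqCover} with $p=1$ and $q=s$ yields $1\leq s$, i.e.\ $\bigvee C=1$. I do not foresee any genuine obstacle; the only points requiring care are the correct reading of the down-set notation $p^\geq$, $C^\geq$ from \autoref{Preliminaries} and invoking the frame law at exactly the right place, namely the identity $p\wedge\bigvee C=\bigvee_{c\in C}(p\wedge c)$. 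It is worth noting that this identity is used only for the first inclusion, whereas the second needs merely a top element and the existence of arbitrary joins.
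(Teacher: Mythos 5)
Your proof is correct and follows essentially the same route as the paper: the frame law $p\wedge\bigvee C=\bigvee_{c\in C}(p\wedge c)$ gives one inclusion, and instantiating \eqref{leqCover} at $p=1$, $q=\bigvee C$ gives the other. The only difference is that you make explicit the intermediate reformulation $p^\geq\cap C^\geq\subseteq q^\geq\Leftrightarrow\forall c\in C\ (p\wedge c\leq q)$, which the paper leaves implicit; this is a harmless (and arguably clarifying) elaboration.
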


\begin{proof}
If $\bigvee C=1$ and $p^\geq\cap C^\geq\subseteq q^\geq$ then $p=p\wedge\bigvee C=\bigvee_{c\in C}p\wedge c\leq q$ so $C$ is a $\leq$-cover.  Conversely, if $C$ is a $\leq$-cover, $1^\geq\cap C^\geq\subseteq(\bigvee C)^\geq$ implies $1\leq\bigvee C$.
\end{proof}

In particular, \autoref{FrameCovers} applies to spatial frames, i.e. if $\mathbb{P}=\mathcal{O}(X)$, for some space $X$, then $C\subseteq\mathbb{P}$ is a $\subseteq$-cover iff $\bigcup C=X$.  So $\subseteq$-covers are $X$-covers in the usual sense.  In fact, here $\mathbb{P}$ does not have to consist of all open subsets.

\begin{dfn}
We call $\mathbb{P}\subseteq\mathcal{P}(X)$ a \emph{$T_D$ family} if
\[\tag{$T_D$}\forall x\in X\ \exists O,N\in\mathbb{P}\ (O\setminus N=\{x\}).\]
\end{dfn}

Note that $\mathcal{O}(X)$ is a $T_D$ family iff $X$ is a $T_D$ space in the sense of \cite[I.2.1]{PicadoPultr2012}.

\begin{prp}
If $\mathbb{P}\subseteq\mathcal{O}(X)$ then
\begin{align}
\label{XCover=>leqCover}\mathbb{P}\text{ is a basis}\qquad&\Rightarrow\qquad\mathcal{C}_X(\mathbb{P})\subseteq\mathcal{C}_\subseteq(\mathbb{P}).\\
\label{XCover<=leqCover}\mathbb{P}\text{ is }T_D\qquad&\Rightarrow\qquad\mathcal{C}_X(\mathbb{P})\supseteq\mathcal{C}_\subseteq(\mathbb{P}).
\end{align}
\end{prp}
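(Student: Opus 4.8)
The plan is to derive both inclusions directly from the reformulation \eqref{leqCover'} of "being a $\subseteq$-cover" (recall the $\Rightarrow$ half of \eqref{leqCover} is automatic, so it suffices to check \eqref{leqCover'}), combined with the pointwise characterisations of $\mathbb{P}$ being a basis and being a $T_D$ family. Both arguments are essentially the pointwise version of the proof of \autoref{FrameCovers}, in the spirit of \autoref{SubbasisOrder}.

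For \eqref{XCover=>leqCover}, I would assume $\mathbb{P}$ is a basis and $C\in\mathcal{C}_X(\mathbb{P})$, i.e.\ $\bigcup C=X$, and verify \eqref{leqCover'}. Given $p,q\in\mathbb{P}$ with $p\nsubseteq q$, pick $x\in p\setminus q$; since $\bigcup C=X$ there is $c\in C$ with $x\in c$, and since $p\cap c$ is open and contains $x$, basis-ness gives $r\in\mathbb{P}$ with $x\in r\subseteq p\cap c$. Then $r\subseteq p$, $r\subseteq c$ and $x\in r\setminus q$, so $r\nsubseteq q$ — exactly \eqref{leqCover'}. Hence $C\in\mathcal{C}_\subseteq(\mathbb{P})$.

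For \eqref{XCover<=leqCover}, I would assume $\mathbb{P}$ is $T_D$ and $C\in\mathcal{C}_\subseteq(\mathbb{P})$, and show $X\subseteq\bigcup C$. Fix $x\in X$ and use $T_D$ to obtain $O,N\in\mathbb{P}$ with $O\setminus N=\{x\}$; in particular $O\nsubseteq N$, so \eqref{leqCover'} yields $c\in C$ and $r\in\mathbb{P}$ with $r\subseteq O$, $r\subseteq c$ and $r\nsubseteq N$. Choosing $y\in r\setminus N$, we get $y\in r\subseteq O$ and $y\notin N$, so $y\in O\setminus N=\{x\}$, forcing $y=x$; thus $x=y\in r\subseteq c\subseteq\bigcup C$. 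As $x$ was arbitrary, $C\in\mathcal{C}_X(\mathbb{P})$.

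There is no real obstacle here, since each direction reduces the statement immediately to \eqref{leqCover'}; the only point needing a moment's care is the last step of the second part — observing that $r\subseteq O$ together with $r\nsubseteq N$ forces $r$ to meet the singleton $O\setminus N=\{x\}$, which is precisely what pins the chosen $c\in C$ down to a set containing $x$.
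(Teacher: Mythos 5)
Your proof is correct and follows essentially the same route as the paper's: the paper verifies the $\Leftarrow$ part of \eqref{leqCover} directly (showing $O^\supseteq\cap C^\supseteq\subseteq N^\supseteq$ forces $O\subseteq N$ via a basic neighbourhood inside $O\cap M$, and for the converse exhibiting the $T_D$ pair $O,N$ as a counterexample when $x\notin\bigcup C$), whereas you phrase both halves through the contrapositive form \eqref{leqCover'} — but the witnesses produced are identical. No gaps.
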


\begin{proof}\
\begin{itemize}
\item[\eqref{XCover=>leqCover}]  Say $\mathbb{P}\subseteq\mathcal{O}(X)$ is a basis for $X$, $\bigcup C=X$ and we have some $O,N\in\mathbb{P}$ with $O^\supseteq\cap C^\supseteq\subseteq N^\supseteq$.  For every $x\in O$, we have some $M\in C$ with $x\in M$.  As $\mathbb{P}$ is a basis, we have some $P\in\mathbb{P}$ with $O,M\supseteq P\ni x$.  By assumption, $x\in P\subseteq N$.  As $x$ was arbitrary, $O\subseteq N$ and hence $C$ is a $\subseteq$-cover.

\item[\eqref{XCover<=leqCover}]  If we had $\bigcup C\neq X$ then we could take some $x\in X\setminus\bigcup C$.  If $\mathbb{P}$ is also $T_D$ then we have some $O,N\in\mathbb{P}$ with $O\setminus N=\{x\}$.  Then $O^\supseteq\cap C^\supseteq\subseteq N^\supseteq$, as $x\notin\bigcup C$, even though $O\nsubseteq N$, showing that $C$ could not be a $\subseteq$-cover.\qedhere
\end{itemize}
\end{proof}

So for $T_D$ bases, spatial covers and order covers are the same thing.  The only drawback here is that $T_D$ bases tend to have rather large cardinality.

\begin{prp}
If $X$ is a compact $T_1$ space with a $T_D$ basis $\mathbb{P}$ then $|\mathbb{P}|\geq|X|$.
\end{prp}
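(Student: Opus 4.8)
The plan is to build an injection $X\hookrightarrow\mathbb{P}\times\mathbb{P}$ and then dispose of the two size regimes for $\mathbb{P}$ separately.

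First, since $\mathbb{P}$ is a $T_D$ family, for each $x\in X$ I would pick (invoking choice, which is already in play via the Alexander subbasis theorem and the Kuratowski-Zorn lemma) sets $O_x,N_x\in\mathbb{P}$ with $O_x\setminus N_x=\{x\}$. The map $x\mapsto(O_x,N_x)$ is injective, because $x$ is recovered from the pair as the unique element of $O_x\setminus N_x$. Hence $|X|\leq|\mathbb{P}\times\mathbb{P}|=|\mathbb{P}|^2$.

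If $\mathbb{P}$ is infinite, standard cardinal arithmetic gives $|\mathbb{P}|^2=|\mathbb{P}|$, so $|X|\leq|\mathbb{P}|$ at once. If $\mathbb{P}$ is finite, then $|X|\leq|\mathbb{P}|^2<\infty$, so $X$ is a finite $T_1$ space, hence discrete. Then $\mathbb{P}$, being a basis for the discrete topology, must contain every singleton $\{x\}$ (as $\{x\}$ is a nonempty open set and the only subsets of $\{x\}$ available to appear in a union expressing it are $\emptyset$ and $\{x\}$ itself), so $x\mapsto\{x\}$ embeds $X$ into $\mathbb{P}$ and again $|\mathbb{P}|\geq|X|$.

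I do not expect a genuine obstacle; the one thing to watch is the case split, which is forced because the identity $\kappa^2=\kappa$ holds only for infinite $\kappa$. It is perhaps worth remarking in passing that compactness plays no essential role: the argument shows more generally that any $T_1$ space with a $T_D$ basis $\mathbb{P}$ satisfies $|\mathbb{P}|\geq|X|$, since an infinite $T_1$ space cannot even admit a finite $T_D$ basis. I would nonetheless keep the statement as given, since it sits in the compact setting of the surrounding discussion.
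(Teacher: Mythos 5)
Your proof is correct, but it takes a genuinely different route from the paper's. The paper also splits on whether $\mathbb{P}$ is finite (handling that case exactly as you do, via discreteness of finite $T_1$ spaces), but for infinite $\mathbb{P}$ it first closes $\mathbb{P}$ under finite unions without changing its cardinality, and then uses compactness of the closed set $X\setminus O$ together with the $T_1$ and $T_D$ hypotheses to exhibit each set $X\setminus\{x\}$ as an element of this enlarged family, so that $x\mapsto X\setminus\{x\}$ injects $X$ into a set of cardinality $|\mathbb{P}|$. You instead inject $X$ directly into $\mathbb{P}\times\mathbb{P}$ via $x\mapsto(O_x,N_x)$ and invoke $|\mathbb{P}|^2=|\mathbb{P}|$ for infinite $\mathbb{P}$. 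Your argument is more elementary and strictly more general: compactness plays no role, so you prove the bound for every $T_1$ space with a $T_D$ basis, and your remark that a finite $T_D$ family forces $X$ to be finite is also correct (indeed the $T_D$ condition alone gives $|X|\leq|\mathbb{P}|^2$). What the paper's argument buys in exchange is a concrete family of basic-union sets $\{X\setminus\{x\}:x\in X\}$ witnessing the cardinality bound inside the finite-union closure of $\mathbb{P}$, which fits the compact setting of the surrounding discussion; what yours buys is the removal of the compactness hypothesis at the cost of an appeal to cardinal arithmetic (and hence choice, which, as you note, the paper already assumes elsewhere).
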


\begin{proof}
If $\mathbb{P}$ is finite then so is $X$, and the only finite $T_1$ spaces are discrete.  So in this case any basis must contain all the singletons and hence $|\mathbb{P}|\geq|X|$.  If $\mathbb{P}$ is infinite then we can take the closure under finite unions without changing $|\mathbb{P}|$.  As $\mathbb{P}$ is $T_D$, for any $x\in X$, we have $O,N\in\mathbb{P}$ with $O\setminus N=\{x\}$.  As $X\setminus O$ is closed and hence compact and $X$ is $T_1$ with basis $\mathbb{P}$, we can cover $X\setminus O$ with finite $F\subseteq\mathbb{P}$ which do not contain $x$.  Thus $N\cup\bigcup F=X\setminus\{x\}\in\mathbb{P}$, as we took the closure under finite unions.  But this means $|\mathbb{P}|\geq|\{X\setminus\{x\}:x\in X\}|\geq|X|$.
\end{proof}

Here is another simple observation about covers.

\begin{prp}\label{Maximum<=>Cover}
$\{p\}\in\mathcal{C}_\leq(\mathbb{P})$ iff $p$ is a maximum of $\mathbb{P}$.
\end{prp}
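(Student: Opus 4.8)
The statement is an iff, so I would prove the two directions separately, and both directions should follow almost immediately by unwinding the definition of $\leq$-cover, i.e. condition \eqref{leqCover}: $C$ is a $\leq$-cover iff for all $p,q\in\mathbb{P}$ we have $p\leq q \Leftrightarrow p^\geq\cap C^\geq\subseteq q^\geq$. The key is to specialise this with $C=\{p\}$ and exploit that $\{p\}^\geq = p^\geq$ (reflexivity of $\leq$), so that $q^\geq\cap\{p\}^\geq = p^\geq\cap q^\geq$ is symmetric in $p$ and $q$.

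For the direction ``$\{p\}\in\mathcal{C}_\leq(\mathbb{P})\Rightarrow p$ is a maximum'', I would take an arbitrary $r\in\mathbb{P}$ and aim to show $r\leq p$. Applying the $\leq$-cover condition with the roles ``$p:=r$'' and ``$q:=p$'', it suffices to check $r^\geq\cap\{p\}^\geq\subseteq p^\geq$; but $\{p\}^\geq=p^\geq$, so the left-hand side is $r^\geq\cap p^\geq\subseteq p^\geq$, which is trivially true. Hence $r\leq p$ for every $r$, i.e. $p$ is a maximum. (One should double-check that the $\leq$-cover condition, as quantified, does apply with these substituted values — it does, since $p,q$ there range over all of $\mathbb{P}$.)

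For the converse, suppose $p$ is a maximum of $\mathbb{P}$; I must verify \eqref{leqCover} for $C=\{p\}$, and since the $\Rightarrow$ part holds for arbitrary $C$ (as noted right after the definition), only the $\Leftarrow$ part needs checking: given $a,b\in\mathbb{P}$ with $a^\geq\cap\{p\}^\geq\subseteq b^\geq$, show $a\leq b$. Since $p$ is a maximum, $a\leq p$, so $a\in a^\geq\cap p^\geq = a^\geq\cap\{p\}^\geq\subseteq b^\geq$, giving $a\leq b$ directly. So $\{p\}$ is a $\leq$-cover.

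I do not anticipate any real obstacle here — the whole argument is a one-line specialisation of the definition in each direction, with the only subtlety being careful bookkeeping of which variable in \eqref{leqCover} gets instantiated to which element. If I wanted to shorten it further, I could just remark that for a maximum $p$ one has $\{p\}^\geq=p^\geq=\mathbb{P}$, so $a^\geq\cap\{p\}^\geq = a^\geq$ and the $\leq$-cover condition collapses to the tautology $a\leq b \Leftrightarrow a^\geq\subseteq b^\geq$; conversely if $\{p\}$ is a $\leq$-cover then taking $q=p$ and any $r$ forces $r\leq p$.
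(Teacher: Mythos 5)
Your proof is correct and follows essentially the same route as the paper's: for the forward direction you instantiate the $\leq$-cover condition at $(r,p)$ and use the trivial inclusion $r^\geq\cap p^\geq\subseteq p^\geq$, and for the converse you observe that a maximum $p$ makes $a^\geq\cap p^\geq$ collapse to $a^\geq$, reducing \eqref{leqCover} to the tautology $a\leq b\Leftrightarrow a^\geq\subseteq b^\geq$. The paper's proof is just a more compressed rendering of exactly these two observations.
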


\begin{proof}
If $p$ is a maximum of $\mathbb{P}$ then $q\leq r$ iff $q^\geq\cap p^\geq=q^\geq\subseteq r^\geq$ so $\{p\}\in\mathcal{C}_\leq(\mathbb{P})$.  Otherwise, we have $q\nleq p$ even though $q^\geq\cap p^\geq\subseteq p^\geq$, so $\{p\}\notin\mathcal{C}_\leq(\mathbb{P})$.
\end{proof}

In particular, if $\mathbb{P}$ has a minimum $0$ then the only way we could have $\{0\}\in\mathcal{C}_\leq(\mathbb{P})$ is if $0$ is also a maximum and hence the only element of $\mathbb{P}$, i.e.
\begin{equation}\label{0Cover}
\{0\}\in\mathcal{C}_\leq(\mathbb{P})\qquad\Leftrightarrow\qquad\mathbb{P}=\{0\}.
\end{equation}

Whenever $\Theta$ can be faithfully represented as a family of basic open covers, like in \autoref{ThetaLocallyCompact} above, it follows that $\Theta\subseteq\mathcal{C}_\leq(\mathbb{P})$, by \eqref{XCover=>leqCover}.  In fact, this applies more generally to any Wallman admissible filter.

\begin{prp}\label{WallmanCovers}
If $\Theta$ is a Wallman admissible filter then $\Theta\subseteq\mathcal{C}_\leq(\mathbb{P})$.
\end{prp}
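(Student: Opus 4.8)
The plan is to check, for each fixed $C\in\Theta$, the nontrivial implication \eqref{leqCover'} (equivalently the $\Leftarrow$ direction of \eqref{leqCover}, the $\Rightarrow$ direction being automatic for every subset). I would run this by contradiction: suppose $C\in\Theta$ is not a $\leq$-cover, so there are $p,q\in\mathbb{P}$ with $p\nleq q$ yet $p^\geq\cap C^\geq\subseteq q^\geq$; spelled out, every $r\in\mathbb{P}$ with $r\leq p$ and $r\leq c$ for some $c\in C$ already satisfies $r\leq q$. The target is to contradict Wallman admissibility by proving that $(C'\setminus\{p\})\cup\{q\}\in\Theta$ for \textbf{every} $C'\in\Theta$, since this together with $p\nleq q$ is exactly what Wallman admissibility rules out.

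To that end I would fix an arbitrary $C'\in\Theta$ and exploit that $\Theta$, being a filter, is directed: pick $E\in\Theta$ with $E\leq C$ and $E\leq C'$. The heart of the argument is to verify that $E$ refines $(C'\setminus\{p\})\cup\{q\}$. Given $e\in E$, first use $E\leq C'$ to find $c'\in C'$ with $e\leq c'$; if $c'\neq p$ then $c'$ already lies in $(C'\setminus\{p\})\cup\{q\}$ and we are done, while if $c'=p$ then $e\leq p$, and using $E\leq C$ to find $c\in C$ with $e\leq c$, the assumed failure of the $\leq$-cover property at $C$ yields $e\leq q$, so again $e$ lies below a member of $(C'\setminus\{p\})\cup\{q\}$. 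Hence $E\leq(C'\setminus\{p\})\cup\{q\}$, and since a filter is in particular a refinement up-set, $(C'\setminus\{p\})\cup\{q\}\in\Theta$.

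Since $C'\in\Theta$ was arbitrary, Wallman admissibility now forces $p\leq q$, contradicting $p\nleq q$; thus every $C\in\Theta$ is a $\leq$-cover and $\Theta\subseteq\mathcal{C}_\leq(\mathbb{P})$. The step I expect to be the crux is the case $c'=p$ in the refinement check: it is the only place the hypothesis $p^\geq\cap C^\geq\subseteq q^\geq$ is actually consumed, and it is also the reason one cannot work with the single cover $C$ alone — both the Wallman witness $C'$ and a common refinement $E$ are genuinely needed, which is precisely where the filter hypothesis (directedness to produce $E$, the up-set property to land $(C'\setminus\{p\})\cup\{q\}$ back in $\Theta$) enters.
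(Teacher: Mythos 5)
Your proof is correct and is essentially the paper's argument run in contrapositive form: the paper fixes an arbitrary $D\in\Theta$ and uses the Wallman witness $C$ with $(C\setminus\{p\})\cup\{q\}\notin\Theta$ to extract, from a common refinement $E$, an element $e\leq p$ with $e\nleq q$ lying below some $d\in D$, whereas you assume the cover condition fails and push the same common-refinement dichotomy the other way to land $(C'\setminus\{p\})\cup\{q\}$ back in $\Theta$. The ingredients (directedness for $E$, $\Theta=\Theta^\leq$, and the $\Leftarrow$ direction of Wallman admissibility) are used in exactly the same places, so this is the same proof up to logical reorganization.
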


\begin{proof}
If $p\nleq q$ then, as $\Theta$ is Wallman admissible, we have $C\in\Theta$ such that $(C\setminus\{p\})\cup\{q\}\notin\Theta$.  For any $D\in\Theta$, we have $E\in\Theta$ with $E\leq C$ and $E\leq D$, as $\Theta$ is a directed.  As $(C\setminus\{p\})\cup\{q\}\notin\Theta=\Theta^\leq$, we must have $e\in E$ with $e\nleq(C\setminus\{p\})\cup\{q\}$.  As $e\leq C$, this implies $e\leq p$.  As $e\leq D$, we thus have $d\in D$ with $p,d\geq e\nleq q$.  Thus $D\in\mathcal{C}_\leq(\mathbb{P})$ and hence $\Theta\subseteq\mathcal{C}_\leq(\mathbb{P})$.
\end{proof}

\section{Picado-Pultr Admissibility}\label{Picado-Pultr Admissibility}

We define Picado-Pultr admissibility of $\Theta$ just like \eqref{WallmanAdmissible}, except that we replace the last $\Theta$ with $\mathcal{C}_\leq(\mathbb{P})$, i.e.
\[\label{PicadoPultrAdmissible}\tag{Picado-Pultr Admissible}p\leq q\quad\Leftrightarrow\quad\forall C\in\Theta\ ((C\setminus\{p\})\cup\{q\}\in\mathcal{C}_\leq(\mathbb{P})).\]

\begin{prp}\label{PicadoPultrCovers}
If $\Theta$ is Picado-Pultr admissible then $\Theta\subseteq\mathcal{C}_\leq(\mathbb{P})$.
\end{prp}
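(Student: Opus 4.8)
The plan is to derive the conclusion directly from the definition of \eqref{PicadoPultrAdmissible}. The key observation is that the defining biconditional has a trivial direction: if $p \leq q$, then for any $C \in \Theta$ and any element to be replaced, passing from $C$ to $(C\setminus\{p\})\cup\{q\}$ only enlarges the sets involved in an order-theoretic sense, so covers are preserved. The real content of Picado-Pultr admissibility is the reverse implication, and this is precisely what lets us conclude $\Theta \subseteq \mathcal{C}_\leq(\mathbb{P})$.

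Concretely, I would fix $C \in \Theta$ and aim to show $C \in \mathcal{C}_\leq(\mathbb{P})$, i.e. that for all $p,q \in \mathbb{P}$ we have $p^\geq \cap C^\geq \subseteq q^\geq \Rightarrow p \leq q$ (the forward direction being automatic, as noted after \autoref{leqCover}). So suppose $p^\geq \cap C^\geq \subseteq q^\geq$. The natural move is to apply the $\Leftarrow$ direction of \eqref{PicadoPultrAdmissible}, which requires checking that $(D\setminus\{p\})\cup\{q\} \in \mathcal{C}_\leq(\mathbb{P})$ for every $D \in \Theta$ — but that seems like the wrong quantifier order. Instead, the cleaner route: take the particular $C$ at hand and verify that $(C\setminus\{p\})\cup\{q\}$ is a $\leq$-cover using the hypothesis $p^\geq \cap C^\geq \subseteq q^\geq$. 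Indeed, if $r^\geq \cap ((C\setminus\{p\})\cup\{q\})^\geq \subseteq s^\geq$, one checks that elements of $r^\geq \cap C^\geq$ either already lie below some element of $C\setminus\{p\}$, or lie below $p$, in which case (being also below $r$) they lie in $p^\geq \cap C^\geq \subseteq q^\geq$; combining, $r^\geq \cap ((C\setminus\{p\})\cup\{q\})^\geq$ contains $r^\geq \cap C^\geq$ up to the relevant closure, forcing $r \leq s$. This shows $(C\setminus\{p\})\cup\{q\} \in \mathcal{C}_\leq(\mathbb{P})$, and since this holds for all $C \in \Theta$, the $\Leftarrow$ direction of \eqref{PicadoPultrAdmissible} yields $p \leq q$, as desired.

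I expect the main obstacle to be the bookkeeping in the $\leq$-cover verification: showing that replacing $p$ by $q$ in $C$ preserves the $\leq$-cover property requires carefully tracking which common lower bounds of $r$ and the modified cover arise from $p$ versus from the rest of $C$, and using transitivity together with the hypothesis $p^\geq \cap C^\geq \subseteq q^\geq$ at the right moment. There is also a mild subtlety about the degenerate cases (e.g. $p \in C$ or $q \in C$ already), but these only make the containment easier, so they should not cause real trouble. Once the $\leq$-cover claim is in hand, the deduction of $\Theta \subseteq \mathcal{C}_\leq(\mathbb{P})$ is immediate.
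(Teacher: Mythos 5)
Your proposal inverts the logic of the definition and, as written, does not go through. The direction of \eqref{PicadoPultrAdmissible} that you dismiss as trivial is the one that actually proves this proposition: the $\Rightarrow$ part is only ``automatic'' once you already know that $\Theta$ consists of $\leq$-covers, which is precisely what is being proved here. Taken as a hypothesis, it gives the result almost immediately: for $C\in\Theta$ with $C\neq\emptyset$, pick $p\in C$ and apply the $\Rightarrow$ part with $p=q$, so that $p\leq p$ yields $(C\setminus\{p\})\cup\{p\}=C\in\mathcal{C}_\leq(\mathbb{P})$. This is the paper's argument. (The case $C=\emptyset$ needs separate attention: the same trick gives $\{p\}\in\mathcal{C}_\leq(\mathbb{P})$ for every $p$, so every $p$ would be a maximum by \autoref{Maximum<=>Cover}, forcing $|\mathbb{P}|\leq1$, in which case $\mathcal{C}_\leq(\mathbb{P})=\mathcal{P}(\mathbb{P})$ anyway. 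Your proposal omits this case entirely.)

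Your route through the $\Leftarrow$ part has two genuine gaps. First, it is circular: in your verification that $(C\setminus\{p\})\cup\{q\}$ is a $\leq$-cover, you reduce $r^\geq\cap((C\setminus\{p\})\cup\{q\})^\geq\subseteq s^\geq$ to $r^\geq\cap C^\geq\subseteq s^\geq$ and then declare that this ``forces $r\leq s$'' --- but that inference is exactly the assertion $C\in\mathcal{C}_\leq(\mathbb{P})$, the thing being proved. Second, the quantifier problem you noticed is real and not avoidable: the $\Leftarrow$ part only yields $p\leq q$ if $(D\setminus\{p\})\cup\{q\}\in\mathcal{C}_\leq(\mathbb{P})$ for \emph{every} $D\in\Theta$, whereas your hypothesis $p^\geq\cap C^\geq\subseteq q^\geq$ concerns one fixed $C$; the closing phrase ``since this holds for all $C\in\Theta$'' is unsupported. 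Indeed, the $\Leftarrow$ part alone cannot suffice: on the two-element antichain $\mathbb{P}=\{a,b\}$ with $\Theta=\{\{a\}\}$, the $\Leftarrow$ implication holds (its hypothesis fails except when $p=q=b$, since here $\mathcal{C}_\leq(\mathbb{P})=\{\{a,b\}\}$), yet $\{a\}\notin\mathcal{C}_\leq(\mathbb{P})$.
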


\begin{proof}
If $\mathbb{P}=\emptyset$ or $\mathbb{P}=\{0\}$ then $\mathcal{C}_\leq(\mathbb{P})=\mathcal{P}(\mathbb{P})\supseteq\Theta$.  If $\mathbb{P}\geq2$ then $\emptyset\notin\Theta$ \textendash\, otherwise \eqref{PicadoPultrAdmissible} with $p=q$ and $C=\emptyset$ would yield $\{p\}\in\mathcal{C}_\leq(\mathbb{P})$ and hence, by \autoref{Maximum<=>Cover}, $p$ would a maximum of $\mathbb{P}$, for every $p\in\mathbb{P}$, a contradiction.  Thus, for any $C\in\Theta$, we can apply \eqref{PicadoPultrAdmissible} with $p=q\in C$ to show that $C\in\mathcal{C}_\leq(\mathbb{P})$, i.e. $\Theta\subseteq\mathcal{C}_\leq(\mathbb{P})$.
\end{proof}

\begin{cor}
If $\Theta$ is an up-set then
\[\eqref{PicadoPultrAdmissible}\qquad\Rightarrow\qquad\eqref{WallmanAdmissible}.\]
\end{cor}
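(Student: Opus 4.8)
The plan is to check the biconditional \eqref{WallmanAdmissible} directly, using as the single substantive input \autoref{PicadoPultrCovers}, which guarantees $\Theta\subseteq\mathcal{C}_\leq(\mathbb{P})$ as soon as $\Theta$ is Picado-Pultr admissible. The point is that \eqref{WallmanAdmissible} and \eqref{PicadoPultrAdmissible} test the same sets $(C\setminus\{p\})\cup\{q\}$, the only difference being membership in $\Theta$ versus membership in $\mathcal{C}_\leq(\mathbb{P})$; so once these two memberships are reconciled, one condition should transfer to the other. The up-set hypothesis on $\Theta$ is what does the reconciliation in one direction, and \autoref{PicadoPultrCovers} does it in the other.

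For the forward implication of \eqref{WallmanAdmissible}, I would take $p\leq q$ and an arbitrary $C\in\Theta$ and observe that $C$ refines $(C\setminus\{p\})\cup\{q\}$: every $c\in C$ with $c\neq p$ lies below itself in $(C\setminus\{p\})\cup\{q\}$, while $p$ lies below $q\in(C\setminus\{p\})\cup\{q\}$. Hence $(C\setminus\{p\})\cup\{q\}\in\Theta^\leq$, and since $\Theta$ is an up-set (i.e. $\Theta^\leq\subseteq\Theta$), we get $(C\setminus\{p\})\cup\{q\}\in\Theta$. This is the only place the up-set assumption is needed, and it is precisely what fails for \eqref{PicadoPultrAdmissible} in general, where one only lands in $\mathcal{C}_\leq(\mathbb{P})$.

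For the reverse implication of \eqref{WallmanAdmissible}, I would assume $(C\setminus\{p\})\cup\{q\}\in\Theta$ for every $C\in\Theta$. By \autoref{PicadoPultrCovers}, $\Theta\subseteq\mathcal{C}_\leq(\mathbb{P})$, so in particular $(C\setminus\{p\})\cup\{q\}\in\mathcal{C}_\leq(\mathbb{P})$ for every $C\in\Theta$, which is exactly the right-hand side of \eqref{PicadoPultrAdmissible}; the $\Leftarrow$ part of Picado-Pultr admissibility then yields $p\leq q$, completing the proof.

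I do not expect a genuine obstacle here: the argument is a short bookkeeping exercise once \autoref{PicadoPultrCovers} is in hand. The only small care required is to recall that "up-set'' applied to $\Theta\subseteq\mathcal{P}(\mathbb{P})$ means closure under the refinement extension of $\leq$ (so that $C\leq (C\setminus\{p\})\cup\{q\}$ and $C\in\Theta$ force $(C\setminus\{p\})\cup\{q\}\in\Theta$), and to note that the $\Rightarrow$ direction of both admissibility conditions is automatic, so the real content is the $\Leftarrow$ direction, handled above via \autoref{PicadoPultrCovers}.
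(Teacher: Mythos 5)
Your proof is correct and follows essentially the same route as the paper: the $\Rightarrow$ part of \eqref{WallmanAdmissible} comes from $C\leq(C\setminus\{p\})\cup\{q\}$ plus the up-set hypothesis, and the $\Leftarrow$ part comes from \autoref{PicadoPultrCovers} together with the $\Leftarrow$ part of \eqref{PicadoPultrAdmissible}. You have merely spelled out the refinement computation that the paper dismisses as immediate.
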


\begin{proof}
If $\Theta$ is an up-set, the $\Rightarrow$ part of \eqref{WallmanAdmissible} is immediate.  If $\Theta$ is also Picado-Pultr admissible then the $\Leftarrow$ part of \eqref{WallmanAdmissible} follows from the $\Leftarrow$ part of \eqref{PicadoPultrAdmissible}, by \autoref{PicadoPultrCovers}.
\end{proof}

The original admissibility condition defining weak nearness frames in \cite[VIII.4.2]{PicadoPultr2012} involves general sublocales.  In \cite[Corollary VIII.4.2.2]{PicadoPultr2012}, an equivalent condition is given in terms of open and closed sublocales, which we now show is equivalent to \eqref{PicadoPultrAdmissible}.  Following the notation there for frame $\mathbb{P}$, we denote the closed and open sublocales defined by $p\in\mathbb{P}$ by
\begin{align*}
\mathfrak{c}(p)&=p^\leq\qquad\text{and}\\
\mathfrak{o}(p)&=\{p\rightarrow q:q\in\mathbb{P}\}=\{q\in\mathbb{P}:p\rightarrow q=q\}.
\end{align*}
These are considered as elements within the co-frame $\mathcal{S\!\ell}(\mathbb{P})$ of all sublocales.

\begin{thm}\label{PPEquiv}
When $\mathbb{P}$ is a frame and $\Theta$ is an up-set, $\Theta$ is Picado-Pultr admissible iff $\Theta\subseteq\mathcal{C}_\leq(\mathbb{P})$ and, for any $p\in\mathbb{P}$ and $Q=\{q\in\mathbb{P}:\exists C\in\Theta\ (C\setminus q^\geq\leq p)\}$,
\begin{equation}\label{SublocaleAdmissible}
\mathfrak{o}(p)=\bigvee_{q\in Q}\mathfrak{c}(q).
\end{equation}
\end{thm}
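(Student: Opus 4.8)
The plan is to translate the sublocale-theoretic statement~\eqref{SublocaleAdmissible} into purely order-theoretic terms and then match it against~\eqref{PicadoPultrAdmissible}. Recall that in the co-frame $\mathcal{S\!\ell}(\mathbb{P})$, the join $\bigvee_{q\in Q}\mathfrak{c}(q)$ is computed as the intersection $\bigcap_{q\in Q}q^\leq$ inside $\mathcal{P}(\mathbb{P})$ (closed sublocales join by intersecting their defining down-sets), so~\eqref{SublocaleAdmissible} says precisely that
\[
\mathfrak{o}(p)=\bigcap_{q\in Q}q^\leq=\{r\in\mathbb{P}:\forall q\in Q\ (r\leq q)\}.
\]
On the other hand, $\mathfrak{o}(p)=\{r\in\mathbb{P}:p\rightarrow r=r\}$, and a standard fact in frames (see \cite[III.3]{PicadoPultr2012}) is that $p\rightarrow r=r$ holds iff $r$ is the largest element with $p\wedge r=p\wedge(p\rightarrow r)=p\wedge r$ below it... more usefully, $r\in\mathfrak{o}(p)$ iff $r=p\rightarrow s$ for some $s$, equivalently iff $p\wedge t\leq r\Leftrightarrow t\leq r$ for all $t$. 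So the whole content of~\eqref{SublocaleAdmissible} reduces to the order-theoretic equivalence, for all $r\in\mathbb{P}$,
\[
\bigl(\forall t\in\mathbb{P}\ (p\wedge t\leq r\Rightarrow t\leq r)\bigr)\qquad\Longleftrightarrow\qquad\bigl(\forall q\in Q\ (r\leq q)\bigr),
\]
where $Q=\{q:\exists C\in\Theta\ (C\setminus q^\geq\leq p)\}$.

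First I would unwind the set $Q$: since $\Theta\subseteq\mathcal{C}_\leq(\mathbb{P})$ and $\mathbb{P}$ is a frame, $C\in\Theta$ means $\bigvee C=1$ by \autoref{FrameCovers}, and $C\setminus q^\geq\leq p$ then says $\bigvee_{q\nleq c}c\leq p$, i.e.\ $\bigvee\{c\in C:c\nleq q\}\leq p$. Note $q\in Q$ iff there is $C\in\Theta$ with $\bigvee_{c\in C,\,c\nleq q}c\leq p$, which (using $\bigvee C=1$ and distributivity) is equivalent to $q\vee p\geq\bigvee_{c\in C}(c\wedge q)\vee p\vee\bigvee_{c\nleq q}c=\bigvee C=1$... more cleanly, $q\in Q$ iff $\exists C\in\Theta$ with $\bigvee_{p\neq c\in C}c\vee q=1$ is \emph{not} quite it; rather one checks $q\in Q\Leftrightarrow\exists C\in\Theta\,(\bigvee\{c\wedge q:c\in C\}=q$ \emph{and} the rest is below $p)$. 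I would carefully establish the clean reformulation $q\in Q\Leftrightarrow\exists C\in\Theta\ \forall c\in C\ (c\leq q\vee p\text{ whenever }c\not\leq q)$, then observe that this makes $Q$ an up-set containing $q$ whenever $q\vee p$ is a $\Theta$-refinement-bound, and crucially that $p\in Q$ always (take any $C\in\Theta$: then $c\not\leq p\Rightarrow c\setminus p^\geq\ni c$... ). Then I would show directly: the right-hand side $\forall q\in Q\,(r\leq q)$ is equivalent to $r\leq\bigwedge Q$, and separately that $\bigwedge Q$ equals $p\rightarrow$ (something), tying the two descriptions of $\mathfrak{o}(p)$ together. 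The key bridge is that $\bigwedge Q=\bigwedge\{q:\exists C\in\Theta\ (\bigvee_{p\neq c\in C}c\vee q=1)\}$, and then~\eqref{PicadoPultrAdmissible} reads exactly $p\leq q\Leftrightarrow q\in Q$ (for $p\leq q$, note $(C\setminus\{p\})\cup\{q\}$ is a $\leq$-cover iff its join is $1$ iff $\bigvee_{p\neq c\in C}c\vee q=1$), so~\eqref{PicadoPultrAdmissible} says precisely $Q=q^\geq$ for... no, it says $p^\leq$... I would reconcile: \eqref{PicadoPultrAdmissible} with the roles understood gives $\{q:p\leq q\}=Q\cap(\text{stuff})$, forcing $\bigwedge Q$ to behave like the Heyting negation complement governing $\mathfrak{o}(p)$.

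The main obstacle will be getting the two directions of this last matching exactly right, because the quantifier structure is delicate: $\mathfrak{o}(p)=\bigvee_{q\in Q}\mathfrak{c}(q)$ is a statement about \emph{all} $r$, whereas Picado-Pultr admissibility is a statement pinning down the order $\leq$ via $\Theta$. Concretely, I expect to prove ($\Rightarrow$): assume~\eqref{PicadoPultrAdmissible}; get $\Theta\subseteq\mathcal{C}_\leq(\mathbb{P})$ from \autoref{PicadoPultrCovers}; then show $Q=\{q:p\leq q\}=p^\geq$, whence $\bigcap_{q\in Q}q^\leq=\bigcap_{q\geq p}q^\leq$, and identify this intersection of down-sets with $\mathfrak{o}(p)$ using the frame identity $\mathfrak{o}(p)=\bigcap\{\mathfrak{c}(q):q\in\mathbb{P},\ p\vee q=1\text{'s dual}\}$... actually the cleanest route is the known sublocale identity $\mathfrak o(p)=\bigvee\{\mathfrak c(q):q\wedge p\leq\text{(nothing)}\}$; I will instead invoke directly that for a frame, $\mathfrak o(p)$ is the sublocale $\{r:r=p\to r\}$ and that $r=p\to r$ iff $r\geq q$ for every $q$ with $q\vee p=1$ fails — the correct statement being $r\in\mathfrak o(p)\Leftrightarrow \forall q\,(q\vee p\geq r\Rightarrow q\geq r)$, which I'd verify is the same as $\forall q\in Q\ (r\leq q)$ once $Q$ is shown to be $\{q:q\vee p=1\}$ intersected appropriately with $\Theta$-witnessed elements — and $*$-coinitiality/Wallman-style density makes the $\Theta$-witnessed ones cofinal. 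For ($\Leftarrow$): assume $\Theta\subseteq\mathcal{C}_\leq(\mathbb{P})$ and~\eqref{SublocaleAdmissible}; the $\Rightarrow$ direction of~\eqref{PicadoPultrAdmissible} is immediate since each $(C\setminus\{p\})\cup\{q\}$ refines $C$ up to replacing $p$ by $q\geq p$... wait, it's free because $\bigvee((C\setminus\{p\})\cup\{q\})\geq\bigvee C=1$; for $\Leftarrow$, suppose $p\not\leq q$, so $q\notin\mathfrak o(p)$ would be needed — argue $p\notin q^\leq$ hence by~\eqref{SublocaleAdmissible} applied at a suitable point there is $q'\in Q$ with $q\not\leq q'$, unpack $q'\in Q$ to a witnessing $C\in\Theta$, and check $(C\setminus\{p\})\cup\{q\}$ has join $<1$, i.e.\ is not a $\leq$-cover. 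I anticipate spending most effort on this $q\not\leq q'$-to-non-cover step and on the precise sublocale identity for $\mathfrak o(p)$; everything else is bookkeeping with \autoref{FrameCovers} and distributivity.
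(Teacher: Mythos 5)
Your translation of \eqref{SublocaleAdmissible} into order-theoretic terms is based on a miscomputation of the sublocale join, and this sinks the whole plan. In the coframe $\mathcal{S\!\ell}(\mathbb{P})$ the \emph{meet} of sublocales is their intersection; the \emph{join} $\bigvee_{q\in Q}\mathfrak{c}(q)$ is the smallest sublocale containing $\bigcup_{q\in Q}q^\leq$, which by \cite[III.3.3.2]{PicadoPultr2012} consists of all meets of elements of that union. Since $q\in Q$ implies $q^\leq\subseteq Q$, this union is $Q$ itself, so \eqref{SublocaleAdmissible} says
\[r=p\rightarrow r\qquad\Leftrightarrow\qquad\exists R\subseteq Q\ \Bigl(r=\bigwedge R\Bigr),\]
not ``$r$ is a (lower or upper) bound of $Q$'' as in your displayed equivalence. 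The two are very different: taking $R=\emptyset$ shows $1$ always lies in the join, and taking $R=\{q\}$ shows every single element of $Q$ does, whereas your condition $\forall q\in Q\,(r\leq q)$ would cut the join down to the lower bounds of $Q$. Everything downstream is built on the wrong target equivalence, so the matching against \eqref{PicadoPultrAdmissible} cannot be carried out as described.

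There are further concrete errors in the sketch. The claim that $p\in Q$ always is false: $C\setminus p^\geq\leq p$ forces $C\subseteq p^\geq$, hence $\bigvee C\leq p$, which for a cover forces $p=1$. The proposed characterisation $r\in\mathfrak{o}(p)\Leftrightarrow\forall q\,(q\vee p\geq r\Rightarrow q\geq r)$ is also wrong \textendash\, it already fails for $r=1$ (which is always in $\mathfrak{o}(p)$) whenever some $q\neq1$ has $q\vee p=1$. The paper's actual argument is short once the join is computed correctly: for one direction, given $q\ngeq p$ one has $1\neq p\rightarrow q\in\mathfrak{o}(p)$, so $p\rightarrow q=\bigwedge R$ for some $R\subseteq Q$, yielding $q'\in Q$ with $q\leq p\rightarrow q\leq q'\neq1$; the witness $C\in\Theta$ with $C\setminus q'^\geq\leq p$ gives $C\leq\{p,q'\}\in\Theta$ and $\{q,q'\}\leq q'\neq1$ is not a $\leq$-cover. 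For the other direction, $q\in Q$ gives $1=\bigvee C\leq p\vee q$, hence $q=p\rightarrow q$ by the Heyting identities, so meets of subsets of $Q$ lie in $\mathfrak{o}(p)$; and for $r=p\rightarrow r$ one shows $r=\bigwedge\{q\in Q:q\geq r\}$ using \eqref{PicadoPultrAdmissible}. I suggest you restart from the correct description of the join and follow this outline.
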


\begin{proof}
Note that $q\leq q'$ implies $C\setminus q'^\geq\subseteq C\setminus q^\geq$, and hence $q\in Q$ implies $\mathfrak{c}(q)=q^\leq\subseteq Q$.  By \cite[III.3.3.2]{PicadoPultr2012}, a join of sublocales consists of all the meets of elements in the sublocales.  Thus \eqref{SublocaleAdmissible} is saying that
\begin{equation}\label{SublocaleAdmissibleEquiv}
r=p\rightarrow r\qquad\Leftrightarrow\qquad\exists R\subseteq Q\ (r=\bigwedge R).
\end{equation}

Say $\Theta$ satisfies \eqref{SublocaleAdmissible} and take $q\ngeq p$.  Then $1\neq p\rightarrow q\in\mathfrak{o}(p)$.  By \eqref{SublocaleAdmissibleEquiv}, we have $R\subseteq Q$ with $1\neq p\rightarrow q=\bigwedge R$.  In particular, we have $q'\in R\subseteq Q$ such that $p\rightarrow q\leq q'\neq 1$.  Taking $C\in\Theta$ with $C\setminus q'^\geq\leq p$, we have $C\leq\{p,q'\}\in\Theta$, as $\Theta$ is an up-set.  But $(\{p,q'\}\setminus\{p\})\cup\{q\}=\{q,q'\}\leq q'\neq 1$ so $(\{p,q'\}\setminus\{p\})\cup\{q\}\notin\mathcal{C}_\leq(\mathbb{P})$.  This verifies the $\Leftarrow$ part of \eqref{PicadoPultrAdmissible}.  But if $\Theta\subseteq\mathcal{C}_\leq(\mathbb{P})$ and $\Theta$ is an up-set then the $\Rightarrow$ part of \eqref{PicadoPultrAdmissible} is immediate.

Conversely, say $\Theta$ is Picado-Pultr admissible so $\Theta\subseteq\mathcal{C}_\leq(\mathbb{P})$, by \autoref{PicadoPultrCovers}.  For the $\Leftarrow$ part of \eqref{SublocaleAdmissibleEquiv}, take $q\in Q$, so we have $C\in\Theta\subseteq\mathcal{C}_\leq(\mathbb{P})$ with $C\setminus q^\geq\leq p$ and hence
\[1=\bigvee C\leq q\vee\bigvee(C\setminus q^\geq)\leq q\vee p.\]
Thus $q=(q\vee p)\wedge(p\rightarrow q)=p\rightarrow q$, by \cite[III.3.1.1(H8)]{PicadoPultr2012}.  Consequently, for any $R\subseteq Q$, $p\rightarrow(\bigwedge R)=\bigwedge_{r\in R}(p\rightarrow r)=\bigwedge R$, by \cite[III.3.1]{PicadoPultr2012}, as required.

For the $\Rightarrow$ part of \eqref{SublocaleAdmissibleEquiv}, take any $r=p\rightarrow r$ and let
\[r'=\bigwedge(Q\cap r^\geq).\]
Certainly $r\leq r'$, what we need to show is $r'\leq r$.  If we had $r'\nleq r$ then we would also have $r'\wedge p\nleq r$ (otherwise $r'\wedge p\leq r$ would imply $r'\leq p\rightarrow r=r$).  By \eqref{PicadoPultrAdmissible}, we would then have $C\in\Theta\subseteq\mathcal{C}_\leq(\mathbb{P})$ with
\begin{equation}\label{qneq1}
r\vee\bigvee(C\setminus\{r'\wedge p\})\neq1.
\end{equation}
Taking $q=r\vee\bigvee(C\setminus\{r'\wedge p\})\geq r$, we have $C\setminus q^\geq\subseteq\{r'\wedge p\}\leq p$ so $q\in Q$.  Thus $r'\leq q$ so $q=r'\vee q\geq r'\vee\bigvee(C\setminus\{r'\wedge p\})\geq\bigvee C=1$, contradicting \eqref{qneq1}.
\end{proof}

We finish with a discussion of `compatibility', which gives motivation for Picado-Pultr admissibility.  First, for $C\subseteq\mathcal{P}(X)$, recall that $C_x=\{c\in C:x\in c\}$ so
\[\mathcal{O}(X)_x^\subseteq=\{N:x\in O\in\mathcal{O}(X)\text{ and }O\subseteq N\subseteq X\}.\]
Also recall that a \emph{neighbourhood base} at $x\in X$ is a $\subseteq$-coinitial subset $\mathcal{O}(X)_x^\subseteq$.

\begin{dfn}\label{Compatibility}
We say $\Theta\subseteq\mathcal{P}(\mathcal{P}(X))$ is \emph{compatible} with the topology of $X$ if every $x\in X$ has $(\bigcup C_x)_{C\in\Theta}$ as a neighbourhood base.
\end{dfn}

\begin{rmk}
Again we are following the terminology of \cite[Definition 1.1 (b)]{CollinsHendrie2000} and \cite[Definition 1.13]{Morita1989} (while Morita instead says that $\Theta$ `agrees with the topology' of $X$ in \cite{Morita1951}).  We also note in passing that when $\Theta\subseteq\mathcal{P}(\mathcal{O}(X))$ is countable and compatible it is sometimes called a `development'.
\end{rmk}

Compatibility is closely related to roundness, as the following result shows.

\begin{prp}\label{XCompatible}
If $\mathbb{P}\subseteq\mathcal{O}(X)$ then $\Theta\subseteq\mathcal{P}(\mathbb{P})$ is compatible with the topology of $X$ if and only if $\mathbb{P}$ is a basis, $\Theta\subseteq\mathcal{C}_X(\mathbb{P})$ and $\mathbb{P}_x$ is $\Theta$-round, for each $x\in X$.
\end{prp}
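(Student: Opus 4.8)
The plan is to unwind the definition of compatibility directly: for each $x\in X$ it asserts that the family $(\bigcup C_x)_{C\in\Theta}$ both lies in $\mathcal{O}(X)_x^\subseteq$ and is $\subseteq$-coinitial there, and I would match these two halves against the three conditions on the right. Throughout I would use the identification $C_x=C\cap\mathbb{P}_x$ (valid since $C\subseteq\mathbb{P}$) and, for $s\in\mathbb{P}$, the bookkeeping equivalence between the refinement relation $C\cap\mathbb{P}_x\leq s$ (with $\leq$ the inclusion order carried by $\mathbb{P}$) and the plain set containment $\bigcup C_x\subseteq s$.

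For the $(\Leftarrow)$ direction I would assume $\mathbb{P}$ is a basis, $\Theta\subseteq\mathcal{C}_X(\mathbb{P})$, and each $\mathbb{P}_x$ is $\Theta$-round, and fix $x$. First, $\Theta\subseteq\mathcal{C}_X(\mathbb{P})$ gives, via \autoref{CauchyCovers}, that $\mathbb{P}_x$ is $\Theta$-Cauchy, so every $C_x=C\cap\mathbb{P}_x$ is non-empty; as its members are open and contain $x$, $\bigcup C_x$ is an open neighbourhood of $x$, i.e. lies in $\mathcal{O}(X)_x^\subseteq$. For coinitiality, given a neighbourhood $N$ of $x$ I would pick an open $O$ with $x\in O\subseteq N$, use the basis property to get $s\in\mathbb{P}_x$ with $s\subseteq O$, and then use roundness of $\mathbb{P}_x$ to produce $C\in\Theta$ with $C\cap\mathbb{P}_x\leq s$, i.e. $\bigcup C_x\subseteq s\subseteq N$.

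For the $(\Rightarrow)$ direction I would assume $\Theta$ compatible. Fixing $C\in\Theta$ and $x$, membership $\bigcup C_x\in\mathcal{O}(X)_x^\subseteq$ forces $x\in\bigcup C_x$, hence $C\cap\mathbb{P}_x\neq\emptyset$; letting $x$ range, this says $X\subseteq\bigcup C$, so $\Theta\subseteq\mathcal{C}_X(\mathbb{P})$. For the basis property, given $x$ and open $O\ni x$, coinitiality yields $C\in\Theta$ with $\bigcup C_x\subseteq O$, and any $c\in C_x$ then witnesses $x\in c\subseteq O$. For roundness of $\mathbb{P}_x$, given $s\in\mathbb{P}_x$ I note $s$ is itself an open neighbourhood of $x$, so coinitiality supplies $C\in\Theta$ with $\bigcup C_x\subseteq s$, i.e. $C\cap\mathbb{P}_x\leq s$, which is exactly the defining condition.

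I do not expect a genuine obstacle: the argument is a matter of translating between the neighbourhood-base formulation and the Cauchy/round vocabulary. The only points needing care are (i) recognising that each $\bigcup C_x$ being an honest neighbourhood is precisely what makes $\Theta\subseteq\mathcal{C}_X(\mathbb{P})$ both necessary (in $\Rightarrow$) and available (in $\Leftarrow$, via \autoref{CauchyCovers}), and (ii) the routine but easy-to-botch passage between the refinement relation $C\cap\mathbb{P}_x\leq s$ and the containment $\bigcup C_x\subseteq s$.
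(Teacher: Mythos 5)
Your argument is correct and follows essentially the same route as the paper's proof: both directions unwind the neighbourhood-base definition of compatibility against the cover/roundness conditions, with the identification $\bigcup C_x\subseteq s\Leftrightarrow C\cap\mathbb{P}_x\leq s$ doing the translation. The only cosmetic difference is that you route the non-emptiness of $C_x$ through \autoref{CauchyCovers} where the paper argues it directly, which changes nothing of substance.
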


\begin{proof}
If $\Theta$ is compatible with $X$ then, for any $C\in\Theta$ and $x\in X$, $\bigcup C_x\in\mathcal{O}(X)_x$ so $C_x\neq\emptyset$, i.e. $C$ covers $X$.  Also, whenever $x\in O\in\mathcal{O}(X)$, $\subseteq$-coinitiality in $\mathcal{O}(X)_x$ means we have some $C\in\Theta$ with $x\in\bigcup C_x\subseteq O$.  In particular, we have some $N\in C\subseteq\mathbb{P}$ with $x\in N\subseteq O$.  As $O$ was arbitrary, $\mathbb{P}$ (and even $\bigcup\Theta$) is a basis for $X$.  Moreover, taking $O\in\mathbb{P}_x$ shows that $\mathbb{P}_x$ is $\Theta$-round.

Conversely, say $\mathbb{P}$ is a basis of $X$, each $C\in\Theta$ covers $X$ and $\mathbb{P}_x$ is $\Theta$-round, for each $x\in X$.  As each $C\in\Theta$ covers $X$, for each $x\in X$, $C_x\neq\emptyset$ and hence $\bigcup C_x\in\mathcal{O}(X)_x$.  As $\mathbb{P}$ is a basis, for each $x\in X$ and $O\in\mathcal{O}(X)$, we have some $N\in\mathbb{P}$ with $x\in N\subseteq O$.  As $\mathbb{P}_x$ is $\Theta$-round, we have some $C\in\Theta$ such that $x\in\bigcup C_x\subseteq N\subseteq O$ and hence $\Theta$ is compatible with $X$.
\end{proof}

Thus, for any basis $\mathbb{P}\subseteq\mathcal{O}(X)$ and $\Theta\subseteq\mathcal{C}_X(\mathbb{P})$, compatibility is precisely what is necessary to ensure that $\mathbb{P}_x\in\widehat{\Theta}$, for each $x\in X$.  This means that $\widehat\Theta$ can be considered as an `extension' or `completion' of $X$.

It also follows from \autoref{LocalT1Recovery} and \autoref{XCompatible} that any $*$-coinitial directed $\Theta\subseteq\mathcal{C}_X(\mathbb{P})$ is compatible.  In fact, directedness is unnecessary.

\begin{prp}
If $\mathbb{P}\subseteq\mathcal{O}(X)$ is a basis of a $T_1$ space $X$ and $\Theta\subseteq\mathcal{C}_X(\mathbb{P})$ is $*$-coinitial then $\Theta$ is compatible with the topology of $X$.
\end{prp}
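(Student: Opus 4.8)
The plan is to reduce to the characterisation of compatibility provided by \autoref{XCompatible}. Since $\mathbb{P}$ is already a basis and $\Theta\subseteq\mathcal{C}_X(\mathbb{P})$ by hypothesis, it suffices to prove that $\mathbb{P}_x$ is $\Theta$-round for every $x\in X$; that is, given $x\in X$ and $N\in\mathbb{P}_x$, to produce $C\in\Theta$ with $C\cap\mathbb{P}_x\leq N$ (meaning every $c\in C$ with $x\in c$ satisfies $c\subseteq N$).

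First I would manufacture an auxiliary cover from the $T_1$ property. As $X$ is $T_1$ and $\mathbb{P}$ is a basis, for every $y\in X\setminus\{x\}$ there is $M\in\mathbb{P}$ with $y\in M\subseteq X\setminus\{x\}$, so $\mathbb{P}\setminus\mathbb{P}_x$ covers $X\setminus\{x\}$; since $x\in N$, this gives $\{N\}\cup(\mathbb{P}\setminus\mathbb{P}_x)\in\mathcal{C}_X(\mathbb{P})$. Now I would invoke $*$-coinitiality at $p=N$, which yields $C\in\Theta$ with $C*N\leq\{N\}\cup(\mathbb{P}\setminus\mathbb{P}_x)$.

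Finally I would verify $C\cap\mathbb{P}_x\leq N$: if $c\in C$ and $x\in c$, then $x\in c\cap N\neq\emptyset$, so $c\in C*N$ and hence $c\subseteq E$ for some $E\in\{N\}\cup(\mathbb{P}\setminus\mathbb{P}_x)$; but $x\in c\subseteq E$ excludes $E\in\mathbb{P}\setminus\mathbb{P}_x$, forcing $E=N$ and $c\subseteq N$. Thus $\mathbb{P}_x$ is $\Theta$-round and \autoref{XCompatible} finishes the proof.

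There is no genuine obstacle here: the argument is precisely the $\Theta$-roundness half of the proof of \autoref{LocalT1Recovery}, stripped of the directedness and non-emptiness assumptions, which are not needed for this implication. The only points to watch are getting the $T_1$ separation step to deliver the cover $\{N\}\cup(\mathbb{P}\setminus\mathbb{P}_x)$ correctly and reading the refinement $C*N\leq\{N\}\cup(\mathbb{P}\setminus\mathbb{P}_x)$ in the right direction.
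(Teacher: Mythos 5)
Your proof is correct and follows essentially the same route as the paper's: both reduce to \autoref{XCompatible}, use the $T_1$ property to produce a cover $C$ with $C\cap\mathbb{P}_x=\{N\}$, and then apply $*$-coinitiality at $N$ to extract the required $\Theta$-roundness witness. The only difference is that you spell out the construction of the auxiliary cover $\{N\}\cup(\mathbb{P}\setminus\mathbb{P}_x)$, which the paper leaves implicit.
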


\begin{proof}
By \autoref{XCompatible}, it suffices to show that, for any $x\in X$, $\mathbb{P}_x$ is $\Theta$-round.  As $X$ is $T_1$, for any $p\in\mathbb{P}_x$ we have $C\in\mathcal{C}_X(\mathbb{P})$ with $C\cap\mathbb{P}_x=\{p\}$.  By $*$-coinitiality, we have $D\in\Theta$ with $D\cap\mathbb{P}_x\subseteq D*p\cap\mathbb{P}_x\leq C\cap\mathbb{P}_x=\{p\}$, as required.
\end{proof}

Intuitively, Picado-Pultr admissibility should correspond to topological compatibility.  Under suitable conditions, this is indeed the case.

\begin{prp}
If $\mathbb{P}\subseteq\mathcal{O}(X)$ is $T_D$ and $\Theta$ is an up-set then $\Theta$ is compatible with $X$ if and only if $\mathbb{P}$ is a basis and $\Theta\subseteq\mathcal{C}_X(\mathbb{P})$ is Picado-Pultr admissible.
\end{prp}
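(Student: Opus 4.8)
The plan is to strip the statement down to a claim about roundness. Since $\leq\ =\ \subseteq$ on $\mathbb{P}\subseteq\mathcal{O}(X)$ and $\mathbb{P}$ is $T_D$, as soon as $\mathbb{P}$ is known to be a basis we have $\mathcal{C}_\leq(\mathbb{P})=\mathcal{C}_X(\mathbb{P})$ by \eqref{XCover=>leqCover} and \eqref{XCover<=leqCover}; I will use this throughout. By \autoref{XCompatible}, $\Theta$ is compatible with the topology of $X$ iff $\mathbb{P}$ is a basis, $\Theta\subseteq\mathcal{C}_X(\mathbb{P})$ and each $\mathbb{P}_x$ is $\Theta$-round, while the right-hand side of the proposition already bundles in ``$\mathbb{P}$ is a basis'' and ``$\Theta\subseteq\mathcal{C}_X(\mathbb{P})$''. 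Hence it suffices to prove, under the standing assumptions that $\mathbb{P}$ is a $T_D$ basis, $\Theta\subseteq\mathcal{C}_X(\mathbb{P})$ and $\Theta$ is an up-set, that each $\mathbb{P}_x$ is $\Theta$-round if and only if $\Theta$ is Picado-Pultr admissible.

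For the forward direction assume every $\mathbb{P}_x$ is $\Theta$-round. The $\Rightarrow$ half of \eqref{PicadoPultrAdmissible} is routine: if $p\subseteq q$ and $C\in\Theta\subseteq\mathcal{C}_X(\mathbb{P})$ then $\bigcup((C\setminus\{p\})\cup\{q\})=X$ because $q\supseteq p$ and $\bigcup C=X$, so $(C\setminus\{p\})\cup\{q\}\in\mathcal{C}_X(\mathbb{P})=\mathcal{C}_\leq(\mathbb{P})$. For the $\Leftarrow$ half, suppose $p\nsubseteq q$ and pick $x\in p\setminus q$, so $p\in\mathbb{P}_x$. Roundness of $\mathbb{P}_x$ at $p$ gives $C_0\in\Theta$ with $C_0\cap\mathbb{P}_x\leq p$, i.e. every member of $C_0$ containing $x$ is $\subseteq p$. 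Now coarsen: set $D=(C_0\setminus\mathbb{P}_x)\cup\{p\}$. Then $C_0$ refines $D$ (members outside $\mathbb{P}_x$ refine themselves, members inside refine $p$), so $D\in\Theta$ since $\Theta$ is an up-set; moreover $p\in\mathbb{P}_x$ gives $D\setminus\{p\}=C_0\setminus\mathbb{P}_x$, none of whose members contains $x$, and $x\notin q$, so $x\notin\bigcup((D\setminus\{p\})\cup\{q\})$. Thus $(D\setminus\{p\})\cup\{q\}\notin\mathcal{C}_X(\mathbb{P})=\mathcal{C}_\leq(\mathbb{P})$, which (taking $C=D$) shows the right-hand side of \eqref{PicadoPultrAdmissible} fails for this pair $p\nsubseteq q$. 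Together with the $\Rightarrow$ half this gives Picado-Pultr admissibility.

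For the converse assume $\Theta$ is Picado-Pultr admissible and fix $x\in X$ and $p\in\mathbb{P}_x$; I must produce $C\in\Theta$ with $C\cap\mathbb{P}_x\leq p$. Using that $\mathbb{P}$ is $T_D$, pick $O_0,N\in\mathbb{P}$ with $O_0\setminus N=\{x\}$, and then use the basis property to pick $O\in\mathbb{P}$ with $x\in O\subseteq O_0\cap p$; since $O\subseteq O_0$ we still have $O\setminus N=\{x\}$, and in particular $O\nsubseteq N$. Apply the $\Leftarrow$ half of \eqref{PicadoPultrAdmissible} in contrapositive form to $O\nleq N$: there is $C\in\Theta$ with $(C\setminus\{O\})\cup\{N\}\notin\mathcal{C}_\leq(\mathbb{P})=\mathcal{C}_X(\mathbb{P})$, so some $y\in X$ avoids $\bigcup((C\setminus\{O\})\cup\{N\})$. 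Since $\bigcup C=X$ (as $\Theta\subseteq\mathcal{C}_X(\mathbb{P})$), necessarily $O\in C$ and $y\in O\setminus\bigcup(C\setminus\{O\})$; since also $y\notin N$ we get $y\in O\setminus N=\{x\}$, so $y=x$. Hence $C_x=\{O\}$, i.e. $C\cap\mathbb{P}_x=\{O\}\leq p$ as $O\subseteq p$, so $\mathbb{P}_x$ is $\Theta$-round, and then \autoref{XCompatible} finishes the direction.

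The reductions, the union-chasing, and the small $T_D$-witness shrinking are all routine; the only real content is the coarsening step $C_0\mapsto(C_0\setminus\mathbb{P}_x)\cup\{p\}$ in the forward direction, and that is precisely where the hypothesis that $\Theta$ is an up-set is used. The one point to stay careful about is that $\mathcal{C}_\leq(\mathbb{P})$ and $\mathcal{C}_X(\mathbb{P})$ genuinely coincide here, which requires $\mathbb{P}$ to be both a basis (for \eqref{XCover=>leqCover}) and $T_D$ (for \eqref{XCover<=leqCover}).
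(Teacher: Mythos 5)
Your proof is correct and follows essentially the same route as the paper's: reduce via \autoref{XCompatible} to the roundness of each $\mathbb{P}_x$, use the up-set hypothesis to coarsen a witnessing cover to one with $C_x=\{p\}$ for the forward direction, and use the $T_D$ witness together with the contrapositive of \eqref{PicadoPultrAdmissible} for the converse, with \eqref{XCover=>leqCover} and \eqref{XCover<=leqCover} mediating between $\mathcal{C}_\leq(\mathbb{P})$ and $\mathcal{C}_X(\mathbb{P})$. You are merely more explicit than the paper about the identification $\mathcal{C}_\leq(\mathbb{P})=\mathcal{C}_X(\mathbb{P})$ and the routine $\Rightarrow$ half of admissibility, which the paper leaves implicit.
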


\begin{proof}
If $\Theta$ is compatible with $X$ then $\mathbb{P}$ is a basis, $\Theta\subseteq\mathcal{C}_X(\mathbb{P})$ and each $\mathbb{P}_x$ is $\Theta$-round, by \autoref{XCompatible}.  Now take $p,q\in\mathbb{P}$ with $p\nsubseteq q$, so we have $x\in p\setminus q$.  As $\mathbb{P}_x$ is $\Theta$-round and $\Theta$ is an up-set, we have $C\in\Theta$ with $C_x=\{p\}$.  So $x\notin q\cup\bigcup(C\setminus\{p\})$ so $C\notin\mathcal{C}_X(\mathbb{P})\supseteq\mathcal{C}_\subseteq(\mathbb{P})$, by \eqref{XCover<=leqCover}, i.e. $\Theta$ is Picado-Pultr admissible.

Conversely, say $\mathbb{P}$ is a basis, $\Theta\subseteq\mathcal{C}_X(\mathbb{P})$ and $\Theta$ is Picado-Pultr admissible.  Whenever $x\in O\in\mathcal{O}(X)$, we have $p,q\in\mathbb{P}$ with $p\setminus q=\{x\}$, as $\mathbb{P}$ is $T_D$.  As $\mathbb{P}$ is basis, we can replace $p$ if necessary to make $p\subseteq O$.  As $\Theta$ is Picado-Pultr admissible, we have $C\in\Theta$ such that $(C\setminus\{p\})\cup\{q\}\notin\mathcal{C}_\subseteq(\mathbb{P})\supseteq\mathcal{C}_X(\mathbb{P})$, by \eqref{XCover=>leqCover}.  As $p\setminus q=\{x\}$, this implies $C_x=\{p\}$, showing that $\Theta$ is compatible with $X$.
\end{proof}

For Wallman admissibility, however, we do not need $T_D$.

\begin{prp}
If $\mathbb{P}\subseteq\mathcal{O}(X)$ then any up-set $\Theta\subseteq\mathcal{C}_X(\mathbb{P})$ that is compatible with $X$ is necessarily Wallman admissible.
\end{prp}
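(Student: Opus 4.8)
The plan is to unpack compatibility using \autoref{XCompatible}, which hands us three concrete facts: $\mathbb{P}$ is a basis, $\Theta\subseteq\mathcal{C}_X(\mathbb{P})$, and $\mathbb{P}_x$ is $\Theta$-round for every $x\in X$. Since $\leq$ is just $\subseteq$ on $\mathbb{P}$, Wallman admissibility is the biconditional $p\subseteq q\Leftrightarrow\forall C\in\Theta\ ((C\setminus\{p\})\cup\{q\}\in\Theta)$, and I would prove its two implications separately.

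The forward implication uses only the up-set hypothesis. Given $p\subseteq q$ and $C\in\Theta$, enlarging the (possible) member $p$ of $C$ to the larger set $q$ merely coarsens $C$, so $C$ refines $(C\setminus\{p\})\cup\{q\}$; as $\Theta$ is an up-set, $(C\setminus\{p\})\cup\{q\}\in\Theta$.

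For the converse I would argue contrapositively. Suppose $p\not\subseteq q$ and fix $x\in p\setminus q$. Then $p\in\mathbb{P}_x$, so roundness of $\mathbb{P}_x$ supplies $C'\in\Theta$ with $C'\cap\mathbb{P}_x\leq p$; that is, every member of $C'$ containing $x$ is contained in $p$. Put $C=(C'\setminus\mathbb{P}_x)\cup\{p\}$. Each member of $C'$ either omits $x$, hence lies in $C$, or contains $x$, hence is $\subseteq p\in C$; thus $C'$ refines $C$, and therefore $C\in\Theta$ since $\Theta$ is an up-set. Now $p$ is the unique member of $C$ that contains $x$, so $C\setminus\{p\}=C'\setminus\mathbb{P}_x$ consists of sets omitting $x$; as $x\notin q$ as well, no member of $(C\setminus\{p\})\cup\{q\}$ contains $x$. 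Hence $(C\setminus\{p\})\cup\{q\}$ fails to cover $X$ and so cannot lie in $\Theta\subseteq\mathcal{C}_X(\mathbb{P})$, which is exactly what refutes the right-hand side of the biconditional.

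I do not anticipate any serious obstacle. The one step that does the work is the passage from the cover $C'$ furnished by roundness to the cover $C$, where the up-set property is used to coarsen $C'$ so that the point $x$ becomes concentrated in the single set $p$; after that, the failure of $(C\setminus\{p\})\cup\{q\}$ to cover $X$ is visible at $x$. The set-difference bookkeeping (for instance, that $p$ automatically lies outside $C'\setminus\mathbb{P}_x$ because $x\in p$) is routine, and the degenerate cases cause no trouble.
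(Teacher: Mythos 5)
Your proof is correct and follows essentially the same route as the paper's: pick $x\in p\setminus q$, use roundness of $\mathbb{P}_x$ together with the up-set property to produce $C\in\Theta$ with $C_x=\{p\}$, and observe that $(C\setminus\{p\})\cup\{q\}$ then misses $x$ and so cannot lie in $\Theta\subseteq\mathcal{C}_X(\mathbb{P})$. The only difference is presentational: you spell out the coarsening $C=(C'\setminus\mathbb{P}_x)\cup\{p\}$ and the easy forward implication, both of which the paper leaves implicit.
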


\begin{proof}
If $\Theta\subseteq\mathcal{C}_X(\mathbb{P})$ is compatible with $X$ then each $\mathbb{P}_x$ is $\Theta$-round, by \autoref{XCompatible}.  Now take $p,q\in\mathbb{P}$ with $p\nsubseteq q$, so we have $x\in p\setminus q$.  As $\mathbb{P}_x$ is $\Theta$-round and $\Theta$ is an up-set, we have $C\in\Theta$ with $C_x=\{p\}$.  Thus $x\notin q\cup\bigcup(C\setminus\{p\})$ so $(C\setminus\{p\})\cup\{q\}\notin\mathcal{C}_X(\mathbb{P})\supseteq\Theta$, i.e. $\Theta$ is Wallman admissible.
\end{proof}

\section{Uniformly Below}\label{UniformlyBelow}

The `uniformly below' relation from \cite[VIII.2.3]{PicadoPultr2012} plays an important role when dealing with metric spaces and more general $T_3$ spaces(=$T_1$ regular spaces, i.e. where each point has a neighbourhood base of closed sets).

\begin{dfn}
The \emph{uniformly below} relation $\vartriangleleft$ (or $\vartriangleleft_\Theta$) on $\mathbb{P}$ is given by
\[\tag{Uniformly Below}p\vartriangleleft q\qquad\Leftrightarrow\qquad\exists C\in\Theta\ (Cp\leq q).\]
\end{dfn}

From \eqref{BoundedNear}, we immediately see that $\vartriangleleft$ is auxiliary to $\leq$, i.e.
\[\label{Auxiliary}\tag{Auxiliary}p\leq p'\vartriangleleft q'\leq q\qquad\Rightarrow\qquad p\vartriangleleft q.\]
In particular, if $\vartriangleleft\ \subseteq\ \leq$ then we immediately see that $\vartriangleleft$ is transitive.  Transitivity also follows immediately when $\Theta$ is directed.

\begin{prp}\label{Directed=>Transitive}
If $\Theta$ is directed then $\vartriangleleft$ is transitive.
\end{prp}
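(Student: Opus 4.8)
The plan is to unfold both instances of the uniformly below relation, invoke directedness of $\Theta$ to replace the two witnessing covers by a single common refinement, and then transport membership in the relevant $C$-stars across this refinement using the fact that $\Theta^\leq$-nearness of a finite set is preserved under passing to a refinement, namely \eqref{BoundedNear}. So, given $p\vartriangleleft q\vartriangleleft r$, I would first extract $C_1,C_2\in\Theta$ with $C_1p\leq q$ and $C_2q\leq r$, and then use directedness to obtain $C\in\Theta$ with $C\leq C_1$ and $C\leq C_2$. The goal is reduced to showing $Cp\leq r$, so fix an arbitrary $c\in Cp$, which by definition means $\{c,p\}$ is $\Theta^\leq$-near.

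The two key steps would then be as follows. Using $C\leq C_1$, choose $c_1\in C_1$ with $c\leq c_1$; since $c_1\geq c$ and $p\geq p$, the pair $\{c_1,p\}$ refines $\{c,p\}$, so $\{c_1,p\}$ is $\Theta^\leq$-near by \eqref{BoundedNear}, i.e.\ $c_1\in C_1p$, and hence $c\leq c_1\leq q$. Next, using $C\leq C_2$, choose $c_2\in C_2$ with $c\leq c_2$; now $\{c_2,q\}$ refines $\{c,p\}$ as well, because $c_2\geq c$ and $q\geq c$ (the latter from $c\leq q$ just obtained), so $\{c_2,q\}$ is $\Theta^\leq$-near by \eqref{BoundedNear}, i.e.\ $c_2\in C_2q$, whence $c\leq c_2\leq r$. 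Since $c\in Cp$ was arbitrary, $Cp\leq r$, and therefore $p\vartriangleleft r$.

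The main obstacle, such as it is, is simply bookkeeping the direction in which nearness interacts with the order: nearness of a pair is retained when its members are enlarged, which is precisely \eqref{BoundedNear} read through the refinement preorder on subsets, and this is what permits pulling $c$ up to members of $C_1$ and of $C_2$ while keeping it inside the corresponding $C$-stars. Directedness is exactly what guarantees a single $C$ on which both of these moves can be carried out for the same $c$; without it the argument breaks, which is why directedness is hypothesised here.
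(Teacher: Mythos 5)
Your argument is correct and is essentially the paper's own proof, just unfolded element by element: the paper takes a common refinement $E\leq C,D$ and observes $Ep\leq Cp\leq q$ and $Ep\subseteq Eq\leq Dq\leq r$, which are exactly your two applications of \eqref{BoundedNear} to the pairs $\{c_1,p\}$ and $\{c_2,q\}$. The only quibble is terminological: with the paper's convention, $\{c,p\}$ refines $\{c_1,p\}$ (equivalently $\{c_1,p\}\geq\{c,p\}$), not the other way around, though your actual use of \eqref{BoundedNear} has the direction right.
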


\begin{proof}
If $p\vartriangleleft q\vartriangleleft r$ then we have $C,D\in\Theta$ with $Cp\leq q$ and $Dq\leq r$.  As $\Theta$ is directed, we have $E\in\Theta$ with $E\leq C$ and $E\leq D$.  Thus $Ep\leq Cp\leq q$ and $Eq\leq Dq\leq r$ and hence $Ep\subseteq Eq\leq r$, i.e. $p\vartriangleleft r$.
\end{proof}

As in \cite{Erne1991}, we define the \emph{lower preorder} $\trianglelefteq$ on $\mathbb{P}$ by
\[p\trianglelefteq q\qquad\Leftrightarrow\qquad p^\vartriangleright\subseteq q^\vartriangleright.\]
From \eqref{Auxiliary}, we immediately see that $\trianglelefteq$ is weaker than $\leq$, i.e.
\[\label{leqsubtri}\tag{$\leq\ \subseteq\ \trianglelefteq$}p\leq q\qquad\Rightarrow\qquad p\trianglelefteq q.\]
We will be particularly interested in $\Theta$ for which the converse also holds, i.e. $\leq\ =\ \trianglelefteq$.

\begin{prp}\label{NearnessProperties1}
If $\Theta$ is directed and $\trianglelefteq\ =\ \leq$ then $\Theta\subseteq\mathcal{C}_\leq(\mathbb{P})$.
\end{prp}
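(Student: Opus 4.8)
The plan is to show directly that every $C\in\Theta$ satisfies the alternative form \eqref{leqCover'} of the $\leq$-cover condition; since the $\Rightarrow$ direction of \eqref{leqCover} holds for arbitrary subsets, this is enough to conclude $C\in\mathcal{C}_\leq(\mathbb{P})$ and hence $\Theta\subseteq\mathcal{C}_\leq(\mathbb{P})$. So I would fix $C\in\Theta$ together with $p,q\in\mathbb{P}$ satisfying $p\nleq q$, and aim to produce $c\in C$ and $r\in\mathbb{P}$ with $p,c\geq r\nleq q$. Because $\trianglelefteq\ =\ \leq$, the hypothesis $p\nleq q$ gives $p^\vartriangleright\nsubseteq q^\vartriangleright$, so I can choose $x\in\mathbb{P}$ with $x\vartriangleleft p$ but $x\not\vartriangleleft q$.

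From $x\vartriangleleft p$ I get $D\in\Theta$ with $Dx\leq p$. Invoking directedness, I pick $E\in\Theta$ refining both $C$ and $D$. The first key step is the observation that $Ex\leq p$: given $e\in Ex$, refinement $E\leq D$ supplies $d\in D$ with $e\leq d$, and since $\{e,x\}$ is $\Theta^\leq$-near and $\{d,x\}\geq\{e,x\}$, \eqref{BoundedNear} shows $\{d,x\}$ is $\Theta^\leq$-near too, i.e.\ $d\in Dx$, whence $e\leq d\leq p$.

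The second key step uses $x\not\vartriangleleft q$ against the particular witness $E\in\Theta$: this forces $Ex\nleq q$, so there is some $c\in Ex$ with $c\nleq q$. Now $c\in E$ and $E\leq C$ give some $c'\in C$ with $c\leq c'$, and then $r:=c$ does the job: $p\geq r$ by the first step, $c'\geq r$ trivially, and $r=c\nleq q$ — exactly \eqref{leqCover'}. Since $C\in\Theta$ was arbitrary, $\Theta\subseteq\mathcal{C}_\leq(\mathbb{P})$.

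I do not expect a real obstacle here; the only thing to be careful about is matching up the $\Theta^\leq$-nearness defining the $C$-star across a refinement, which is precisely the content of \eqref{BoundedNear}, and noting that directedness is genuinely needed: it is what lets me replace $C$ and $D$ by a common refinement $E$, so that the witness $c$ (forced to lie in $Ex$ by $x\not\vartriangleleft q$) simultaneously lies below $p$ via $Dx\leq p$. Without directedness the analogous witness $c\in Cx$ carries no information relating it to $p$.
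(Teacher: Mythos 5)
Your argument is correct and is essentially the paper's own proof: pick $s\vartriangleleft p$ with $s\not\vartriangleleft q$ using $\trianglelefteq\ =\ \leq$, take $D\in\Theta$ with $Ds\leq p$, use directedness to get a common refinement $E$ of $C$ and $D$, observe $Es\leq p$ while $Es\nleq q$, and push the resulting witness up into $C$ to verify \eqref{leqCover'}. The only difference is that you spell out, via \eqref{BoundedNear}, why stars are preserved under refinement (i.e.\ why $E\leq D$ gives $Es\leq Ds$), a step the paper leaves implicit.
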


\begin{proof}
Say $\Theta$ is directed and $\leq\ =\ \trianglelefteq$ and take $C\in\Theta$.  If $p\nleq q$ then we have $s\vartriangleleft p$ with $s\not\vartriangleleft q$.  Thus we have $D\in\Theta$ with $Ds\leq p$.  As $\Theta$ is directed, we have $E\in\Theta$ with $E\leq C$ and $E\leq D$.  Thus $Es\leq p$ but $Es\not\leq q$, as $s\not\vartriangleleft q$, i.e. we have $e\in Es\leq p$ with $e\nleq q$.  We also have $c\in C$ with $e\leq c$, as $E\leq C$.  Thus $r=e$ witnesses \eqref{leqCover'} so $C\in\mathcal{C}_\leq(\mathbb{P})$.
\end{proof}

When $\Theta\subseteq\mathcal{C}_\leq(\mathbb{P})$, weak admissibility means $\vartriangleleft$ is weaker than $\leq$.

\begin{prp}\label{TriangleSubsetLeq}
If $\Theta\subseteq\mathcal{C}_\leq(\mathbb{P})$ then
\[\Theta^\leq\text{ is weakly admissible}\qquad\Leftrightarrow\qquad\vartriangleleft\ \subseteq\ \leq.\]
\end{prp}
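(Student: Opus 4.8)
The plan is to prove the two implications separately, the forward one ($\Rightarrow$) directly using that every member of $\Theta$ is a $\leq$-cover, and the backward one ($\Leftarrow$) by contraposition using the fact that $Cp$ is empty whenever $\{p\}$ fails to be $\Theta^\leq$-near. Throughout, one should be careful that all instances of ``near'' refer to $\Theta^\leq$-nearness, so that they match the occurrence in the definition of $Cp$.

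For $\Rightarrow$, assume $\Theta^\leq$ is weakly admissible and suppose $p\vartriangleleft q$, witnessed by $C\in\Theta$ with $Cp\leq q$; I would derive $p\leq q$ by contradiction. If $p\nleq q$, then since $C\in\Theta\subseteq\mathcal{C}_\leq(\mathbb{P})$, the defining property \eqref{leqCover'} of $\leq$-covers produces $c\in C$ and $r\in\mathbb{P}$ with $r\leq p$, $r\leq c$ and $r\nleq q$. From $r\nleq q$ we get that $r$ is not the minimum of $\mathbb{P}$, so weak admissibility of $\Theta^\leq$ gives that $\{r\}$ is $\Theta^\leq$-near; since $r\leq c$ and $r\leq p$, upward preservation of nearness \eqref{BoundedNear} shows $\{c,p\}$ is $\Theta^\leq$-near, i.e. $c\in Cp$. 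But then $Cp\leq q$ forces $c\leq q$, whence $r\leq c\leq q$, contradicting $r\nleq q$. Hence $p\leq q$, so $\vartriangleleft\ \subseteq\ \leq$.

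For $\Leftarrow$, I would argue contrapositively: suppose $\Theta^\leq$ is not weakly admissible, so there is some non-minimal $p\in\mathbb{P}$ — hence some $q$ with $p\nleq q$ — for which $\{p\}$ is not $\Theta^\leq$-near. The key observation is that this forces $Cp=\emptyset$ for every $C\in\Theta$: if some $c\in Cp$, then $\{c,p\}$ is $\Theta^\leq$-near, giving $D\notin\Theta^\leq$ with $D\cup\{c\},D\cup\{p\}\in\Theta^\leq$, and already $D\notin\Theta^\leq$ together with $D\cup\{p\}\in\Theta^\leq$ witnesses that $\{p\}$ is $\Theta^\leq$-near, a contradiction. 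Picking any $C\in\Theta$ (the case $\Theta=\emptyset$ being degenerate and handled separately), we then have $Cp=\emptyset\leq q$, so $p\vartriangleleft q$ while $p\nleq q$, i.e. $\vartriangleleft\not\subseteq\ \leq$. The quantifier bookkeeping around the definitions of $\Theta^\leq$-nearness, $Cp$ and refinement is routine; the only genuinely load-bearing inputs are \eqref{leqCover'}, used to extract a common lower bound $r$ beneath $p$ and beneath a cover element $c$, and \eqref{BoundedNear}, used to lift nearness of $\{r\}$ up to $\{c,p\}$, so the main thing to get right is aligning these two with the star $Cp$.
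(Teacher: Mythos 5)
Your proof is correct and follows essentially the same route as the paper's: the forward direction extracts $r$ with $p,c\geq r\nleq q$ from \eqref{leqCover'} and uses weak admissibility plus \eqref{BoundedNear} to place $c$ in $Cp$, while the backward direction is the same contrapositive observing that $\{p\}$ failing to be $\Theta^\leq$-near forces $Cp=\emptyset$. The $\Theta=\emptyset$ case you set aside is also silently assumed away in the paper (its proof likewise needs some $C\in\Theta$ to witness $p\vartriangleleft q$), so this is a shared, not an additional, caveat.
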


\begin{proof}
If $\Theta^\leq$ is not weakly admissible then we have $p,q\in\mathbb{P}$ such that $p\nleq q$ and $\{p\}$ is not $\Theta^\leq$-near.  This means $\mathbb{P}p=\emptyset\leq q$ so $p\vartriangleleft q$ and hence $\vartriangleleft\ \nsubseteq\ \leq$.

Conversely, if $C\in\Theta\subseteq\mathcal{C}_\leq(\mathbb{P})$ and $p\nleq q$ then, by \eqref{leqCover'}, we have $c\in C$ and $r\in\mathbb{P}$ with $p,c\geq r\nleq q$.  If $\Theta^\leq$ is also weakly admissible then $\{r\}$ is $\Theta^\leq$-near so $c\in Cp\nleq q$.  As $C$ was arbitrary, this shows $p\nleq q$ implies $p\not\vartriangleleft q$, i.e. $\vartriangleleft\ \subseteq\ \leq$.
\end{proof}

As in \cite[VIII.2.4]{PicadoPultr2012}, we call $\Theta$ \emph{admissible} if, for all $p\in\mathbb{P}$,
\[\label{triAdmissible}\tag{Admissible}p=\bigvee p^\vartriangleright.\]
In particular, this yields $p^\vartriangleleft\leq p$, i.e. $\vartriangleleft\ \subseteq\ \leq$.  If $p\trianglelefteq q$, i.e. $p^\vartriangleright\subseteq q^\vartriangleright$, it also yields $p=\bigvee p^\vartriangleright\leq\bigvee q^\vartriangleright=q$, i.e. $\trianglelefteq\ \subseteq\ \leq$.  In other words,
\begin{equation}\label{triAdmissible=>leq=trieq}
\eqref{triAdmissible}\qquad\Rightarrow\qquad\vartriangleleft\ \subseteq\ \leq\ =\ \trianglelefteq.
\end{equation}
If $\Theta$ is also directed then \autoref{NearnessProperties1} yields $\Theta\subseteq\mathcal{C}_\leq(\mathbb{P})$.

\begin{rmk}
Together with \autoref{FrameCovers}, this shows that the assumption in \cite[VIII.2.6]{PicadoPultr2012} that a nearness must consist of covers is actually superfluous.  Indeed, while $Cp$ is defined differently in \cite{PicadoPultr2012} as $\{c\in C:c\wedge p\neq0\}$, the proof of \autoref{NearnessProperties1} still applies to show that, if $\Theta$ is directed and admissible,
\[\Theta\subseteq\mathcal{C}_\leq(\mathbb{P})=\{C\subseteq\mathbb{P}:\bigvee C=1\}.\]
\end{rmk}

For filters, \eqref{triAdmissible} strengthens \eqref{PicadoPultrAdmissible}.

\begin{prp}\label{Admissible=>PPAdmissible}
If $\Theta$ is a admissible filter then $\Theta$ is Picado-Pultr admissible.
\end{prp}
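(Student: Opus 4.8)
The plan is to derive everything from three standing consequences of the hypothesis and then verify the two halves of \eqref{PicadoPultrAdmissible} separately. First, a filter is an up-set, so $\Theta=\Theta^\leq$. Second, since $\Theta$ is admissible, \eqref{triAdmissible=>leq=trieq} gives $\vartriangleleft\ \subseteq\ \leq\ =\ \trianglelefteq$, and a filter is directed, so \autoref{NearnessProperties1} then yields $\Theta\subseteq\mathcal{C}_\leq(\mathbb{P})$. Third, from $\Theta\subseteq\mathcal{C}_\leq(\mathbb{P})$ and $\vartriangleleft\ \subseteq\ \leq$, \autoref{TriangleSubsetLeq} shows that $\Theta=\Theta^\leq$ is weakly admissible; I will use this only in the contrapositive form: if $\{r\}$ is not $\Theta^\leq$-near then $r^\leq=\mathbb{P}$, so $r$ is a minimum of $\mathbb{P}$ and hence $r\leq q$ for every $q\in\mathbb{P}$.

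Granting these, the forward implication of \eqref{PicadoPultrAdmissible} is immediate: if $p\leq q$ and $C\in\Theta$, then $C$ refines $(C\setminus\{p\})\cup\{q\}$, so $(C\setminus\{p\})\cup\{q\}\in\Theta\subseteq\mathcal{C}_\leq(\mathbb{P})$ because $\Theta$ is an up-set.

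For the converse I would argue contrapositively, so suppose $p\not\leq q$ and let me produce $C\in\Theta$ with $(C\setminus\{p\})\cup\{q\}\notin\mathcal{C}_\leq(\mathbb{P})$. Since $p=\bigvee p^\vartriangleright$ and $p\not\leq q$, not every $s\in p^\vartriangleright$ can satisfy $s\leq q$, so I fix $s\vartriangleleft p$ with $s\not\leq q$. Then $s$ is not a minimum of $\mathbb{P}$ (a minimum would be $\leq q$), so $s^\leq\neq\mathbb{P}$ and $\{s\}$ is $\Theta^\leq$-near by weak admissibility, and hence so is $\{p,s\}$ by \eqref{BoundedNear}, using $s\leq p$. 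Now pick $C\in\Theta$ with $Cs\leq p$ and put $E=(C\setminus Cs)\cup\{p\}$. Every $c\in Cs$ satisfies $c\leq p\in E$ and $C\setminus Cs\subseteq E$, so $C$ refines $E$ and therefore $E\in\Theta$. Moreover $p\notin C\setminus Cs$, since $\{p,s\}$ being $\Theta^\leq$-near forces $p\in Cs$ whenever $p\in C$; hence $E\setminus\{p\}=C\setminus Cs$, so $(E\setminus\{p\})\cup\{q\}=(C\setminus Cs)\cup\{q\}$.

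Finally I would show $D:=(C\setminus Cs)\cup\{q\}$ violates \eqref{leqCover}, for which it suffices to exhibit a pair $a\not\leq b$ with $a^\geq\cap D^\geq\subseteq b^\geq$; I would take $a=s$ and $b=q$, noting $s\not\leq q$. Suppose $r\leq s$ and $r\in D^\geq$. If $r\leq q$ we are done, so assume $r\leq c$ for some $c\in C\setminus Cs$. Then $\{c,s\}\geq\{r\}$, so were $\{r\}$ $\Theta^\leq$-near then $\{c,s\}$ would be $\Theta^\leq$-near by \eqref{BoundedNear}, giving $c\in Cs$, a contradiction. Hence $\{r\}$ is not $\Theta^\leq$-near, so $r$ is a minimum of $\mathbb{P}$ and $r\leq q$. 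Thus $s^\geq\cap D^\geq\subseteq q^\geq$ while $s\not\leq q$, so $D\notin\mathcal{C}_\leq(\mathbb{P})$, completing the contrapositive. The main obstacle is this construction: the idea is to use the $C$-star decomposition to peel $s$ away from the rest of the cover, so that the near/not-near dichotomy — which is exactly what weak admissibility pins down — forces anything lying under both $s$ and a non-star member of $C$ to be the bottom element, hence harmless for the witness $(s,q)$.
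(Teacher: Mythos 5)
Your proof is correct and follows essentially the same route as the paper's: the forward direction from $\Theta$ being an up-set contained in $\mathcal{C}_\leq(\mathbb{P})$, and the converse by picking $s\vartriangleleft p$ with $s\nleq q$, passing to $(C\setminus Cs)\cup\{p\}\in\Theta$, and using weak admissibility to show any common lower bound of $s$ and an element of $C\setminus Cs$ must be the minimum, so that $(C\setminus Cs)\cup\{q\}$ fails \eqref{leqCover} at the pair $(s,q)$. Your explicit check that $p\notin C\setminus Cs$ is a detail the paper's proof glosses over, but otherwise the arguments coincide.
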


\begin{proof}
As just mentioned above, $\Theta\subseteq\mathcal{C}_\leq(\mathbb{P})$.  As $\Theta$ is also an up-set, we immediately see that the $\Rightarrow$ part of \eqref{PicadoPultrAdmissible} holds.

Conversely, say $p\nleq q$ and hence, by \eqref{triAdmissible}, we have some $r\vartriangleleft p$ with $r\nleq q$.  So we have $C\in\Theta$ with $Cr\leq p$.  By \autoref{TriangleSubsetLeq}, $\Theta$ is weakly admissible, which means that no element of $C\setminus Cr$ shares a non-zero lower bound with $r$.  Letting $D=(C\setminus Cr)\cup\{p\}\in\Theta$ and $E=(D\setminus\{p\})\cup\{q\}=(C\setminus Cr)\cup\{q\}$, this means that $r^\geq\cap E^\geq\subseteq q^\geq$, even though $r\nleq q$, showing that $E\notin\mathcal{C}_\leq(\mathbb{P})$, thus verifying the $\Leftarrow$ part of \eqref{PicadoPultrAdmissible}.
\end{proof}

Under admissibility, covers can always be `shrunk'.

\begin{prp}\label{RegularCovers}
If $\Theta$ is admissible and $C\in\mathcal{C}_\leq(\mathbb{P})$ then $C^\vartriangleright\in\mathcal{C}_\leq(\mathbb{P})$.
\end{prp}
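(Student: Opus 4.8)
The plan is to check the $\leq$-cover condition for $C^\vartriangleright$ by means of its explicit reformulation \eqref{leqCover'}. Since the $\Rightarrow$ direction of \eqref{leqCover} holds automatically for every subset of $\mathbb{P}$, it is enough to show that whenever $p\nleq q$ there are $c'\in C^\vartriangleright$ and $r\in\mathbb{P}$ with $p,c'\geq r\nleq q$.

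So fix $p\nleq q$. First I would feed the pair $p\nleq q$ into the hypothesis $C\in\mathcal{C}_\leq(\mathbb{P})$ via \eqref{leqCover'}: this yields $c\in C$ and $r_0\in\mathbb{P}$ with $r_0\leq p$, $r_0\leq c$ and $r_0\nleq q$. The crux is then to pass from $r_0$ to something uniformly below $c$, and here admissibility enters at the single element $r_0$: from $r_0=\bigvee r_0^\vartriangleright$ and $r_0\nleq q$ we see that $q$ fails to be an upper bound of $r_0^\vartriangleright$, so there is some $v\vartriangleleft r_0$ with $v\nleq q$.

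It then remains to observe that $v$ witnesses what we want. On one side, $v\vartriangleleft r_0\leq c$, so $v\vartriangleleft c$ by \eqref{Auxiliary} (using reflexivity of $\leq$ for the left endpoint), and since $c\in C$ this puts $v\in C^\vartriangleright$. On the other side, admissibility gives $v\leq\bigvee r_0^\vartriangleright=r_0\leq p$ (equivalently, $\vartriangleleft\ \subseteq\ \leq$ by \eqref{triAdmissible=>leq=trieq}). Thus $r=v$ and $c'=v$ satisfy $p,c'\geq v\nleq q$, establishing \eqref{leqCover'} for $C^\vartriangleright$; as $p,q$ were arbitrary, $C^\vartriangleright\in\mathcal{C}_\leq(\mathbb{P})$. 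I do not expect any real obstacle here: the only subtle points are to use \eqref{leqCover'} in both directions — once to extract a common lower bound $r_0$ for $C$, once to produce one for $C^\vartriangleright$ — and to note that directedness of $\Theta$ is nowhere needed, since we never have to amalgamate two members of $\Theta$.
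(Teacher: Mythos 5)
Your argument is correct and is essentially identical to the paper's own proof: both extract $c\in C$ and a common lower bound $r_0\nleq q$ from \eqref{leqCover'}, then use admissibility at $r_0$ to find $v\vartriangleleft r_0$ with $v\nleq q$, and conclude via \eqref{Auxiliary} and $\vartriangleleft\ \subseteq\ \leq$ that $v$ itself serves as both the element of $C^\vartriangleright$ and the witness $r$. Your closing remark that directedness of $\Theta$ is not needed is also consistent with the paper, which states the proposition without that hypothesis.
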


\begin{proof}
If $p\nleq q$ then we have $c\in C$ and $r\in\mathbb{P}$ with $p,c\geq r\nleq q$.  By admissibility, we then have $s\vartriangleleft r$ with $s\nleq q$.  By \eqref{Auxiliary}, $s\vartriangleleft c$, i.e. $s\in C^\vartriangleright$, and $s\leq p$, as $\vartriangleleft\ \subseteq\ \leq$, and hence $p,s\geq s\nleq q$, as required. 
\end{proof}

Next we note $\vartriangleleft$-regular subsets (i.e. $R\subseteq\mathbb{P}$ with $R\subseteq R^\vartriangleleft$) are related to $\Theta$-round subsets, at least when they are linked like in \cite[Definition III.3.56]{Kunen2011}.

\begin{dfn}
We call $S\subseteq\mathbb{P}$ \emph{linked} if $\{p,q\}$ is $\Theta^\leq$-near, for all $p,q\in S$.
\end{dfn}

\begin{prp}\label{RegularLinked=>Round}
Any $\vartriangleleft$-regular linked $R\subseteq\mathbb{P}$ is $\Theta$-round.
\end{prp}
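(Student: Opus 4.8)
The plan is to unwind the definitions directly; no clever construction is needed. Fix $r\in R$; we must produce some $C\in\Theta$ with $C\cap R\leq r$, which is exactly what $\Theta$-roundness at $r$ demands.

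First I would invoke $\vartriangleleft$-regularity of $R$ to choose $s\in R$ with $s\vartriangleleft r$. By the definition of the uniformly below relation, this yields some $C\in\Theta$ with $Cs\leq r$. I claim this same $C$ witnesses roundness at $r$, so the only thing left to check is $C\cap R\leq r$.

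To see this, take any $c\in C\cap R$. Since both $c$ and $s$ lie in $R$ and $R$ is linked, the pair $\{c,s\}$ is $\Theta^\leq$-near, so $c\in Cs$ by the definition of the $C$-star. Hence $c\leq r$, because $Cs\leq r$. As $c\in C\cap R$ was arbitrary, $C\cap R\leq r$, and as $r\in R$ was arbitrary, $R$ is $\Theta$-round.

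There is essentially no obstacle here; the proof is a two-line chain through the definitions of $\vartriangleleft$-regular, linked, and $C$-star. The one point worth emphasising is that the element $s$ furnished by $\vartriangleleft$-regularity at $r$ genuinely lies in $R$, which is precisely why linkedness of $R$ can be applied to the pair $\{c,s\}$; if one only knew $s\vartriangleleft r$ for some $s\in\mathbb{P}$ outside $R$, the argument would break down.
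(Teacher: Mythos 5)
Your proof is correct and follows exactly the paper's argument: use $\vartriangleleft$-regularity to get $s\in R$ with $s\vartriangleleft r$, take the witnessing $C\in\Theta$ with $Cs\leq r$, and use linkedness to conclude $C\cap R\subseteq Cs\leq r$. The paper's version is just a more compressed statement of the same two steps, and your emphasis on $s$ lying in $R$ is indeed the one point where linkedness is genuinely needed.
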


\begin{proof}
For any $r\in R$, $\vartriangleleft$-regularity yields $q\in R$ with $q\vartriangleleft r$, so we have $C\in\Theta$ with $Cq\leq r$.  As $R$ is linked, $R\cap C\subseteq Cq\leq r$, verifying that $R$ is $\Theta$-round.
\end{proof}

\begin{thm}\label{CountableDirectedAdmissible=>Faithful}
If $\Theta$ is countable, directed and admissible then $\Theta$ is faithful.
\end{thm}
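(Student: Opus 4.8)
The $\Rightarrow$ part of faithfulness is \autoref{FaithfulRightarrow}, so the plan is to establish the $\Leftarrow$ part: given $p\nleq q$, I will build some $R\in\widehat\Theta$ with $p\in R$ and $q\notin R$, which yields $R\in\widehat\Theta_p\setminus\widehat\Theta_q$. First I would record the standing consequences of directedness and admissibility: $\vartriangleleft\ \subseteq\ \leq$ and $\leq\ =\ \trianglelefteq$ by \eqref{triAdmissible=>leq=trieq}, hence $\Theta\subseteq\mathcal{C}_\leq(\mathbb{P})$ by \autoref{NearnessProperties1}, and hence $\Theta^\leq$ is weakly admissible by \autoref{TriangleSubsetLeq}. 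If $\Theta=\emptyset$ then \eqref{triAdmissible} forces $|\mathbb{P}|\leq1$ and there is nothing to prove, so assume $\Theta\neq\emptyset$ and enumerate $\Theta=(C_n)_{n\in\mathbb{N}}$, repeating entries if necessary; this is the only place countability enters.

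The core is a recursion producing a $\vartriangleleft$-decreasing sequence $p=p_0\vartriangleright p_1\vartriangleright p_2\vartriangleright\cdots$ with $p_n\nleq q$ for all $n$ and, crucially, $p_n\leq c_n$ for some $c_n\in C_n$. Given $p_{n-1}\nleq q$, apply \eqref{triAdmissible} twice: since $q$ is not an upper bound of $p_{n-1}^\vartriangleright$, pick $r\vartriangleleft p_{n-1}$ with $r\nleq q$, and likewise pick $s\vartriangleleft r$ with $s\nleq q$. Choose $D\in\Theta$ with $Ds\leq r$, then by directedness $E\in\Theta$ with $E\leq D$ and $E\leq C_n$; a short argument via \eqref{BoundedNear} (lift each $e\in Es$ to some $d\in Ds$ with $e\leq d$) upgrades $Ds\leq r$ to $Es\leq r$. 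Now $Es\nleq q$, for otherwise $s\vartriangleleft q$ and then $s\leq q$ since $\vartriangleleft\ \subseteq\ \leq$, a contradiction; so I can pick $p_n\in Es$ with $p_n\nleq q$. Then $p_n\leq r\vartriangleleft p_{n-1}$ gives $p_n\vartriangleleft p_{n-1}$ by \eqref{Auxiliary}, while $p_n\in E\leq C_n$ gives $c_n\in C_n$ with $p_n\leq c_n$.

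Finally I would set $R=\bigcup_n p_n^\geq$, manifestly an up-set. It is $\Theta$-Cauchy because $c_n\in C_n\cap R$ for every $n$ and $(C_n)$ exhausts $\Theta$. It is linked: any two elements of $R$ lie above some common $p_k$, which satisfies $p_k^\leq\neq\mathbb{P}$ (as $p_k\nleq q$), so $\{p_k\}$ is $\Theta^\leq$-near by weak admissibility, and \eqref{BoundedNear} then makes the pair $\Theta^\leq$-near. It is $\vartriangleleft$-regular: if $p_n\leq s\in R$ then $p_{n+1}\vartriangleleft p_n\leq s$, so $p_{n+1}\vartriangleleft s$ with $p_{n+1}\in R$. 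Hence $R$ is $\Theta$-round by \autoref{RegularLinked=>Round}, so $R\in\widehat\Theta$; and $p=p_0\in R$ while $q\notin R$ (else $p_n\leq q$ for some $n$), completing the $\Leftarrow$ direction.

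The main obstacle is making the single sequence $(p_n)$ serve two purposes simultaneously: the strictly $\vartriangleleft$-decreasing chain is what eventually forces roundness (through $\vartriangleleft$-regularity plus linkedness), whereas the clause $p_n\leq c_n\in C_n$ is what forces Cauchyness. The device that reconciles them is the two-fold application of admissibility at each step — one copy to keep the chain $\vartriangleleft$-decreasing, the other to absorb the cover $C_n$ via a common refinement — combined with the implication $\vartriangleleft\ \subseteq\ \leq$, which is precisely what turns ``$Es\nleq q$'' into a legitimate choice of $p_n$.
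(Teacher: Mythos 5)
Your proposal is correct and follows essentially the same route as the paper: enumerate the countable $\Theta$, build a $\vartriangleleft$-decreasing chain $p\vartriangleright p_1\vartriangleright p_2\vartriangleright\cdots$ avoiding $q$ while meeting each cover (the paper gets the recursion step slightly more directly from \eqref{leqCover'} applied to $C_n\in\mathcal{C}_\leq(\mathbb{P})$, rather than via a second use of admissibility and a common refinement, but the effect is identical), and then show the generated up-set is linked and $\vartriangleleft$-regular, hence round by \autoref{RegularLinked=>Round}. The only blemish is notational: in the paper's convention $R=\bigcup_n p_n^\geq$ is a union of down-sets, whereas everything you do with $R$ afterwards makes clear you mean the up-set $R=\bigcup_n p_n^\leq$, exactly as in the paper.
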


\begin{proof}
Say $p\nleq q$.  By \eqref{triAdmissible}, we have some $r\vartriangleleft p$ with $r\nleq q$.  For any $C\in\Theta\subseteq\mathcal{C}_\leq(\mathbb{P})$ (see \eqref{triAdmissible=>leq=trieq} and \autoref{NearnessProperties1}), \eqref{leqCover'} yields $c\in C$ and $p_1\in\mathbb{P}$ with $r,c\geq p_1\nleq q$.  Applying the same argument to each $C\in\Theta$ we obtain $(p_n)_{n\in\mathbb{N}}$ such that $R=\bigcup_{n\in\mathbb{N}}p_n^\leq$ is $\Theta$-Cauchy and $p\vartriangleright p_j\vartriangleright p_k\nleq q$, for all $j\leq k$.  Thus $R\in\widehat\Theta_p\setminus\widehat\Theta_q$, by \autoref{TriangleSubsetLeq} \autoref{RegularLinked=>Round}, as required.
\end{proof}

Note the above proof shows that, even if we restricted to the $\vartriangleleft$-regular elements of $\widehat\Theta$, we would still get a faithful representation.  One might wonder if all elements of $\widehat\Theta$ are necessarily $\vartriangleleft$-regular.  The following example shows that this need not be the case, even when $\Theta\subseteq\mathcal{F}(\mathbb{P})$ is countable, directed and admissible.

\begin{xpl}
Take $Y,Z\subseteq\mathbb{N}$ such that $Y\setminus Z$, $Z\setminus Y$ and $Y\cap Z$ are all infinite.  Let $X=\mathbb{N}\cup\{y,z\}$ be the space with basis
\[\mathbb{P}=\{\{n\}:n\in\mathbb{N}\}\cup\{\{y\}\cup Y':Y'\subseteq_\mathcal{F}Y\}\cup\{\{z\}\cup Z':Z'\subseteq_\mathcal{F}Z\}.\]
Note $X$ is compact so if we let $\Theta=\mathcal{C}_X(\mathbb{P})\cap\mathcal{F}(\mathbb{P})$ then $\widehat\Theta=\{\mathbb{P}_x:x\in X\}$, by \autoref{T1Recovery}.  However, as $Y\cap Z$ is infinite, $\mathbb{P}_y\cup\mathbb{P}_z$ is linked which means neither $\mathbb{P}_y$ nor $\mathbb{P}_z$ are $\vartriangleleft$-regular and
\[p\vartriangleleft q\qquad\Leftrightarrow\qquad\exists n\in\mathbb{N}\ (p=\{n\}\subseteq q).\]
Still, $\mathbb{P}$ and hence $\Theta$ is countable and also directed and even admissible, as $Y\setminus Z$ and $Z\setminus Y$ are infinite. 
\end{xpl}

However, we can avoid this situation by placing some degree of regularity on $\Theta$, as we discuss in the next section.

But before moving on, we consider the following notion from \cite{Williams1972} and how it relates to compatibility, regularity and admissibility.

\begin{dfn}
We call $\Theta\subseteq\mathcal{P}(\mathcal{P}(X))$ \emph{locally uniform} if
\[\forall x\in X\ \forall C\in\Theta\ \exists D\in\Theta\ (\bigcup(D*\bigcup D_x)\subseteq\bigcup C_x).\]
\end{dfn}

\begin{prp}\label{LocallyUniform<=>Base}
If $\mathbb{P}\subseteq\mathcal{O}(X)$ then $\Theta\subseteq\mathcal{P}(\mathbb{P})$ is locally uniform and compatible with $X$ iff $(\bigcup(C*\bigcup C_x))_{C\in\Theta}$ is a neighbourhood base at each $x\in X$.
\end{prp}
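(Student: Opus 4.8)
The plan is to reduce both implications to one elementary observation about the relationship between $\bigcup C_x$ and $\bigcup(C*\bigcup C_x)$, after which each direction is almost formal. The observation has two halves. First, for any $C\subseteq\mathbb{P}$ and any $x\in X$ we have $C_x\subseteq C*\bigcup C_x$: if $c\in C$ with $x\in c$ then $x\in c\cap\bigcup C_x$, so $c\in C*\bigcup C_x$; consequently $\bigcup C_x\subseteq\bigcup(C*\bigcup C_x)$, and both sets are open, being unions of members of $\mathbb{P}\subseteq\mathcal{O}(X)$. Second, $x\in\bigcup(C*\bigcup C_x)$ forces $C_x\neq\emptyset$: any $c\in C*\bigcup C_x$ with $x\in c$ already satisfies $c\in C$ and $x\in c$, hence $c\in C_x$. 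Combining the two halves, $\bigcup(C*\bigcup C_x)$ is a neighbourhood of $x$ if and only if $\bigcup C_x$ is, and this happens exactly when $C_x\neq\emptyset$.

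For the forward implication, assume $\Theta$ is compatible with $X$ and locally uniform, and fix $x\in X$. Compatibility says $(\bigcup C_x)_{C\in\Theta}$ is a neighbourhood base at $x$, so in particular each $\bigcup C_x$ is a neighbourhood of $x$; by the observation each $\bigcup(C*\bigcup C_x)$ is then a neighbourhood of $x$ too. Given any neighbourhood $O$ of $x$, compatibility provides $C_0\in\Theta$ with $\bigcup(C_0)_x\subseteq O$, and then local uniformity provides $D\in\Theta$ with $\bigcup(D*\bigcup D_x)\subseteq\bigcup(C_0)_x\subseteq O$. Hence $(\bigcup(C*\bigcup C_x))_{C\in\Theta}$ is a neighbourhood base at $x$.

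For the converse, assume $(\bigcup(C*\bigcup C_x))_{C\in\Theta}$ is a neighbourhood base at each $x\in X$, and fix $x$. Since each $\bigcup(C*\bigcup C_x)$ is a neighbourhood of $x$, it contains $x$, so $C_x\neq\emptyset$ by the observation, whence each $\bigcup C_x$ is an open neighbourhood of $x$; and for any neighbourhood $O$ of $x$ the base hypothesis gives $C\in\Theta$ with $\bigcup C_x\subseteq\bigcup(C*\bigcup C_x)\subseteq O$, so $(\bigcup C_x)_{C\in\Theta}$ is a neighbourhood base at $x$, i.e. $\Theta$ is compatible. For local uniformity, fix $x$ and $C\in\Theta$; as just shown $\bigcup C_x$ is a neighbourhood of $x$, so the base hypothesis yields $D\in\Theta$ with $\bigcup(D*\bigcup D_x)\subseteq\bigcup C_x$, which is exactly the locally uniform condition. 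I do not anticipate a genuine obstacle here: the only care required is keeping the bookkeeping straight between "is a neighbourhood of $x$" and "is coinitial among the neighbourhoods of $x$", and deriving $C_x\neq\emptyset$ rather than assuming it — which is precisely why I would isolate the observation above before running either direction.
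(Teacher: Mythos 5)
Your proposal is correct and follows essentially the same route as the paper's proof: both directions rest on the containment $\bigcup C_x\subseteq\bigcup(C*\bigcup C_x)$ together with the fact that $x$ lying in the larger set forces $C_x\neq\emptyset$, which the paper uses implicitly when passing to ``the smaller neighbourhoods'' and which you have merely made explicit. No gaps.
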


\begin{proof}
If $\Theta$ is compatible with $X$ then, whenever $x\in O\in\mathcal{O}(X)$, we have $C\in\Theta$ with $x\in C_x\subseteq O$.  If $\Theta$ is also locally uniform then we have $D\in\Theta$ with $x\in\bigcup(D*\bigcup D_x)\subseteq\bigcup C_x\subseteq O$, showing that these sets do indeed form a neighbourhood base at $X$.  Conversely, if $(\bigcup(D*\bigcup D_x))_{D\in\Theta}$ forms a neighbourhood base at $x$ then so too do the smaller neighbourhoods $(\bigcup D_x)_{D\in\Theta}$, so $\Theta$ is compatible with $X$.  Moreover, for any $C\in\Theta$, $\bigcup C_x$ is a neighbourhood of $x$ so by assumption we have some $D\in\Theta$ with $\bigcup(D*\bigcup D_x)\subseteq\bigcup C_x$, i.e. $\Theta$ is also locally uniform.
\end{proof}

\begin{prp}\label{LocallyUniform<=>xRegular}
If $\mathbb{P}\subseteq\mathcal{O}(X)$ and $\Theta$ is compatible with $X$ and directed then
\[\Theta\text{ is locally uniform}\qquad\Leftrightarrow\qquad\forall x\in X\ (\mathbb{P}_x\text{ is $\vartriangleleft$-regular}).\]
\end{prp}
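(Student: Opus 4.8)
The plan is to reduce the equivalence, via \autoref{LocallyUniform<=>Base}, to a statement about neighbourhood bases, and then to exploit the fact that under our hypotheses $\Theta^\leq$-nearness of a pair in $\mathbb{P}$ is nothing but having non-empty intersection. First I would dispose of the case $X=\emptyset$, where both sides hold vacuously, and assume $X\neq\emptyset$ (and, as usual when $\mathbb{P}\subseteq\mathcal{O}(X)$, that $\leq\ =\ \subseteq$). Since $\Theta$ is compatible, \autoref{XCompatible} gives that $\mathbb{P}$ is a basis, $\Theta\subseteq\mathcal{C}_X(\mathbb{P})$, and each $\mathbb{P}_x$ is $\Theta$-round; as $\mathbb{P}_x$ is moreover a $\subseteq$-up-set and is $\Theta$-Cauchy by \autoref{CauchyCovers}, we get $\mathbb{P}_x\in\widehat\Theta$, so $\mathbb{P}_x$ is $\Theta^\leq$-near by \autoref{xNear}. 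I would then record the key observation that for all $a,b\in\mathbb{P}$,
\[\{a,b\}\text{ is }\Theta^\leq\text{-near}\qquad\Longleftrightarrow\qquad a\cap b\neq\emptyset.\]
The implication $\Leftarrow$ holds because any $x\in a\cap b$ puts $a,b$ into $\mathbb{P}_x\in\widehat\Theta$; for $\Rightarrow$, \autoref{Near=>Bounded} (using that $\Theta$ is directed) produces $p\in\mathbb{P}$ with $p\subseteq a\cap b$, and since $X\neq\emptyset$ forces $\emptyset\notin\Theta$, \autoref{0near} shows that any minimum of $\mathbb{P}$ fails to be a $\Theta^\leq$-near singleton, so the ``moreover'' clause of \autoref{Near=>Bounded} lets us take $p$ non-empty. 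Hence $Cp=C*p$ for every $C\in\Theta$ and $p\in\mathbb{P}$. Finally, since $\Theta$ is compatible, \autoref{LocallyUniform<=>Base} says that ``$\Theta$ is locally uniform'' is equivalent to ``$\big(\bigcup(C*\bigcup C_x)\big)_{C\in\Theta}$ is a neighbourhood base at each $x\in X$'', so it suffices to prove this is equivalent to each $\mathbb{P}_x$ being $\vartriangleleft$-regular.

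For the forward direction I would take $p\in\mathbb{P}_x$; as $p$ is an open neighbourhood of $x$, there is $C\in\Theta$ with $\bigcup(C*\bigcup C_x)\subseteq p$, and since $\mathbb{P}$ is a basis and $x\in\bigcup C_x$ (as $C$ covers $X$) there is $q\in\mathbb{P}$ with $x\in q\subseteq\bigcup C_x$, so $q\in\mathbb{P}_x$. Every $c\in Cq=C*q$ meets $\bigcup C_x$, hence lies in $C*\bigcup C_x$, hence $c\subseteq\bigcup(C*\bigcup C_x)\subseteq p$; so $Cq\leq p$, witnessing $q\vartriangleleft p$, and $\mathbb{P}_x$ is $\vartriangleleft$-regular.

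For the converse, given $x$ and an open $O\ni x$, I would pick $p\in\mathbb{P}$ with $x\in p\subseteq O$, apply $\vartriangleleft$-regularity twice to obtain $q_1,q_2\in\mathbb{P}_x$ and $C_1,C_2\in\Theta$ with $C_1q_1\leq p$ and $C_2q_2\leq q_1$, and use directedness to choose $D\in\Theta$ refining both $C_1$ and $C_2$; then $Dq_1\leq p$ and $Dq_2\leq q_1$ (any $d\in D$ with $\{d,q_i\}$ near lies below some $c\in C_i$, which is then also near $q_i$ by \eqref{BoundedNear}, hence below the relevant set, and so is $d$). Now for $d\in D*\bigcup D_x$ choose $d'\in D_x$ with $d\cap d'\neq\emptyset$; since $x\in d'\cap q_2$ we have $d'\in Dq_2$, so $d'\subseteq q_1$, so $d\cap q_1\neq\emptyset$, so $d\in Dq_1$ and $d\subseteq p$. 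Thus $\bigcup(D*\bigcup D_x)\subseteq p\subseteq O$, establishing the neighbourhood base condition at $x$.

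The main obstacle I anticipate is twofold. First is the careful handling of the pair-nearness/intersection dictionary across the degenerate possibilities (a minimum element of $\mathbb{P}$, an empty basic open set), which is exactly where directedness and $X\neq\emptyset$ are genuinely used. Second, and more conceptual, is the realisation that in the converse direction a single application of $\vartriangleleft$-regularity does not suffice: $q\vartriangleleft p$ only controls $Dq=D*q$, whereas local uniformity demands control of the larger ``double star'' $D*\bigcup D_x$, so one must iterate the relation once more and then pass to a common refinement.
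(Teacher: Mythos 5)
Your proposal is correct and follows essentially the same route as the paper's own proof: reduce to the neighbourhood-base formulation via \autoref{LocallyUniform<=>Base}, identify $Cp$ with $C*p$ using directedness and \autoref{Near=>Bounded}/\autoref{xNear}, and in the harder direction apply $\vartriangleleft$-regularity twice and pass to a common refinement. The only (immaterial) difference is that in the forward direction you choose $q\in\mathbb{P}_x$ with $q\subseteq\bigcup C_x$ via the basis, where the paper simply takes $q\in C_x$.
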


\begin{proof}
If $X=\emptyset$ then the result is vacuously true.  Otherwise $C*p=Cp$, for all $C\subseteq\mathbb{P}$ and $p\in\mathbb{P}$, as $\Theta$ is compatible with $X$ and directed \textendash\, see \autoref{Near=>Bounded} and \autoref{XCompatible}.  Also, $\mathbb{P}$ is a basis so, whenever $x\in O\in\mathcal{O}(X)$, we have $p\in\mathbb{P}$ with $x\in p\subseteq O$.  If $\mathbb{P}_x$ is $\vartriangleleft$-regular then we have $q,r\in\mathbb{P}_x$ and $C,D\in\Theta$ with $Cq\leq p$ and $Dr\leq q$.  As $\Theta$ is directed, we have $E\in\Theta$ with $E\leq C$ and $E\leq D$ so $E_x\subseteq Er\leq Dr\leq q$ and hence $E*\bigcup E_x\subseteq Eq\leq Cq\leq p$.  This shows that $\Theta$ is locally uniform, by \autoref{LocallyUniform<=>Base}.  Conversely, if $\Theta$ is locally uniform and $p\in\mathbb{P}_x$ then, by \autoref{LocallyUniform<=>Base}, we have $C\in\Theta$ such that $\bigcup(C*\bigcup C_x)\subseteq p$.  So, for any $q\in C_x$, $Cq\subseteq\bigcup_{c\in C_x}Cc=C*\bigcup C_x\leq p$, i.e. $q\vartriangleleft p$ so $\mathbb{P}_x$ is $\vartriangleleft$-regular.
\end{proof}

\begin{prp}\label{CompatiblePxRegular=>Admissible}
If $\mathbb{P}\subseteq\mathcal{O}(X)$, $\Theta\subseteq\mathcal{P}(\mathbb{P})$ is compatible with the topology of $X$ and $\mathbb{P}_x$ is $\vartriangleleft$-regular, for each $x\in X$, then $\Theta$ is admissible.
\end{prp}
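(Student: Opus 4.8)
The plan is to establish the two inclusions constituting \eqref{triAdmissible}: that $q\vartriangleleft p$ forces $q\subseteq p$, and that $p\subseteq\bigcup p^\vartriangleright$. Since compatibility forces $\mathbb{P}$ to be a basis of $X$ (by \autoref{XCompatible}), these together give $p=\bigcup p^\vartriangleright$, and as $\bigcup p^\vartriangleright\in\mathbb{P}$ this union is also the join $\bigvee p^\vartriangleright$ taken in $(\mathbb{P},\subseteq)$, which is exactly admissibility.

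First I would unpack the hypothesis. By \autoref{XCompatible}, $\mathbb{P}$ is a basis of $X$, $\Theta\subseteq\mathcal{C}_X(\mathbb{P})$, and each $\mathbb{P}_x$ is $\Theta$-round. Now $\mathbb{P}_x$ is visibly an up-set (if $x\in N\subseteq M$ with $N,M\in\mathbb{P}$ then $x\in M$) and, since $\Theta$ consists of covers, $\mathbb{P}_x$ is $\Theta$-Cauchy by \autoref{CauchyCovers}; hence $\mathbb{P}_x\in\widehat\Theta$ for every $x\in X$. By \autoref{xNear}, $\mathbb{P}_x$ is $\Theta^\leq$-near, and therefore so is every subset of $\mathbb{P}_x$, as observed after \autoref{NearDefinition}. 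In particular, whenever $c,q\in\mathbb{P}$ share a point $x\in c\cap q$, the pair $\{c,q\}$ is $\Theta^\leq$-near.

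For the first inclusion, suppose $q\vartriangleleft p$, witnessed by $C\in\Theta$ with $Cq\leq p$. Given any $x\in q$, the cover $C$ contains some $c$ with $x\in c$; then $x\in c\cap q$, so $\{c,q\}$ is $\Theta^\leq$-near by the previous paragraph, i.e.\ $c\in Cq$, whence $c\subseteq p$ because $Cq\leq p$. Thus $x\in p$, and since $x\in q$ was arbitrary, $q\subseteq p$. For the second inclusion, take any $x\in p$, so $p\in\mathbb{P}_x$; since $\mathbb{P}_x$ is $\vartriangleleft$-regular there is $q\in\mathbb{P}_x$ with $q\vartriangleleft p$, and then $x\in q\subseteq\bigcup p^\vartriangleright$. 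Combining the two inclusions gives $p=\bigcup p^\vartriangleright$, and since every $q\in p^\vartriangleright$ lies below $p$ while any upper bound $u\in\mathbb{P}$ of $p^\vartriangleright$ satisfies $p=\bigcup p^\vartriangleright\subseteq u$, we get $p=\bigvee p^\vartriangleright$; so $\Theta$ is admissible. The only delicate point is the first inclusion, and the whole argument rests on the single observation extracted above — that members of $\mathbb{P}$ sharing a point are automatically $\Theta^\leq$-near — which is precisely what compatibility delivers by forcing each $\mathbb{P}_x$ into $\widehat\Theta$.
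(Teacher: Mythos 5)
Your proof is correct and follows essentially the same route as the paper's: both establish $\bigcup p^\vartriangleright\subseteq p$ from the fact that sets sharing a point are $\Theta^\leq$-near (via $\mathbb{P}_x\in\widehat\Theta$ and \autoref{xNear}), and the reverse inclusion from $\vartriangleleft$-regularity of each $\mathbb{P}_x$. The only cosmetic difference is that you argue the first inclusion directly ($q\vartriangleleft p\Rightarrow q\subseteq p$) where the paper takes the contrapositive, and you spell out the identification $\bigcup p^\vartriangleright=\bigvee p^\vartriangleright$ which the paper leaves implicit.
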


\begin{proof}
If $p\nsubseteq q$ then we have $x\in X$ with $\mathbb{P}_x\in p\setminus q$.  By \autoref{xNear} and \autoref{XCompatible}, for any $C\in\Theta\subseteq\mathcal{C}_X(\mathbb{P})$, we have $c\in C_x\subseteq Cp$ and hence $c\nleq q$, i.e. $p\not\vartriangleleft q$.  Thus $\bigcup p^\vartriangleright\subseteq p$, for all $p\in\mathbb{P}$, while the reverse inclusion follows from the fact that each $\mathbb{P}_x$ is $\vartriangleleft$-regular, i.e. $p=\bigcup p^\vartriangleright=\bigvee p^\vartriangleright$ so $\Theta$ is admissible.
\end{proof}

\section{Star-Regularity}

\begin{dfn}
We call $\Theta$ \emph{star-regular} if
\[\label{LocallyRegular}\tag{Star-Regular}\forall p\in\mathbb{P}\ \forall C\in\Theta\ \exists D\in\Theta\ (Dp\vartriangleleft C).\]
\end{dfn}

Note that if $\mathbb{P}$ has a maximum $1$, e.g. if $\mathbb{P}$ is a frame, and $\Theta$ is weakly admissible then $D1=D$ or $D\setminus\{0\}$ so star-regularity reduces to $\vartriangleleft$-regularity, i.e. $\Theta\subseteq\Theta^\vartriangleleft$.  Indeed, we could just consider $\vartriangleleft$-regularity, but we felt it worth pointing out that star-regularity suffices for the main results.  Moreover, star-regularity arises naturally from $*$-coinitiality, at least for $T_3$ spaces.
\begin{prp}
If $\mathbb{P}$ is a basis of a $T_3$ space $X$ then
\[\Theta\subseteq\mathcal{C}_X(\mathbb{P})\text{ is $*$-coinitial}\qquad\Rightarrow\qquad\Theta\text{ is star-regular}.\]
\end{prp}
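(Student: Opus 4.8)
The plan is to reduce star-regularity to the concrete equivalence $p\vartriangleleft q\iff\mathrm{cl}(p)\subseteq q$ and then produce the required $D$ by shrinking $C$ with regularity. First I would record that, by the preceding proposition, $\Theta$ is compatible with $X$, so each $\mathbb{P}_x$ lies in $\widehat\Theta$ by \autoref{XCompatible} and is therefore maximal $\Theta^\leq$-near by \autoref{xNear}. In particular, whenever $d\cap r\neq\emptyset$ we may pick $x\in d\cap r$ and conclude that $\{d,r\}\subseteq\mathbb{P}_x$ is $\Theta^\leq$-near, i.e. $D{*}r\subseteq Dr$ for every $D\subseteq\mathbb{P}$ and $r\in\mathbb{P}$; and since $D{*}r$ already covers $\mathrm{cl}(r)$ whenever $D$ covers $X$ (if $x\in\mathrm{cl}(r)$ and $x\in d\in D$ then $d$ meets $r$), so does $Dr$.

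The technical core is the identity $(\Theta r)^\leq=\mathcal{C}_{\mathrm{cl}(r)}(\mathbb{P})$ for every $r\in\mathbb{P}$. The inclusion $\subseteq$ is immediate from the last remark, since refinements of covers of $\mathrm{cl}(r)$ are again covers of $\mathrm{cl}(r)$. For $\supseteq$, given $B\in\mathcal{C}_{\mathrm{cl}(r)}(\mathbb{P})$, I would use $T_3$ together with the basis property to build a cover $B'\in\mathcal{C}_X(\mathbb{P})$ that refines $B$ over $\mathrm{cl}(r)$ and all of whose members meeting $r$ are buried inside members of $B$: for $x\in\mathrm{cl}(r)$ pick $b'_x\in\mathbb{P}$ with $x\in b'_x\subseteq\mathrm{cl}(b'_x)\subseteq b$ for some $b\in B$ with $x\in b$, and for $x\notin\mathrm{cl}(r)$ use regularity to pick $b'_x\in\mathbb{P}$ with $x\in b'_x$ and $\mathrm{cl}(b'_x)\cap r=\emptyset$. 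Applying $*$-coinitiality at $r$ to $B'$ yields $D\in\Theta$ with $D{*}r\leq B'$; then any $d\in Dr$ meeting $r$ lies in some $b'_x$, and $d\cap r\neq\emptyset$ forces $x\in\mathrm{cl}(r)$, so $d\subseteq b\in B$; with the remaining (near-but-disjoint) members of $Dr$ disposed of as in the last paragraph, this gives $Dr\leq B$ and hence $B\in(\Theta r)^\leq$.

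With the identity in hand, $p\vartriangleleft q$ — which by the definition of $\vartriangleleft$ says exactly that $\{q\}\in(\Theta p)^\leq$ — is equivalent to $\mathrm{cl}(p)\subseteq q$. To finish, fix $p\in\mathbb{P}$ and $C\in\Theta$. For each $x\in X$ choose, using $T_3$ and the basis, some $c_x\in C$ with $x\in c_x$ and $b_x\in\mathbb{P}$ with $x\in b_x\subseteq\mathrm{cl}(b_x)\subseteq c_x$; then $B:=\{b_x:x\in X\}$ covers $X$, so in particular covers $\mathrm{cl}(p)$, whence $B\in\mathcal{C}_{\mathrm{cl}(p)}(\mathbb{P})=(\Theta p)^\leq$ and there is $D\in\Theta$ with $Dp\leq B$. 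For any $d\in Dp$ we get $d\subseteq b_x$ for some $x$, so $\mathrm{cl}(d)\subseteq\mathrm{cl}(b_x)\subseteq c_x\in C$, i.e. $d\vartriangleleft c_x$. Thus $Dp\vartriangleleft C$, proving star-regularity.

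I expect the main obstacle to be precisely the $\supseteq$ half of $(\Theta r)^\leq=\mathcal{C}_{\mathrm{cl}(r)}(\mathbb{P})$: the construction above only directly pins down the members of $Dr$ that genuinely meet $r$, so the delicate point is to show that a member of $Dr$ which is $\Theta^\leq$-near $r$ yet disjoint from $r$ is still contained in some member of $B$ (equivalently, that the $\Theta^\leq$-star $\Theta r$ is not appreciably larger than the spatial star $\Theta{*}r$). This is the one place where the regular separation of a point outside $\mathrm{cl}(r)$ from $\mathrm{cl}(r)$ — and hence the full strength of $T_3$ rather than the $T_1$ hypothesis used earlier — is essential, and it is where I would expect to spend most of the effort.
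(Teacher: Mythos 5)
Your overall route is the paper's: establish $(\Theta r)^\leq=\mathcal{C}_{\mathrm{cl}(r)}(\mathbb{P})$, read off $p\vartriangleleft q\Leftrightarrow\mathrm{cl}(p)\subseteq q$, shrink $C$ to $B$ with $\mathrm{cl}(b_x)\subseteq c_x$ using regularity, and finish with $*$-coinitiality via $Dp\leq B\vartriangleleft C$. The paper compresses all of this into three lines by importing the identity $(\Theta*p)^\leq=(\Theta p)^\leq=\mathcal{C}_{\mathrm{cl}(p)}(\mathbb{P})$ from the proof of \autoref{LocalT1Recovery}, whereas you re-derive it; the final shrinking-plus-coinitiality step is identical in both.

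The one genuine gap is exactly the point you flag yourself and then wave past: the members of $Dr$ that are $\Theta^\leq$-near $r$ yet disjoint from $r$. Your first paragraph only proves $D{*}r\subseteq Dr$, so "disposed of as in the last paragraph" does not dispose of anything \textendash\, you need the reverse containment $\Theta r\subseteq\Theta{*}r$, i.e.\ that a $\Theta^\leq$-near pair actually intersects, and without it neither $Dr\leq B$ nor the characterisation of $\vartriangleleft$ follows. Moreover, your diagnosis of where the cure lies is off: $T_3$ separation of a point outside $\mathrm{cl}(r)$ builds the auxiliary cover $B'$, but it gives no purchase on an abstract witness $S\notin\Theta^\leq$ to the nearness of a disjoint pair $\{d,r\}$. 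In the paper this step is closed inside \autoref{LocalT1Recovery} by the argument of \autoref{Near=>Bounded}: since $\Theta$ is directed one takes a single $C\in\Theta$ refining both $S\cup\{d\}$ and $S\cup\{r\}$, extracts $c\in C$ with $c\nleq S$ (possible as $S\notin\Theta^\leq$) and $c\neq\emptyset$, and concludes $c\subseteq d\cap r$. So the missing ingredient is directedness of $\Theta$ (implicit in the paper's appeal to \autoref{LocalT1Recovery}, whose hypotheses include it), not a further use of regularity; as written, your proof does not establish the identity on which everything else rests.
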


\begin{proof}
By the proof of \autoref{LocalT1Recovery}, $(\Theta*p)^\leq=(\Theta p)^\leq=\mathcal{C}_{\mathrm{cl}(p)}(X)$ and hence
\[p\vartriangleleft q\qquad\Leftrightarrow\qquad\mathrm{cl}(p)\subseteq q.\]
Thus, as $\Theta$ is regular, for any $C\in\mathcal{C}_X(\mathbb{P})$, we have $D\in\mathcal{C}_X(\mathbb{P})$ with $D\vartriangleleft C$.  By $*$-coinitiality, for any $p\in\mathbb{P}$, we have $E\in\Theta$ with $Ep\leq Dp\vartriangleleft C$, as required.
\end{proof}

\begin{prp}\label{Regular=>Admissible}
If $\Theta$ is star-regular and directed then
\[\label{triAdmissible'}\eqref{triAdmissible}\qquad\Leftrightarrow\qquad\leq\ =\ \trianglelefteq.\]
\end{prp}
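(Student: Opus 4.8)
The plan is to split the biconditional, with the forward implication being essentially free. For $\Rightarrow$, note that \eqref{triAdmissible=>leq=trieq} already records that \eqref{triAdmissible} forces $\vartriangleleft\ \subseteq\ \leq\ =\ \trianglelefteq$, so in particular $\leq\ =\ \trianglelefteq$; this uses neither directedness nor star-regularity, so it holds a fortiori under the hypotheses of the proposition.

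For the converse, assume $\leq\ =\ \trianglelefteq$ with $\Theta$ directed and star-regular, fix $p\in\mathbb{P}$, and aim to prove $p=\bigvee p^\vartriangleright$. First I would record the auxiliary fact $\vartriangleleft\ \subseteq\ \leq$: if $p\vartriangleleft q$ then, since $\Theta$ is directed, $\vartriangleleft$ is transitive by \autoref{Directed=>Transitive}, so every $r\vartriangleleft p$ satisfies $r\vartriangleleft q$; hence $p^\vartriangleright\subseteq q^\vartriangleright$, i.e. $p\trianglelefteq q$, i.e. $p\leq q$ by hypothesis. In particular every $r\in p^\vartriangleright$ satisfies $r\leq p$, so $p$ is an upper bound of $p^\vartriangleright$.

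It then remains to show $p$ is the \emph{least} upper bound, so let $q$ be any element with $r\leq q$ for all $r\vartriangleleft p$; by $\leq\ =\ \trianglelefteq$ it suffices to prove $p^\vartriangleright\subseteq q^\vartriangleright$. Fix $r\vartriangleleft p$, witnessed by some $C\in\Theta$ with $Cr\leq p$, and apply star-regularity to $r$ and $C$ to obtain $D\in\Theta$ with $Dr\vartriangleleft C$. I claim $Dr\leq q$, which gives $r\vartriangleleft q$ as witnessed by $D$. Indeed, given $d\in Dr$, choose $c\in C$ with $d\vartriangleleft c$; then $d\leq c$ by the auxiliary fact, and since $\{d,r\}$ is $\Theta^\leq$-near (as $d\in Dr$) and $\{c,r\}\geq\{d,r\}$, \eqref{BoundedNear} makes $\{c,r\}$ $\Theta^\leq$-near, so $c\in Cr$ and hence $c\leq p$. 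Then $d\vartriangleleft c\leq p$ yields $d\vartriangleleft p$ by \eqref{Auxiliary}, so $d\leq q$ by the choice of $q$. As $d\in Dr$ was arbitrary this proves $Dr\leq q$, hence $r\vartriangleleft q$; and as $r\vartriangleleft p$ was arbitrary, $p^\vartriangleright\subseteq q^\vartriangleright$, so $p\leq q$ and therefore $p=\bigvee p^\vartriangleright$.

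I expect the only real difficulty is this last paragraph: converting the plain inequalities $r\leq q$ (for $r\vartriangleleft p$) into the uniform inequalities $r\vartriangleleft q$ that $\leq\ =\ \trianglelefteq$ can consume. Star-regularity is precisely what bridges the gap — replacing the cover $C$ witnessing $r\vartriangleleft p$ by a cover $D$ whose star $Dr$ is $\vartriangleleft$-refined by $C$ is what lets near-ness of a pair $\{c,r\}$ be inferred from near-ness of $\{d,r\}$, thereby forcing $c\leq p$ and hence $d\vartriangleleft p$. Everything else is bookkeeping with \eqref{Auxiliary}, \eqref{BoundedNear}, and \autoref{Directed=>Transitive}.
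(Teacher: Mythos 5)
Your proof is correct and follows essentially the same route as the paper's: the forward direction is read off from \eqref{triAdmissible=>leq=trieq}, and the converse reduces showing $p=\bigvee p^\vartriangleright$ to proving $p^\vartriangleright\leq q\Rightarrow p\trianglelefteq q$, using star-regularity to replace the witness $C$ for $r\vartriangleleft p$ by $D$ with $Dr\vartriangleleft C$ and then pushing $Dr$ below $q$. The only difference is that you spell out, via \eqref{BoundedNear}, why the $c\in C$ above $d\in Dr$ actually lies in $Cr$ \textemdash\ a step the paper's ``$Dr\vartriangleleft Cr$'' asserts without comment \textemdash\ which is a welcome clarification rather than a deviation.
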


\begin{proof}
The $\Rightarrow$ part was already observed in \eqref{triAdmissible=>leq=trieq}.  Conversely, if $\leq\ =\ \trianglelefteq$ then, as $\vartriangleleft$ is transitive, by \autoref{Directed=>Transitive}, we have $\vartriangleleft\ \subseteq\ \trianglelefteq\ =\ \leq$, i.e. $p^\vartriangleright\leq p$, for all $p\in\mathbb{P}$.  So, by the definition of suprema and the $\leq\ =\ \trianglelefteq$ assumption, we just need to show
\[p^\vartriangleright\leq q\qquad\Rightarrow\qquad p\trianglelefteq q.\]
So say $p^\vartriangleright\leq q$ and take $r\vartriangleleft p$, so we have $C\in\Theta$ such that $Cr\leq p$.  As $\Theta$ is star-regular, we have $D\in\Theta$ such that $Dr\vartriangleleft C$ and hence $Dr\vartriangleleft Cr\leq p$.  By \eqref{Auxiliary}, $Dr\vartriangleleft p$ and hence $Dr\subseteq p^\vartriangleright\leq q$ so $r\vartriangleleft q$.  As $r$ was arbitrary, this shows that $p\trianglelefteq q$, as required.
\end{proof}

Recall that $\dt{\mathbb{P}}=\{p\in\mathbb{P}:\{p\}\text{ is $\Theta^\leq$-near}\}$.

\begin{thm}\label{CauchyComplete}
If $\Theta$ is star-regular, directed and $\vartriangleleft\ \subseteq\ \leq$ then
\begin{equation}\label{StarRegularSpectrum}
\widehat\Theta=\{R\subseteq\dt{\mathbb{P}}:R\text{ is a $\Theta$-Cauchy $\vartriangleleft$-filter}\}.
\end{equation}
Consequently, if $\Theta$ is also countable then $\widehat\Theta$ is completely metrisable.
\end{thm}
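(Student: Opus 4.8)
The plan is to establish the set identity \eqref{StarRegularSpectrum} first and then read off complete metrisability from the right‑hand description. For \eqref{StarRegularSpectrum} I would argue by mutual inclusion.

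For $\supseteq$, let $R\subseteq\dt{\mathbb{P}}$ be a $\Theta$-Cauchy $\vartriangleleft$-filter. I would first note that $R$ is linked: given $p,q\in R$, $\vartriangleleft$-directedness gives $s\in R$ with $s\vartriangleleft p$ and $s\vartriangleleft q$, hence $s\leq p$ and $s\leq q$ (as $\vartriangleleft\ \subseteq\ \leq$), and since $s\in\dt{\mathbb{P}}$ the singleton $\{s\}$ is $\Theta^\leq$-near, so $\{p,q\}\geq\{s\}$ is $\Theta^\leq$-near by \eqref{BoundedNear}. Being $\vartriangleleft$-regular and linked, $R$ is $\Theta$-round by \autoref{RegularLinked=>Round}. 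Moreover $R$ is a $\leq$-up-set: if $r\in R$ and $r\leq s$, choose $q\in R$ with $q\vartriangleleft r$ by $\vartriangleleft$-regularity; then $q\vartriangleleft s$ by \eqref{Auxiliary}, so $s\in R$ by $\vartriangleleft$-closedness. Since $R$ is also $\Theta$-Cauchy by hypothesis, $R\in\widehat\Theta$.

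For $\subseteq$, let $R\in\widehat\Theta$. By definition it is a $\Theta$-Cauchy $\leq$-up-set; it lies in $\dt{\mathbb{P}}$ since $R$ is $\Theta^\leq$-near by \autoref{xNear} and hence so is each of its singleton subsets; it is $\vartriangleleft$-closed because $\vartriangleleft\ \subseteq\ \leq$ and $R$ is a $\leq$-up-set; and it is $\leq$-directed since $\Theta$ is directed (\autoref{Directed=>Directed}). The one real point is $\vartriangleleft$-regularity of $R$, and this is where star-regularity is used: given $r\in R$, roundness supplies $C\in\Theta$ with $C\cap R\leq r$; pick $c_0\in C\cap R$ (Cauchyness), so $c_0\leq r$; star-regularity supplies $D\in\Theta$ with $Dc_0\vartriangleleft C$; pick $q\in D\cap R$ (Cauchyness again); since $R$ is $\Theta^\leq$-near the pair $\{q,c_0\}$ is $\Theta^\leq$-near, so $q\in Dc_0$ and hence $q\vartriangleleft c'$ for some $c'\in C$; then $q\leq c'$, so $c'\in R$ as $R$ is a $\leq$-up-set, whence $c'\leq r$ and finally $q\vartriangleleft r$ by \eqref{Auxiliary}. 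Thus $r\in R^\vartriangleleft$, as needed, and $\vartriangleleft$-directedness of $R$ then follows by combining $\vartriangleleft$-regularity with $\leq$-directedness and \eqref{Auxiliary}. So $R$ is a $\Theta$-Cauchy $\vartriangleleft$-filter inside $\dt{\mathbb{P}}$, completing \eqref{StarRegularSpectrum}. I expect this extraction of $\vartriangleleft$-regularity to be the crux of the first half; everything else is bookkeeping with \eqref{Auxiliary}.

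For the final assertion, assume in addition that $\Theta$ is countable. Since $\Theta$ is directed and countable I would choose a $\subseteq$-coinitial descending sequence $C_0\geq C_1\geq\cdots$ in $\Theta$ and then, using star-regularity (together with directedness and transitivity of $\vartriangleleft$, \autoref{Directed=>Transitive}), pass to a subsequence so that each induced open cover $\widehat C_{n+1}=\{\widehat\Theta_c:c\in C_{n+1}\}$ of $\widehat\Theta$ is a star-refinement of $\widehat C_n$; roundness of the points then makes $(\mathrm{St}(R,\widehat C_n))_n$ a neighbourhood base at each $R\in\widehat\Theta$. As $\widehat\Theta$ is $T_1$ (\autoref{T1}), the Alexandroff--Urysohn metrisation theorem yields a compatible metric $d$. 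Completeness of $d$ is where \eqref{StarRegularSpectrum} pays off: given a $d$-Cauchy filter $\mathcal G$ on $\widehat\Theta$, for each $n$ it contains some $\widehat\Theta_{p_n}$ with $p_n\in C_n$, and each such $p_n$ lies in $\dt{\mathbb{P}}$ because $\widehat\Theta_{p_n}\neq\emptyset$ means $p_n$ belongs to some member of $\widehat\Theta$, which by the inclusion just proved is a subset of $\dt{\mathbb{P}}$; the star-refinement lets one arrange the $p_n$ to be $\vartriangleleft$-decreasing, so that $R=\{p\in\mathbb{P}:\exists n\ (p_n\vartriangleleft p)\}$ is a $\vartriangleleft$-filter contained in $\dt{\mathbb{P}}$ which is $\Theta$-Cauchy (by coinitiality of $(C_n)$ and $p_n\in C_n\cap R$); by \eqref{StarRegularSpectrum}, $R\in\widehat\Theta$, and since each $\widehat\Theta_{p_n}\in\mathcal G$ while $(\widehat\Theta_{p_n})_n$ is a neighbourhood base at $R$, $\mathcal G$ converges to $R$. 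Hence $d$ is complete and $\widehat\Theta$ completely metrisable. The delicate step here is turning the pointwise statement of star-regularity into genuine star-refinements of the covers $\widehat C_n$, and into the $\vartriangleleft$-decreasing selection of the $p_n$; that is where I would expect most of the work to go.
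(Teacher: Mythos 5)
Your proof of the set identity \eqref{StarRegularSpectrum} is correct and runs essentially parallel to the paper's: the $\supseteq$ direction is the same (linkedness from $\vartriangleleft$-directedness and $R\subseteq\dt{\mathbb{P}}$, then \autoref{RegularLinked=>Round}), and your direct extraction of $\vartriangleleft$-regularity of $R\in\widehat\Theta$ from star-regularity (via $C\cap R\leq r$, a witness $D$ with $Dc_0\vartriangleleft C$, and the up-set property) is a sound, slightly more hands-on substitute for the paper's route through the claim that $R^\vartriangleleft\in\widehat\Theta$ for every linked $\Theta$-Cauchy $R$ together with minimality (\autoref{Round<=>Minimal}).

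The gap is in the metrisability half, and it is exactly at the step you flag as delicate. Star-regularity \eqref{LocallyRegular} is pointwise in $p$: for each $p$ and each $C\in\Theta$ it supplies some $D_p\in\Theta$ with $D_pp\vartriangleleft C$. A genuine star-refinement of $\widehat C_n$ requires a \emph{single} $D\in\Theta$ with $Dp\vartriangleleft C_n$ for \emph{every} $p$; since $Dp\leq D_pp$ whenever $D\leq D_p$, such a $D$ would have to refine all of the (possibly infinitely many) witnesses $D_p$ simultaneously, and directedness of $\Theta$ only yields common refinements of finitely many members. Passing to a subsequence of a fixed coinitial chain cannot manufacture star-refinements that are not already present \textendash\, this is precisely the difference between a locally uniform and a uniform family of covers, and the theorem is stated for the former. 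Consequently the appeal to Alexandroff-Urysohn, and the subsequent ``$\vartriangleleft$-decreasing selection of the $p_n$'' in your completeness argument, both rest on structure you do not have. The paper avoids this by using the Arhangelskii-Stone metrisation theorem in its \emph{local} form: the inclusion $\bigcup\{\widehat\Theta_c:c\in C\text{ and }\widehat\Theta_c\cap\widehat\Theta_s\neq\emptyset\}\subseteq\widehat\Theta_r$, obtained from $s\vartriangleleft r$ in $R$ and $Cs\leq r$, says that for each point and each basic neighbourhood some member of the countable family $\Theta'$ has small star \emph{at that point}, which is all that theorem requires. Completeness is then obtained not from a chosen metric but by showing $\widehat\Theta$ is \v{C}ech-complete, using the claim \eqref{CC} that $R^\vartriangleleft\in\widehat\Theta$ for linked $\Theta$-Cauchy $R$ \textendash\, whose proof again consumes star-regularity only at a single point of $R$. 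To repair your argument, either prove and use that claim, or replace Alexandroff-Urysohn by a metrisation criterion that needs only the pointwise star condition.
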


\begin{proof}
We first claim that, for any $R\subseteq\mathbb{P}$,
\begin{equation}\label{CC}
R\text{ is linked and $\Theta$-Cauchy}\qquad\Rightarrow\qquad R^\vartriangleleft\in\widehat\Theta.
\end{equation}
To see this, first take any $s\in R^\vartriangleleft$, so we have $r\in R$ with $r\vartriangleleft s$, which means we have $D\in\Theta$ with $Dr\leq s$.  As $R$ and hence $R^\leq$ is linked and $\vartriangleleft\ \subseteq\ \leq$,
\[D\cap R^\vartriangleleft\subseteq D\cap R^\leq\subseteq Dr\leq s,\]
showing that $R^\vartriangleleft$ is $\Theta$-round.

As $\Theta$ is directed, $\Theta\neq\emptyset$ (actually, \eqref{CC} holds even when $\Theta=\emptyset$, as this implies $\emptyset$ is both the only linked subset and the only point of the spectrum).  As $R$ is $\Theta$-Cauchy, $R\neq\emptyset$ so we have $r\in R$.  For any $C\in\Theta$, the star-regularity of $\Theta$ yields $D\in\Theta$ with $Dr\vartriangleleft C$.  As $R$ is $\Theta$-Cauchy and linked, $\emptyset\neq D\cap R\subseteq Dr\vartriangleleft C$ and hence $R^\vartriangleleft\cap C\neq\emptyset$, i.e. $R^\vartriangleleft$ is $\Theta$-Cauchy.  By \eqref{Auxiliary}, $R^\vartriangleleft$ is also $\leq$-closed so $R^\vartriangleleft\in\widehat\Theta$, thus proving \eqref{CC}.

To prove \eqref{StarRegularSpectrum}, note that if $R\subseteq\mathbb{P}$ is a $\vartriangleleft$-filter then $R$ is $\vartriangleleft$-regular and $\vartriangleleft$-closed, and hence $\leq$-closed, by \eqref{Auxiliary}.  Also, $R$ is $\vartriangleleft$-directed and hence directed, as $\vartriangleleft\ \subseteq\ \leq$, so if $R\subseteq\dt{\mathbb{P}}$ too then $R$ is linked.  Thus $R$ is $\Theta$-round, by \autoref{RegularLinked=>Round}, and if $R$ is also $\Theta$-Cauchy then $R\in\widehat\Theta$.  Conversely, every $R\in\widehat\Theta$ is $\Theta^\leq$-near, by \autoref{xNear}, and hence linked and therefore also contained in $\dt{\mathbb{P}}$.  Thus \eqref{CC} yields $R=R^\leq\supseteq R^\vartriangleleft\in\widehat\Theta$ and hence $R=R^\vartriangleleft$, by \autoref{Round<=>Minimal}, i.e. $R$ is $\vartriangleleft$-regular and hence a $\vartriangleleft$-filter, again by \eqref{Auxiliary} and $\vartriangleleft\ \subseteq\ \leq$, proving \eqref{StarRegularSpectrum}.

So, whenever $r\in R\in\widehat\Theta$, we have $s\in R$ with $s\vartriangleleft r$.  This means we have $C\in\Theta$ with $Cs\leq r$ and hence, by \autoref{xNear},
\[\bigcup\{\widehat\Theta_c:c\in C\text{ and }\widehat\Theta_c\cap\widehat\Theta_s\neq\emptyset\}\subseteq\bigcup\{\widehat\Theta_c:c\in Cs\}\subseteq\widehat\Theta_r.\]
If $\Theta$ is also countable then so is $\Theta'=\{\{\widehat\Theta_c:c\in C\}:C\in\Theta\}$ and hence these covers of $\widehat\Theta$ satisfy the required conditions of the Arhangelskii-Stone metrisation theorem (see \cite{CollinsMorita1998}), i.e. $\widehat\Theta$ is metrisable.  Moreover, every $\Theta'$-Cauchy filter in $\{\widehat\Theta_p:p\in\dt{\mathbb{P}}\}$ must be of the form $\{\widehat\Theta_r:r\in R\}$, for some linked $\Theta$-Cauchy $R$, which converges to $R^\vartriangleleft\in\widehat\Theta$, by \eqref{CC}.  So $\widehat\Theta$ is \v{C}ech-complete and hence completely metrisable.
\end{proof}

Even if $\Theta$ is not (locally) regular, it can often be `regularised'.  First define
\[\Theta'=\{C^\vartriangleright:C\in\Theta\}.\]
Setting $\Theta^1=\Theta$ and $\Theta^{n+1}=({\Theta^n})'$ we define the \emph{regularisation} of $\Theta$ by
\[\Theta^\mathsf{R}=\bigcup_{n\in\mathbb{N}}\Theta^n.\]
By definition, $\Theta^\mathsf{R}$ is $\vartriangleleft$-regular.  Also $|\Theta|=|\Theta^\mathsf{R}|$, as long as $\Theta$ is infinite.  

\begin{thm}\label{AdmissibleDirectedRegularisation}
If $\Theta$ is admissible and directed then so is $\Theta^\mathsf{R}$ and
\[\widehat{\Theta^\mathsf{R}}=\{R\subseteq\dt{\mathbb{P}}:R\text{ is a $\Theta$-Cauchy $\vartriangleleft$-filter}\}.\]
If $\Theta$ is countable then again $\widehat{\Theta^\mathsf{R}}$ is completely metrisable.
\end{thm}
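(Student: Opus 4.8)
The plan is to show that the regularisation $\Theta^\mathsf{R}$ meets the hypotheses of \autoref{CauchyComplete} and then to reconcile the description of $\widehat{\Theta^\mathsf{R}}$ that it yields, which is phrased in terms of $\Theta^\mathsf{R}$, with the one claimed, which is phrased in terms of the original $\Theta$. Throughout I read the construction with $\vartriangleleft$ always the \emph{original} relation $\vartriangleleft_\Theta$, so that $\Theta^{n+1}=\{C^\vartriangleright:C\in\Theta^n\}$ and $C^\vartriangleright\vartriangleleft C$ for every $C$. Since $\Theta$ is directed and admissible I already have $\vartriangleleft\ \subseteq\ \leq$, $\Theta\subseteq\mathcal{C}_\leq(\mathbb{P})$ and (via \autoref{TriangleSubsetLeq}) $\Theta^\leq$ weakly admissible, and I will use these freely.

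First I would check that $\Theta^\mathsf{R}$ is directed, star-regular and admissible. For directedness the key sub-lemma is that $\{C^\vartriangleright:C\in\Phi\}$ is directed whenever $\Phi$ is: given $C,D\in\Phi$ pick $E\in\Phi$ with $E\leq C,D$; then for $e'\in E^\vartriangleright$, say $e'\vartriangleleft e\in E$, any $c\in C$ with $e\leq c$ gives $e'\vartriangleleft c$ by \eqref{Auxiliary}, so $e'\in C^\vartriangleright$ and hence $E^\vartriangleright\leq C^\vartriangleright$, and similarly $E^\vartriangleright\leq D^\vartriangleright$. Thus each $\Theta^n$ is directed, and as $C^\vartriangleright\leq C$ gives $\Theta^{n+1}\leq\Theta^n$, the increasing union $\Theta^\mathsf{R}$ is directed. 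Iterating \autoref{RegularCovers} gives $\Theta^n\subseteq\mathcal{C}_\leq(\mathbb{P})$, hence $\Theta^\mathsf{R}\subseteq\mathcal{C}_\leq(\mathbb{P})$. Star-regularity is immediate: $\Theta^\mathsf{R}$ is $\vartriangleleft$-regular by construction, and a $\vartriangleleft$-regular family is automatically star-regular, since given $p$ and $C$ any $D$ with $D\vartriangleleft C$ has $Dp\subseteq D\vartriangleleft C$. For admissibility it remains to see $\vartriangleleft_{\Theta^\mathsf{R}}\ \subseteq\ \leq$ (the inequality $p\leq\bigvee p^{\vartriangleright_{\Theta^\mathsf{R}}}$ being immediate from $\bigvee p^\vartriangleright=p$), and by \autoref{TriangleSubsetLeq} together with $\Theta^\mathsf{R}\subseteq\mathcal{C}_\leq(\mathbb{P})$ this reduces to $(\Theta^\mathsf{R})^\leq$ being weakly admissible. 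Here I would argue that for $p$ not a minimum no $\leq$-cover refines $\mathbb{P}\setminus p^\geq$ (such a cover would force $p^\geq\cap C^\geq=\emptyset$ and hence $p$ below everything), so $\mathbb{P}\setminus p^\geq\notin(\Theta^\mathsf{R})^\leq$; choosing a down-set $D\notin\Theta^\leq$ witnessing that $\{p\}$ is $\Theta^\leq$-near we have $D\subseteq\mathbb{P}\setminus p^\geq$ and $D\cup\{p\}\in\Theta^\leq\subseteq(\Theta^\mathsf{R})^\leq$, whence $D\notin(\Theta^\mathsf{R})^\leq$ and $\{p\}$ is $(\Theta^\mathsf{R})^\leq$-near. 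In particular $\dt{\mathbb{P}}_{\Theta^\mathsf{R}}=\dt{\mathbb{P}}_\Theta$.

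By \autoref{CauchyComplete}, $\widehat{\Theta^\mathsf{R}}$ is exactly the set of $\Theta^\mathsf{R}$-Cauchy $\vartriangleleft_{\Theta^\mathsf{R}}$-filters contained in $\dt{\mathbb{P}}$, so I must match this with the set of $\Theta$-Cauchy $\vartriangleleft$-filters contained in $\dt{\mathbb{P}}$. The two links are: (a) any $\Theta$-Cauchy, $\vartriangleleft$-regular, $\leq$-closed $R$ is $\Theta^\mathsf{R}$-Cauchy — by induction, if $R$ meets every $C\in\Theta^n$ then for $C^\vartriangleright\in\Theta^{n+1}$, picking $c\in C\cap R$ and $s\in R$ with $s\vartriangleleft c$ gives $s\in R\cap C^\vartriangleright$; and (b) any $R\in\widehat{\Theta^\mathsf{R}}$ is $\vartriangleleft$-regular — given $r\in R$, $\Theta^\mathsf{R}$-roundness yields $C\in\Theta^n$ with $C\cap R\leq r$, and $\Theta^\mathsf{R}$-Cauchyness against $C^\vartriangleright\in\Theta^{n+1}$ yields $s\in R\cap C^\vartriangleright$, so $s\vartriangleleft c$ with $c\in C\cap R$ (as $R$ is an up-set), $c\leq r$, and $s\vartriangleleft r$ by \eqref{Auxiliary}. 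With these, a $\Theta$-Cauchy $\vartriangleleft$-filter $R\subseteq\dt{\mathbb{P}}$ is linked (directedness gives a common lower bound in $\dt{\mathbb{P}}$, so \autoref{FiniteNearBounded} applies), hence $\Theta$-round by \autoref{RegularLinked=>Round}, hence $\Theta^\mathsf{R}$-round, and $\Theta^\mathsf{R}$-Cauchy by (a) and an up-set, so it lies in $\widehat{\Theta^\mathsf{R}}$; conversely any $R\in\widehat{\Theta^\mathsf{R}}$ is an up-set, $\leq$-directed (\autoref{Directed=>Directed}), linked (a common lower bound in $\dt{\mathbb{P}}_{\Theta^\mathsf{R}}=\dt{\mathbb{P}}$ via \autoref{xNear} and \autoref{FiniteNearBounded}) and $\vartriangleleft$-regular by (b), hence a $\vartriangleleft$-filter, and it is $\Theta$-Cauchy since $\Theta\subseteq\Theta^\mathsf{R}$. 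Finally, if $\Theta$ is countable then $|\Theta^{n+1}|\leq|\Theta^n|$, so $\Theta^\mathsf{R}$ is countable and the metrisability clause of \autoref{CauchyComplete} gives that $\widehat{\Theta^\mathsf{R}}$ is completely metrisable.

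The main obstacle, and the thing to be careful about, is that passing from $\Theta$ to $\Theta^\mathsf{R}$ genuinely enlarges the $\leq$-closure, and with it a priori the relations $\vartriangleleft$, nearness and Cauchyness, so one cannot simply say "nothing changes"; the real content is that on the relevant class — the round Cauchy up-sets, equivalently the $\vartriangleleft$-filters — everything agrees. What rescues this is that $\Theta^\mathsf{R}$ is $\vartriangleleft$-regular by construction: on one side this lets one climb the tower $\Theta^1\subseteq\Theta^2\subseteq\cdots$ one level at a time to promote $\Theta$-Cauchyness to $\Theta^\mathsf{R}$-Cauchyness, and on the other it forces every point of $\widehat{\Theta^\mathsf{R}}$ to be $\vartriangleleft_\Theta$-regular, which is what makes the two descriptions coincide. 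I expect the one genuinely delicate point to be establishing weak admissibility of $(\Theta^\mathsf{R})^\leq$ (needed both to keep $\vartriangleleft_{\Theta^\mathsf{R}}$ below $\leq$ and to pin down $\dt{\mathbb{P}}$), via the observation that $\mathbb{P}\setminus p^\geq$ is never a $\leq$-cover.
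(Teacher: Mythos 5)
Your overall architecture matches the paper's: reduce to \autoref{CauchyComplete}, promote $\Theta$-Cauchyness to $\Theta^\mathsf{R}$-Cauchyness by climbing the tower one level at a time, and extract $\vartriangleleft$-regularity of points of $\widehat{\Theta^\mathsf{R}}$ by testing roundness against some $C\in\Theta^n$ and Cauchyness against $C^\vartriangleright\in\Theta^{n+1}$; those parts are fine. The genuine gap sits exactly where you predicted it would: your argument for weak admissibility of $(\Theta^\mathsf{R})^\leq$ does not work. The claim that no $\leq$-cover refines $\mathbb{P}\setminus p^\geq$ is false: $C\leq\mathbb{P}\setminus p^\geq$ only requires each $c\in C$ to lie below \emph{some} element that is not below $p$, and this does not force $p^\geq\cap C^\geq=\emptyset$. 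For instance, if $\mathbb{P}$ has a maximum $1\neq p$ then $1\in\mathbb{P}\setminus p^\geq$ and hence \emph{every} $C$ refines $\mathbb{P}\setminus p^\geq$; concretely, in $\mathcal{O}([0,1])$ with $p=(0,\frac{1}{2})$ the $\leq$-cover $\{[0,1]\}$ refines $\mathbb{P}\setminus p^\geq$. The auxiliary claim that a witness $D$ to $\Theta^\leq$-nearness of $\{p\}$ can be taken to be a down-set contained in $\mathbb{P}\setminus p^\geq$ is also unjustified: a down-set disjoint from $p^\geq$ can contain no element sharing a lower bound with $p$, and a generic witness (with $D\notin\Theta^\leq$ and $D\cup\{p\}\in\Theta^\leq$) need not have this property. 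Since this step underpins $\vartriangleleft_{\Theta^\mathsf{R}}\ \subseteq\ \leq$, the admissibility of $\Theta^\mathsf{R}$, the identity $\dt{\mathbb{P}}_{\Theta^\mathsf{R}}=\dt{\mathbb{P}}_\Theta$ and hence the agreement of stars via \autoref{FiniteNearBounded}, everything downstream of it is affected.

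The paper closes this gap by a different route: by \autoref{Admissible=>PPAdmissible}, $\Theta^\leq$ is Picado-Pultr admissible, and \autoref{RegularCovers} gives $\Theta^\leq\subseteq(\Theta')^\leq\subseteq\mathcal{C}_\leq(\mathbb{P})$, so $(\Theta')^\leq$ (and inductively $(\Theta^\mathsf{R})^\leq$) is again Picado-Pultr admissible \textendash\, the $\Leftarrow$ half of \eqref{PicadoPultrAdmissible} only gets stronger when the family of covers grows, while the $\Rightarrow$ half is automatic once the family lies in $\mathcal{C}_\leq(\mathbb{P})$.  Weak admissibility then follows via Wallman admissibility and \autoref{Wallman=>Weakly}. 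The correct explicit witness hiding behind this is of the form $D=C\setminus Cr$ for some $r\vartriangleleft p$ with $r\nleq q$: any $\leq$-cover refining $D$ would produce a common lower bound of $r$ and some $d\in D$ that is not below $q$, hence lies in $\dt{\mathbb{P}}$ and contradicts $d\notin Cr$. It is \emph{not} $\mathbb{P}\setminus p^\geq$. If you substitute that argument, or simply cite the Picado-Pultr route, the rest of your proof goes through.
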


\begin{proof}
By \eqref{Auxiliary}, $C\leq D$ implies $C^\vartriangleright\subseteq D^\vartriangleright$.  As $\Theta$ is directed, this implies that $\Theta'$ and hence $\Theta^\mathsf{R}$ is $\subseteq$-directed and, in particular, directed.

By \autoref{Admissible=>PPAdmissible}, $\Theta^\leq$ is Picado-Pultr admissible.  By \autoref{RegularCovers}, $\Theta^\leq\subseteq\Theta'^\leq\subseteq\mathcal{C}_\leq(\mathbb{P})$ so $\Theta'^\leq$ is also Picado-Pultr admissible.  In particular, $\Theta'^\leq$ is weakly admissible so $\vartriangleleft_{\Theta'}\subseteq\ \leq$, by \autoref{TriangleSubsetLeq}.  Also, as $\Theta,\Theta'\subseteq\mathcal{C}_\leq(\mathbb{P})$, the only way $\{0\}$ could be $\Theta^\leq$-near or $\Theta'^\leq$-near is if $\mathbb{P}=\{0\}$, by \autoref{0near} and \eqref{0Cover}, in which case $\Theta=\Theta'$.  So $\dt{\mathbb{P}}$ is the same with respect to $\Theta$ and $\Theta'$, and hence the same goes for finite near subsets and hence stars, by \autoref{FiniteNearBounded}.  Thus, as $\Theta^\leq\subseteq\Theta'^\leq$,
\[p^\vartriangleright\subseteq p^{\vartriangleright_{\Theta'}}\leq p=\bigvee p^\vartriangleright\]
and hence $p=\bigvee p^{\vartriangleright_{\Theta'}}$, for all $p\in\mathbb{P}$, i.e. $\Theta'$ is admissible.  By induction, the same is true of $\Theta^n$, for all $n\in\mathbb{N}$, and hence of $\Theta^\mathsf{R}$.  In particular, $\Theta^\mathsf{R}$ is admissible.

Now say $R\subseteq\dt{\mathbb{P}}$ is a $\Theta$-Cauchy $\vartriangleleft$-filter.  As $R$ is $\vartriangleleft$-regular and $\Theta$-Cauchy, $R$ must be $\Theta'$-Cauchy and by induction it follows that $R$ is $\Theta^\mathsf{R}$-Cauchy.  As $\vartriangleleft\ \subseteq\ \leq$, $R\subseteq\dt{\mathbb{P}}$ is directed and hence linked.  Thus $R$ is $\Theta$-round, by \autoref{RegularLinked=>Round}, and hence $\Theta^\mathsf{R}$-round, as $\Theta\subseteq\Theta^\mathsf{R}$, so $R\in\widehat{\Theta^\mathsf{R}}$.

Conversely, if $R\in\widehat{\Theta^\mathsf{R}}$ then $R$ is $\Theta^\mathsf{R}$-Cauchy and hence $\Theta$-Cauchy, as $\Theta\subseteq\Theta^\mathsf{R}$.  As $R$ is $\Theta^\mathsf{R}$-round, for any $r\in R$, we have $C\in\Theta^\mathsf{R}$ with $C\cap R\leq r$.  As $R$ is $\Theta^\mathsf{R}$-Cauchy, we have some $s\in C^\vartriangleright\cap R\vartriangleleft C\cap R\leq r$ and hence $s\vartriangleleft r$, showing that $R$ is $\vartriangleleft$-regular.  As $\Theta^\mathsf{R}$ is directed, $R$ is also directed, by \autoref{Directed=>Directed}, and hence a $\vartriangleleft$-filter, by \eqref{Auxiliary}.

If $\Theta$ is countable then so is $\Theta^\mathsf{R}$.  Also, $\Theta^\mathsf{R}$ is $\vartriangleleft$-regular and hence $\vartriangleleft_{\Theta^\mathsf{R}}$-regular, as $\Theta\subseteq\Theta^\mathsf{R}$ so $\vartriangleleft\ \subseteq\ \vartriangleleft_{\Theta^\mathsf{R}}$ (as stars are the same relative to $\Theta$ and $\Theta^\mathsf{R}$).  Thus $\widehat{\Theta^\mathsf{R}}$ is completely metrisable, by \autoref{CauchyComplete}.
\end{proof}

Let us call a concrete filter of covers $\Theta\subseteq\mathcal{C}_X(\mathbb{P})$ \emph{complete} if every $\Theta$-Cauchy $\vartriangleleft$-filter $R\subseteq\mathbb{P}$ converges to some $x\in X$, i.e. $\mathbb{P}_x\subseteq R$.  Then we can state our duality for completely metrisable spaces as follows.

\begin{thm}\label{CompletelyMetrisableDualitySummary}
\
\begin{center}
\textbf{\upshape Abstract countable admissible filters $\Theta\subseteq\mathcal{P}(\mathbb{P})$ are dual to concrete countable complete locally uniform compatible filters $\Theta\subseteq\mathcal{C}_X(\mathbb{P})$ where $\mathbb{P}\subseteq\mathcal{O}(X)$ is a basis of a completely metrisable space $X$}.
\end{center}
\end{thm}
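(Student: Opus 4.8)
The plan is to obtain this last duality by assembling the results of the preceding two sections, in the same style as the earlier duality summaries; the only new wrinkle is that an admissible filter need not be star-regular, so the completely metrisable space attached to an abstract $\Theta$ will be the spectrum of its regularisation $\Theta^\mathsf{R}$ rather than of $\Theta$ itself, and throughout one must keep track of which data is indexed by $\Theta$ and which by $\Theta^\mathsf{R}$.

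For the passage from abstract to concrete, I start with a countable admissible filter $\Theta$. Being a filter it is directed, so \autoref{AdmissibleDirectedRegularisation} applies: $\Theta^\mathsf{R}$ is again a countable admissible directed filter, $\widehat{\Theta^\mathsf{R}}$ is completely metrisable, and its points are exactly the $\Theta$-Cauchy $\vartriangleleft$-filters contained in $\dt{\mathbb{P}}$. By \autoref{Directed=>Basis}, $(\widehat{\Theta^\mathsf{R}}_p)_{p\in\mathbb{P}}$ is a basis of $\widehat{\Theta^\mathsf{R}}$, and it is faithful by \autoref{CountableDirectedAdmissible=>Faithful} applied to $\Theta^\mathsf{R}$, so $\leq$ is realised as $\subseteq$. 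Under the identification $p\leftrightarrow\widehat{\Theta^\mathsf{R}}_p$, I then check that $(\{\widehat{\Theta^\mathsf{R}}_c:c\in C\})_{C\in\Theta}$ is a countable filter of $\mathbb{P}$-covers of $\widehat{\Theta^\mathsf{R}}$ which is compatible (every point is a $\vartriangleleft$-regular $\Theta^\leq$-near up-set, hence $\Theta$-round by \autoref{xNear} and \autoref{RegularLinked=>Round}, so \autoref{XCompatible} applies), locally uniform (every point is $\vartriangleleft$-regular, so \autoref{LocallyUniform<=>xRegular} applies), and complete: a $\Theta$-Cauchy $\vartriangleleft$-filter $R$ lying in $\dt{\mathbb{P}}$ is itself a point of $\widehat{\Theta^\mathsf{R}}$, while one containing a non-near element $r$ is forced to be all of $\mathbb{P}$, since a $\vartriangleleft$-predecessor of $r$ in $R$ is again non-near and hence, via \autoref{ThetapNonempty}, uniformly below every element.

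For the converse I start with a concrete countable complete locally uniform compatible filter $\Theta\subseteq\mathcal{C}_X(\mathbb{P})$, where $\mathbb{P}$ is a basis of a completely metrisable (hence $T_1$) space $X$. Here $\Theta$ is directed, so \autoref{LocallyUniform<=>xRegular} gives that each $\mathbb{P}_x$ is $\vartriangleleft$-regular, and then \autoref{CompatiblePxRegular=>Admissible} gives that $\Theta$ is admissible, so $\Theta$ is a countable admissible filter. It remains to see that passing to $\widehat{\Theta^\mathsf{R}}$ recovers $X$: compatibility puts each $\mathbb{P}_x$ in $\widehat\Theta$ (\autoref{XCompatible}), and each such $\mathbb{P}_x$ is a $\Theta$-Cauchy $\vartriangleleft$-filter inside $\dt{\mathbb{P}}$, hence a point of $\widehat{\Theta^\mathsf{R}}$ by \autoref{AdmissibleDirectedRegularisation}; conversely any $R\in\widehat{\Theta^\mathsf{R}}$ is a $\Theta$-Cauchy $\vartriangleleft$-filter, hence contains some $\mathbb{P}_x$ by completeness, and then $R=\mathbb{P}_x$ since $\mathbb{P}_x$ is a $\Theta^\mathsf{R}$-Cauchy up-set and $R$ is $\Theta^\mathsf{R}$-round (\autoref{CauchyUpsetRound}). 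So $x\mapsto\mathbb{P}_x$ is a bijection $X\to\widehat{\Theta^\mathsf{R}}$, and the argument of \autoref{T1Recovery} shows it is a homeomorphism carrying $\Theta$ to itself.

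The step I expect to cause the most trouble is the completeness bookkeeping, on both sides at once: one must confirm that the concrete covers built from an abstract admissible $\Theta$ really are complete (the non-near-element dichotomy above), and dually that completeness of a concrete $\Theta$ is exactly the hypothesis that keeps $\widehat{\Theta^\mathsf{R}}$ from acquiring extra points beyond those of $X$. The rest is a routine, if lengthy, reassembly of the cited results.
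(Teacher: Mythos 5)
Your proposal is correct and follows essentially the same route as the paper, which proves this summary by assembling \autoref{XCompatible}, \autoref{LocallyUniform<=>xRegular}, \autoref{CountableDirectedAdmissible=>Faithful} and \autoref{AdmissibleDirectedRegularisation} for the abstract-to-concrete direction, and \autoref{CompatiblePxRegular=>Admissible} together with the definition of completeness (giving $\widehat{\Theta^\mathsf{R}}=\{\mathbb{P}_x:x\in X\}$) for the converse. The only difference is that you spell out the completeness check for the concrete covers arising from an abstract $\Theta$ (via the near/non-near dichotomy, where weak admissibility confines non-near elements to a minimum), a point the paper leaves implicit in its citation of \autoref{AdmissibleDirectedRegularisation}.
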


More precisely, if $\Theta$ is a countable admissible filter then $\Theta$ can be concretely represented as the basis $(\widehat{\Theta^\mathsf{R}}_p)_{p\in\mathbb{P}}$ of the completely metrisable space $\widehat{\Theta^\mathsf{R}}$ where $\leq$ becomes $\subseteq$ and $\Theta$ becomes the complete locally uniform compatible family of covers $(\{\widehat{\Theta^\mathsf{R}}_c:c\in C\})_{C\in\Theta}$ of $\widehat{\Theta^\mathsf{R}}$, by \autoref{XCompatible}, \autoref{LocallyUniform<=>xRegular}, \autoref{CountableDirectedAdmissible=>Faithful} and \autoref{AdmissibleDirectedRegularisation}.  Conversely, if $\Theta\subseteq\mathcal{C}_X(\mathbb{P})$ is a locally uniform compatible filter then $\Theta$ is admissible, by \autoref{CompatiblePxRegular=>Admissible}.  If $\Theta$ is also complete then by definition (and \autoref{AdmissibleDirectedRegularisation}),
\[\widehat{\Theta^\mathsf{R}}=\{R\subseteq\dt{\mathbb{P}}:R\text{ is a $\Theta$-Cauchy $\vartriangleleft$-filter}\}=\{\mathbb{P}_x:x\in X\},\]
showing that $\widehat{\Theta^\mathsf{R}}$ is homeomorphic to the original space $X$.
\vfill

\bibliography{maths}{}

\begin{thebibliography}{BHH98}

\bibitem[Ale39]{Alexander1939}
James~Waddell Alexander.
\newblock Ordered sets, complexes and the problem of compactification.
\newblock {\em Proceedings of the National Academy of Sciences},
  25(6):296--298, 1939.
\newblock \href {http://dx.doi.org/10.1073/pnas.25.6.296}
  {\path{doi:10.1073/pnas.25.6.296}}.

\bibitem[BHH98]{BentleyHerrlichHusek1998}
H.~L. Bentley, H.~Herrlich, and M.~Hu\v{s}ek.
\newblock The historical development of uniform, proximal, and nearness
  concepts in topology.
\newblock In {\em Handbook of the history of general topology, {V}ol. 2 ({S}an
  {A}ntonio, {TX}, 1993)}, volume~2 of {\em Hist. Topol.}, pages 577--629.
  Kluwer Acad. Publ., Dordrecht, 1998.

\bibitem[BS18a]{BiceStarling2018}
Tristan Bice and Charles Starling.
\newblock General non-commutative locally compact locally {H}ausdorff {S}tone
  duality, 2018.
\newblock \href {http://arxiv.org/abs/1803.00394} {\path{arXiv:1803.00394}}.

\bibitem[BS18b]{BiceStarling2018b}
Tristan Bice and Charles Starling.
\newblock Hausdorff tight groupoids generalized, 2018.
\newblock \href {http://arxiv.org/abs/1809.08578} {\path{arXiv:1809.08578}}.

\bibitem[CH00]{CollinsHendrie2000}
P.~J. Collins and C.~A. Hendrie.
\newblock On strict extensions of nearness spaces.
\newblock {\em Appl. Categ. Structures}, 8(3):435--446, 2000.
\newblock Dedicated to Professor H. Herrlich on the occasion of his sixtieth
  birthday.
\newblock \href {http://dx.doi.org/10.1023/A:1008614929530}
  {\path{doi:10.1023/A:1008614929530}}.

\bibitem[CM98]{CollinsMorita1998}
P.~J. Collins and K.~Morita.
\newblock Coverings versus entourages.
\newblock {\em Topology Appl.}, 82(1-3):125--130, 1998.
\newblock Special volume in memory of Kiiti Morita.
\newblock \href {http://dx.doi.org/10.1016/S0166-8641(97)00054-0}
  {\path{doi:10.1016/S0166-8641(97)00054-0}}.

\bibitem[DT18]{DebskiTymchatyn2018}
Wojciech D\polhk{e}bski and E.~D. Tymchatyn.
\newblock Cell structures and topologically complete spaces.
\newblock {\em Topology Appl.}, 239:293--307, 2018.
\newblock \href {http://dx.doi.org/10.1016/j.topol.2018.02.020}
  {\path{doi:10.1016/j.topol.2018.02.020}}.

\bibitem[Ern91]{Erne1991}
M.~Ern\'{e}.
\newblock The {ABC} of order and topology.
\newblock {\em Category Theory at Work}, 18:57--83, 1991.
\newblock URL: \url{http://www.heldermann.de/R&E/RAE18/ctw05.pdf}.

\bibitem[Har71]{Harris1971}
Douglas Harris.
\newblock {\em Structures in topology}.
\newblock American Mathematical Society, Providence, R.I., 1971.
\newblock Memoirs of the American Mathematical Society, No. 115.

\bibitem[Kun11]{Kunen2011}
Kenneth Kunen.
\newblock {\em Set theory}, volume~34 of {\em Studies in Logic (London)}.
\newblock College Publications, London, 2011.

\bibitem[Mor51]{Morita1951}
Kiiti Morita.
\newblock On the simple extension of a space with respect to a uniformity.
  {I}-{IV}.
\newblock {\em Proc. Japan Acad.}, 27:65--72,130--137,166--171,632--636, 1951.
\newblock URL: \url{http://projecteuclid.org/euclid.pja/1195571517}.

\bibitem[Mor89]{Morita1989}
Kiiti Morita.
\newblock Extensions of mappings. {I}.
\newblock In {\em Topics in general topology}, volume~41 of {\em North-Holland
  Math. Library}, pages 1--39. North-Holland, Amsterdam, 1989.
\newblock \href {http://dx.doi.org/10.1016/S0924-6509(08)70147-8}
  {\path{doi:10.1016/S0924-6509(08)70147-8}}.

\bibitem[PP12]{PicadoPultr2012}
Jorge Picado and Ale{\v{s}} Pultr.
\newblock {\em Frames and locales: Topology without points}.
\newblock Frontiers in Mathematics. Birkh\"auser/Springer Basel AG, Basel,
  2012.
\newblock \href {http://dx.doi.org/10.1007/978-3-0348-0154-6}
  {\path{doi:10.1007/978-3-0348-0154-6}}.

\bibitem[Sco13]{Scott2013}
Brian~M. Scott.
\newblock Compact and locally {H}ausdorff, but not locally compact.
\newblock Mathematics Stack Exchange, 2013.
\newblock URL: \url{https://math.stackexchange.com/q/337657}.

\bibitem[Tuk40]{Tukey1940}
John~W. Tukey.
\newblock {\em Convergence and {U}niformity in {T}opology}.
\newblock Annals of Mathematics Studies, no. 2. Princeton University Press,
  Princeton, N. J., 1940.

\bibitem[Wal38]{Wallman1938}
Henry Wallman.
\newblock Lattices and topological spaces.
\newblock {\em Ann. of Math. (2)}, 39(1):112--126, 1938.
\newblock \href {http://dx.doi.org/10.2307/1968717}
  {\path{doi:10.2307/1968717}}.

\bibitem[Wil72]{Williams1972}
James Williams.
\newblock Locally uniform spaces.
\newblock {\em Trans. Amer. Math. Soc.}, 168:435--469, 1972.
\newblock \href {http://dx.doi.org/10.2307/1996185}
  {\path{doi:10.2307/1996185}}.

\end{thebibliography}
\bibliographystyle{alphaurl}

\end{document}